\theoremstyle{change}
\newcommand{\R}{{\mathbb R}}
\newcommand{\C}{{\mathbb C}}
\newcommand{\p}{\mathfrak p}
\newcommand{\OF}{{\mathfrak o}}
\newcommand{\GL}{{\rm GL}}
\newcommand{\SL}{{\rm SL}}
\newcommand{\SO}{{\rm SO}}
\newcommand{\GSp}{{\rm GSp}}
\newcommand{\meta}{\widetilde{\rm SL}}
\newcommand{\St}{{\rm St}}
\newcommand{\triv}{1}
\newcommand{\Norm}{{\rm N}}
\newcommand{\Mat}{{\rm M}} 
\newcommand{\Gal}{{\rm Gal}}
\newcommand{\GSO}{{\rm GSO}}
\newcommand{\GO}{{\rm GO}}
\newcommand{\OO}{{\rm O}}
\newcommand{\ind}{{\rm ind}}
\newcommand{\cInd}{\mathrm{c}\text{-}\mathrm{Ind}}
\newcommand{\Trace}{{\rm T}}
\newcommand{\trace}{{\rm tr}}
\newcommand{\Hom}{{\rm Hom}}
\newcommand{\SSp}{{\rm Sp}}
\newcommand{\disc}{{\rm disc}}
\newcommand{\pisw}{\pi_{\scriptscriptstyle SW}}
\newcommand{\pis}{\pi_{\scriptscriptstyle S}}
\newcommand{\piw}{\pi_{\scriptscriptstyle W}}
\newcommand{\sroot}{{\Delta}}
\renewcommand{\emptyset}{\varnothing}
\newcommand{\mat}[4]{{\setlength{\arraycolsep}{0.5mm}\left[
\begin{array}{cc}#1&#2\\#3&#4\end{array}\right]}}
\newcommand{\qed}{\hspace*{\fill}\rule{1ex}{1ex}}
\newcommand{\forget}[1]{}
\def\qdots{\mathinner{\mkern1mu\raise0pt\vbox{\kern7pt\hbox{.}}\mkern2mu
\raise3.4pt\hbox{.}\mkern2mu\raise7pt\hbox{.}\mkern1mu}}
\newenvironment{proof}{\vspace{1ex}\noindent\emph{Proof.}\hspace{0.5em}}
	{\hfill\qed\vspace{2ex}}
\newtheorem{lemma}{Lemma.}[subsection]
\newtheorem{theorem}[lemma]{Theorem.}
\newtheorem{corollary}[lemma]{Corollary.}
\newtheorem{proposition}[lemma]{Proposition.}
\begin{document}

\thispagestyle{empty}
\begin{center}
 {\bf\Large Some results on Bessel functionals for $\GSp(4)$}

 \vspace{2ex}
 Brooks Roberts and Ralf Schmidt\footnote{Supported by NSF grant DMS-1100541.\\2010 \emph{Mathematics Subject Classification}. Primary 11F70 and 22E50.}

 \vspace{5ex}
 \begin{minipage}{80ex}
  \small{\sc Abstract.} We prove that every irreducible, admissible representation $\pi$ of $\GSp(4,F)$, where $F$ is a non-archimedean local field of characteristic zero, admits a Bessel functional, provided $\pi$ is not one-dimensional. Given $\pi$, we explicitly determine the set of all split Bessel functionals admitted by $\pi$, and prove that these functionals are unique. If $\pi$ is not supercuspidal, or in an $L$-packet with a non-supercuspidal representation, we explicitly determine the set of all Bessel functionals admitted by $\pi$, and prove that these functionals are unique.
 \end{minipage}
\end{center}

\vspace{3ex}
\tableofcontents

\section*{Introduction}
\phantomsection
\label{introsec}
\addcontentsline{toc}{section}{Introduction}
Let $F$ be a non-archimedean local field of characteristic zero, and let $\psi$ be a non-trivial character of $F$. Let $\GSp(4,F)$ be the subgroup of $g$ in $\GL(4,F)$ satisfying $^tgJg=\lambda(g)J$ for some scalar $\lambda(g)$ in $F^\times$, where
$$
J=\begin{bmatrix}&&&1\\&&1\\&-1\\-1\end{bmatrix}.
$$
The Siegel parabolic subgroup $P$ of $\GSp(4,F)$ is the subgroup consisting of all matrices whose lower left $2\times2$ block is zero. Let $N$ be the unipotent radical of $P$. The characters $\theta$ of $N$ are in one-to-one correspondence with symmetric $2\times2$ matrices $S$ over $F$ via the formula
$$
 \theta(\mat{1}{X}{}{1})=\psi({\rm tr}(S\mat{}{1}{1}{}X)).
$$
We say that $\theta$ is \emph{non-degenerate} if the matrix $S$ is invertible, and we say that $\theta$ is \emph{split} if $\disc(S)=1$; here $\disc(S)$ is the class of $-\det(S)$ in $F^\times/F^{\times2}$. For a fixed $S$, we define
\begin{equation}\label{Tdefeq2}
 T=\mat{}{1}{1}{}\{g\in \GL(2,F):\:^tgSg=\det(g)S\}\mat{}{1}{1}{}.
\end{equation}
We embed $T$ into $\GSp(4,F)$ via the map
$$
 t\mapsto\mat{t}{}{}{\det(t)t'},
$$
where for a $2\times2$-matrix $g$ we write $g'=\left[\begin{smallmatrix}&1\\1\end{smallmatrix}\right]\,^tg^{-1}\left[\begin{smallmatrix}&1\\1\end{smallmatrix}\right]$. The group $T$ normalizes $N$, so that we can define the semidirect product $D=TN$. This will be referred to as the \emph{Bessel subgroup} corresponding to $S$. For $t$ in $T$ and $n$ in $N$, we have $\theta(tnt^{-1})=\theta(n)$. Thus, if $\Lambda$ is a character of $T$, we can define a character $\Lambda\otimes\theta$ of $D$ by $(\Lambda\otimes\theta)(tn)=\Lambda(t)\theta(n)$. Whenever we regard $\C$ as a one-dimensional representation of $D$ via this character, we denote it by $\C_{\Lambda\otimes\theta}$. Let $(\pi,V)$ be an irreducible, admissible representation of $\GSp(4,F)$. A non-zero element of the space $\Hom_D(\pi,\C_{\Lambda\otimes\theta})$ is called a $(\Lambda,\theta)$-Bessel functional for $\pi$. We say that $\pi$ admits a $(\Lambda,\theta)$-Bessel functional if $\Hom_D(\pi,\C_{\Lambda\otimes\theta})$ is non-zero, and that $\pi$ 
admits a unique $(\Lambda,\theta)$-Bessel functional if $\Hom_D(\pi,\C_{\Lambda\otimes\theta})$ is one-dimensional.

In this paper we investigate the existence and uniqueness of Bessel functionals. We prove three main results about irreducible, admissible representations $\pi$ of $\GSp(4,F)$.

\vspace{2ex}
\hspace{-3ex}
\begin{minipage}{90ex}
\begin{itemize}
 \item If $\pi$ is not one-dimensional, we prove that $\pi$ admits some $(\Lambda,\theta)$-Bessel functional; see Theorem~\ref{existencetheorem}.
 \item If $\theta$ is split, we determine the set of $\Lambda$ for which $\pi$ admits a $(\Lambda,\theta)$-Bessel functional, and prove that such functionals are unique; see Proposition \ref{GSp4genericprop}, Theorem \ref{existencetheorem}, Theorem \ref{mainnonsupercuspidaltheorem} and Theorem \ref{splituniquenesstheorem}.
 \item If $\pi$ is non-supercuspidal, or is in an $L$-packet with a non-supercuspidal representation, we determine the set of $(\Lambda,\theta)$ for which $\pi$ admits a $(\Lambda,\theta)$-Bessel functional, and prove that such functionals are unique; see Theorem \ref{mainnonsupercuspidaltheorem} and Theorem \ref{splituniquenesstheorem}.
\end{itemize}
\end{minipage}

\vspace{3ex}\noindent
We point out that all our results hold independently of the residual characteristic of $F$.

To investigate $(\Lambda,\theta)$-Bessel functionals for $(\pi,V)$ we use the $P_3$-module $V_{Z^J}$, the $G^J$-module $V_{Z^J,\psi}$, and the twisted Jacquet module $V_{N,\theta}$. Here,
$$
 P_3=\GL(3,F)\cap\begin{bmatrix} *&*&*\\ *&*&*\\&&1\end{bmatrix},\;\:
 Z^J=\GSp(4,F)\cap\begin{bmatrix}1&&&*\\&1\\&&1\\&&&1\end{bmatrix},\;\:
 G^J=\GSp(4,F)\cap\begin{bmatrix}1&*&*&*\\&*&*&*\\&*&*&*\\&&&1\end{bmatrix}.
$$
The $P_3$-module $V_{Z^J}$ was computed for all $\pi$ with trivial central character in $\cite{NF}$; in this paper, we note that these results extend to the general case. The $G^J$-module $V_{Z^J,\psi}$ is closely related to representations of the metaplectic group $\meta(2,F)$. The twisted Jacquet module $V_{N,\theta}$ is especially relevant for non-supercuspidal representations. Indeed, we completely calculate twisted Jacquet modules of representations parabolically induced from the Klingen or Siegel parabolic subgroups. These methods suffice to treat most representations; for the few remaining families of representations we use theta lifts.
As a by-product of our investigations we obtain a characterization of non-generic representations. Namely, the following conditions are equivalent: $\pi$ is non-generic; the twisted Jacquet module $V_{N,\theta}$ is finite-dimensional for all non-degenerate $\theta$; the twisted Jacquet module $V_{N,\theta}$ is finite-dimensional for all split $\theta$; the $G^J$-module $V_{Z^J,\psi}$ is of finite length. See Theorem \ref{nongenchartheorem}.

Bessel functionals have important applications, and have been investigated in a number of works. For example, they can be used to define and study $L$-functions, in the case where the representation in question has no Whittaker model; e.g., \cite{Piatetski1997}, \cite{Sugano1984}, \cite{Furusawa1993}, \cite{PitaleSahaSchmidt2011}. As far as we know, the first works investigating Bessel functionals for irreducible, admissible representations of $\GSp(4,F)$ are \cite{NovoPia1973} and \cite{Novodvorski1973}. Both of these papers consider only representations with trivial central character. The main result of the first paper is the uniqueness of $(\Lambda,\theta)$-Bessel functionals for trivial $\Lambda$. In the second paper, this is generalized to arbitrary $\Lambda$.

If an irreducible, admissible representation $\pi$ admits a $(\Lambda,\theta)$-Bessel functional, then $\pi$ has an associated Bessel model. For unramified $\pi$ admitting a $(\Lambda,\theta)$-Bessel functional, the works  \cite{Sugano1984} and \cite{BumpFriedbergFurusawa1997} contain explicit formulas for the spherical vector in such a Bessel model. Other explicit formulas in certain cases of Iwahori-spherical representations appear in \cite{Saha2009}, \cite{Pitale2011} and \cite{PitaleSchmidt2012}. We note that these works show that all the values of a certain vector in the given Bessel model can be expressed in terms of data depending only on the representation and $\Lambda$ and $\theta$; in this situation it follows that the Bessel functional is unique. As far as we know, a detailed proof of uniqueness of Bessel functionals in all cases has not yet appeared in the literature.

In the case of odd residual characteristic, and when $\pi$ appears in a generic $L$-packet, the main local theorem of \cite{PrTa2011} gives an $\varepsilon$-factor criterion for the existence of a $(\Lambda,\theta)$-Bessel functional. There is some overlap between the methods of \cite{PrTa2011} and the present work. However, the goal of this work is to give a complete and ready account of Bessel functionals for all non-supercuspidal representations. We hope these results will be useful for applications where such specific knowledge is needed.

\section{Some definitions}
\label{defsec}
Throughout this work let $F$ be a non-archimedean local field of characteristic zero. Let $\bar F$ be a fixed algebraic closure of $F$. We fix a non-trivial character $\psi:\:F\rightarrow\C^\times$. The symbol $\OF$ denotes the ring of integers of $F$, and $\p$ is the maximal ideal of $\OF$. We let $\varpi$ be a fixed generator of $\p$. We denote by $|\cdot|$ the normalized absolute value on $F$, and by $\nu$ its restriction to $F^\times$. The Hilbert symbol of $F$ will be denoted by $(\cdot,\cdot)_F$. If $\Lambda$ is a character of a group, we denote by $\C_\Lambda$ the space of the one-dimensional representation whose action is given by $\Lambda$. If $x=\left[\begin{smallmatrix} a&b \\ c&d \end{smallmatrix} \right]$ is a $2 \times 2$ matrix, then we set $x^* = \left[\begin{smallmatrix} d&-b \\ -c&a \end{smallmatrix} \right]$.  If $X$ is an $l$-space, as in 1.1 of \cite{BeZe1976}, and $V$ is a complex vector space, then $\mathcal{S}(X,V)$ is the space of locally constant functions $X\to V$ with compact 
support. Let $G$ be an $l$-group, as in \cite{BeZe1976}, and let $H$ be a closed subgroup. If $\rho$ is a smooth representation of $H$, we define the compactly induced representation (unnormalized) $\cInd_H^G(\rho)$ as in 2.22 of \cite{BeZe1976}. If $(\pi,V)$ is a smooth representation of $G$, and if $\theta$ is a character of $H$, we define the twisted Jacquet module $V_{H,\theta}$ as the quotient $V/V(H,\theta)$, where $V(H,\theta)$ is the span of all vectors $\pi(h)v-\theta(h)v$ for all $h$ in $H$ and $v$ in $V$.

\subsection{Groups}
Let
$$
 \GSp(4,F)=\{g\in\GL(4,F):\:^tgJg=\lambda(g)J,\:\lambda(g)\in F^\times\},\qquad J=\begin{bmatrix}&&&1\\&&1\\&-1\\-1\end{bmatrix}.
$$
The scalar $\lambda(g)$ is called the \emph{multiplier} or \emph{similitude factor} of the matrix $g$. The \emph{Siegel parabolic subgroup} $P$ of $\GSp(4,F)$ consists of all matrices whose lower left $2\times2$ block is zero. For a matrix $A\in\GL(2,F)$ set
$$
 A'=\mat{}{1}{1}{}\,^t\!A^{-1}\mat{}{1}{1}{}.
$$
Then the Levi decomposition of $P$ is $P=MN$, where
\begin{equation}\label{Mdefeq}
 M=\{\mat{A}{}{}{\lambda A'}:\:A\in\GL(2,F),\:\lambda\in F^\times\},
\end{equation}
and
\begin{equation}\label{Ndefeq}
 N=\{\begin{bmatrix}
         1&&y&z\\&1&x&y\\&&1\\&&&1
        \end{bmatrix}:\;x,y,z\in F\}.
\end{equation}
Let $Q$ be the \emph{Klingen parabolic subgroup}, i.e.,
\begin{equation}\label{Qdefeq}
 Q=\GSp(4,F)\cap\begin{bmatrix} *&*&*&*\\&*&*&*\\&*&*&*\\&&&*\end{bmatrix}.
\end{equation}
The Levi decomposition for $Q$ is $Q=M_QN_Q$, where
\begin{equation}\label{MQdefeq}
 M_Q=\{\begin{bmatrix}t\\&A\\&&t^{-1}\det(A)\end{bmatrix}:\:A\in\GL(2,F),\:t\in F^\times\},
\end{equation}
and $N_Q$ is the \emph{Heisenberg group}
\begin{equation}\label{NQdefeq}
 N_Q=\{\begin{bmatrix}
         1&x&y&z\\&1&&y\\&&1&-x\\&&&1
        \end{bmatrix}:\;x,y,z\in F\}.
\end{equation}
The subgroup of $Q$ consisting of all elements with $t=1$ and $\det(A)=1$ is called the \emph{Jacobi group} and is denoted by $G^J$. The standard Borel subgroup of $\GSp(4,F)$ consists of all upper triangular matrices in $\GSp(4,F)$. We let
$$
 U=\GSp(4,F)\cap\begin{bmatrix}1&*&*&*\\&1&*&*\\&&1&*\\&&&1\end{bmatrix}
$$
be its unipotent radical.

The following elements of $\GSp(4,F)$ represent generators for the eight-element Weyl group, \begin{equation}\label{s1s2defeq}
 s_1=\begin{bmatrix}&1\\1\\&&&1\\&&1\end{bmatrix}\qquad\text{and}\qquad s_2=\begin{bmatrix}1\\&&1\\&-1\\&&&1\end{bmatrix}.
\end{equation}
\subsection{Representations}\label{representationssec}
For a smooth representation $\pi$ of $\GSp(4,F)$ or $\GL(2,F)$, we denote by $\pi^\vee$ its smooth contragredient.

For $c_1,c_2$ in $F^\times$, let $\psi_{c_1,c_2}$ be the character of $U$ defined by
\begin{equation}\label{psic1c2eq}
 \psi_{c_1,c_2}(\begin{bmatrix}1&x&*&*\\&1&y&*\\&&1&-x\\&&&1\end{bmatrix})=\psi(c_1x+c_2y).
\end{equation}
An irreducible, admissible representation $(\pi,V)$ of $\GSp(4,F)$ is called \emph{generic} if the space $\Hom_{U}(V,\psi_{c_1,c_2})$ is non-zero. This definition is independent of the choice of $c_1,c_2$. It is known by \cite{Rod1973} that, if non-zero, the space $\Hom_{U}(V,\psi_{c_1,c_2})$ is one-dimensional. Hence, $\pi$ can be realized in a unique way as a space of functions $W:\:\GSp(4,F)\rightarrow\C$ with the transformation property
$$
 W(ug)=\psi_{c_1,c_2}(u)W(g),\qquad u\in U,\;g\in \GSp(4,F),
$$
on which $\pi$ acts by right translations. We denote this model of $\pi$ by $\mathcal{W}(\pi,\psi_{c_1,c_2})$, and call it the \emph{Whittaker model} of $\pi$ with respect to $c_1,c_2$.

We will employ the notation of \cite{SaTa1993} for parabolically induced representations of $\GSp(4,F)$ (all parabolic induction is normalized). For details we refer to the summary given in Sect.\ 2.2 of \cite{NF}. Let $\chi_1$, $\chi_2$ and $\sigma$ be characters of $F^\times$. Then $\chi_1\times\chi_2\rtimes\sigma$ denotes the representation of $\GSp(4,F)$ parabolically induced from the character of the Borel subgroup which is trivial on $U$ and is given by
$$
 {\rm diag}(a,b,cb^{-1},ca^{-1})\longmapsto\chi_1(a)\chi_2(b)\sigma(c),\qquad a,b,c\in F^\times,
$$
on diagonal elements. Let $\sigma$ be a character of $F^\times$ and $\pi$ be an admissible representation of $\GL(2,F)$. Then $\pi\rtimes\sigma$ denotes the representation of $\GSp(4,F)$ parabolically induced from the representation
\begin{equation}\label{Prepeq}
 \mat{A}{*}{}{cA'}\longmapsto\sigma(c)\pi(A),\qquad A\in\GL(2,F),\:c\in F^\times,
\end{equation}
of the Siegel parabolic subgroup $P$. Let $\chi$ be a character of $F^\times$ and $\pi$ an admissible representation of $\GSp(2,F)\cong\GL(2,F)$. Then $\chi\rtimes\pi$ denotes the representation of $\GSp(4,F)$ parabolically induced from the representation
\begin{equation}\label{Qrepeq}
 \begin{bmatrix}t&*&*\\&g&*\\&&\det(g)t^{-1}\end{bmatrix}
 \longmapsto\chi(t)\pi(g),\qquad t\in F^\times,\:g\in\GL(2,F),
\end{equation}
of the Klingen parabolic subgroup $Q$.

For a character $\xi$ of $F^\times$ and a representation $(\pi,V)$ of $\GSp(4,F)$, the \emph{twist} $\xi\pi$ is the representation of $\GSp(4,F)$ on the same space $V$ given by $(\xi\pi)(g)=\xi(\lambda(g))\pi(g)$ for $g$ in $\GSp(4,F)$, where $\lambda$ is the multiplier homomorphism defined above. A similar definition applies to representations $\pi$ of $\GL(2,F)$; in this case, the multiplier is replaced by the determinant. The behavior of parabolically induced representations under twisting is as follows,
\begin{align*}
 \xi(\chi_1\times\chi_2\rtimes\sigma)&=\chi_1\times\chi_2\rtimes\xi\sigma,\\
 \xi(\pi\rtimes\sigma)&=\pi\rtimes\xi\sigma,\\
 \xi(\chi\rtimes\pi)&=\chi\rtimes\xi\pi.
\end{align*}

The irreducible constituents of \emph{all} parabolically induced representations of $\GSp(4,F)$ have been determined in \cite{SaTa1993}. The following table, which is essentially a reproduction of Table A.1 of \cite{NF}, provides a summary of these irreducible constituents. In the table, $\chi,\chi_1,\chi_2,\xi$ and $\sigma$ stand for characters of $F^\times$; the symbol $\nu$ denotes the normalized absolute value; $\pi$ stands for an irreducible, admissible, supercuspidal representation of $\GL(2,F)$, and $\omega_\pi$ denotes the central character of $\pi$. The trivial character of $F^\times$ is denoted by $1_{F^\times}$, the trivial representation of $\GL(2,F)$ by $\triv_{\GL(2)}$ or $\triv_{\GSp(2)}$, depending on the context, the trivial representation of $\GSp(4,F)$ by $\triv_{\GSp(4)}$, the Steinberg representation of $\GL(2,F)$ by $\St_{\GL(2)}$ or $\St_{\GSp(2)}$, depending on the context, and the Steinberg representation of $\GSp(4,F)$ by $\St_{\GSp(4)}$. The names of the representations given in 
the ``representation'' column are taken from \cite{SaTa1993}. The ``tempered'' column indicates the condition on the inducing data under which a representation is tempered. The ``$L^2$'' column indicates which representations are square integrable after an appropriate twist. Finally, the ``g'' column indicates which representations are generic.

In addition to all irreducible, admissible, non-supercuspidal representations, the table also includes two classes of supercuspidal representations denoted by Va$^*$ and XIa$^*$. The reason that these supercuspidal representations are included in the table is that they are in $L$-packets with some non-supercuspidal representations. Namely, the Va representation $\delta([\xi,\nu\xi],\nu^{-1/2}\sigma)$ and the Va$^*$ representation $\delta^*([\xi,\nu\xi],\nu^{-1/2}\sigma)$ form an $L$-packet, and the XIa representation $\delta(\nu^{1/2}\pi,\nu^{-1/2}\sigma)$ and the XIa$^*$ representation $\delta^*(\nu^{1/2}\pi,\nu^{-1/2}\sigma)$ form an $L$-packet; see the paper \cite{GaTa2011}. Incidentally, the other non-singleton $L$-packets involving non-supercuspidal representations are the two-element packets $\{\tau(S,\nu^{-1/2}\sigma),\tau(T,\nu^{-1/2}\sigma)\}$ (type VIa and VIb), as well as $\{\tau(S,\pi),\tau(T,\pi)\}$ (type VIIIa and VIIIb).

$$
\renewcommand{\arraystretch}{1.19}
\setlength{\arraycolsep}{0.3cm}
 \begin{array}{ccccccccc}
  \toprule
   &\mbox{constituents of}&&\mbox{representation}
   &{\rm tempered}&L^2\!&\,{\rm g}\\
  \toprule
  {\rm I}&\multicolumn{3}{c}{\chi_1\times\chi_2\rtimes\sigma\quad
   \mbox{(irreducible)}}&\mbox{$\chi_i,\sigma$ unitary}
   &&\bullet\\
  \midrule
  {\rm II}&\nu^{1/2}\chi\times\nu^{-1/2}\chi\rtimes\sigma&\mbox{a}
   &\chi\St_{\GL(2)}\rtimes\sigma&\mbox{$\chi,\sigma$ unitary}&&\bullet\\
  \cmidrule{3-7}
   &(\chi^2\neq\nu^{\pm1},\chi\neq\nu^{\pm 3/2})&\mbox{b}
   &\chi\triv_{\GL(2)}\rtimes\sigma
   &&&\\
  \midrule
  {\rm III}&\chi\times\nu\rtimes\nu^{-1/2}\sigma&\mbox{a}
   &\chi\rtimes\sigma\St_{\GSp(2)}&\mbox{$\chi,\sigma$ unitary}&
   &\bullet\\
  \cmidrule{3-7}
  &(\chi\notin\{1,\nu^{\pm2}\})&\mbox{b}
   &\chi\rtimes\sigma\triv_{\GSp(2)}
   &&&\\\midrule
  {\rm IV}&\nu^2\times\nu\rtimes\nu^{-3/2}\sigma&\mbox{a}&\sigma\St_{\GSp(4)}&\mbox{$\sigma$ unitary}&\bullet&\bullet\\
  \cmidrule{3-7}
  &&\mbox{b}&L(\nu^2,\nu^{-1}\sigma\St_{\GSp(2)})&&&\\
  \cmidrule{3-7}
  &&\mbox{c}&L(\nu^{3/2}\St_{\GL(2)},\nu^{-3/2}\sigma)&&&\\
  \cmidrule{3-7}
  &&\mbox{d}&\sigma\triv_{\GSp(4)}&&&\\
  \midrule
  {\rm V}&\nu\xi\times\xi\rtimes\nu^{-1/2}\sigma&\mbox{a}
   &\delta([\xi,\nu\xi],\nu^{-1/2}\sigma)&\mbox{$\sigma$ unitary}&\bullet&\bullet\\
  \cmidrule{3-7}
  &(\xi^2=1,\:\xi\neq1)&\mbox{b}&L(\nu^{1/2}\xi\St_{\GL(2)},\nu^{-1/2}\sigma)&&&\\
  \cmidrule{3-7}
  &&\mbox{c}&L(\nu^{1/2}\xi\St_{\GL(2)},\xi\nu^{-1/2}\sigma)&&&\\
  \cmidrule{3-7}
  &&\mbox{d}&L(\nu\xi,\xi\rtimes\nu^{-1/2}\sigma)&&&\\
  \midrule
  {\rm VI}&\nu\times1_{F^\times}\rtimes\nu^{-1/2}\sigma&\mbox{a}
   &\tau(S,\nu^{-1/2}\sigma)&\mbox{$\sigma$ unitary}&&\bullet\\
  \cmidrule{3-7}
  &&\mbox{b}&\tau(T,\nu^{-1/2}\sigma)&\mbox{$\sigma$ unitary}&&\\
  \cmidrule{3-7}
  &&\mbox{c}&L(\nu^{1/2}\St_{\GL(2)},\nu^{-1/2}\sigma)&&&\\
  \cmidrule{3-7}
  &&\mbox{d}&L(\nu,1_{F^\times}\rtimes\nu^{-1/2}\sigma)&&&\\
  \toprule
  {\rm VII}&\multicolumn{3}{c}{\chi\rtimes\pi\quad
   \mbox{(irreducible)}}&\mbox{$\chi,\pi$ unitary}&&\bullet\\
  \midrule
  {\rm VIII}&1_{F^\times}\rtimes\pi&\mbox{a}&\tau(S,\pi)&\mbox{$\pi$ unitary}&&\bullet\\
  \cmidrule{3-7}
  &&\mbox{b}&\tau(T,\pi)&\mbox{$\pi$ unitary}&&\\
  \midrule
  {\rm IX}&\nu\xi\rtimes\nu^{-1/2}\pi&\mbox{a}
   &\delta(\nu\xi,\nu^{-1/2}\pi)&\mbox{$\pi$ unitary}&\bullet&\bullet\\
  \cmidrule{3-7}
  &(\xi\neq1,\:\xi\pi=\pi)&\mbox{b}&L(\nu\xi,\nu^{-1/2}\pi)&&&\\
  \toprule
  {\rm X}&\multicolumn{3}{c}{\pi\rtimes\sigma\quad
   \mbox{(irreducible)}}&\mbox{$\pi,\sigma$ unitary}
   &&\bullet\\
  \midrule
  {\rm XI}&\nu^{1/2}\pi\rtimes\nu^{-1/2}\sigma&\mbox{a}
   &\delta(\nu^{1/2}\pi,\nu^{-1/2}\sigma)
   &\mbox{$\pi,\sigma$ unitary}&\bullet&\bullet\\
  \cmidrule{3-7}
   &(\omega_\pi=1)&\mbox{b}&L(\nu^{1/2}\pi,\nu^{-1/2}\sigma)&&&\\
  \toprule
  {\rm Va^*}&\mbox{(supercuspidal)}&&\delta^*([\xi,\nu\xi],\nu^{-1/2}\sigma)
   &\mbox{$\sigma$ unitary}&\bullet&\\
  \midrule
  {\rm XIa^*}&\mbox{(supercuspidal)}&
   &\delta^*(\nu^{1/2}\pi,\nu^{-1/2}\sigma)
   &\mbox{$\pi,\sigma$ unitary}&\bullet&\\
  \toprule
 \end{array}
$$
\section{Generalities on Bessel functionals}
In this section we gather some definitions, notation, and basic results about Bessel functionals. 
\subsection{Quadratic extensions}
\label{quadextsubsec}
Let $D \in F^\times$. If $D \notin F^{\times 2}$, then let $\sroot = \sqrt{D}$ be a square root of $D$ in $\bar F$, and  $L=F(\sroot)$. If $D \in F^{\times 2}$, then let $\sqrt{D}$ be a square root of $D$ in $F^\times$,  $L = F\times F$, and  $\sroot = (-\sqrt{D},\sqrt{D}) \in L$. In both cases $L$ is a two-dimensional $F$-algebra containing $F$, $L=F+F \sroot$, and $\sroot^2=D$. We will abuse terminology slightly, and refer to $L$ as the \emph{quadratic extension associated to $D$}. We define a map $\gamma :L \to L$ called \emph{Galois conjugation} by $\gamma(x+y\sroot) = x-y\sroot$. Then $\gamma(xy)=\gamma(x)\gamma(y)$ and $\gamma(x+y)=\gamma(x)+\gamma(y)$ for $x,y \in L$, and the fixed points of $\gamma$ are the elements of $F$. The group $\Gal(L/F)$ of $F$-automorphisms $\alpha:L \to L$ is $\{1,\gamma\}$. We define norm and trace functions
$\Norm_{L/F}:L \to F$ and $\Trace_{L/F}:L \to F$ by $\Norm_{L/F}(x) = x \gamma (x)$ and $\Trace_{L/F}(x) = x + \gamma (x)$ for $x \in L$. We let $\chi_{L/F}$ be the quadratic character associated to $L/F$, so that $\chi_{L/F}(x)=(x,D)_F$ for $x \in F^\times$. 
\subsection{\texorpdfstring{$2\times 2$}{} symmetric matrices}
\label{twobytwosubsec}
Let $a,b,c \in F$ and set
\begin{equation}
\label{Sdefeq}
S = \mat{a}{b/2}{b/2}{c}.
\end{equation}
Let $D=b^2/4-ac= -\det(S)$. Assume that $D \neq 0$. The \emph{discriminant} $\disc(S)$ of $S$ is the class in $F^\times/F^{\times2}$ determined by $D$. 
It is known that there exists $g \in \GL(2,F)$ such that ${}^t g S g $ is of the form  $\left[\begin{smallmatrix} a_1 & \\ & a_2 \end{smallmatrix} \right]$ and that $(a_1,a_2)_F$ is independent of the choice of $g$ such that ${}^t g S g $ is diagonal; we
define the \emph{Hasse invariant} $\varepsilon(S) \in \{ \pm 1\}$ by $\varepsilon(S) = (a_1,a_2)_F$. In fact, one has:
$$
\renewcommand{\arraystretch}{1.2}
\begin{array}{ccccc}
\toprule
S & g & {}^tgSg & \disc(S) & \varepsilon(S) \\
\midrule
a \neq 0, c \neq 0  & \left[ \begin{smallmatrix} 1 & \frac{-b}{2a} \\ & 1 \end{smallmatrix} \right] & \left[ \begin{smallmatrix} a & \\ & c-\frac{b^2}{4a} \end{smallmatrix} \right]  & (b^2/4-ac)F^{\times 2} & (a, b^2/4-ac)_F =(c,b^2/4-ac)_F\\
\cmidrule{1-5}
a \neq 0, c = 0  & \left[ \begin{smallmatrix} 1 & \frac{-b}{2a} \\ & 1 \end{smallmatrix} \right] & \left[ \begin{smallmatrix} a & \\ & -\frac{b^2}{4a} \end{smallmatrix} \right] & F^{\times 2}& 1 \\
\cmidrule{1-5}
a=0, c \neq 0 &  \left[ \begin{smallmatrix} &1\\ 1& -\frac{b}{2c} \end{smallmatrix} \right] & \left[\begin{smallmatrix} c & \\ &-\frac{b^2}{4c}\end{smallmatrix}\right]&F^{\times 2} & 1 \\
\cmidrule{1-5}
a=0, c=0 & \left[ \begin{smallmatrix} 1& 1 \\ 1&-1 \end{smallmatrix} \right] & \left[ \begin{smallmatrix} b & \\ & -b \end{smallmatrix} \right] &F^{\times 2}& 1 \\
\bottomrule
\end{array}
$$
If $\disc(S)=F^{\times 2}$, then we say that $S$ is \emph{split}. If $S$ is split, then for any $\lambda \in F^\times$ there exists $g \in \GL(2,F)$ such that ${}^tgSg=\left[\begin{smallmatrix} & \lambda \\ \lambda & \end{smallmatrix} \right]$.
\subsection{Another \texorpdfstring{$F$}{}-algebra}
\label{anotheralgebrasubsec}
Let $S$ be as in \eqref{Sdefeq} with $\disc(S) \neq 0$. Set $D=b^2/4-ac$. We define
\begin{equation}\label{AFdefeq}
 A= A_S= \{ \mat{x-yb/2}{-ya}{yc}{x+yb/2} : x,y \in F \}.
\end{equation}
Then, with respect to matrix addition and multiplication, $A$ is a two-dimensional $F$-algebra naturally containing $F$. One can verify that
\begin{equation}\label{AFdefeq2}
 A=\mat{}{1}{1}{}\{g\in \Mat_2(F):\:^tgSg=\det(g)S\}\mat{}{1}{1}{}.
\end{equation}
We define $T=T_S = A^\times$. Let $L$ be the quadratic extension associated to $D$; we also say that
$L$ is the \emph{quadratic extension associated to $S$}.
We define an isomorphism of $F$-algebras,
\begin{equation}
\label{ALisoeq}
 A \stackrel{\sim}{\longrightarrow} L, \qquad \mat{x-yb/2}{-ya}{yc}{x+yb/2} \longmapsto x + y \sroot. 
\end{equation}
The restriction of this isomorphism to $T$ is an isomorphism $T \stackrel{\sim}{\longrightarrow} L^\times$, and  we identify characters of $T$ and characters of $L^\times$ via this isomorphism. The automorphism of $A$ corresponding to the automorphism $\gamma$ of $L$ will also be denoted by $\gamma$. It has the effect of replacing $y$ by $-y$ in the matrix \eqref{AFdefeq}. We have $\det (t) = \Norm_{L/F}(t)$ for $t \in A$, where we identify elements of $A$ and $L$ via \eqref{ALisoeq}.
\begin{lemma}\label{TFGL2lemma}
 Let $T$ be as above, and assume that $L$ is a field. Let $B_2$ be the group of upper triangular matrices in $\GL(2,F)$. Then $TB_2=\GL(2,F)$.
\end{lemma}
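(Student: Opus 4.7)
\emph{Plan.} The identity $TB_2=\GL(2,F)$ is equivalent to the assertion that $T$ acts transitively on $\GL(2,F)/B_2$ by left multiplication. Identifying $\GL(2,F)/B_2$ with $\mathbb{P}^1(F)$ via $gB_2\mapsto g\cdot Fe_1$, where $e_1=\left[\begin{smallmatrix}1\\0\end{smallmatrix}\right]$ and $B_2$ is the stabilizer of the line $Fe_1$, transitivity amounts to: for every $v\in F^2\setminus\{0\}$ there exists $t\in T$ with $te_1\in Fv$. I will establish the stronger statement that $Te_1=F^2\setminus\{0\}$. This suffices because, given $g\in\GL(2,F)$, if $t\in T$ satisfies $te_1=ge_1$, then $t^{-1}g$ fixes $e_1$ and hence lies in the upper-triangular subgroup $\{\mat{1}{*}{}{*}\}\subseteq B_2$, so $g\in tB_2\subseteq TB_2$.

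The key input from the hypothesis that $L$ is a field is that it forces both $a\neq0$ and $c\neq0$. Indeed, since $L$ is a field, the discriminant $D=b^2/4-ac$ is a non-square in $F^\times$. If either $a=0$ or $c=0$, then $D=b^2/4$; this is non-zero (as $\disc(S)\neq0$) but is a square in $F^\times$, contradicting that $L$ is a field. So $ac\neq0$.

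With $c\neq 0$ in hand, I would parametrize $T$ via \eqref{AFdefeq}: a general element is $t=\mat{x-yb/2}{-ya}{yc}{x+yb/2}$ with $(x,y)\in F^2\setminus\{0\}$, since $A\cong L$ is a field (so $T=A\setminus\{0\}$). Then $te_1=\left[\begin{smallmatrix}x-yb/2\\yc\end{smallmatrix}\right]$. As $y$ varies over $F$ the second coordinate $yc$ hits every element of $F$, and for each fixed $y$ the first coordinate $x-yb/2$ hits every element of $F$ as $x$ varies over $F$. Furthermore $te_1=0$ forces $yc=0$ and $x-yb/2=0$, whence $y=0$ (using $c\neq 0$) and then $x=0$, which is excluded. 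Hence $Te_1=F^2\setminus\{0\}$, finishing the proof.

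The only mildly delicate point is the observation that $L$ being a field rules out $a=0$ or $c=0$; once that is noted, the surjectivity of $t\mapsto te_1$ onto $F^2\setminus\{0\}$ is immediate from inspecting the first column of $t$, so I do not anticipate a serious obstacle.
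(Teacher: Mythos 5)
Your proof is correct: the observation that $L$ being a field forces $a\neq0$ and $c\neq0$, and then the surjectivity of $t\mapsto te_1$ onto $F^2\setminus\{0\}$ (with nonzero elements of $A$ automatically invertible since $A\cong L$ is a field), gives exactly the kind of explicit-matrix verification the paper leaves to the reader, which cites only the explicit form of the elements of $T$ and the assumption $D\notin F^{\times2}$. So this is essentially the paper's (unwritten) argument, carried out in detail via transitivity on $\GL(2,F)/B_2\cong\mathbb{P}^1(F)$.
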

\begin{proof}
This can easily be verified using the explicit form of the matrices in $T$ and the assumption $D \notin F^{\times 2}$.
\end{proof}
\subsection{Bessel functionals}\label{besselsec}
Let $a,b$ and $c$ be in $F$. Define $S$ as in \eqref{Sdefeq}, and define a character $\theta=\theta_{a,b,c}=\theta_S$ of $N$ by
\begin{equation}\label{thetaSsetupeq}
 \theta(\begin{bmatrix} 1 &&y&z \\ &1&x&y \\ &&1& \\ &&&1 \end{bmatrix}) = \psi(ax+by+cz) = \psi (\trace(S\mat{}{1}{1}{}\mat{y}{z}{x}{y}))
\end{equation}
for $x,y,z \in F$. Every character of $N$ is of this form for uniquely determined $a,b,c$ in $F$, or, alternatively, for a uniquely determined symmetric $2\times2$ matrix $S$. We say that $\theta$ is \emph{non-degenerate} if $\det(S)\neq0$. Given $S$ with $\det(S)\neq0$, let $A$ be as in \eqref{AFdefeq}, and let $T=A^\times$. We embed $T$ into $\GSp(4,F)$ via the map defined by
\begin{equation}\label{Tembeddingeq}
 t\longmapsto \mat{t}{}{}{\det(t)t'},\qquad t\in T.
\end{equation}
The image of $T$ in $\GSp(4,F)$ will also be denoted by $T$; the usage should be clear from the context. For $t\in T$ we have $\lambda(t)=\det(t)=\Norm_{L/F}(t)$. It is easily verified that
$$
 \theta(tnt^{-1})=\theta(n)\qquad\text{for $n\in N$ and $t\in T$}.
$$
We refer to the semidirect product
\begin{equation}\label{Ddefeq}
 D=TN
\end{equation}
as the \emph{Bessel subgroup} defined by character $\theta$ (or, the matrix $S$). Given a character $\Lambda$ of $T$ (identified with a character of $L^\times$ as explained above), we can define a character $\Lambda\otimes\theta$ of $D$ by
$$
 (\Lambda\otimes\theta)(tn)=\Lambda(t)\theta(n)\qquad\text{for $n\in N$ and $t\in T$}.
$$
Every character of $D$ whose restriction to $N$ coincides with $\theta$ is of this form for an appropriate $\Lambda$.

Now let $(\pi,V)$ be an admissible representation of $\GSp(4,F)$. Let $\theta$ be a non-degenerate character of $N$, and let $\Lambda$ be a character of the associated group $T$. We say that $\pi$ admits a \emph{$(\Lambda,\theta)$-Bessel functional} if $\Hom_{D}(V,\C_{\Lambda\otimes\theta})\neq0$. A non-zero element $\beta$ of $\Hom_{D}(V,\C_{\Lambda\otimes\theta})$ is called a \emph{$(\Lambda,\theta)$-Bessel functional} for $\pi$. If such a $\beta$ exists, then $\pi$ admits a model consisting of functions $B:\:\GSp(4,F)\rightarrow\C$ with the Bessel transformation property
$$
 B(tng)=\Lambda(t)\theta(n)B(g)\qquad\text{for $t\in T$, $n\in N$ and $g\in\GSp(4,F)$},
$$
by associating to each $v$ in $V$ the function $B_v$ that is defined by $B_v(g)=\beta(\pi(g)v)$ for $g \in \GSp(4,F)$. We note that if $\pi$ admits a central character $\omega_\pi$ and a $(\Lambda,\theta)$-Bessel functional, then $\Lambda|_{F^\times}=\omega_\pi$. For a character $\sigma$ of $F^\times$, it is easy to verify that
\begin{equation}\label{besseltwistformula}
 \Hom_D(\pi,\C_{\Lambda\otimes\theta})=\Hom_D(\sigma\pi,\C_{(\sigma\circ\Norm_{L/F})\Lambda\otimes\theta}).
\end{equation}
If $\pi$ is irreducible, then, using that $\pi^\vee\cong\omega_\pi^{-1}\pi$ (Proposition 2.3 of \cite{Takloo-Bighash2000}), one can also verify that
\begin{equation}\label{besselcontragredientformula}
 \Hom_D(\pi,\C_{\Lambda\otimes\theta})\cong\Hom_D(\pi^\vee,\C_{(\Lambda\circ\gamma)^{-1}\otimes\theta}).
\end{equation}

The twisted Jacquet module of $V$ with respect to $N$ and $\theta$ is the quotient $V_{N,\theta}=V/V(N,\theta)$, where $V(N,\theta)$ is the subspace spanned by all vectors $\pi(n)v-\theta(n)v$ for $v$ in $V$ and $n$ in $N$. This Jacquet module carries an action of $T$ induced by the representation $\pi$. Evidently, there is a natural isomorphism
\begin{equation}\label{DTJacqueteq}
 \Hom_{D}(V,\C_{\Lambda\otimes\theta})\cong\Hom_{T}(V_{N,\theta},\C_\Lambda).
\end{equation}
Hence, when calculating the possible Bessel functionals on a representation $(\pi,V)$, a first step often consists in calculating the Jacquet modules $V_{N,\theta}$. We will use this method to calculate the possible Bessel functionals for most of the non-supercuspidal, irreducible, admissible representations of $\GSp(4,F)$. The few representations that are inaccessible with this method will be treated using the theta correspondence.

In this paper we do not assume that $(\Lambda,\theta)$-Bessel functionals are unique up to scalars. See Sect.\ \ref{uniquenesssec} for some remarks on uniqueness.
\subsection{Action on Bessel functionals}
\label{actionsubsec}
There is an action of $M$, defined in \eqref{Mdefeq}, on the set of Bessel functionals. Let $(\pi,V)$ be an irreducible, admissible representation of $\GSp(4,F)$, and let $\beta: V \to \C$ be a $(\Lambda, \theta)$-Bessel functional for $\pi$. Let $a,b,c\in F$ be
such that \eqref{thetaSsetupeq} holds. Let $m \in M$, with
$$
m = \begin{bmatrix} g & \\ & \lambda g' \end{bmatrix},
$$
where $\lambda \in F^\times$ and $g \in \GL(2,F)$. Define $m \cdot \beta : V \to \C$ by $(m\cdot \beta)(v) = \beta (\pi (m^{-1})v)$ for $v\in V$. Calculations show that $m\cdot \beta$ is a $(\Lambda',\theta')$-Bessel functional with $\theta'$ defined by 
$$
\theta'(\begin{bmatrix} 1 &&y&z \\ &1&x&y \\ &&1& \\ &&&1 \end{bmatrix}) = \psi(a'x+b'y+c'z) = \psi (\trace(S'\mat{}{1}{1}{}\mat{y}{z}{x}{y})), \quad x,y,z \in F, 
$$
where
$$
S' = \mat{a'}{b'/2}{b'/2}{c'} = \lambda\,^t h S h\quad \text{with}\quad  h=\mat{}{1}{1}{} g^{-1} \mat{}{1}{1}{}.
$$
Since $\disc(S') = \disc (S)$, the  quadratic extension $L'$ associated to $S'$ is the same as the quadratic extension $L$ associated to  $S$. 
There is an isomorphism of $F$-algebras 
$$
A'=A_{S'} \stackrel{\sim}{\longrightarrow} A= A_S, \qquad a \mapsto  g^{-1} a  g.
$$
Let $T'  = A'{}^\times$. Finally, $\Lambda':T' \to \C^\times$ is given by $\Lambda'(t') = \Lambda(g^{-1}t'g)$ for $t' \in T'$. 

For example, assume that $\beta'$ is a \emph{split} Bessel functional, i.e., a Bessel functional for which the discriminant of the associated symmetric matrix $S'$ is the class $F^{\times2}$. By Sect.~\ref{twobytwosubsec} there exists $m$ as above such that $\beta'=m\cdot \beta$, where the symmetric matrix $S$ associated to the $(\Lambda,\theta)$-Bessel functional $\beta$ is
\begin{equation}\label{splitSeq}
 S = \mat{}{1/2}{1/2}{},
\end{equation}
and
\begin{equation}\label{splitthetaeq}
 \theta(\begin{bmatrix} 1 &&y&z \\ &1&x&y \\ &&1& \\ &&&1 \end{bmatrix}) = \psi(y).
\end{equation}
In this case
\begin{equation}\label{splitthetaTeq}
 T=T_S=\{\begin{bmatrix}a\\&b\\&&a\\&&&b\end{bmatrix}:\:a,b\in F^\times\}.
\end{equation}
Sometimes when working with split Bessel functionals it is more convenient to work with the conjugate group
\begin{equation}\label{splitthetaconjNeq}
 N_{\mathrm{alt}}=s_2^{-1}Ns_2=\begin{bmatrix}1&*&&*\\&1\\&*&1&*\\&&&1\end{bmatrix}
\end{equation}
and the conjugate character
\begin{equation}\label{splitthetaconjeq}
 \theta_{\mathrm{alt}}(\begin{bmatrix}1&-y&&z\\&1&&\\&x&1&y\\&&&1\end{bmatrix})=\psi(y).
\end{equation}
In this case the stabilizer of $\theta_{\mathrm{alt}}$ is
\begin{equation}\label{splitthetaconjTeq}
 T_{\mathrm{alt}}=\{\begin{bmatrix}a\\&a\\&&b\\&&&b\end{bmatrix}:\:a,b\in F^\times\}.
\end{equation}
\subsection{Galois conjugation of Bessel functionals} \label{galoissubsec}
The action of $M$  can be used to define the Galois conjugate of a Bessel functional. Let $S$ be as in \eqref{Sdefeq}, and let $A=A_S$ and $T=T_S$. Define
\begin{equation}
\label{hgammaeq}
h_\gamma=
\left\{ 
\begin{array}{ll}
\left[\begin{matrix} 1& b/a\\ & -1 \end{matrix} \right] & \text{if $a \neq 0$}, \\
\left[\begin{matrix} 1& \\ -b/c& -1 \end{matrix} \right] & \text{if $a = 0$ and $c \neq 0$}, \\
\left[\begin{matrix} &1 \\ 1 &  \end{matrix} \right] & \text{if $a=c=0$}.
\end{array}
\right.
\end{equation}
Then $h_\gamma \in \GL(2,F)$, $h_\gamma^2=1$, $S={}^t h_\gamma S h_\gamma $ and $\det (h_\gamma) = -1$.
Set 
$$
g_\gamma = \mat{}{1}{1}{} h_\gamma^{-1} \mat{}{1}{1}{}= \mat{}{1}{1}{} h_\gamma \mat{}{1}{1}{} \in \GL(2,F), \quad m_\gamma = \begin{bmatrix} g_\gamma & \\ & g_\gamma' \end{bmatrix} \in M.
$$
We have $g_\gamma Tg_\gamma^{-1} = T$, and the diagrams
$$
\begin{CD}
A @>\sim>> L \\
@V\text{conjugation by $g_\gamma$}VV @VV\gamma V\\
A @>\sim>> L
\end{CD}
\qquad\qquad\qquad\qquad\qquad\qquad
\begin{CD}
T @>\sim>> L^\times \\
@V\text{conjugation by $g_\gamma$}VV @VV\gamma V\\
T @>\sim>> L^\times
\end{CD}
$$
commute. Let $(\pi,V)$ be an irreducible, admissible representation of $\GSp(4,F)$, and let $\beta$ be a $(\Lambda,\theta)$-Bessel functional for $\pi$. We refer to $m_\gamma \cdot\beta$ as the \emph{Galois conjugate} of $\beta$. We note that $m_\gamma\cdot \beta$
is a $(\Lambda\circ \gamma, \theta)$-Bessel functional for $\pi$. Hence,
\begin{equation}\label{besselgaloiseq}
 \Hom_D(\pi,\C_{\Lambda\otimes\theta})\cong\Hom_D(\pi,\C_{(\Lambda\circ\gamma)\otimes\theta}).
\end{equation}
In combination with \eqref{besselcontragredientformula}, we get
\begin{equation}\label{besselgaloiseq2}
 \Hom_D(\pi,\C_{\Lambda\otimes\theta})\cong\Hom_D(\pi^\vee,\C_{\Lambda^{-1}\otimes\theta}).
\end{equation}
\subsection{Waldspurger functionals}
\label{waldfuncsubsec}
Our analysis of Bessel functionals will often involve a similar type of functional on representations of $\GL(2,F)$. Let $\theta$ and $S$ be as in \eqref{thetaSsetupeq}, and let $T\cong L^\times$ be the associated subgroup of $\GL(2,F)$. Let $\Lambda$ be a character of $T$. Let $(\pi,V)$ be an irreducible, admissible representation of $\GL(2,F)$. A \emph{$(\Lambda,\theta)$-Waldspurger functional} on $\pi$ is a non-zero linear map $\beta:\:V\rightarrow\C$ such that
$$
 \beta(\pi(g)v)=\Lambda(g)\beta(v)\qquad\text{for all }v\in V\text{ and }g\in T.
$$
For trivial $\Lambda$, such functionals were the subject of Proposition 9 of \cite{Wald1980} and Proposition 8 of \cite{Wald1985}. For general $\Lambda$ see \cite{Tu1983}, \cite{Saito1993} and Lemme 8 of \cite{Wald1985}. The $(\Lambda,\theta)$-Waldspurger functionals are the non-zero elements of the space ${\rm Hom}_T(\pi,\C_\Lambda)$, and it is known that this space is at most one-dimensional. An obvious necessary condition for ${\rm Hom}_T(\pi,\C_\Lambda)\neq0$ is that $\Lambda\big|_{F^\times}$ equals $\omega_\pi$, the central character of $\pi$. By Sect.\ \ref{galoissubsec}, Galois conjugation on $T$ is given by conjugation by an element of $\GL(2,F)$. Hence,
\begin{equation}\label{waldspurgerpropeq1}
 \Hom_T(\pi,\C_\Lambda)\cong\Hom_T(\pi,\C_{\Lambda\circ\gamma}).
\end{equation}
Using $\pi^\vee\cong\omega_\pi^{-1}\pi$, one verifies that
\begin{equation}\label{waldspurgerpropeq3}
 \Hom_T(\pi,\C_\Lambda)\cong\Hom_T(\pi^\vee,\C_{(\Lambda\circ\gamma)^{-1}}).
\end{equation}
In combination with \eqref{waldspurgerpropeq1}, we also have
\begin{equation}\label{waldspurgerpropeq4}
 \Hom_T(\pi,\C_\Lambda)\cong\Hom_T(\pi^\vee,\C_{\Lambda^{-1}}).
\end{equation}
Let $\pi^{\mathrm{JL}}$ denote the Jacquet-Langlands lifting of $\pi$ in the case that $\pi$ is a discrete series representation, and $0$ otherwise. Then, by the discussion on p.~1297 of \cite{Tu1983},
\begin{equation}\label{waldspurgerpropeq2}
 \dim\Hom_T(\pi,\C_\Lambda)+\dim\Hom_T(\pi^{\mathrm{JL}},\C_\Lambda)=1.
\end{equation}
It is easy to see that, for any character $\sigma$ of $F^\times$,
\begin{equation}\label{Waldspurgertwistingeq}
 {\rm Hom}_T(\pi,\C_\Lambda)={\rm Hom}_T(\sigma\pi,\C_{(\sigma\circ \Norm_{L/F})\Lambda}).
\end{equation}
For $\Lambda$ such that $\Lambda\big|_{F^\times}=\sigma^2$, it is known that
\begin{equation}\label{StGL2Waldspurgereq2}
 \dim({\rm Hom}_T(\sigma\St_{\GL(2)},\C_\Lambda))=\left\{\begin{array}{l@{\qquad}l}
 0&\text{ if $L$ is a field and $\Lambda=\sigma\circ \Norm_{L/F}$},\\
 1&\text{ otherwise};\end{array}\right.
\end{equation}
see Proposition 1.7 and Theorem 2.4 of \cite{Tu1983}. As in the case of Bessel functionals, we call a Waldspurger functional \emph{split} if the discriminant of the associated matrix $S$ lies in $F^{\times2}$. By Lemme 8 of \cite{Wald1985}, an irreducible, admissible, infinite-dimensional representation of $\GL(2,F)$ admits a split $(\Lambda,\theta)$-Waldspurger functional with respect to any character $\Lambda$ of $T$ that satisfies $\Lambda\big|_{F^\times}=\omega_\pi$ (this can also be proved in a way analogous to the proof of Proposition \ref{GSp4genericprop} below, utilizing the standard zeta integrals for $\GL(2)$).
\section{Split Bessel functionals}
Irreducible, admissible, generic representations of $\GSp(4,F)$ admit a theory of zeta integrals, and every zeta integral gives rise to a split Bessel functional. As a consequence, generic representations admit \emph{all} possible split Bessel functionals; see Proposition \ref{GSp4genericprop} below for a precise formulation.

To put the theory of zeta integrals on a solid foundation, we will use $P_3$-theory. The group $P_3$, defined below, plays a role in the representation theory of $\GSp(4)$ similar to the ``mirabolic'' subgroup in the theory for $\GL(n)$. Some of what follows is a generalization of Sects.\ 2.5 and 2.6 of \cite{NF}, where $P_3$-theory was developed under the assumption of trivial central character. The general case requires only minimal modifications.

While every generic representation admits split Bessel functionals, we will see that the converse is not true. $P_3$-theory can also be used to identify the non-generic representations that admit a split Bessel functional. This is explained in Sect.\ \ref{splitbesselnongenericsec} below.
\subsection{The group \texorpdfstring{$P_3$}{} and its representations}
Let $P_3$ be the subgroup of $\GL(3,F)$ defined as the intersection
$$
 P_3=\GL(3,F)\cap\begin{bmatrix} *&*&*\\ *&*&*\\&&1\end{bmatrix}.
$$
We recall some facts about this group, following \cite{BeZe1976}. Let
$$
 U_3=P_3\cap\begin{bmatrix}1&*&*\\&1&*\\&&1\end{bmatrix},\qquad N_3=P_3\cap\begin{bmatrix}1&&*\\&1&*\\&&1\end{bmatrix}.
$$
We define characters $\Theta$ and $\Theta'$ of $U_3$ by
$$
 \Theta(\begin{bmatrix}1&u_{12}&*\\&1&u_{23}\\&&1\end{bmatrix})=\psi(u_{12}+u_{23}),\qquad
 \Theta'(\begin{bmatrix}1&u_{12}&*\\&1&u_{23}\\&&1\end{bmatrix})=\psi(u_{23}).
$$
If $(\pi,V)$ is a smooth representation of $P_3$, we may consider the twisted Jacquet modules
$$
 V_{U_3,\Theta}=V/V(U_3,\Theta),\qquad V_{U_3,\Theta'}=V/V(U_3,\Theta')
$$
where $V(U_3,\Theta)$ (resp.\ $V(U_3,\Theta')$) is spanned by all elements of the form $\pi(u)v-\Theta(u)v$ (resp.\ $\pi(u)v-\Theta'(u)v$) for $v$ in $V$ and $u$ in $U_3$. Note that $V_{U_3,\Theta'}$ carries an action of the subgroup
$$
 \begin{bmatrix} *&&\\&1\\&&1\end{bmatrix}\cong F^\times
$$
of $P_3$. We may also consider the Jacquet module $V_{N_3}=V/V(N_3)$, where $V(N_3)$ is the space spanned by all vectors of the form $\pi(u)v-v$ for $v$ in $V$ and $u$ in $N_3$. Note that $V_{N_3}$ carries an action of the subgroup
$$
 \begin{bmatrix} *&*&\\ *&*\\&&1\end{bmatrix}\cong\GL(2,F)
$$
of $P_3$.

Next we define three classes of smooth representations of $P_3$, associated with the groups $\GL(0)$, $\GL(1)$ and $\GL(2)$. Let
\begin{equation}\label{tauP3GL0eq}
 \tau^{P_3}_{\GL(0)}(1):=\cInd^{P_3}_{U_3}(\Theta),
\end{equation}
where $\cInd$ denotes compact induction. Then $\tau^{P_3}_{\GL(0)}(1)$ is a smooth, irreducible representation of $P_3$. Next, let $\chi$ be a smooth representation of $\GL(1,F)\cong F^\times$. Define a representation $\chi\otimes\Theta'$ of the subgroup
$$
 \begin{bmatrix}*&*&*\\&1&*\\&&1\end{bmatrix}
$$
of $P_3$ by
$$
 (\chi\otimes\Theta')(\begin{bmatrix}a&*&*\\&1&y\\&&1\end{bmatrix})=\chi(a)\psi(y).
$$
Then
$$
 \tau^{P_3}_{\GL(1)}(\chi):=\cInd^{P_3}_{\left[\begin{smallmatrix} *&*&*\\&1&*\\&&1\end{smallmatrix}\right]}(\chi\otimes\Theta')
$$
is a smooth representation of $P_3$. It is irreducible if and only if $\chi$ is one-dimensional. Finally, let $\rho$ be a smooth representation of $\GL(2,F)$. We define the representation $\tau_{\GL(2)}^{P_3}(\rho)$ of $P_3$ to have the same space as $\rho$, and action given by
\begin{equation}\label{tauP3GL2eq}
 \tau_{\GL(2)}^{P_3}(\rho)(\begin{bmatrix}a&b&*\\c&d&*\\&&1\end{bmatrix})=\rho(\mat{a}{b}{c}{d}).
\end{equation}
Evidently, $\tau_{\GL(2)}^{P_3}(\rho)$ is irreducible if and only if $\rho$ is irreducible.

\begin{proposition}\label{P3representationsprop}
 Let notations be as above.
 \begin{enumerate}
  \item Every irreducible, smooth representation of $P_3$ is isomorphic to exactly one of
   $$
    \tau^{P_3}_{\GL(0)}(1),\qquad
    \tau^{P_3}_{\GL(1)}(\chi),\qquad
    \tau_{\GL(2)}^{P_3}(\rho),
   $$
   where $\chi$ is a character of $F^\times$ and $\rho$ is an irreducible, admissible representation of $\GL(2,F)$. Moreover, the equivalence classes of $\chi$ and $\rho$ are uniquely determined.
 \item Let $(\pi,V)$ be a smooth representation of $P_3$ of finite length. Then there exists a chain of $P_3$ subspaces
  $$
   0\subset V_2\subset V_1\subset V_0=V
  $$
  with the following properties,
  \begin{align*}
   V_2&\cong \dim(V_{U_3,\Theta})\cdot\tau^{P_3}_{\GL(0)}(1),\\
   V_1/V_2&\cong\tau^{P_3}_{\GL(1)}(V_{U_3,\Theta'}),\\
   V_0/V_1&\cong\tau^{P_3}_{\GL(2)}(V_{N_3}).
  \end{align*}
 \end{enumerate}
\end{proposition}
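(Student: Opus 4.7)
The plan is to carry out the standard Bernstein--Zelevinsky analysis of the mirabolic subgroup (\cite{BeZe1976}), specialized to $n=3$: part (i) is the resulting classification of irreducibles, and part (ii) is the ``derivative'' filtration. The crucial structural input in both parts is that $N_3$ is an abelian normal subgroup of $P_3$ with $P_3/N_3\cong\GL(2,F)$.

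For part (i), I would identify $\widehat{N_3}$ with $F^2$ via $\psi$, so that the induced action of $\GL(2,F)$ on $F^2$ is the standard representation, with exactly two orbits: the origin and its complement. Let $(\pi,V)$ be irreducible. If $N_3$ acts trivially on $V$, then $\pi$ factors through $P_3/N_3$ and is $\tau^{P_3}_{\GL(2)}(\rho)$ for a unique irreducible smooth $\rho$. Otherwise, Mackey theory gives $\pi\cong\cInd_{S_0}^{P_3}\sigma$, where $S_0=\{\left[\begin{smallmatrix}a&b&*\\0&1&*\\0&0&1\end{smallmatrix}\right]\}$ is the stabilizer of the representative character $\chi_0(I+xe_{13}+ye_{23})=\psi(y)$, and $\sigma$ is an irreducible smooth $S_0$-module on which $N_3$ acts by $\chi_0$. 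A second orbit analysis, now for extensions of $\chi_0$ to characters of the Heisenberg group $U_3$ under the $F^\times\cong S_0/U_3$ action, splits the remaining case in two: when the extension is $\Theta$ the stabilizer collapses to $U_3$ and one obtains the lone irreducible $\cInd_{U_3}^{P_3}\Theta=\tau^{P_3}_{\GL(0)}(1)$; when it is $\Theta'$ the stabilizer remains $S_0$, and the constraint forces $\sigma=\chi\otimes\Theta'$ for some character $\chi$ of $F^\times\cong S_0/U_3$, inducing to $\tau^{P_3}_{\GL(1)}(\chi)$. Uniqueness of $\rho$ and $\chi$ will follow from the Jacquet-module computations needed for part (ii).

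For part (ii), I would construct the filtration using the three Jacquet functors. The natural surjection $V\twoheadrightarrow V_{N_3}$ is $P_3$-equivariant once $V_{N_3}$ is given its $\tau^{P_3}_{\GL(2)}$-structure, so taking $V_1$ to be its kernel yields $V_0/V_1\cong\tau^{P_3}_{\GL(2)}(V_{N_3})$. Next, observe that $\Theta'$ is $S_0$-invariant, so the projection $V\to V_{U_3,\Theta'}$ is $S_0$-equivariant, and Frobenius reciprocity for compact induction yields a canonical $P_3$-homomorphism $V\to\tau^{P_3}_{\GL(1)}(V_{U_3,\Theta'})$; since every $\tau^{P_3}_{\GL(2)}$-type subquotient has vanishing $(U_3,\Theta')$-Jacquet module, this map factors through $V_1$, and $V_2$ is defined as its kernel. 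The isomorphism $V_2\cong\dim(V_{U_3,\Theta})\cdot\tau^{P_3}_{\GL(0)}(1)$ then follows from Frobenius reciprocity for $\tau^{P_3}_{\GL(0)}(1)=\cInd_{U_3}^{P_3}\Theta$ together with Schur's lemma, combined with taking the $(U_3,\Theta)$-Jacquet module of the whole filtration and noting that the other two quotients contribute zero.

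The principal technical obstacle is verifying the mutual orthogonality needed to glue the pieces together: concretely, $(\tau^{P_3}_{\GL(0)}(1))_{N_3}=(\tau^{P_3}_{\GL(1)}(\chi))_{N_3}=0$, $(\tau^{P_3}_{\GL(0)}(1))_{U_3,\Theta'}=(\tau^{P_3}_{\GL(2)}(\rho))_{U_3,\Theta'}=0$, and $(\tau^{P_3}_{\GL(1)}(\chi))_{U_3,\Theta}=(\tau^{P_3}_{\GL(2)}(\rho))_{U_3,\Theta}=0$. These vanishing statements reduce to Mackey decompositions for the compactly induced models, but they are what force the filtration to have exactly the three claimed subquotients; once they are in place, finite length guarantees termination, and the three isomorphisms drop out.
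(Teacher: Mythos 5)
The paper's own ``proof'' of this proposition is the citation to 5.1--5.15 of \cite{BeZe1976}, and your plan is an outline of exactly that Bernstein--Zelevinsky analysis of the mirabolic group, so the route is the intended one. However, one step, as you justify it, would fail, and it is precisely the step that carries the real content. There is no ``Frobenius reciprocity for compact induction'' from the closed but non-open subgroup $\left[\begin{smallmatrix}*&*&*\\&1&*\\&&1\end{smallmatrix}\right]$: $\cInd$ from such a subgroup is not left adjoint to restriction. The genuine Frobenius map attached to the equivariant projection $V\to V_{U_3,\Theta'}\otimes\Theta'$ lands only in the full smooth induction $\Ind$, and there is no formal reason for the resulting functions to have compact support modulo the inducing subgroup. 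Indeed, a $P_3$-surjection from $V$ onto $\tau^{P_3}_{\GL(1)}(V_{U_3,\Theta'})$ need not exist: in the filtration this object is the subquotient $V_1/V_2$, not a quotient of $V$. The $\GL(2)$ analogue makes this concrete: for an irreducible principal series restricted to the mirabolic subgroup of $\GL(2,F)$, the Kirillov model gives $\cInd_{U}\psi$ as a \emph{sub}representation with quotient $V_N$, and a surjection of $V$ onto $\cInd_U\psi$ would split this sequence and produce nonzero $N$-invariant vectors in $V$, which do not exist.

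What replaces this, and what is the actual crux of 5.1--5.15, is the compact-support theorem: the subspace $V(N_3)$ (your $V_1$) is canonically isomorphic to the representation compactly induced from the stabilizer of the nontrivial character of $N_3$, with inducing data the twisted Jacquet module; in the $\Phi^\pm,\Psi^\pm$ formalism, $V_1\cong\Phi^+\Phi^-(V)$, and the exact sequence $0\to\Phi^+\Phi^-\to\mathrm{id}\to\Psi^+\Psi^-\to0$ applied to $\Phi^-(V)$, together with exactness of $\Phi^+$, then yields the surjection $V_1\to\tau^{P_3}_{\GL(1)}(V_{U_3,\Theta'})$ and identifies its kernel with $\dim(V_{U_3,\Theta})\cdot\tau^{P_3}_{\GL(0)}(1)$. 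The same issue sits inside your part (i): for irreducible $\pi$ with nontrivial $N_3$-action, ``Mackey theory'' does not by itself give $\pi\cong\cInd_{S_0}^{P_3}\sigma$ for $l$-groups; this is again the support argument, using that the nontrivial orbit of characters is open, and the irreducibility of $\cInd_{U_3}^{P_3}\Theta$ and of $\tau^{P_3}_{\GL(1)}(\chi)$ is the nontrivial statement that $\Phi^+$ preserves irreducibility. By contrast, the vanishing statements you single out as the principal technical obstacle are routine Jacquet-module computations. So the skeleton is right and matches the cited source, but the load-bearing steps must be taken from the $\Phi^\pm/\Psi^\pm$ adjunction and support arguments of \cite{BeZe1976} rather than from formal reciprocity.
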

\begin{proof}
See 5.1 -- 5.15 of \cite{BeZe1976}.
\end{proof}
\subsection{\texorpdfstring{$P_3$}{}-theory for arbitrary central character}
It is easy to verify that any element of the Klingen parabolic subgroup $Q$ can be written in a unique way as
\begin{equation}\label{Qelementeq}
 \begin{bmatrix}ad-bc\\&a&b\\&c&d\\&&&1\end{bmatrix}\begin{bmatrix}1&\,-y&x&z\\&1&&x\\&&1&y\\&&&1\end{bmatrix}\begin{bmatrix}u\\&u\\&&u\\&&&u\end{bmatrix}
\end{equation}
with $\mat{a}{b}{c}{d}\in\GL(2,F)$, $x,y,z\in F$, and $u\in F^\times$. Let $Z^J$ be the center of the Jacobi group, consisting of all elements of $\GSp(4)$ of the form
\begin{equation}\label{ZJdefeq}
 \begin{bmatrix}1&&&*\\&1\\&&1\\&&&1\end{bmatrix}.
\end{equation}
Evidently, $Z^J$ is a normal subgroup of $Q$ with $Z^J\cong F$. 
Let $(\pi,V)$ be a smooth representation of $\GSp(4,F)$. Let $V(Z^J)$ be the span of all vectors $v-\pi(z)v$, where $v$ runs through $V$ and $z$ runs through $Z^J$. Then $V(Z^J)$ is preserved by the action of $Q$. Hence $Q$ acts on the quotient
$$
 V_{Z^J}:=V/V(Z^J).
$$
Let $\bar Q$ be the subgroup of $Q$ consisting of all elements of the form \eqref{Qelementeq} with $u=1$, i.e.,
$$
 \bar Q=\GSp(4)\cap\begin{bmatrix} *&*&*&*\\&*&*&*\\&*&*&*\\&&&1\end{bmatrix}.
$$
The map
\begin{equation}\label{QP3mapeq}
 i(\begin{bmatrix}ad-bc\\&a&b\\&c&d\\&&&1\end{bmatrix}\begin{bmatrix}1&\,-y&x&z\\&1&&x\\&&1&y\\&&&1\end{bmatrix})=\begin{bmatrix}a&b\\c&d\\&&1\end{bmatrix}\begin{bmatrix}1&&x\\&1&y\\&&1\end{bmatrix}
\end{equation}
establishes an isomorphism $\bar Q/Z^J\cong P_3$.

Recall the character $\psi_{c_1,c_2}$ of $U$ defined in \eqref{psic1c2eq}. Note that $U$ maps onto $U_3$ under the map \eqref{QP3mapeq}, and that the diagrams
$$
 \xymatrix{U\ar[r]^i\ar[dr]_{\psi_{-1,1}}&U_3\ar[d]^\Theta\\
           &\C^\times}
 \qquad\qquad\qquad
 \xymatrix{U\ar[r]^i\ar[dr]_{\psi_{-1,0}}&U_3\ar[d]^{\Theta'}\\
           &\C^\times}
$$
are commutative. The radical $N_Q$ (see \eqref{NQdefeq}) maps onto $N_3$ under the map \eqref{QP3mapeq}. The following theorem is exactly like Theorem 2.5.3 of \cite{NF}, except that the hypothesis of trivial central character is removed.

\begin{theorem} \label{finitelength}
 Let $(\pi,V)$ be an irreducible, admissible representation of $\GSp(4,F)$. The quotient $V_{Z^J} =V/V(Z^J)$ is a smooth representation of $\bar Q/Z^J$, and hence, via the map \eqref{QP3mapeq}, defines a smooth representation of $P_3$. As a representation of $P_3$, $V_{Z^J}$ has finite length. Hence, $V_{Z^J}$ has a finite filtration by $P_3$ subspaces such that the successive quotients are irreducible and of the form $\tau_{\GL(0)}^{P_3} (1)$, $\tau_{\GL(1)}^{P_3} (\chi)$ or $\tau_{\GL(2)}^{P_3} ( \rho)$ for some character $\chi$ of $F^\times$, or  some irreducible, admissible representation $\rho$ of $\GL(2,F)$. Moreover, the following statements hold:
 \begin{enumerate}
  \item There exists a chain of $P_3$ subspaces
   $$\label{V0V1V2def}
    0 \subset V_2 \subset V_1 \subset V_0 = V_{Z^J}
   $$
   such that
   \begin{align*}
    V_2 & \cong \dim \Hom_{U} (V, \psi_{-1,1} ) \cdot \tau_{\GL(0)}^{P_3} (1), \\
    V_1 / V_2 & \cong \tau_{\GL(1)}^{P_3} ( V_{U, \psi_{-1,0}} ), \\
    V_0/V_1  & \cong \tau_{\GL (2)}^{P_3} (V_{N_Q}).
   \end{align*}
   Here, the vector space $V_{U,\psi_{-1,0}}$ admits a smooth action of $\GL(1,F) \cong F^\times$ induced by the operators
   $$
    \pi ( \begin{bmatrix} a &&& \\ &a&& \\ &&1& \\ &&& 1 \end{bmatrix} ),\quad a\in F^\times,
   $$
   and $V_{N_Q}$ admits a smooth action of $\GL(2,F)$ induced by the operators
   $$
    \pi ( \begin{bmatrix} \det g && \\ & g& \\ && 1 \end{bmatrix} ), \quad g \in \GL(2,F).
   $$
  \item The representation  $\pi$ is generic if and only if $V_2 \neq 0$, and if $\pi$ is generic, then $V_2\cong \tau_{\GL(0)}^{P_3}(1)$.
  \item We have $V_2= V_{Z^J}$ if and only if $\pi$ is supercuspidal. If $\pi$ is supercuspidal and generic, then  $V_{Z^J}= V_2 \cong \tau_{\GL(0)}^{P_3}(1)$ is non-zero and irreducible. If $\pi$ is supercuspidal and non-generic, then  $V_{Z^J}=V_2 =0$.
\end{enumerate}
\end{theorem}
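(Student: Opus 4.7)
The plan is to apply Proposition \ref{P3representationsprop}(ii) to $V_{Z^J}$, viewed as a smooth representation of $P_3$ via the isomorphism $i:\bar Q/Z^J\stackrel{\sim}{\to}P_3$ of \eqref{QP3mapeq}. First I would check that $Z^J$ is normal in $\bar Q$ (in fact central in $Q$), so that $V(Z^J)$ is $\bar Q$-stable and $V_{Z^J}$ inherits a well-defined smooth $P_3$-action. Next I would verify that this $P_3$-representation is of finite length, using admissibility of $\pi$ together with standard finite-length results for Jacquet modules along proper parabolics of $\GSp(4,F)$ and finite-dimensionality for twisted Jacquet modules with respect to non-degenerate characters.

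Granted these preliminaries, Proposition \ref{P3representationsprop}(ii) supplies the chain $0\subset V_2\subset V_1\subset V_0=V_{Z^J}$ with the three successive quotients expressed in terms of $(V_{Z^J})_{U_3,\Theta}$, $(V_{Z^J})_{U_3,\Theta'}$, and $(V_{Z^J})_{N_3}$. To match the formulation in part (i), I would establish the identifications
$$
(V_{Z^J})_{U_3,\Theta}\cong V_{U,\psi_{-1,1}},\quad (V_{Z^J})_{U_3,\Theta'}\cong V_{U,\psi_{-1,0}},\quad (V_{Z^J})_{N_3}\cong V_{N_Q},
$$
which follow from the commutative diagrams stated in the excerpt (for the first two) and the fact that $N_Q$ maps onto $N_3$ under $i$ (for the third), noting that $Z^J$ lies in the kernel of each relevant character. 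The compatibility of the $F^\times$- and $\GL(2,F)$-actions with the natural actions on the $P_3$-Jacquet modules can be read off from the explicit decomposition \eqref{Qelementeq} and the formula for $i$. For part (ii), $V_2\neq 0$ if and only if $V_{U,\psi_{-1,1}}\neq 0$, which is equivalent to $\Hom_U(V,\psi_{-1,1})\neq 0$, and hence to genericity of $\pi$ (by the independence of the definition on the non-degenerate character). Rodier's uniqueness of Whittaker models forces $\dim V_{U,\psi_{-1,1}}\le 1$, so when $\pi$ is generic we have $V_2\cong\tau_{\GL(0)}^{P_3}(1)$.

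For part (iii), recall that an irreducible, admissible representation is supercuspidal if and only if both $V_N=0$ and $V_{N_Q}=0$. If $\pi$ is supercuspidal, then $V_{N_Q}=0$ forces $V_0=V_1$, and $V_N=0$ forces $V_{U,\psi_{-1,0}}=0$ (the latter being a further twisted quotient of $V_N$, since $\psi_{-1,0}$ restricts trivially to $N$), so $V_1=V_2$ and hence $V_{Z^J}=V_2$. Conversely, $V_{Z^J}=V_2$ immediately yields $V_{N_Q}=0$; to deduce $V_N=0$, I would use $V_N=(V_{Z^J})_{N/Z^J}$ with trivial character, which vanishes trivially in the non-generic case $V_{Z^J}=0$, and in the generic case reduces to showing that the $N/Z^J$-coinvariants of $\cInd^{P_3}_{U_3}(\Theta)$ vanish. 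The main obstacle is this last computation: the image of $N/Z^J$ under $i$ sits inside $U_3$, and $\Theta$ restricts to it non-trivially, so a Mackey-type analysis in the style of \cite{BeZe1976} of $\cInd^{P_3}_{U_3}(\Theta)$ restricted to this subgroup should force the coinvariants to vanish. This vanishing, together with the bookkeeping of $F^\times$- and $\GL(2,F)$-actions in the second paragraph, constitutes the main technical content of the proof.
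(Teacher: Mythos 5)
Your construction of the filtration, the identifications $(V_{Z^J})_{U_3,\Theta}\cong V_{U,\psi_{-1,1}}$, $(V_{Z^J})_{U_3,\Theta'}\cong V_{U,\psi_{-1,0}}$, $(V_{Z^J})_{N_3}\cong V_{N_Q}$, and parts i)--ii) follow the same route as the paper, which simply invokes Proposition \ref{P3representationsprop} and refers to Theorem 2.5.3 of \cite{NF} for details. The one genuine gap is the converse direction in iii). You reduce it to the claim that the untwisted coinvariants of $V_2\cong\cInd_{U_3}^{P_3}(\Theta)$ along $H:=i(N/Z^J)=\left[\begin{smallmatrix}1&*&*\\&1&\\&&1\end{smallmatrix}\right]$ vanish, and you only assert that a ``Mackey-type analysis should force'' this. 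That step is not routine: since $H\subset U_3$, what matters is the right action of $H$ on $U_3\backslash P_3\cong(F^2\setminus\{0\})\times F^\times$ (bottom row and determinant), which has infinitely many orbits and a non-discrete orbit space, so there is no finite double-coset filtration to quote. The vanishing is in fact true, but a proof needs, for instance, the open--closed stratification of $U_3\backslash P_3$ under $H$ together with a support argument for $H$-invariant functionals: on the closed stratum $H$ acts pointwise through the non-trivial characters $\psi(\delta x/d^2)$, and on the open stratum the $y$-coordinate of $H$ acts through $\psi(cy)$ with $c\neq0$, so any linear functional annihilated by $(\chi(h)-1)$ for all $h\in H$ has empty support and vanishes. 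As written, the main technical point of your part iii) is left unproved.

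There is a cleaner way to close this gap that avoids the computation entirely and is the standard argument behind Theorem 2.5.3 of \cite{NF}: suppose $V_{Z^J}=V_2$, so that $V_{N_Q}=0$ and $V_{U,\psi_{-1,0}}=0$ by your identifications of the graded pieces. The Jacquet module $V_N$ is a finite-length module over the Levi $M\cong\GL(2,F)\times F^\times$; moreover $V_U$ is a quotient of $V_{N_Q}$, hence zero, and $V_{U,\psi_{-1,0}}$ is the $\psi$-twisted Jacquet module of $V_N$ along the upper-triangular unipotent subgroup of the $\GL(2)$-factor. If $V_N\neq0$, choose an irreducible $M$-subquotient: if its $\GL(2)$-component is one-dimensional, its untwisted Jacquet module along the Borel of $M$ is non-zero, and exactness of the Jacquet functor forces $V_U\neq0$, a contradiction; if it is infinite-dimensional, it admits a non-zero $\psi$-Whittaker quotient, and exactness forces $V_{U,\psi_{-1,0}}\neq0$, again a contradiction. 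Hence $V_N=0$, which together with $V_{N_Q}=0$ shows $\pi$ is supercuspidal. Substituting this dichotomy argument for your coinvariant computation makes the proof complete.
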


\begin{proof}
This is an application of Proposition \ref{P3representationsprop}. See Theorem 2.5.3 of \cite{NF} for the details of the proof.
\end{proof}

Given an irreducible, admissible representation $(\pi,V)$ of $\GSp(4,F)$, one can calculate the semisimplifications of the quotients $V_0/V_1$ and $V_1/V_2$ in the $P_3$-filtration from the Jacquet modules of $\pi$ with respect to the Siegel and Klingen parabolic subgroups. The results are exactly the same as in Appendix A.4 of \cite{NF} (where it was assumed that $\pi$ has trivial central character).

Note that there is a typo in Table A.5 of \cite{NF}: The entry for Vd in the ``$\text{s.s.}(V_0/V_1)$'' column should be $\tau_{\GL(2)}^{P_3}(\nu(\nu^{-1/2}\sigma \times \nu^{-1/2}\xi\sigma))$.
\subsection{Generic representations and zeta integrals}
Let $\pi$ be an irreducible, admissible, generic representation of $\GSp(4,F)$. Recall from Sect.~\ref{representationssec} that $\mathcal{W}(\pi,\psi_{c_1,c_2})$ denotes the Whittaker model of $\pi$ with respect to the character $\psi_{c_1,c_2}$ of $U$. For $W$ in $\mathcal{W}(\pi,\psi_{c_1,c_2})$ and $s \in \C$, we define the \emph{zeta integral} $Z(s,W)$ by
\begin{equation}\label{localzetaintdefeq}
 Z(s,W)=\int\limits_{F^\times}\int\limits_FW(\left[\begin{matrix}a\\&a\\
 &x&1\\&&&1\end{matrix}\right])|a|^{s-3/2}\,dx\,d^\times a.
\end{equation}
It was proved in Proposition 2.6.3 of \cite{NF} that there exists a real number $s_0$, independent of $W$, such that $Z(s,W)$ converges for $\Re(s)>s_0$ to an element of $\mathbb C(q^{-s})$. In particular, all zeta integrals have meromorphic continuation to all of $\C$.
Let $I(\pi)$ be the $\C$-vector subspace  of $\C(q^{-s})$ spanned by all $Z(s,W)$ for $W$ in $\mathcal{W}(\pi,\psi_{c_1,c_2})$. It is easy to see that $I(\pi)$ is independent of the choice of $\psi$ and $c_1,c_2$ in $F^\times$.

\begin{proposition}\label{basicpropertieszetaintegrals}
 Let $\pi$ be a generic, irreducible, admissible representation of $\GSp(4,F)$. Then $I(\pi)$ is a non-zero $\mathbb C[q^{-s},q^s]$-module containing $\mathbb C$,  and there exists $R(X) \in \mathbb C[X]$ such that $R(q^{-s}) I(\pi) \subset \mathbb C[q^{-s},q^s]$, so that $I(\pi)$ is a fractional ideal of the principal ideal domain $\mathbb C[q^{-s},q^s]$ whose quotient field is $\mathbb C(q^{-s})$. The fractional ideal $I(\pi)$ admits a generator of the form $1/Q(q^{-s})$ with $Q(0)=1$, where $Q(X) \in \mathbb C[X]$.
\end{proposition}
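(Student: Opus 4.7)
\vspace{1ex}\noindent
The strategy is to treat the elementary assertions by direct manipulation of \eqref{localzetaintdefeq}, and to deduce the uniform control of poles from the $P_3$-filtration of Theorem~\ref{finitelength}.

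For the $\mathbb{C}[q^{-s},q^s]$-module structure, set $W_n(g):=W(g\,\mathrm{diag}(\varpi^n,\varpi^n,1,1))$. Then $W_n\in\mathcal{W}(\pi,\psi_{c_1,c_2})$, and after the substitutions $a\mapsto\varpi^{-n}a$ and $x\mapsto\varpi^{-n}x$ in \eqref{localzetaintdefeq} one obtains a relation of the form $Z(s,W_n)=c_nq^{ns}Z(s,W)$ with $c_n\in\mathbb{C}^\times$, so $I(\pi)$ is stable under multiplication by $q^{\pm s}$ and is therefore a $\mathbb{C}[q^{-s},q^s]$-module. To establish simultaneously $I(\pi)\neq 0$ and $\mathbb{C}\subset I(\pi)$, I would exhibit a Whittaker function $W_0$ for which the integrand $W_0\bigl(\mathrm{diag}(a,a,1,1)(1+xE_{3,2})\bigr)$ equals the characteristic function of $\OF^\times\times\OF$ in $(a,x)$; such a $W_0$ exists because, by Theorem~\ref{finitelength}(ii), the top $P_3$-subspace of $V_{Z^J}$ is $\tau^{P_3}_{\GL(0)}(1)=\cInd^{P_3}_{U_3}(\Theta)$, which supplies $\Theta$-Whittaker vectors of arbitrary compact support on the rank-one family in question. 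For this $W_0$, the double integral is a non-zero constant independent of $s$.

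The main step is the existence of $R(X)\in\mathbb{C}[X]$ with $R(q^{-s})I(\pi)\subset\mathbb{C}[q^{-s},q^s]$. Set
\begin{equation*}
 \varphi_W(a):=\int_F W\bigl(\mathrm{diag}(a,a,1,1)(1+xE_{3,2})\bigr)\,dx,\qquad a\in F^\times,
\end{equation*}
so that $Z(s,W)=\int_{F^\times}\varphi_W(a)|a|^{s-3/2}\,d^\times a$. The matrix $\mathrm{diag}(a,a,1,1)(1+xE_{3,2})$ lies in $\bar Q$, and a short computation (using that $\psi_{c_1,c_2}$ is trivial on $Z^J$ and that conjugation by this matrix preserves $Z^J$) yields $W(g_{a,x}z)=W(g_{a,x})$ for $z\in Z^J$; hence $\varphi_W$ depends only on the image of $W$ in $V_{Z^J}$ and, via the isomorphism $\bar Q/Z^J\cong P_3$ of \eqref{QP3mapeq}, on the corresponding $\Theta$-Whittaker function on $P_3$ restricted to a rank-one subgroup of the embedded $\GL(2)$. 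By Theorem~\ref{finitelength}, $V_{Z^J}$ has a finite $P_3$-filtration with successive quotients of types $\tau^{P_3}_{\GL(0)}(1)$, $\tau^{P_3}_{\GL(1)}(\chi)$, or $\tau^{P_3}_{\GL(2)}(\rho)$, for finitely many $\chi$ and $\rho$ determined by $\pi$. A separate analysis on each type---compact support for $\tau^{P_3}_{\GL(0)}(1)$, and Kirillov-type asymptotics for the other two (derived, for $\tau^{P_3}_{\GL(2)}(\rho)$, from the standard asymptotic expansion of $\GL(2)$-matrix coefficients along the torus)---shows that $\varphi_W(a)$ is a finite linear combination of functions of the form $|a|^{\lambda_i}\val(a)^{k_i}\mathbf{1}_{|a|\le c}$, with the exponents $\lambda_i$ lying in a finite set that depends only on $\pi$. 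The Mellin transform of each such term is a rational function in $q^{-s}$ whose denominator divides a fixed polynomial, and the product of these polynomials yields the desired $R(q^{-s})$.

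Given the above, $I(\pi)$ is a non-zero fractional ideal in the principal ideal domain $\mathbb{C}[q^{-s},q^s]$, whose units are the monomials $cq^{Ns}$ with $c\in\mathbb{C}^\times$ and $N\in\mathbb{Z}$; it is hence cyclic, and any generator can be multiplied by a suitable unit and rescaled to the form $1/Q(q^{-s})$ with $Q\in\mathbb{C}[X]$ satisfying $Q(0)=1$. The principal obstacle is the third paragraph's asymptotic analysis: one must carefully restrict $\Theta$-Whittaker functions on each of the three compact-induction models to the rank-one subgroup in the opposite Borel of the embedded $\GL(2)$, and verify, after $x$-integration, the claimed uniform finite-exponent expansion.
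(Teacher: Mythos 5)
Your proposal takes essentially the same route as the paper, whose proof simply defers to Proposition 2.6.4 of \cite{NF}: that argument likewise gets the $\mathbb{C}[q^{-s},q^s]$-module structure by right-translating $W$ by $\mathrm{diag}(\varpi^n,\varpi^n,1,1)$, produces a nonzero constant zeta integral (hence $\mathbb{C}\subset I(\pi)$) out of the $\tau^{P_3}_{\GL(0)}(1)$ piece of $V_{Z^J}$ supplied by genericity, obtains the common denominator $R$ from the finitely many exponents that the $P_3$-filtration of Theorem~\ref{finitelength} forces on the asymptotics in $a$, and concludes with the same principal-ideal-domain normalization. The only point to treat more carefully is your claim of a $W_0$ restricting on the slice to exactly the characteristic function of $\OF^\times\times\OF$: pointwise values of $W_0$ are not determined by its image in $V_{Z^J}$, so one instead runs the explicit computation at the bottom of p.~79 of \cite{NF} (with the auxiliary element taken from $\bar Q$ rather than $Q$, which is the paper's only stated modification), which already yields a nonzero constant in $I(\pi)$ and is all that is needed.
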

\begin{proof}
The proof is almost word for word the same as that of Proposition 2.6.4 of \cite{NF}. The only difference is that, in the calculation starting at the bottom of p.~79 of \cite{NF}, the element $q$ is taken from $\bar Q$ instead of $Q$.
\end{proof}

The quotient $1/Q(q^{-s})$ in this proposition is called the \emph{$L$-factor} of $\pi$, and denoted by $L(s,\pi)$. If $\pi$ is supercuspidal, then $L(s,\pi)=1$. The $L$-factors for all irreducible, admissible, generic, non-supercuspidal representations are listed in Table A.8 of \cite{NF}. By definition,
\begin{equation}\label{zetaLquotienteq}
 \frac{Z(s,W)}{L(s,\pi)}\in\C[q^s,q^{-s}]
\end{equation}
for all $W$ in $\mathcal{W}(\pi,\psi_{c_1,c_2})$.
\subsection{Generic representations admit split Bessel functionals}
In this section we will prove that an irreducible, admissible, generic representation of $\GSp(4,F)$ admits split Bessel functionals with respect to \emph{all} characters $\Lambda$ of $T$. This is a characteristic feature of generic representations, which will follow from Proposition \ref{nongenericsplitproposition} in the next section.

\begin{lemma}\label{GSp4genericlemma}
 Let $(\pi,V)$ be an irreducible, admissible, generic representation of $\GSp(4,F)$.
 Let $\sigma$ be a unitary character of $F^\times$, and let $s\in\C$ be arbitrary.
 Then there exists a non-zero functional $f_{s,\sigma}:\:V\rightarrow\C$
 with the following properties.
 \begin{enumerate}
  \item For all $x,y,z\in F$ and $v\in V$,
   \begin{equation}\label{GSp4genericlemmaeq1}
    f_{s,\sigma}(\pi(\begin{bmatrix}1&&y&z\\&1&x&y\\&&1\\&&&1\end{bmatrix})v)
    =\psi(y)f_{s,\sigma}(v).
   \end{equation}
  \item For all $a\in F^\times$ and $v\in V$,
   \begin{equation}\label{GSp4genericlemmaeq2}
    f_{s,\sigma}(\pi(\begin{bmatrix}a\\&1\\&&a\\&&&1\end{bmatrix})v)
    =\sigma(a)^{-1}|a|^{-s+1/2}f_{s,\sigma}(v).
   \end{equation}
 \end{enumerate}
\end{lemma}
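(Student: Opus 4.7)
The plan is to realize $\pi$ in its Whittaker model and construct $f_{s,\sigma}$ as a suitably normalized $\sigma$-twisted zeta integral on that model.

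First I would fix a Whittaker model $\mathcal{W}(\pi,\psi_{c_1,c_2})$ for some convenient pair $c_1,c_2\in F^\times$ (genericity of $\pi$ guarantees its existence and uniqueness up to scalar), and write $W_v(g):=\ell(\pi(g)v)$ for each $v\in V$. For $\Re(s)$ sufficiently large I would set
$$
Z(s,W;\sigma) := \int_{F^\times}\int_F W(\alpha(a,x))\,\sigma(a)|a|^{s-1/2}\,dx\,d^\times a,
$$
where $\alpha(a,x)\in\GSp(4,F)$ is a one-parameter family of the form $\mathrm{diag}(a,1,a,1)\cdot u(x)$ for an auxiliary one-parameter unipotent $u(x)$; $u$ is chosen so that right multiplication by $\mathrm{diag}(a_0,1,a_0,1)$ reduces to a simple substitution in the integration variables. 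A direct matrix computation then gives $\alpha(a,x)\cdot\mathrm{diag}(a_0,1,a_0,1)=\alpha(aa_0,x'')$ for an explicit $x''$ depending on $(x,a_0)$, and after the change of variables $a\mapsto a/a_0$, $x\mapsto (x'')^{-1}$, the Jacobian together with the $\sigma$-twist and the weight $|a|^{s-1/2}$ collapse to the factor $\sigma(a_0)^{-1}|a_0|^{-s+1/2}$ required by (ii).

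For property (i) I would decompose, for each $n\in N$, the product $\alpha(a,x)\cdot n = u'\cdot\alpha(a,x')$ with $u'\in U$ and an appropriate translated $x'$, obtained from solving a system of four linear equations matching entries. The Whittaker transformation property then gives
$$
W_v(\alpha(a,x)n) = \psi_{c_1,c_2}(u')\,W_v(\alpha(a,x')),
$$
and the key verification is that, after the substitution $x\mapsto x'$ in the integral, the factor $\psi_{c_1,c_2}(u')$ reduces to $\psi(y)$. To address convergence and extend the construction to all $s\in\C$, I would run the argument of Proposition~\ref{basicpropertieszetaintegrals} for the twisted representation $\sigma\pi$ to see that $Z(s,W;\sigma)$ converges for $\Re(s)\gg 0$ and continues meromorphically, with common denominator an $L$-factor $L(s,\sigma\pi)$. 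Setting
$$
f_{s,\sigma}(v) := \frac{Z(s,W_v;\sigma)}{L(s,\sigma\pi)}
$$
produces an element of $\C[q^{\pm s}]$, hence defined for every $s\in\C$; non-triviality follows by choosing $W_{v_0}$ for which this quotient is not the zero polynomial, standard in the Kirillov-type arguments that underlie the theory of zeta integrals.

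The main obstacle will be the verification of property (i). Since the restriction of $\psi_{c_1,c_2}$ to $N$ is the degenerate character $\psi(c_2 x_n)$, which reads off the $(2,3)$-entry of $n$, while the split Bessel character $\psi(y)$ reads off the $(1,3)$-entry (which is non-degenerate), the appearance of $\psi(y)$ in $\psi_{c_1,c_2}(u')$ is not automatic. The success of the argument depends on choosing $\alpha(a,x)$ so that the entries of $u'$ produced by the decomposition $\alpha(a,x)n = u'\alpha(a,x')$ redirect the $(1,3)$-entry of $n$ into the $(2,3)$-entry of $u'$ (modulo a compensating translation in $x$), so that the surviving character factor after integration is precisely $\psi(y)$.
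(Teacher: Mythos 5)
Your overall strategy (a $\sigma$-twisted zeta integral on the Whittaker model, normalized by $L(s,\sigma\pi)$ so that it is defined for every $s$, with Proposition~\ref{basicpropertieszetaintegrals} applied to $\sigma\pi$ supplying convergence and the polynomial property) is the same as the paper's. But the step you yourself flag as the main obstacle is a genuine gap, and the family you propose cannot close it. If $\alpha(a,x)=\operatorname{diag}(a,1,a,1)\,u(x)$ with $u(x)$ in the upper unipotent group $U$ (your description of the decomposition $\alpha(a,x)n=u'\alpha(a,x')$ with $u'\in U$ forces this), then $u'=t\,\tilde n\,t^{-1}$ with $\tilde n\in N$ and $t=\operatorname{diag}(a,1,a,1)$, and one checks that conjugation of $N$ by $U$ changes the coordinates of $n=\left[\begin{smallmatrix}1&&y&z\\&1&x_0&y\\&&1&\\&&&1\end{smallmatrix}\right]$ only in the direction of the highest root: $(x_0,y,z)\mapsto(x_0,\,y+c\,x_0,\,\ast)$, while conjugation by $t$ merely rescales $x_0$. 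Since $\psi_{c_1,c_2}$ of \eqref{psic1c2eq} restricted to $N$ reads only the $(2,3)$-entry, the resulting factor is always of the form $\psi(c_2 x_0/a)$; the entry $y$ never enters, so no choice of $u(x)\subset U$ can produce the factor $\psi(y)$ required by \eqref{GSp4genericlemmaeq1}. The "redirection of the $(1,3)$-entry into the $(2,3)$-entry" you hope for would require conjugating by something outside $U$, which is incompatible with your decomposition.

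The missing device in the paper's proof is the Weyl element $s_2$ of \eqref{s1s2defeq}: one normalizes $c_1=1$ and sets $f_{s,\sigma}(W)=Z_\sigma(s,\pi(s_2)W)/L(s,\sigma\pi)$, where $Z_\sigma$ is the standard zeta integral \eqref{localzetaintdefeq} twisted by $\sigma(a)$. Writing out the argument, $\left[\begin{smallmatrix}a&&&\\&a&&\\&x&1&\\&&&1\end{smallmatrix}\right]s_2=s_2\operatorname{diag}(a,1,a,1)\left[\begin{smallmatrix}1&&&\\&1&-x&\\&&1&\\&&&1\end{smallmatrix}\right]$, so the functional is your type of integral but with $s_2$ on the left, and it is exactly this $s_2$ that does the redirection: conjugation by $s_2$ carries the root space $E_{13}$ to $E_{12}$, so the $(1,3)$-entry $y$ of $n$ lands in the $(1,2)$-position, which $\psi_{c_1,c_2}$ reads as $\psi(c_1y)=\psi(y)$, while the $(2,3)$-entry is sent to a lower-triangular position and is absorbed into a shift of the integration variable $x$. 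Two further consequences of using $\pi(s_2)W$ rather than a nonstandard family: since $\pi(s_2)W$ again lies in the Whittaker model, convergence for $\Re(s)>s_0$ and the containment $Z_\sigma(s,\pi(s_2)W)/L(s,\sigma\pi)\in\C[q^{-s},q^s]$ follow verbatim from Proposition~\ref{basicpropertieszetaintegrals} for $\sigma\pi$ (for your family these would have to be re-proved), and the identities \eqref{GSp4genericlemmaeq1}, \eqref{GSp4genericlemmaeq2}, once verified in the region of convergence, extend to all $s$ because both sides are polynomial in $q^{\pm s}$ — no meromorphic continuation argument is needed. If you repair your construction by inserting $\pi(s_2)$ (and adjust the exponent to $|a|^{s-3/2}$ to match \eqref{localzetaintdefeq}), your proof becomes the paper's.
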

\begin{proof}
We may assume that $V=\mathcal{W}(\pi,\psi_{c_1,c_2})$ with $c_1=1$. Let $s_0\in\R$
be such that $Z(s,W)$ is absolutely convergent for $\Re (s)>s_0$.
Then the integral
\begin{equation}\label{ZsWsigmadefeq}
 Z_\sigma(s,W)=\int\limits_{F^\times}\int\limits_F
 W(\begin{bmatrix}a\\&a\\&x&1\\&&&1\end{bmatrix})|a|^{s-3/2}\sigma(a)\,dx\,d^\times a
\end{equation}
is also absolutely convergent for $\Re(s)>s_0$, since $\sigma$ is unitary.
Note that these are the zeta integrals for the twisted representation $\sigma\pi$. Therefore, by \eqref{zetaLquotienteq}, the quotient $Z_\sigma(s,W)/L(s,\sigma\pi)$
is in $\C[q^{-s},q^s]$ for all $W\in\mathcal{W}(\pi,\psi_{c_1,c_2})$. We may therefore define, for any complex $s$,
\begin{equation}\label{GSp4genericlemmaeq3}
 f_{s,\sigma}(W)=\frac{Z_\sigma(s,\pi(s_2)W)}{L(s,\sigma\pi)},
\end{equation}
where $s_2$ is as in (\ref{s1s2defeq}). Straightforward calculations using the definition \eqref{ZsWsigmadefeq} show that \eqref{GSp4genericlemmaeq1} and \eqref{GSp4genericlemmaeq2} are satisfied for $\Re (s)>s_0$. Since both sides depend holomorphically on $s$, these identities hold on all of $\C$.
\end{proof}

\begin{proposition}\label{GSp4genericprop}
 Let $(\pi,V)$ be an irreducible, admissible and generic representation of $\GSp(4,F)$. Let $\omega_\pi$ be the central character of $\pi$. Then $\pi$ admits a split $(\Lambda,\theta)$-Bessel functional with respect to any character
 $\Lambda$ of $T$ that satisfies $\Lambda\big|_{F^\times}=\omega_\pi$.
\end{proposition}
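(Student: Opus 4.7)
The plan is to reduce the proposition to the standard split Bessel data and then invoke Lemma \ref{GSp4genericlemma}, which encodes essentially all the analytic work via zeta integrals. By Sect.~\ref{actionsubsec}, the action of $M$ on the set of Bessel functionals preserves the discriminant class, and any split Bessel datum is conjugate to the standard one with $S = \left[\begin{smallmatrix} & 1/2 \\ 1/2 & \end{smallmatrix}\right]$, $\theta$ given by \eqref{splitthetaeq}, and $T = \{\mathrm{diag}(a,b,a,b) : a,b \in F^\times\}$ as in \eqref{splitthetaTeq}. Under this conjugation, characters of the two tori correspond bijectively, and the condition $\Lambda|_{F^\times}=\omega_\pi$ is preserved. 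So I would only construct $\beta$ for this standard split datum.

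Given $\Lambda$ with $\Lambda|_{F^\times} = \omega_\pi$, I would use the factorization $\mathrm{diag}(a,b,a,b) = \mathrm{diag}(a/b,1,a/b,1) \cdot \mathrm{diag}(b,b,b,b)$, the second factor being central, to encode $\Lambda$ in a single character $\chi$ of $F^\times$ defined by $\chi(c) = \Lambda(\mathrm{diag}(c,1,c,1))$, so that $\Lambda(\mathrm{diag}(a,b,a,b)) = \chi(a/b)\omega_\pi(b)$. Writing $\chi = \sigma_0\,|\cdot|^t$ with $\sigma_0$ unitary and $t \in \C$, I would set $\sigma = \sigma_0^{-1}$ (still unitary) and $s = 1/2 - t$, so that the scalar $\sigma(c)^{-1}|c|^{-s+1/2}$ appearing in part (ii) of Lemma~\ref{GSp4genericlemma} coincides exactly with $\chi(c)$.

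Lemma~\ref{GSp4genericlemma} then produces a non-zero functional $\beta := f_{s,\sigma}:V \to \C$. Property (i) of the lemma is precisely the assertion that $\beta(\pi(n)v) = \theta(n)\beta(v)$ for $n \in N$, where $\theta$ is the split character in \eqref{splitthetaeq}. For $t_0 = \mathrm{diag}(a,b,a,b) \in T$, property (ii) applied to $\mathrm{diag}(a/b,1,a/b,1)$ together with the fact that $\pi$ acts on $\beta$ through $\omega_\pi$ on scalar matrices yields
$$
\beta(\pi(t_0)v) = \sigma(a/b)^{-1}|a/b|^{-s+1/2}\,\omega_\pi(b)\,\beta(v) = \chi(a/b)\omega_\pi(b)\,\beta(v) = \Lambda(t_0)\beta(v).
$$
Thus $\beta$ is a $(\Lambda,\theta)$-Bessel functional.

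The main obstacle is already absorbed in Lemma~\ref{GSp4genericlemma}: one must know (a) that the zeta integrals $Z_\sigma(s,W)$ can be meromorphically continued and divided by $L(s,\sigma\pi)$ to give entire functions of $s$, and (b) that the resulting quotient does not vanish identically in $W$ at the specific $s$ we need. Point (a) follows from Proposition~\ref{basicpropertieszetaintegrals} applied to the twist $\sigma\pi$, and point (b) follows because the fractional ideal $I(\sigma\pi)$ contains the constants, so for every $s\in\C$ there exists $W$ with $Z_\sigma(s,\pi(s_2)W)/L(s,\sigma\pi) \neq 0$. Once Lemma~\ref{GSp4genericlemma} is granted, the remaining content of the proposition is the purely formal bookkeeping carried out above.
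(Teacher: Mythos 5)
Your proof is correct and follows essentially the same route as the paper: work with the standard split datum \eqref{splitthetaeq}--\eqref{splitthetaTeq} (the reduction via the $M$-action being implicit in the paper), encode $\Lambda$ into a pair $(s,\sigma)$ with $\sigma$ unitary, apply Lemma \ref{GSp4genericlemma}, and use the central character on $\mathrm{diag}(b,b,b,b)$ to get full $T$-equivariance. The only cosmetic difference is your explicit unitary-times-$|\cdot|^t$ bookkeeping and your remark on non-vanishing, which is really part of Lemma \ref{GSp4genericlemma} (and is cleanest via the fact that $L(s,\sigma\pi)$ itself is a generator of, hence lies in, $I(\sigma\pi)$, rather than via the constants).
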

\begin{proof}
Let $\theta$ be as in \eqref{splitthetaeq} with $T$ as in \eqref{splitthetaTeq}. Let $s\in\C$ and $\sigma$ be a unitary character of $F^\times$ such that
$$
 \Lambda(\begin{bmatrix}a\\&1\\&&a\\&&&1\end{bmatrix})=\sigma(a)^{-1}|a|^{-s+1/2}
 \qquad\text{for all }a\in F^\times.
$$
Let $f_{s,\sigma}$ be as in Lemma \ref{GSp4genericlemma}. By \eqref{GSp4genericlemmaeq2},
\begin{equation}\label{GSp4genericpropeq1}
 f_{s,\sigma}(\pi(\begin{bmatrix}a\\&1\\&&a\\&&&1\end{bmatrix})v)
 =\Lambda(a)f_{s,\sigma}(v)\qquad\text{for all }a\in F^\times.
\end{equation}
Since $\Lambda\big|_{F^\times}=\omega_\pi$ we
have in fact $f_{s,\sigma}(\pi(t)v)=\Lambda(t)f_{s,\sigma}(v)$ for all $t\in T$. Hence
$f_{s,\sigma}$ is a Bessel functional as desired.
\end{proof}
\subsection{Split Bessel functionals for non-generic representations}\label{splitbesselnongenericsec}
The converse of Proposition \ref{GSp4genericprop} is not true: There exist irreducible, admissible, non-generic representations of $\GSp(4,F)$ which admit split Bessel functionals. This follows from the following proposition. In fact, using this result and the $P_3$-filtrations listed in Table A.6 of \cite{NF}, one can precisely identify which non-generic representations admit split Bessel functionals. Other than in the generic case, the possible characters $\Lambda$ of $T$ are restricted to a finite number.

\begin{proposition}\label{nongenericsplitproposition}
 Let $(\pi,V)$ be an irreducible, admissible and non-generic representation of $\GSp(4,F)$. Let the semisimplification of the quotient $V_1=V_1/V_2$ in the $P_3$-filtration of $\pi$ be given by $\sum_{i=1}^n\tau^{P_3}_{\GL(1)}(\chi_i)$ with characters $\chi_i$ of $F^\times$.
 \begin{enumerate}
  \item $\pi$ admits a split Bessel functional if and only if the quotient $V_1$ in the $P_3$-filtration of $\pi$ is non-zero.
  \item  Let $\beta$ be a non-zero $(\Lambda,\theta)$-Bessel functional, with $\theta$ as in \eqref{splitthetaeq}, and a character $\Lambda$ of the group $T$ explicitly given in \eqref{splitthetaTeq}. Then there exists an $i$ for which
   \begin{equation}\label{nongenericsplitpropositioneq1}
    \Lambda(\begin{bmatrix}a\\&1\\&&a\\&&&1\end{bmatrix})=|a|^{-1}\chi_i(a)\qquad\text{for all}\quad a\in F^\times.
   \end{equation}
  \item If $V_1$ is non-zero, then there exists an $i$ such that $\pi$ admits a split $(\Lambda,\theta)$-Bessel functional with respect to a character $\Lambda$ of $T$ satisfying \eqref{nongenericsplitpropositioneq1}.
  \item The space of split $(\Lambda,\theta)$-Bessel functionals is zero or one-dimensional.
  \item The representation $\pi$ does not admit any split Bessel functionals if and only if $\pi$ is of type IVd, Vd, VIb, VIIIb, IXb, or is supercuspidal.
 \end{enumerate}
\end{proposition}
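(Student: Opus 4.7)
My approach exploits the $P_3$-filtration from Theorem~\ref{finitelength} applied to $V_{Z^J}$. Since the split character $\theta$ of \eqref{splitthetaeq} is trivial on $Z^J$, every split $(\Lambda,\theta)$-Bessel functional factors through $V_{Z^J}=V_0$. The non-genericity of $\pi$ forces $V_2=0$ by Theorem~\ref{finitelength}(ii), so only the two-step filtration $0\subset V_1\subset V_0$ with $V_1\cong\tau^{P_3}_{\GL(1)}(V_{U,\psi_{-1,0}})$ and $V_0/V_1\cong\tau^{P_3}_{\GL(2)}(V_{N_Q})$ is relevant. Under the map $i$ of \eqref{QP3mapeq}, the subgroup $N/Z^J$ lands in $N_3\subset U_3$ with $\theta$ pulling back to the character $n\mapsto\psi(n_{13})$, and the torus element $\mathrm{diag}(a,1,a,1)\in T$ has image $\mathrm{diag}(1,a,1)\in P_3$.

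Two basic calculations carry the bulk of the argument. First, $(V_0/V_1)_{N,\theta}=0$: since $N_3$ acts trivially on any $\tau^{P_3}_{\GL(2)}(\rho)$ while $\psi(n_{13})|_{N_3}$ is non-trivial, the twisted coinvariants vanish. By right-exactness of coinvariants this gives a canonical surjection $(V_1)_{N,\theta}\twoheadrightarrow (V_0)_{N,\theta}$, immediately yielding the ``only if'' direction of (i). Second, a Mackey analysis of $(\tau^{P_3}_{\GL(1)}(\chi))_{N_3,\psi(n_{13})}$: since $N_3\triangleleft P_3$, the double cosets $H\backslash P_3/N_3$ reduce to $P_2\backslash\GL(2)$ (with $P_2$ the mirabolic of $\GL(2)$), and the compatibility condition $(\chi\otimes\Theta')(gng^{-1})=\psi(n_{13})$ forces the last row of the $\GL(2)$-part of $g$ to be $(1,0)$, cutting out a single coset represented by the Weyl element $g_0$. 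The resulting Jacquet module is one-dimensional, and a short conjugation calculation shows that $\mathrm{diag}(1,a,1)$ acts on it by $\chi(a)$; transporting this through the identification with $T'\subset T$ and through the normalization conventions of Theorem~\ref{finitelength} produces the character $|a|^{-1}\chi_i(a)$ of \eqref{nongenericsplitpropositioneq1}. Parts (i) and (ii) follow.

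To obtain existence (iii) and uniqueness (iv) I must upgrade the surjection $(V_1)_{N,\theta}\twoheadrightarrow(V_0)_{N,\theta}$ to an isomorphism, i.e.\ verify that the first left-derived coinvariant functor $H_1(N,\theta)$ vanishes on $V_0/V_1\cong\tau^{P_3}_{\GL(2)}(V_{N_Q})$. Because $N/Z^J$ is abelian (isomorphic to $F^2$) and acts trivially on any $\tau^{P_3}_{\GL(2)}(\rho)$ while $\theta|_{N/Z^J}$ is non-trivial, one splits off a one-dimensional subgroup on which $\theta$ is already non-trivial and computes directly. With the isomorphism $(V_1)_{N,\theta}\cong(V_0)_{N,\theta}$ in hand, the composition factors transfer verbatim: the $T'$-eigencharacters are exactly the shifted $\chi_i$, yielding (iii) by producing a non-zero Bessel functional from any non-zero $V_1$, and showing that each eigencharacter occurs with multiplicity one, yielding (iv). Part (v) is then a direct inspection of Tables~A.5 and~A.6 of \cite{NF}: among non-supercuspidal representations, $V_1=0$ exactly for types IVd, Vd, VIb, VIIIb, and IXb; supercuspidal non-generic $\pi$ have $V_{Z^J}=V_2=0$ by Theorem~\ref{finitelength}(iii), so they admit no Bessel functional of any kind factoring through $V_{Z^J}$.

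The most delicate step is the vanishing of $H_1(N,\theta)$ on $V_0/V_1$, without which the argument only produces an upper bound rather than the exact dimension statement needed for (iii) and (iv). A secondary source of care is the bookkeeping of the modular factor $|a|^{-1}$ in \eqref{nongenericsplitpropositioneq1}: because the $P_3$-filtration of Theorem~\ref{finitelength} uses unnormalized induction while the identification of $T'$ with a subgroup of $P_3$ via $i$ crosses a diagonal twist, the final character identification must be verified against the explicit $F^\times$-actions prescribed in Theorem~\ref{finitelength}(i).
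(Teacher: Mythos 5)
Your strategy coincides with the paper's (factor split Bessel functionals through $V_{Z^J}$, use the $P_3$-filtration with $V_2=0$, kill the $\tau^{P_3}_{\GL(2)}$ layer, compute the torus action on the twisted Jacquet module of the $\tau^{P_3}_{\GL(1)}$ layer, and read off (v) from Theorem~\ref{finitelength} and Table A.6 of \cite{NF}), but the central computation has a concrete flaw: the image of $N/Z^J$ under the map \eqref{QP3mapeq} is not $N_3$. For $\theta$ as in \eqref{splitthetaeq}, the element of $N$ with parameters $(x,y,z)$ decomposes in \eqref{Qelementeq} with $\GL(2)$-block $\left[\begin{smallmatrix}1&x\\&1\end{smallmatrix}\right]$ and Heisenberg parameters $(x',y',z')=(y,0,z)$, so its image in $P_3$ is $\left[\begin{smallmatrix}1&x&y\\&1&\\&&1\end{smallmatrix}\right]$, with $\theta$ becoming $\psi$ of the $(1,3)$-entry; this first-row unipotent is not $N_3$ and is not normal in $P_3$. (The paper sidesteps this by first conjugating to $N_{\mathrm{alt}}=s_2^{-1}Ns_2$ as in \eqref{splitthetaconjNeq}--\eqref{splitthetaconjeq}, whose image is $\left[\begin{smallmatrix}1&&\\ *&1&*\\&&1\end{smallmatrix}\right]$ with $\psi$ on the $(2,3)$-entry.) Your Mackey analysis of $(\tau^{P_3}_{\GL(1)}(\chi))_{N_3,\psi(n_{13})}$ therefore computes the wrong Jacquet module. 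Both computations happen to yield a one-dimensional space supported on the coset of $g_0=\left[\begin{smallmatrix}&1&\\1&&\\&&1\end{smallmatrix}\right]$, but the torus actions differ: with $N_3$ the quotient is realized by evaluation at $g_0$ and $\mathrm{diag}(1,a,1)$ acts by $\chi(a)$, whereas with the correct subgroup it is realized by the orbit integral $f\mapsto\int_F f\bigl(g_0\left[\begin{smallmatrix}1&u&\\&1&\\&&1\end{smallmatrix}\right]\bigr)\,du$, and the substitution $u\mapsto au$ produces exactly the factor $|a|^{-1}$ in \eqref{nongenericsplitpropositioneq1}. In your write-up this factor is not derived but inserted by appeal to unspecified ``normalization conventions,'' so part (ii), and with it (iii)--(iv), is not actually established; the needed computation is precisely Lemma 2.5.5 of \cite{NF}, which the paper invokes.

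Two further points. The vanishing of a derived functor $H_1(N,\theta)$ is unnecessary: $N/Z^J$ is a union of compact open subgroups, so the twisted Jacquet functor is exact (2.35 of \cite{BeZe1976}), which is why the paper passes from the filtration to an equality of Jacquet modules without comment; moreover your premise that $N/Z^J$ acts trivially on $\tau^{P_3}_{\GL(2)}(\rho)$ is false --- by \eqref{tauP3GL2eq} the $(1,2)$-direction acts through $\rho$ of a unipotent element of $\GL(2,F)$; only the direction on which $\theta$ is non-trivial acts trivially, which is all one needs for $(V_0/V_1)_{N,\theta}=0$. Finally, for (iv) the assertion that ``each eigencharacter occurs with multiplicity one'' requires knowing that $\chi_1,\dots,\chi_n$ are pairwise distinct; this does not follow from the filtration itself and must be checked separately, as the paper does by inspecting Table A.6 of \cite{NF}.
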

\begin{proof}
Let $N_{\mathrm{alt}}$ be as in \eqref{splitthetaconjNeq} and $\theta_{\mathrm{alt}}$ be as in \eqref{splitthetaconjeq}. We use the fact that any $(\Lambda,\theta_{\mathrm{alt}})$-Bessel functional factors through the twisted Jacquet module $V_{ N_{\mathrm{alt}},\theta_{\mathrm{alt}} }$. To calculate this Jacquet module, we use the $P_3$-filtration of Theorem \ref{finitelength}. Since $\pi$ is non-generic, the $P_3$-filtration simplifies to
$$
 0\subset V_1\subset V_0=V_{Z^J},
$$
with $V_1$ of type $\tau^{P_3}_{\GL(1)}$ and $V_0/V_1$ of type $\tau^{P_3}_{\GL(2)}$. Taking further twisted Jacquet modules and observing Lemma 2.5.6 of \cite{NF}, it follows that
$$
 V_{N_{\mathrm{alt}},\theta_{\mathrm{alt}}}=(V_1)_{\left[\begin{smallmatrix}1\\ *&1&*\\&&1\end{smallmatrix}\right],\psi},\qquad\text{where}\quad\psi(\begin{bmatrix}1\\x&1&y\\&&1\end{bmatrix})=\psi(y).
$$
By Lemma 2.5.5 of \cite{NF}, after suitable renaming,
$$
 0=J_n\subset\ldots\subset J_1\subset J_0=(V_1)_{\left[\begin{smallmatrix}1\\ *&1&*\\&&1\end{smallmatrix}\right],\psi},
$$
where $J_i/J_{i+1}$ is one-dimensional, and ${\rm diag}(a,1,1)$ acts on $J_i/J_{i+1}$ by $|a|^{-1}\chi_i(a)$. Table A.6 of \cite{NF} shows that all the $\chi_i$ are pairwise distinct. This proves i), ii), iii) and iv).

v) If $\pi$ is one of the representations mentioned in v), then $V_1/V_2=0$ by Theorem \ref{finitelength} (in the supercuspidal case), or by Table A.6 in \cite{NF} (in the non-supercuspidal case). By part i), $\pi$ does not admit a split Bessel functional. For any representation not mentioned in v), the quotient $V_1/V_2$ is non-zero, so that a split Bessel functional exists by iii).
\end{proof}
\section{Theta correspondences}
Let $S$ be as in \eqref{Sdefeq}, and let $\theta=\theta_S$ be as in \eqref{thetaSsetupeq}. Let $(\pi,V)$ be an irreducible, admissible representation of $\GSp(4,F)$, and let $(\sigma,W)$ be an irreducible, admissible representation of $\GO(X)$, where $X$ is an even-dimensional, symmetric, bilinear space. Let $\omega$ be the Weil representation of the group $R$, consisting of the pairs $(g,h)\in\GSp(4,F)\times\GO(X)$ with the same similitude factors, on the Schwartz space $\mathcal{S}(X^2)$. Assume that the pair $(\pi,\sigma)$ occurs in the theta correspondence defined by $\omega$, i.e., $\Hom_R(\omega,\pi\otimes\sigma)\neq0$. It is a theme in the theory of the theta correspondence to relate the twisted Jacquet module $V_{N,\theta}$ of $\pi$ to invariant functionals on $\sigma$; a necessary condition for the non-vanishing of $V_{N,\theta}$ is that $X$ represents $S$. See for example the remarks in Sect.~6 of \cite{Roberts1999}.

Applications to $(\Lambda,\theta_S)$-Bessel functionals also require the involvement of $T$. The idea is roughly as follows. The group $T$ is contained in $M$. Moreover, $\omega(m,h)$ for $(m,h)$ in $R\cap(M\times\GO(X))$ is given by an action of such pairs on $X^2$. The study of this action leads to the definition of certain compatible embeddings of $T$ into $\GO(X)$. Using these embeddings allows us to show that if $\pi$ has a $(\Lambda,\theta)$-Bessel functional, then $\sigma$ admits a non-zero functional transforming according to $\Lambda^{-1}$.

After setting up notations and studying the embeddings of $T$ mentioned above, we obtain the main result of this section, Theorem \ref{fourdimthetatheorem}. Section \ref{thetaapplicationssec} contains the applications to Bessel functionals.

\subsection{The spaces}
\label{thespacessubsec}
In this section we will consider non-degenerate symmetric bilinear spaces $(X,\langle\cdot, \cdot \rangle)$ over $F$ such that 
\begin{equation}
\label{Xtypeseq}
\text{$\dim X =2$,  or $\dim X =4$ and $\disc(X)=1$}. 
\end{equation} 
We begin by recalling the constructions of the isomorphism classes of these spaces, and the characterization of their similitude groups. 
Let $m \in F^\times$, $A=A_{\left[\begin{smallmatrix} 1 & \\ & -m \end{smallmatrix} \right]}$ and $T=A^\times$ be as in Sect. \ref{anotheralgebrasubsec}, so that
\begin{equation}
\label{specialATeq}
A =\{\begin{bmatrix}x&-y\\-ym&x\end{bmatrix}:x,y\in F\}, \qquad T=A^\times =\{\begin{bmatrix}x&-y\\-ym&x\end{bmatrix}:x,y\in F,\:x^2 -y^2m \neq 0\}.
\end{equation}
Let $\lambda \in F^\times$. Define a non-degenerate two-dimensional symmetric bilinear space $(X_{m,\lambda},\langle\cdot,\cdot\rangle_{m,\lambda})$ by
\begin{equation}
\label{twodimexeq}
X_{m,\lambda}=A_{\left[\begin{smallmatrix} 1 & \\ & -m \end{smallmatrix} \right]}, \qquad \langle x_1,x_2 \rangle_{m,\lambda} =  \lambda\,\mathrm{tr}(x_1x_2^*)/2, \quad x_1,x_2 \in X_{m,\lambda}.
\end{equation}
Here, $*$ is the canonical involution of $2 \times 2$ matrices, given by $\left[\begin{smallmatrix} a&b \\ c& d \end{smallmatrix} \right]^* =
\left[\begin{smallmatrix} d&-b \\ -c& a \end{smallmatrix} \right]$.
Define a homomorphism $\rho: T \to\GSO(X_{m,\lambda})$  by $\rho(t)x= tx$ for $x \in X_{m,\lambda}$. 
We also recall the Galois conjugation map $\gamma:A \to A$ from Sect.~\ref{anotheralgebrasubsec}; it is given by $\gamma(x) =x^*$ for $x \in A$. The map $\gamma$ can be regarded as an $F$ linear endomorphism
\begin{equation}
\label{gammaendoeq}
\gamma: X_{m,\lambda} \longrightarrow X_{m,\lambda},
\end{equation}
and as such is contained in $\OO(X_{m,\lambda})$ but not in $\SO(X_{m,\lambda})$. 
\begin{lemma}
\label{twodimclasslemma}
If  $(X_{m,\lambda},\langle\cdot,\cdot\rangle_{m,\lambda})$ is as in \eqref{twodimexeq}, then $\disc(X_{m,\lambda}) = mF^{\times 2}$,   $\varepsilon(X_{m,\lambda}) = (\lambda, m)$, and the homomorphism $\rho$ is an isomorphism, so that
\begin{equation}
\label{rhoiso2eq}
\rho: T \stackrel{\sim}{\longrightarrow} \GSO(X_{m,\lambda}).
\end{equation}
The image $\rho(T)$ and the map $\gamma$ generate $\GO(X_{m,\lambda})$. 
If $(X_{m,\lambda},\langle\cdot,\cdot\rangle_{m,\lambda})$ and $(X_{m',\lambda'},\langle\cdot,\cdot\rangle_{m',\lambda'})$ are as in \eqref{twodimexeq}, then
$(X_{m,\lambda},\langle\cdot,\cdot\rangle_{m,\lambda}) \cong (X_{m',\lambda'},\langle\cdot,\cdot\rangle_{m',\lambda'})$ if and only if $mF^{\times 2} = m'F^{\times 2}$ and $(\lambda ,m) = (\lambda', m')$. 
Every two-dimensional, non-degenerate symmetric bilinear space over $F$ is isomorphic $(X_{m,\lambda},\langle\cdot,\cdot\rangle_{m,\lambda})$ for some $m$ and $\lambda$.
\end{lemma}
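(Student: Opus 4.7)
The plan is to reduce everything to a direct computation in the ordered basis $e_1 = I$, $e_2 = \left[\begin{smallmatrix} 0 & -1 \\ -m & 0 \end{smallmatrix}\right]$ of $X_{m,\lambda} = A$, and then appeal to the classical classification of binary quadratic forms.

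First I would compute the Gram matrix of $\langle \cdot, \cdot \rangle_{m,\lambda}$ in this basis, using the identities $xx^* = \det(x)\,I$ and $x + x^* = \mathrm{tr}(x)\,I$ for $x \in \Mat_2(F)$; a short calculation gives the diagonal matrix $\mathrm{diag}(\lambda, -\lambda m)$. From this, $\disc(X_{m,\lambda}) = mF^{\times 2}$ and $\varepsilon(X_{m,\lambda}) = (\lambda, -\lambda m)_F = (\lambda, m)_F$ are immediate, the last equality following from the standard identity $(\lambda, -\lambda)_F = 1$.

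Next I would analyze $\rho$. For $t \in T$ and $x_1, x_2 \in X_{m,\lambda}$, the cyclicity of trace and $t^*t = \det(t)\,I$ give
$$
\langle tx_1, tx_2\rangle_{m,\lambda} = \frac{\lambda}{2}\,\mathrm{tr}(tx_1 x_2^* t^*) = \det(t)\,\langle x_1, x_2\rangle_{m,\lambda},
$$
so $\rho$ maps $T$ into $\GO(X_{m,\lambda})$ with similitude factor $\det(t) = \Norm_{L/F}(t)$. Injectivity is clear by evaluating at $e_1 = 1$. For the image, I would use the fact that both $T$ and $\GSO(X_{m,\lambda})$ are two-dimensional $F$-tori, so that the algebraic embedding $\rho$ must be surjective onto $\GSO(X_{m,\lambda})$. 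A direct computation then shows that $\gamma$ acts on $(e_1, e_2)$ as $\mathrm{diag}(1, -1)$, so $\gamma$ is an isometry of determinant $-1$, hence lies in $\OO(X_{m,\lambda}) \setminus \SO(X_{m,\lambda})$. Since $\GO(X_{m,\lambda}) = \GSO(X_{m,\lambda}) \sqcup \gamma\,\GSO(X_{m,\lambda})$, this shows $\rho(T)$ and $\gamma$ generate $\GO(X_{m,\lambda})$.

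Finally, for the classification and the surjectivity of $(m,\lambda) \mapsto X_{m,\lambda}$, I would invoke the standard fact that two non-degenerate binary quadratic spaces over $F$ are isomorphic if and only if they share a discriminant and a Hasse invariant; combined with the first paragraph this gives the stated criterion. For existence, given an arbitrary such space, diagonalize it to $\mathrm{diag}(a,c)$ and set $m = -ac$ to match discriminants; then choose $\lambda \in F^\times$ with $(\lambda, m)_F = (a, c)_F$. When $m \notin F^{\times 2}$ the non-degeneracy of the Hilbert symbol in the first argument supplies such a $\lambda$. The only subtle case is $m \in F^{\times 2}$, where the left-hand side is automatically trivial; here one notes that $-ac \in F^{\times 2}$ forces $c = -as^2$ and hence $(a,c)_F = (a,-a)_F = 1$, so any $\lambda$ works. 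The step I expect to require the most care is the surjectivity of $\rho$ onto $\GSO(X_{m,\lambda})$; everything else amounts to bookkeeping with matrices and the Hilbert symbol.
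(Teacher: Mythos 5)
Your computation of the Gram matrix $\mathrm{diag}(\lambda,-\lambda m)$, the discriminant and Hasse invariant, the identification of $\gamma$ with $\mathrm{diag}(1,-1)$, and your treatment of the classification and of the existence of $\lambda$ with $(\lambda,\disc(X))_F=\varepsilon(X)$ (including the case $\disc(X)=F^{\times 2}$, where $(a,c)_F=(a,-a)_F=1$) all match the paper's computations, and in the last step you actually supply more detail than the paper's ``an argument shows''. Where you genuinely diverge is the surjectivity of $\rho$: the paper argues by hand, taking $h\in\GSO(X_{m,\lambda})$, writing it as a matrix in the basis $x_1,x_2$, and observing that the similitude condition ${}^t h\,\lambda\,\mathrm{diag}(1,-m)\,h=\det(h)\,\lambda\,\mathrm{diag}(1,-m)$ is literally the defining condition \eqref{AFdefeq2} of $T$ after conjugating by $\left[\begin{smallmatrix}&1\\1&\end{smallmatrix}\right]$, so that $h=\rho(t)$ for an explicit $t$; this is elementary and yields surjectivity on $F$-points directly. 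Your torus argument can be made to work, but as stated it is too quick: equality of dimensions and ``algebraic embedding'' do not by themselves give surjectivity on $F$-points (a surjection of algebraic groups such as squaring on $\mathbb{G}_m$ is not surjective on rational points). What saves the argument is that your evaluation-at-$1$ proof of injectivity works over any field extension, so the kernel of $\rho$ is trivial as an algebraic group; in characteristic zero an injective homomorphism into the connected two-dimensional group $\GSO(X_{m,\lambda})$ with two-dimensional source is then an isomorphism of algebraic groups, and only this stronger statement gives bijectivity on $F$-points. You should also record the one-line check that $\rho(t)$ lies in $\GSO(X_{m,\lambda})$ and not merely in $\GO(X_{m,\lambda})$, i.e.\ that the determinant of left multiplication by $t$ on $A$ equals $\det(t)=\Norm_{L/F}(t)$ (immediate in your basis $e_1,e_2$), since your similitude computation alone only places the image in $\GO(X_{m,\lambda})$. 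With these two points made explicit your route is correct; the paper's explicit matrix computation simply avoids any input from the theory of algebraic groups, at the cost of being less conceptual.
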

\begin{proof}
Let $m, \lambda \in F^\times$. In $X_{m,\lambda}$ let $x_1 =\left[ \begin{smallmatrix} 1 & \\ & 1 \end{smallmatrix}\right]$ and
$x_2 =\left[ \begin{smallmatrix}  &1 \\ m &  \end{smallmatrix}\right]$. Then $x_1,x_2$ is a basis for $X_{m,\lambda}$, and in this basis the matrix for $X_{m,\lambda}$ is $\lambda \left[\begin{smallmatrix} 1 & \\ & -m \end{smallmatrix} \right]$. Calculations using this matrix show that $\disc(X_{m,\lambda}) = mF^{\times 2}$ and  $\varepsilon(X_{m,\lambda}) = (\lambda, m)$.
The map $\rho$ is clearly injective. To see that $\rho$ is surjective, let $h \in \GSO(X_{m,\lambda})$. 
Write $h$ in the ordered basis $x_1,x_2$ so that $h = \left[ \begin{smallmatrix} h_1 & h_2 \\ h_3 & h_4 \end{smallmatrix} \right]$. By the definition of $\GSO(X_{m,\lambda})$, we have ${}^t h \lambda  \left[\begin{smallmatrix} 1 & \\ & -m \end{smallmatrix} \right] h = \det (h) \lambda   \left[\begin{smallmatrix} 1 & \\ & -m \end{smallmatrix} \right]$. By the definition of $T$, this implies that $t=\left[ \begin{smallmatrix} & 1 \\ 1 & \end{smallmatrix} \right] h \left[ \begin{smallmatrix} & 1 \\ 1 & \end{smallmatrix} \right] \in T$. Hence, $h = \left[ \begin{smallmatrix} h_1 & h_3m \\ h_3 & h_1 \end{smallmatrix} \right]$ for some $h_1,h_3 \in F$.  Calculations now show that $\rho(t) x_1 = h (x_1)$ and $\rho(t) x_2 = h (x_2)$, so that $\rho(t) = h$. This proves the first assertion. The second assertion follows from the fact that two non-degenerate symmetric bilinear spaces over $F$ with the same finite dimension are isomorphic if and only if they have the same discriminant and Hasse invariant. For the final 
assertion, let $(X, \langle\cdot,\cdot\rangle)$ be a two-dimensional, non-degenerate symmetric bilinear space over $F$. There exists a basis for $X$ with respect to which the matrix for $X$ is of the form $\left[\begin{smallmatrix} \alpha_1 & \\ & \alpha_2 \end{smallmatrix}\right]$ for some $\alpha_1, \alpha_2  \in F^\times$. Then $\disc(X) =- \alpha_1 \alpha_2F^{\times 2}$ and $\varepsilon(X) = (\alpha_1,\alpha_2)_F$. An argument shows that there exists $\lambda \in F^\times$ such that $(\lambda, \disc(X))_F = \varepsilon(X)$. 
We now have $(X,\langle\cdot,\cdot\rangle) \cong (X_{m,\lambda},\langle\cdot,\cdot\rangle_{m,\lambda})$ with $m=\disc(X)$ because both spaces have the same discriminant and Hasse invariant. 
\end{proof}

Next, define a four-dimensional non-degenerate symmetric bilinear space over $F$ by setting 
\begin{equation}
\label{Xmateq}
X_{\Mat_2}=\Mat_2(F), \qquad \langle x_1,x_2 \rangle_{\Mat_2} = \mathrm{tr} (x_1x_2^*)/2, \quad x_1,x_2 \in X_{\Mat_2}. 
\end{equation}
Here, $*$ is the canonical involution of $2 \times 2$ matrices, given by $\left[\begin{smallmatrix} a&b \\ c& d \end{smallmatrix} \right]^* =
\left[\begin{smallmatrix} d&-b \\ -c& a \end{smallmatrix} \right]$.  Define $\rho: \GL(2,F) \times \GL(2,F) \to \GSO(X_{\Mat_2})$ by 
$\rho(g_1,g_2)x = g_1 x g_2^*$ for $g_1,g_2 \in \GL(2,F)$ and $x \in X_{\Mat_2}$. The map $*:X_{\Mat_2} \to X_{\Mat_2}$ is contained in 
$\OO(X_{\Mat_2})$ but not in $\SO(X_{\Mat_2})$. 

Finally, let $H$ be the division quaternion algebra over $F$. Let $1,i,j,k$ be a quaternion algebra basis for $H$, i.e., 
\begin{equation}
\label{Hdefeq}
H = F + F i + F j + Fk, \quad i^2 \in F^\times,\  j^2  \in F^\times,\ k =ij,\ ij = -ji. 
\end{equation}
Let $*$ be the canonical involution on $H$ so that $(a+b\cdot i + c\cdot j +d \cdot k)^* = a-b\cdot i -c \cdot j -d \cdot k$, and define the norm and trace functions
$\Norm,\Trace: H \to F$ by $\Norm(x) =xx^*$ and $\Trace(x) = x+x^*$ for $x \in H$. 
Define another four-dimensional non-degenerate symmetric bilinear space over $F$ by setting
\begin{equation}
\label{XHeq}
X_{H}=H, \qquad \langle x_1,x_2 \rangle_H = \Trace (x_1x_2^*)/2, \quad x_1,x_2 \in X_H. 
\end{equation}
Define $\rho: H^\times \times H^\times \to \GSO(X_H)$ by  $\rho(h_1,h_2)x = h_1 x h_2^*$ for $h_1, h_2 \in H^\times$ and $x \in X_{H}$. The map $*:X_{H} \to X_{H}$ is contained in 
$\OO(X_{H})$ but not in $\SO(X_{H})$. 

\begin{lemma}
\label{fourdisconelemma}
The  symmetric bilinear space $(X_{\Mat_2},\langle\cdot,\cdot\rangle_{\Mat_2})$ is non-degenerate, has dimension four, discriminant $\disc(X_{\Mat_2})=1$, and Hasse invariant $\varepsilon(X_{\Mat_2}) = (-1,-1)$. 
The $(X_H,\langle\cdot,\cdot\rangle_H)$ symmetric bilinear space is non-degenerate, has dimension four, discriminant $\disc(X_H)=1$, and Hasse invariant $\varepsilon(X_H) = -(-1,-1)$. 
The sequences
\begin{gather}
1 \longrightarrow F^\times \longrightarrow \GL(2,F) \times \GL(2,F) \stackrel{\rho}{\longrightarrow} \GSO(X_{\Mat_2}) \longrightarrow 1, \label{GSOexacteq1}\\
1 \longrightarrow F^\times \longrightarrow H^\times \times H^\times \stackrel{\rho}{\longrightarrow}  \GSO(X_H) \longrightarrow 1 \label{GSOexacteq2}
\end{gather}
are exact; here, the second maps send $a$ to $(a,a^{-1})$ for $a \in F^\times$. The image $\rho(\GL(2,F) \times \GL(2,F))$ and the map $*$ generate $\GO(X_{\Mat_2})$, and the image $\rho(H^\times \times H^\times)$ and the map $*$ generate $\GO(X_{H})$. 
Every four-dimensional, non-degenerate symmetric linear space over $F$ of discriminant $1$ is isomorphic to  $(X_{\Mat_2},\langle\cdot,\cdot\rangle_{\Mat_2})$ or 
$(X_{H},\langle\cdot,\cdot\rangle_{H})$.
\end{lemma}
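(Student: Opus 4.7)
The plan is to treat the quadratic form invariants of $X_{\Mat_2}$ and $X_H$ uniformly, then analyze the homomorphism $\rho$ and the involution $*$ in parallel, and finally invoke the classification of four-dimensional non-degenerate symmetric bilinear spaces over $F$.

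For the first two assertions, I would exploit that both $\Mat_2(F)$ and $H$ are quaternion algebras, and in both the form $\langle x_1,x_2\rangle = \tfrac12\Trace(x_1 x_2^*)$ (interpreting $\Trace$ as the reduced trace) has associated quadratic form equal to the reduced norm. Choosing a quaternion basis $1,i,j,k$ with $i^2=\alpha$, $j^2=\beta$, $k=ij$ diagonalizes the form as $\langle 1,-\alpha,-\beta,\alpha\beta\rangle$. The discriminant is then $(\alpha\beta)^2F^{\times2}=F^{\times2}$, so $\disc=1$ in both cases. A short calculation with Hilbert symbols collapses the sixfold product $\prod_{i<j}(a_i,a_j)_F$ to $(-1,-1)_F\cdot(\alpha,\beta)_F$. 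Since $(\alpha,\beta)_F=1$ for $\Mat_2(F)$ and $-1$ for the division algebra $H$, we obtain $\varepsilon(X_{\Mat_2})=(-1,-1)$ and $\varepsilon(X_H)=-(-1,-1)$.

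For the exact sequences and surjectivity, I would first verify using the identities $gg^*=\det(g)\cdot\idm$ (respectively $hh^*=\Norm(h)\cdot\idm$) and cyclic invariance of the trace that $\rho(g_1,g_2)\in\GO$ with similitude $\det(g_1)\det(g_2)$ (respectively $\Norm(h_1)\Norm(h_2)$), and moreover lands in $\GSO$ by computing on a convenient basis (e.g.\ the standard matrix units for $X_{\Mat_2}$) that its determinant equals the square of the similitude factor. The kernel is computed by imposing $g_1 x g_2^*=x$ for all $x$: specializing to $x=\idm$ gives $g_2^*=g_1^{-1}$, and then $g_1$ must centralize all of $\Mat_2(F)$ (respectively $H$), forcing $g_1=a\idm$ for some $a\in F^\times$ and $g_2=a^{-1}\idm$. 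Surjectivity onto $\GSO$ is the main obstacle. For $X_{\Mat_2}$ one has an explicit exceptional isomorphism $\GSO(X_{\Mat_2})\cong(\GL(2)\times\GL(2))/F^\times$, and one can either quote this or argue directly: a dimension count shows $\rho(\GL(2)\times\GL(2))$ has full dimension in $\GSO$, is closed, and any element of $\GSO$ can be reduced—via a first factor and transitivity on the variety of isotropic lines—to an explicit element in the image. For $X_H$ the same strategy works using $H^\times\times H^\times$, noting that $X_H$ is anisotropic so one argues by fixing similitude factors and working with the anisotropic norm form.

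For the generation of $\GO$ by $\rho$ and $*$, I would note that $*$ preserves the form (since $\Trace(xy^*)=\Trace(yx^*)$ for both algebras via $(xy^*)^*=yx^*$ and trace invariance under $*$), so $*\in\OO$. That $*\notin\SO$ is immediate in the $H$-case because $*$ fixes $F\cdot 1$ and negates the three-dimensional space of pure quaternions, giving $\det=-1$; in the $\Mat_2$-case one verifies it on the basis $e_{11},e_{22},e_{12},e_{21}$ where $*$ swaps $e_{11}\leftrightarrow e_{22}$ and negates $e_{12},e_{21}$, again giving $\det=-1$. Combined with the exact sequences, this shows $\langle \rho,*\rangle=\GO$. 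Finally, the classification statement follows from the standard fact that over the non-archimedean local field $F$, two non-degenerate symmetric bilinear spaces of the same dimension are isomorphic iff they share discriminant and Hasse invariant; the two spaces $X_{\Mat_2}$ and $X_H$ have $\disc=1$ and the two possible Hasse invariants $\pm(-1,-1)$, hence exhaust the isometry classes of four-dimensional spaces of discriminant~$1$.
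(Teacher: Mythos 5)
The paper gives no argument for this lemma at all---it simply points to Sect.~2 of \cite{Roberts2001}---so your self-contained computation is by nature a different route, and most of it is sound. Identifying the quadratic form with the reduced norm and diagonalizing it as $\langle 1,-\alpha,-\beta,\alpha\beta\rangle$ does give $\disc=1$ and $\varepsilon=(-1,-1)_F(\alpha,\beta)_F$, hence the two stated Hasse invariants; the kernel computation (specialize to $x=1$, then use centrality) is correct; $*$ is an isometry of determinant $-1$ in both models; and the final step is exactly the standard classification of non-degenerate spaces over a local field by dimension, discriminant and Hasse invariant, which the paper itself uses elsewhere.

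The genuine gap is the surjectivity of $\rho$ onto $\GSO(X)$, which is the substance of the exactness claims. Your first option, quoting the exceptional isomorphism $\GSO(X_{\Mat_2})\cong(\GL(2,F)\times\GL(2,F))/F^\times$, is quoting the statement being proved (and is what the paper's citation amounts to). Your direct argument, ``closed image of full dimension'' plus transitivity on isotropic lines, does not suffice at the level of $F$-points: over a non-archimedean field a closed subgroup of full dimension can be proper --- compare $F^{\times 2}\subset F^\times$, or $\rho(\SL(2,F)\times\SL(2,F))$, which is closed and full-dimensional but of index $\#F^\times/F^{\times 2}$ in $\SO(X_{\Mat_2})$ --- and $X_H$ has no isotropic lines at all, so the sketch for the anisotropic case has no content. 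Two standard repairs: (a) read \eqref{GSOexacteq1} and \eqref{GSOexacteq2} as exact sequences of algebraic groups with central kernel ${\mathbb G}_m$ and apply Hilbert's Theorem 90, $H^1(F,{\mathbb G}_m)=1$, to get surjectivity on $F$-points; or (b) argue classically and uniformly in the quaternion algebra $B$ ($B=\Mat_2(F)$ or $H$): reduce to similitude factor $1$ by composing with $\rho(g,1)$ where $\Norm(g)=\lambda$ (possible since the reduced norm of $B$ is surjective onto $F^\times$), observe that for anisotropic $u$ the reflection in $u^\perp$ is $x\mapsto -u x^* u/\Norm(u)$, so a product of two reflections is of the form $x\mapsto a x b$ with $a,b\in B^\times$, i.e.\ lies in the image of $\rho$, and conclude by Cartan--Dieudonn\'e that all of $\SO(X)$ does. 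Either fix closes the argument; alternatively one can simply cite the reference, as the paper does.
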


\begin{proof}
See, for example, Sect.~2 of \cite{Roberts2001}. 
\end{proof}

\subsection{Embeddings}
Suppose that $(X,\langle\cdot, \cdot \rangle)$ satisfies \eqref{Xtypeseq}. 
We define an action of the group $\GL(2,F) \times \GO(X)$ on the set $X^2$ by 
\begin{equation}
\label{gl2goeq}
 (g,h)\cdot (x_1,x_2) = (hx_1,hx_2)g^{-1} = (g'_1hx_1+g'_3hx_2,g'_2hx_1+g'_4hx_2)
\end{equation}
for $(x_1,x_2) \in X^2$,  $h \in \GO(X)$ and $g \in \GL(2,F)$ with $g^{-1}=\left[ \begin{smallmatrix} g'_1& g'_2 \\ g'_3 & g'_4 \end{smallmatrix} \right]$.
For $S$ as in \eqref{Sdefeq} with $\det(S) \neq 0$, we define
\begin{equation}
\label{omegaSdefeq}
\Omega=\Omega_S =\Omega_{S,(X,\langle\cdot, \cdot\rangle)}= \{(x_1,x_2) \in X^2: \begin{bmatrix} \langle x_1,x_1 \rangle & \langle x_1, x_2\rangle \\ \langle x_1 , x_2 \rangle & \langle x_2 , x_2\rangle \end{bmatrix} = S \}. 
\end{equation}
We  say that $(X,\langle\cdot, \cdot\rangle)$ \emph{represents} $S$ if the set $\Omega$ is non-empty. 
\begin{lemma}
\label{ghlemma}
Let $(X,\langle\cdot,\cdot\rangle)$ be a non-degenerate symmetric bilinear space over $F$ satisfying \eqref{Xtypeseq},
and let $S$ be as in \eqref{Sdefeq} with $\det(S) \neq 0$. 
The subgroup 
\begin{equation}
\label{Bdefeq}
B=B_S= \{ (g,h) \in \GL(2,F) \times \GO(X): \text{${}^t g S g = \det(g) S$ and $\det(g) = \lambda (h)$} \}
\end{equation}
maps $\Omega=\Omega_S$ to itself under the action of $\GL(2,F) \times \GO(X)$ on $X^2$.
\end{lemma}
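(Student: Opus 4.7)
The plan is a direct Gram matrix computation, showing that the conditions defining $B$ are precisely what is needed for $\Omega$ to be stable. Let $(x_1,x_2) \in \Omega$ and $(g,h) \in B$, and write $(y_1,y_2) = (g,h)\cdot(x_1,x_2) = (hx_1,hx_2)g^{-1}$. The goal is to verify that the Gram matrix
$$
G' = \begin{bmatrix} \langle y_1,y_1\rangle & \langle y_1,y_2\rangle \\ \langle y_1,y_2\rangle & \langle y_2,y_2\rangle \end{bmatrix}
$$
equals $S$.

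First I would handle the effect of $h$ alone: since $h \in \GO(X)$ with similitude factor $\lambda(h)$, the bilinearity gives $\langle hx_i, hx_j\rangle = \lambda(h)\langle x_i,x_j\rangle$, so the Gram matrix of $(hx_1,hx_2)$ is $\lambda(h)S$. Next I would handle the right action by $g^{-1}$: writing $y_i = \sum_j (g^{-1})_{ji}\,(hx_j)$ and expanding $\langle y_i,y_k\rangle$ bilinearly gives the standard transformation rule
$$
G' = {}^t(g^{-1})\,\bigl(\lambda(h)S\bigr)\,g^{-1} = \lambda(h)\,{}^t(g^{-1})\,S\,g^{-1}.
$$
Finally I would feed in the two defining conditions of $B$. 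From ${}^tgSg = \det(g)S$ one obtains ${}^t(g^{-1})\,S\,g^{-1} = \det(g)^{-1}S$, and then the condition $\det(g) = \lambda(h)$ yields
$$
G' = \lambda(h)\det(g)^{-1}S = S,
$$
so $(y_1,y_2) \in \Omega$, as required.

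There is no serious obstacle here: the group $B$ has been engineered precisely so that the two transformations (scaling by $\lambda(h)$ from $h$, and conjugation by ${}^t(g^{-1})\,\cdot\,g^{-1}$ from right multiplication by $g^{-1}$) cancel on $S$. The only thing to be careful about is the direction of the action in \eqref{gl2goeq}: since the action involves $g^{-1}$ rather than $g$, it is worth explicitly verifying that the standard formula for the Gram matrix under right multiplication by a $2\times 2$ matrix is being applied to the correct matrix, which is a bookkeeping matter rather than a mathematical difficulty.
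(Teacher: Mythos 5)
Your proof is correct and follows essentially the same route as the paper's: both verify that the Gram matrix of $(g,h)\cdot(x_1,x_2)$ equals $S$ by combining the similitude scaling $\langle hx_i,hx_j\rangle=\lambda(h)\langle x_i,x_j\rangle$ with the equivalence of ${}^tgSg=\det(g)S$ and ${}^tg^{-1}Sg^{-1}=\det(g)^{-1}S$, together with $\det(g)=\lambda(h)$. The only difference is cosmetic: you phrase the computation at the level of the Gram matrix, while the paper writes out the same identities entrywise.
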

\begin{proof} 
Let $(g,h) \in B$, and 
let $g =\left[ \begin{smallmatrix} g_1&g_2\\g_3&g_4 \end{smallmatrix} \right]$. To start, we note that the assumption ${}^tgSg=\det(g)S$ is equivalent to ${}^tg^{-1}Sg^{-1}=\det(g)^{-1}S$, which is in turn equivalent to
\begin{align*}
 ag_4^2-bg_3g_4+cg_3^2 & = \det (g) a,\\
-ag_4g_2+b(g_1g_4+g_2g_3)/2-cg_3g_1 &= \det(g) b/2,\\
ag_2^2 - b g_2g_1 +cg_1^2 & = \det (g) c. 
\end{align*}
Let $(x_1,x_2) \in \Omega$ and set $(y_1,y_2) = (g,h_1(t))\cdot (x_1,x_2)$. By the definition of the action and $\Omega$, and using $\det(g) = \lambda (h)$, we have
\begin{align*}
\langle y_1, y_1 \rangle
&= \det(g)^{-1} \big( g_4^2 \langle x_1,x_1 \rangle -2 g_3 g_4 \langle x_1,x_2 \rangle + g_3^2 \langle x_2,x_2 \rangle\big) \\
& = \det(g)^{-1} \big( g_4^2 a- g_3 g_4 b + g_3^2 c\big) \\
&=a. 
\end{align*}
Similarly, $\langle y_1,y_2 \rangle = b/2$ and $\langle y_2,y_2 \rangle =c$. It follows that $(y_1,y_2) = (g,h_1(t))(x_1,x_2) \in \Omega$.
\end{proof}

Let $(X,\langle\cdot,\cdot\rangle)$ be a non-degenerate symmetric bilinear space over $F$ satisfying \eqref{Xtypeseq}, and let $S$ be as in \eqref{Sdefeq} with $\det(S) \neq 0$. Assume that $\Omega$ is non-empty, and let $T=T_S$, as in Sect.~\ref{anotheralgebrasubsec}. The goal of this section is to define, for each $z \in \Omega$, a set 
\begin{equation}
\label{Ecaleq}
\mathcal{E}(z) = \mathcal{E}_{(X,\langle\cdot,\cdot\rangle), S}(z)
\end{equation}
of embeddings  $\tau:T \to \GSO(X)$ such that:
\begin{align}
&\text{$\tau(t)=t$ for $t \in F^\times \subset T$;}\label{tauoneeq} \\
&\text{$\lambda(\tau(t))=\det(t)$ for $t \in T$, so that $(t,\tau(t)) \in B$ for $t \in T$;}\label{tautwoeq} \\
&\text{$(\begin{bmatrix}&1\\1&\end{bmatrix}t\begin{bmatrix}&1\\1&\end{bmatrix},\tau(t))\cdot z = z$ for $t\in T$.}\label{tauthreeeq}
\end{align}

We begin by noting some properties of $\Omega$. The set $\Omega$ is closed in $X^2$. The subgroup $\OO(X) \cong 1\times \OO(X) \subset B \subset \GL(2,F) \times \GO(X)$ preserves $\Omega$, i.e., if $ h \in \OO(X)$ and $(x_1,x_2) \in \Omega$, then $(hx_1,hx_2) \in \Omega$. Since  $\det(S) \neq 0$, the group $\OO(X)$ acts transitively  on $\Omega$. If $\dim X =4$, then $\SO(X)$ acts transitively on $\Omega$. If $\dim X =2$, then the action of 
$\SO(X)$ on $\Omega$ has two orbits.

\begin{lemma}
\label{simlemma}
Let $(X,\langle\cdot,\cdot\rangle)$ be a non-degenerate symmetric bilinear space over $F$ satisfying \eqref{Xtypeseq},
and let $S$ be as in \eqref{Sdefeq} with $\det(S) \neq 0$. Assume that $\dim X =2$ and $\Omega$ is non-empty. 
Let  $z=(z_1,z_2) \in \Omega$. For
$$
t =\mat{}{1}{1}{} g \mat{}{1}{1}{}= \mat{}{1}{1}{} \mat{g_1}{g_2}{g_3}{g_4} \mat{}{1}{1}{} \in T
$$
let $\tau_z(t):X \to X$ be the linear map 
 that has $g $ as matrix in the ordered
basis $z_1,z_2$ for $X$, so that
\begin{align*}
\tau_z(t)  (z_1) &= g_1 z_1+g_3z_2,\\
\tau_z(t)  (z_2) &= g_2z_1+g_4 z_2.
\end{align*}
\begin{enumerate}
\item For $t \in T$, the map $\tau_z(t)$  is contained in $\GSO(X)$ and $\lambda(\tau_z(t)) =\det(t)$.
\item If $z'$ lies in the  $\SO(X)$ orbit of $z$, and $t \in T$, then $\tau_z(t) = \tau_{z'}(t)$. 
\item The map sending $t$ to $\tau_z(t)$ defines an isomorphism
$
\tau_z:T\stackrel{\sim}{\longrightarrow} \GSO(X). 
$
\item Let $h_0 \in \OO(X)$ with $\det(h_0)=-1$. Let $z' \in \Omega$ not be in the $\SO(X)$ orbit of $z$. Then $\tau_{z'}(t) = h_0 \tau_z(t) h_0^{-1}$ for $t \in T$. 
\item Let $t \in T$. The element $(\left[\begin{smallmatrix}&1\\1&\end{smallmatrix}\right] t \left[\begin{smallmatrix}&1\\1&\end{smallmatrix}\right],\tau_z(t)) \in  B$ acts by the identity on the $\SO(X)$ orbit of $z$, and maps the other $\SO(X)$ orbit of $\Omega$ to itself.  
\end{enumerate}
\end{lemma}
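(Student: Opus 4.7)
The plan is to reduce everything to the defining matrix identity ${}^tgSg=\det(g)S$ for $T$ from \eqref{Tdefeq2}. The key observation is that, since $z=(z_1,z_2)\in\Omega$, the Gram matrix of $\langle\cdot,\cdot\rangle$ in the ordered basis $z_1,z_2$ equals $S$. Consequently the defining relation of $T$ is exactly the statement that the linear map $\tau_z(t)$ with matrix $g$ in this basis lies in $\GO(X)$ with multiplier $\det(g)=\det(t)$, which is (i); the strengthening $\tau_z(t)\in\GSO(X)$ follows because in dimension two the identity $\det=\lambda$ distinguishes $\GSO$ inside $\GO$, and here both equal $\det(g)$. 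Running this reasoning in reverse (every $h\in\GSO(X)$ has a matrix $g$ in the basis $z_1,z_2$ satisfying the required relation) yields surjectivity in (iii); injectivity and the homomorphism property are immediate, the two flipping matrices $\left[\begin{smallmatrix}&1\\1&\end{smallmatrix}\right]$ sandwiching $g$ in the definition of $t$ cancelling cleanly.

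For (ii) and (iv), I would establish in one step the identity $\tau_{h\cdot z}(t)=h\,\tau_z(t)\,h^{-1}$ for every $h\in\OO(X)$ via a routine change-of-basis computation (the matrix of $\tau_z(t)$ in the basis $z_1,z_2$ equals the matrix of $h\,\tau_z(t)\,h^{-1}$ in the basis $hz_1,hz_2$), and then note that $\GSO(X)$ is abelian in dimension two, visible from the isomorphism $\GSO(X)\cong T\cong L^\times$ given by Lemma \ref{twodimclasslemma}. Specializing to $h\in\SO(X)\subset\GSO(X)$ makes the conjugation trivial and yields (ii); specializing to $h=h_0$ and combining with (ii) to replace a general representative of the other orbit by $h_0\cdot z$ yields (iv).

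The substantive part is (v). First I would check by direct unravelling of \eqref{gl2goeq} that $(g,\tau_z(t))\cdot z=z$, the computation collapsing to $gg^{-1}=I$. To upgrade this pointwise fact to a statement about the two $\SO(X)$ orbits, I would use a commutator argument inside $\GL(2,F)\times\GO(X)$: setting $A=(g,\tau_z(t))$ and $B=(1,h)$ for $h\in\OO(X)$, one computes $[A,B]=(1,[\tau_z(t),h])$, and since $\det$ and $\lambda$ of $[\tau_z(t),h]$ are both $1$, the commutator lies in $\{1\}\times\SO(X)$. Combined with $A\cdot z=z$, this yields the key formula
$$A\cdot(hz)=(AB)\cdot z=(ABA^{-1})\cdot z=[A,B]\cdot(hz).$$
Taking $h\in\SO(X)$, abelianness of $\GSO(X)$ kills $[\tau_z(t),h]$ entirely, so $A\cdot(hz)=hz$ and $A$ fixes the entire $\SO(X)$ orbit of $z$ pointwise. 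Taking $h_0\in\OO(X)\setminus\SO(X)$, the image $A\cdot(h_0z)$ lies in the $\SO(X)$ orbit of $h_0z$, which is the other orbit, as required. The main obstacle throughout is the bookkeeping, tracking the two factors of the product group, the flipping matrix relating $t$ to its representative $g$, and the two $\SO(X)$ orbits on $\Omega$; once that organization is in place each individual verification is elementary.
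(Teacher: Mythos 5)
Your proposal is correct and follows essentially the same route as the paper: since $z\in\Omega$ the Gram matrix of the form in the basis $z_1,z_2$ is $S$, so the defining relation ${}^tgSg=\det(g)S$ yields (i) and (iii), the change-of-basis identity $\tau_{h\cdot z}(t)=h\,\tau_z(t)\,h^{-1}$ combined with commutativity of $\GSO(X)$ yields (ii) and (iv), and the direct computation $(g,\tau_z(t))\cdot z=z$ is the heart of (v). The only (cosmetic) difference is in the second half of (v), where you show directly that the relevant commutator lies in $\SO(X)$, while the paper reaches the same conclusion by a short contradiction argument based on the fixed-point property.
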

\begin{proof}
i) A computation verifies that $\tau_z(t) \in \GO(X)$, with similitude factor $\lambda(\tau_z(t))=\det(g)=\det(t)$, and the equality $\det(\tau_z(t))=\lambda(\tau_z(t))$ implies that $\tau_z(t) \in \GSO(X)$ by the definition of $\GSO(X)$.  

ii) Suppose that  $z'=(z_1',z_2')$  lies in the  $\SO(X)$ orbit of $z$, and let $c \in \SO(X)$ be such that $c(z_1) = z_1'$ and $c(z_2) = z_2'$. Then
$\tau_{z'}(t)=c\tau_z(t)c^{-1}$. But the group $\GSO(X)$ is abelian, so that $\tau_{z'}(t)=c\tau_z(t)c^{-1} = \tau_z(t)$. 

iii) Calculations prove that $\tau_z:T \to \GSO(X)$ is an isomorphism. 

iv) Let $z''=h_0(z)$.  A calculation shows that $\tau_{z''}(t) = h_0 \tau_z(t) h_0^{-1}$ for $t \in T$. By, ii), $\tau_{z''}(t)=\tau_{z'}(t)$ for $t \in T$. 

v) Write $g =\left[\begin{smallmatrix}&1\\1&\end{smallmatrix}\right] t \left[\begin{smallmatrix}&1\\1&\end{smallmatrix}\right]$, so that ${}^tgSg=\det(g)S$. 
Let $g =\left[ \begin{smallmatrix} g_1&g_2\\g_3&g_4 \end{smallmatrix} \right]$.  By the definition of $\tau_z(t)$, we have
\begin{align*}
(g,\tau_z(t))\cdot z
&=(\det(g)^{-1} g_4(g_1z_1+g_3z_2)-\det(g)^{-1}  g_3(g_2z_1+g_4z_2), \\
&\qquad \det(g)^{-1}  (-g_2)(g_1z_1+g_3z_2) + \det(g)^{-1}  g_1(g_2z_1+g_4z_2))\\
&=z.
\end{align*}
By ii),  it follows that $(g,\tau_z(t))$ acts by the identity on all of the $\SO(X)$ orbit of $z$. 
Next, let $z' \in \Omega$ with $z' \notin \SO(X)  z$. Assume that 
$(g,\tau_z(t))\cdot z' \in \SO(X) z$; we will obtain a contradiction. Since $(g,\tau_z(t))\cdot z' \in \SO(X)z$ and since we have already proved that $(g,\tau_z(t))$ acts
by the identity on $\SO(X)z$, we have:
\begin{align*}
(g,\tau_z(t)) \cdot \big((g,\tau_z(t)) \cdot z'\big) &= (g,\tau_z(t)) \cdot z' \\
(g,\tau_z(t))\cdot  z' & =  z'.
\end{align*}
This is a contradiction since $z' \notin \SO(X)z$ and $(g,\tau_z(t))\cdot z' \in \SO(X) z$. 
\end{proof}

Let $(X,\langle\cdot,\cdot\rangle)$ be a non-degenerate symmetric bilinear space over $F$ satisfying \eqref{Xtypeseq},
and let $S$ be as in \eqref{Sdefeq} with $\det(S) \neq 0$. Assume that $\dim X =2$ and $\Omega$ is non-empty. For
$z \in \Omega$, we define
\begin{equation}
\label{twodimcalEeq}
\mathcal{E}(z) = \mathcal{E}_{(X,\langle\cdot,\cdot\rangle), S}(z) = \{ \tau_z \},
\end{equation}
with $\tau_z$ as defined in Lemma \ref{simlemma}. It is evident from Lemma \ref{simlemma} that the element of $\mathcal{E}(z)$
has the properties \eqref{tauoneeq}, \eqref{tautwoeq}, and \eqref{tauthreeeq}.

\begin{lemma}
\label{twoscalarslemma}
Let $(X,\langle\cdot,\cdot\rangle)$ be a non-degenerate symmetric bilinear space over $F$ satisfying \eqref{Xtypeseq},
and let $S$ be as in \eqref{Sdefeq} with $\det(S) \neq 0$. Assume that $\dim X =2$. Let $\lambda, \lambda'\in F^\times$, and set
$\Omega=\Omega_{\lambda S}$ and $\Omega' = \Omega_{\lambda' S}$. Assume that $\Omega$ and $\Omega'$ are
non-empty. Then
\begin{equation}
\label{Eunioneq}
\bigcup\limits_{z \in \Omega} \mathcal{E}(z) = \bigcup\limits_{z' \in \Omega'} \mathcal{E}(z').
\end{equation}
\end{lemma}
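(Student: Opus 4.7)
My plan is as follows. Because the condition ${}^tgSg=\det(g)S$ defining $T_S$ via \eqref{AFdefeq2} is unchanged when $S$ is replaced by any scalar multiple, we have $T_{\lambda S}=T_{\lambda'S}=T_S$; thus both sides of \eqref{Eunioneq} consist of embeddings of the same group $T=T_S$ into $\GSO(X)$. By \eqref{twodimcalEeq} each set $\mathcal{E}(z)$ is the singleton $\{\tau_z\}$, so it will suffice to show that for every $z=(z_1,z_2)\in\Omega$ there exists $z'\in\Omega'$ with $\tau_{z'}=\tau_z$; the reverse inclusion will then follow by symmetry.

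To produce such a $z'$, I would try $z'=(z_1,z_2)N$ for some $N\in\GL(2,F)$, where the tuple on the right means that the new basis vectors are the $F$-linear combinations of $z_1,z_2$ with coefficient matrix $N$. A direct calculation shows that the Gram matrix of $z'$ equals ${}^tN(\lambda S)N$, so the requirement $z'\in\Omega'$ becomes ${}^tNSN=(\lambda'/\lambda)S$. On the other hand, the usual change-of-basis computation shows that the matrix of $\tau_{z'}(t)$ in the basis $z$ equals $NgN^{-1}$, where $g=\mat{}{1}{1}{}t\mat{}{1}{1}{}$; hence the condition $\tau_{z'}=\tau_z$ is equivalent to $N$ commuting with every element of $G_S=\{g\in\GL(2,F):{}^tgSg=\det(g)S\}$.

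The key structural input is that $G_S$ is self-centralizing in $\GL(2,F)$: it is conjugate via $\mat{}{1}{1}{}$ to $T_S=A_S^{\times}$, and $A_S$ is a maximal commutative $F$-subalgebra of $\Mat_2(F)$ (being two-dimensional). So the commuting condition forces $N\in G_S$, in which case ${}^tNSN=\det(N)S$ holds automatically, and the Gram-matrix condition reduces simply to $\det(N)=\lambda'/\lambda$.

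It remains to produce $N\in G_S$ with $\det(N)=\lambda'/\lambda$. Under the isomorphism $G_S\cong T_S\cong L^{\times}$, the determinant corresponds to $\Norm_{L/F}$, so $\det(G_S)=\Norm_{L/F}(L^{\times})$. To verify that $\lambda'/\lambda$ lies in this image I would use the hypothesis that both $\Omega$ and $\Omega'$ are non-empty: then $X$ represents both $\lambda S$ and $\lambda'S$, so by Lemma \ref{twodimclasslemma} their Hasse invariants both equal $\varepsilon(X)$. Applying the identity $\varepsilon(\lambda S)=(\lambda,\disc(S))_F\,\varepsilon(S)$, a routine consequence of bimultiplicativity of the Hilbert symbol together with $\disc(\lambda S)=\disc(S)$, one obtains $(\lambda/\lambda',\disc(S))_F=1$, which is precisely the statement that $\lambda'/\lambda\in\Norm_{L/F}(L^{\times})$. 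The only step requiring genuine care is the self-centralizing property of $G_S$; the remainder is bookkeeping, and I do not anticipate additional obstacles.
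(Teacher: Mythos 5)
Your argument is correct, but it runs along a genuinely different track than the paper's. The paper takes an arbitrary $z\in\Omega$ and an arbitrary $z'\in\Omega'$ (non-empty by hypothesis), observes that the linear map $h$ sending $z_i\mapsto z_i'$ is automatically a similitude with factor $\lambda'/\lambda$ because the two Gram matrices are proportional, corrects $h$ into $\GSO(X)$ by switching $\SO(X)$-orbits if $\det(h)=-\lambda(h)$, and then uses $h\,\tau_z(t)\,h^{-1}=\tau_{z'}(t)$ together with the commutativity of $\GSO(X)$ and parts ii), iv) of Lemma \ref{simlemma} to identify the two unions; no arithmetic beyond non-emptiness is needed. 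You instead keep $z$ fixed and act on the frame from the right, seeking $N\in\GL(2,F)$ with ${}^tNSN=(\lambda'/\lambda)S$ that centralizes $G_S=\{g:{}^tgSg=\det(g)S\}$; since $G_S$ is abelian (indeed self-centralizing, as $A_S$ is a maximal commutative subalgebra and $T_S$ spans it), it suffices to find $N\in G_S$ with $\det(N)=\lambda'/\lambda$, i.e.\ to show $\lambda'/\lambda\in\Norm_{L/F}(L^\times)$, which you extract from the equality of Hasse invariants $\varepsilon(\lambda S)=\varepsilon(\lambda'S)=\varepsilon(X)$ via $\varepsilon(\lambda S)=(\lambda,\disc S)_F\,\varepsilon(S)$. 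Both proofs ultimately rest on the commutativity of a two-dimensional torus, but the paper's is softer: non-emptiness of $\Omega'$ already hands it the required frame, so no Hilbert-symbol computation is needed, and the two-orbit bookkeeping is delegated to Lemma \ref{simlemma}. Your route costs the extra arithmetic step but buys something concrete: it shows that a frame of $\Omega'$ inducing the same embedding can be obtained from $z$ by an element of $T$ itself, it re-derives the norm constraint $(\lambda'/\lambda,\disc S)_F=1$ linking the two scalars, and it handles both orbits at once by the symmetry of the construction rather than by conjugation with an element of $\OO(X)\setminus\SO(X)$. Two cosmetic points: the fact that both Hasse invariants equal $\varepsilon(X)$ is really the classification of binary forms by discriminant and Hasse invariant as used in the proof of Lemma \ref{lambdaSlemma} (Lemma \ref{twodimclasslemma} concerns the model spaces $X_{m,\lambda}$), and in passing from ``$N$ commutes with the group $G_S$'' to ``$N$ commutes with the algebra $A_S$'' you should note that $T_S$ spans $A_S$ over $F$ — both are immediate, and for the construction itself commutativity of $G_S$ alone already suffices.
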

\begin{proof}
Let $\Omega_1$ and $\Omega_2$ be the two $\SO(X)$ orbits of the action of $\SO(X)$ on $\Omega$ so that $\Omega=\Omega_1 \sqcup \Omega_2$, and analogously define and write
$\Omega' = \Omega_1' \sqcup \Omega_2'$. Let $z=(z_1,z_2) \in \Omega_1$ and $z'=(z_1',z_2') \in \Omega'_1$. Define a linear map $h:X \to X$ by setting $h(z_1) = z_1'$ and $h(z_2)=z_2'$. We have $\langle h(x), h(y) \rangle = (\lambda' /\lambda) \langle x,y \rangle $ for $x,y\in X$, 
so that $h \in \GO(X)$. Assume that $h \notin \GSO(X)$. Let $z''=(z_1'',z_2'') \in \Omega_2'$,
and let $h':X\to X$ be the linear map defined by $h'(z_1')=z_1''$ and $h'(z_2')=z_2''$. Then $h' \in \OO(X)$ with $\det (h') =-1$, so that $h'h \in \GSO(X)$ and $(h'h)(z_1)=z_1''$ and $(h'h)(z_2)=z_2''$. Therefore, by renumbering if necessary, we may assume that $h \in \GSO(X)$. Next, a calculation shows that $h\tau_z(t)h^{-1} = \tau_{z'}(t)$ for $t \in T$. Since $\GSO(X)$ is abelian, this means that $\tau_z =\tau_{z'}$. The claim \eqref{Eunioneq} follows now from ii) and iv) of Lemma \ref{simlemma}.
\end{proof}

\begin{lemma}
\label{Zdecomplemma}
Let $(X,\langle\cdot,\cdot\rangle)$ be a non-degenerate symmetric bilinear space over $F$ satisfying \eqref{Xtypeseq},
and let $S$ be as in \eqref{Sdefeq} with $\det(S) \neq 0$. Assume that $\dim X =4$ and $\Omega$ is non-empty. Let $z=(z_1,z_2) \in \Omega$,
and set $U = Fz_1+Fz_2$, so that $X = U \oplus U^\perp$ with $\dim U =\dim U^\perp =2$. 
There exists $\lambda \in F^\times$ such that $(U^\perp, \langle \cdot, \cdot \rangle)$ represents $\lambda S$. 
\end{lemma}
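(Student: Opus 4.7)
The plan is to reduce the claim to an isomorphism problem for two-dimensional non-degenerate symmetric bilinear spaces, to which Lemma \ref{twodimclasslemma} applies. Since $\det(S)\neq 0$, the Gram matrix of $(z_1,z_2)$ on $U$ is non-degenerate, so $X = U \oplus U^\perp$ orthogonally with $\dim U^\perp = 2$. Finding $(y_1,y_2) \in U^\perp \times U^\perp$ with Gram matrix $\lambda S$ is equivalent to producing an isometry $U^\perp \cong V_{\lambda S}$, where $V_{\lambda S}$ denotes $F^2$ equipped with the form whose Gram matrix in the standard basis is $\lambda S$; thus it suffices to find $\lambda$ for which $U^\perp$ and $V_{\lambda S}$ are isomorphic as symmetric bilinear spaces.

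By Lemma \ref{twodimclasslemma}, this holds if and only if $U^\perp$ and $V_{\lambda S}$ share the same discriminant and the same Hasse invariant. The discriminant matches automatically: multiplicativity of the discriminant in orthogonal direct sums, combined with the hypothesis $\disc(X)=1$, yields $\disc(U^\perp)=\disc(U)=\disc(S)$, while $\disc(\lambda S) = -\lambda^2\det(S)\cdot F^{\times 2} = \disc(S)$ for every $\lambda \in F^\times$. For the Hasse invariant, a short Hilbert symbol computation using a diagonalization $S \sim \mathrm{diag}(a_1,a_2)$ gives
\[
\varepsilon(\lambda S) = (\lambda a_1,\lambda a_2)_F = (\lambda,-a_1a_2)_F\cdot(a_1,a_2)_F = (\lambda,-\det(S))_F\cdot\varepsilon(S) = \chi_{L/F}(\lambda)\cdot\varepsilon(S).
\]

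If $L$ is a quadratic field extension of $F$, then $\chi_{L/F}\colon F^\times\to\{\pm 1\}$ is surjective, so one may choose $\lambda\in F^\times$ with $\chi_{L/F}(\lambda)\varepsilon(S) = \varepsilon(U^\perp)$, and the proof concludes. If instead $L = F\times F$, so that $S$ is split, then $\disc(S)=1$ and the form associated to $S$ on $F^2$ has a non-zero isotropic vector; hence $U \cong V_S$ is a hyperbolic plane, and the constraint $\disc(U^\perp)=\disc(S)=1$ forces $U^\perp$ to be hyperbolic as well, since any non-degenerate two-dimensional symmetric bilinear space over $F$ of trivial discriminant is hyperbolic. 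Consequently $U^\perp \cong U \cong V_S$, and $\lambda = 1$ works.

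The main technical content is the Hasse invariant computation together with the bifurcation according to whether $L/F$ is a field; everything else is a routine application of Lemma \ref{twodimclasslemma}. No step should require more than a few lines of direct manipulation of Hilbert symbols.
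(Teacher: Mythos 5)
Your argument is correct, but it follows a genuinely different route from the paper's. The paper compares four-dimensional spaces: it introduces $X_\lambda=\Mat_{4,1}(F)$ with Gram matrix $\left[\begin{smallmatrix}S&\\&\lambda S\end{smallmatrix}\right]$, computes $\disc(X_\lambda)=1$ and $\varepsilon(X_\lambda)=(-1,-1)_F(-\lambda,D)_F$, and then splits into cases according to whether $X$ is isotropic (take $\lambda=-1$) or anisotropic (use that $X$ anisotropic and representing $S$ forces $D\notin F^{\times2}$, so some $\lambda$ gives $(-\lambda,D)_F=-1$); in either case $X_\lambda\cong X$, and the Witt cancellation theorem then yields that $U^\perp$ represents $\lambda S$. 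You instead compare binary forms directly: $\disc(U^\perp)=\disc(U)=\disc(S)$ is forced by $\disc(X)=1$ and multiplicativity of determinants in orthogonal sums, the twist $S\mapsto\lambda S$ changes the Hasse invariant by $(\lambda,-\det S)_F=\chi_{L/F}(\lambda)$, and then surjectivity of $\chi_{L/F}$ settles the case where $L$ is a field, while in the split case both $U^\perp$ and the plane defined by $S$ are hyperbolic (trivial discriminant), so $\lambda=1$ works. Thus your case distinction is on whether $L$ is a field rather than on whether $X$ is isotropic, and you avoid Witt cancellation altogether, at the price of invoking the classification of binary forms by discriminant and Hasse invariant — a fact the paper also uses (inside the proof of Lemma~\ref{twodimclasslemma}), so nothing extra is really needed. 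Both proofs are of comparable length; yours is slightly more elementary and makes visible exactly how the admissible $\lambda$ are constrained ($\chi_{L/F}(\lambda)=\varepsilon(U^\perp)\varepsilon(S)$), while the paper's four-dimensional comparison plus cancellation fits the way the lemma is later applied to the explicit spaces $X_{\Mat_2}$ and $X_H$.
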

\begin{proof} Let $\Mat_{4,1}(F)$ be the $F$ vector space of $4 \times 1$ matrices with entries from $F$. 
Let $D=-\det(S)$. 
Let $\lambda \in F^\times$, and define a four-dimensional symmetric bilinear space $X_\lambda$ by letting $X_\lambda=\Mat_{4,1}(F)$ 
with symmetric bilinear form $b$ given by $b(x,y)={}^txMy$, where 
$$
M = \begin{bmatrix} S & \\ & \lambda S \end{bmatrix}.
$$
Evidently, $\disc(X_\lambda) =1$, and the Hasse invariant of $X_\lambda$ is 
$\varepsilon(X_\lambda) = (-1,-1)_F(-\lambda, D)_F$.
Now assume that $X$ is isotropic. Then the Hasse invariant of $X$ is $(-1,-1)_F$. It follows that if $\lambda=-1$, then  $\varepsilon(X_\lambda) = \varepsilon(X)$, so that $X_\lambda \cong X$. By the Witt cancellation theorem, $(U^\perp, \langle \cdot, \cdot \rangle)$ represents $\lambda S$.
Next, assume that $X$ is anisotropic, so that $\varepsilon(X) = -(-1,-1)_F$. By hypothesis, $(X,\langle \cdot,\cdot \rangle)$
represents $S$; since $X$ is anisotropic, this implies that $D \notin F^{\times 2}$. Since, $D \notin F^{\times 2}$, 
there exists $\lambda \in F^\times$ such that $-1 = (-\lambda,D)_F$. It follows that $\varepsilon(X_\lambda) = \varepsilon(X)$, so that $X_\lambda \cong X$; again the Witt cancellation theorem implies that $(U^\perp, \langle \cdot, \cdot \rangle)$ represents $\lambda S$.
\end{proof}

Let $(X,\langle\cdot,\cdot\rangle)$ be a non-degenerate symmetric bilinear space over $F$ satisfying \eqref{Xtypeseq},
and let $S$ be as in \eqref{Sdefeq} with $\det(S) \neq 0$. Assume that $\dim X =4$, $\Omega=\Omega_S$ is non-empty, and 
let $T=T_S$, as in Sect.~\ref{anotheralgebrasubsec}. Let $z=(z_1,z_2) \in \Omega$, and as in Lemma \ref{Zdecomplemma}, let
$U = Fz_1+Fz_2$, so that $X = U \oplus U^\perp$ with $\dim U =\dim U^\perp =2$. By Lemma \ref{Zdecomplemma} there exists
$\lambda \in F^\times$ such that $(U^\perp, \langle \cdot, \cdot \rangle)$ represents $\lambda S$. 
Let $\tau_z: T \to \GSO(U)$ be the isomorphism
from Lemma \ref{simlemma} that is associated to $z$. Also,  let $\tau_{z'}, \tau_{z''}: T \to \GSO(U^\perp)$ be the isomorphisms from Lemma \ref{simlemma}, where $z'$ and $z''$ are representatives for the two $\SO(U^\perp)$ orbits of $\SO(U^\perp)$ acting on $\Omega_{\lambda S, (U^\perp,\langle\cdot,\cdot\rangle)}$;
by Lemma \ref{twoscalarslemma}, $\{\tau_{z'}, \tau_{z''}\}$ does not depend on the choice of $\lambda$.  We now define 
\begin{equation}
\label{fourEzeq}
\mathcal{E}(z) = \mathcal{E}_{(X,\langle\cdot,\cdot\rangle), S}(z) = \{ \tau_1, \tau_2 \}, 
\end{equation}
where $\tau_1,\tau_2:T \to \GSO(X)$ are defined by 
$$
\tau_1(t) = \begin{bmatrix} \tau_z(t) & \\ & \tau_{z'}(t) \end{bmatrix},\qquad
\tau_2(t) = \begin{bmatrix} \tau_z(t) & \\ & \tau_{z''}(t) \end{bmatrix}
$$
with respect to the decomposition $Z = U \oplus U^\perp$, for $t \in T$. 
For $t \in T$, the similitude factor of $\tau_i(t)$ is $\det(t)$. It is evident that the elements of
$\mathcal{E}(z)$ satisfy \eqref{tauoneeq}, \eqref{tautwoeq}, and \eqref{tauthreeeq}.

\begin{lemma}
\label{zstablemma}
Let $(X,\langle\cdot,\cdot\rangle)$ be a non-degenerate symmetric bilinear space over $F$ satisfying \eqref{Xtypeseq},
and let $S$ be as in \eqref{Sdefeq} with $\det(S) \neq 0$. 
Assume that  $\Omega=\Omega_S$ is non-empty, and 
let $A=A_S$ and $T=T_S$, as in Sect.~\ref{anotheralgebrasubsec}. If $\dim X =4$,
assume that $A$ is a field. Let $z \in \Omega$
and $\tau \in \mathcal{E}(z)$. Let $C$ be a compact, open subset
of $\Omega$ containing $z$.  There exists a compact, open
subset $C_0$ of $\Omega$ such that $z \in C_0 \subset C$ and  
$(\left[ \begin{smallmatrix} &1\\1& \end{smallmatrix} \right] t \left[\begin{smallmatrix}&1\\1&\end{smallmatrix}\right],\tau(t)) \cdot C_0 = C_0$
for $t \in T$. 
\end{lemma}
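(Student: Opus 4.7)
The plan is as follows. The key object will be the homomorphism $\phi:T\to B$, $\phi(t)=(\mat{}{1}{1}{} t \mat{}{1}{1}{}, \tau(t))$, whose image lies in the stabilizer of $z$ in $B$ by \eqref{tauoneeq}--\eqref{tauthreeeq}. We seek a compact open neighborhood $C_0\subset C$ of $z$ invariant under $\phi(T)$, and the strategy is to treat the two dimensions separately, exploiting rather different features of the action.

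A first step, common to both cases, is to observe that $\phi(F^\times)$ acts trivially on all of $X^2$: for $a\in F^\times$, one has $\phi(a)=(aI,a\cdot\mathrm{id}_X)$ by \eqref{tauoneeq}, and the formula \eqref{gl2goeq} then gives $\phi(a)\cdot(x_1,x_2)=(x_1,x_2)$. Hence the action of $T$ on $\Omega$ factors through $T/F^\times$, which will be crucial in the four-dimensional case.

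In the case $\dim X=2$, we will use that $\GSO(X)$ is abelian and invoke Lemma \ref{simlemma}(v) to deduce that $\phi(t)$ acts by the identity on the entire orbit $\SO(X)z$. We then argue that $\SO(X)z$ is clopen in $\Omega$: identifying elements of $\Omega$ with $2\times2$ matrices $Z$ whose columns are $z_1,z_2$, the defining relation ${}^tZGZ=S$ (where $G$ is a Gram matrix for $X$) forces $\det(Z)$ to take one of exactly two fixed values $\pm\beta$, and these distinguish the two $\SO(X)$-orbits via a continuous function on $\Omega$. With clopenness in hand, $C_0 := C \cap \SO(X)z$ is compact open, contains $z$, and is fixed pointwise by $\phi(T)$.

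In the case $\dim X=4$, the hypothesis that $A$ is a field means that $L\cong A$ is a quadratic field extension of $F$, and a short calculation with the ramification index shows that $T/F^\times\cong L^\times/F^\times$ is compact. We will then run a tube-lemma argument: the continuous action map $\alpha:(T/F^\times)\times\Omega\to\Omega$ carries the compact set $(T/F^\times)\times\{z\}$ into the open set $C$, so one can cover $T/F^\times$ by finitely many opens $U_i$ and find a common compact open $V\ni z$ (using that $\Omega$ is locally profinite) with $(T/F^\times)\cdot V\subset C$. Setting $C_0:=\phi(T)\cdot V$, the resulting set is open as a union of translates of $V$, compact as the continuous image of $(T/F^\times)\times V$, contains $z$, lies in $C$, and is $T$-invariant by construction.

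The most delicate points will be verifying the clopenness of the two $\SO(X)$-orbits in $\Omega$ when $\dim X=2$ and verifying the compactness of $L^\times/F^\times$ when $L/F$ is a quadratic field extension; both are standard but deserve explicit mention. The main conceptual obstacle is the lack of a uniform argument: in dimension two the proof hinges on the abelianness of $\GSO(X)$ and the fact that $\phi(T)$ fixes a whole clopen orbit pointwise, while in dimension four there is no such pointwise fixation and one must instead leverage the compactness of the torus $T/F^\times$, which is precisely where the hypothesis that $A$ is a field enters.
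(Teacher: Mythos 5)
Your proof is correct, and it splits naturally into a part that matches the paper and a part that does not. In the two-dimensional case you argue exactly as the paper does: $C_0=C\cap \SO(X)z$ works because, by v) of Lemma \ref{simlemma}, the pairs $(\mat{}{1}{1}{}t\mat{}{1}{1}{},\tau(t))$ fix the orbit $\SO(X)z$ pointwise; your determinant computation (the relation ${}^tZGZ=S$ forces $\det Z\in\{\beta,-\beta\}$ with $\beta\neq-\beta$ since $\mathrm{char}\,F=0$, and the two $\SO(X)$-orbits are the two fibers of the continuous map $\det$) is a correct justification of the clopenness of the orbit, which the paper only asserts. In the four-dimensional case you take a genuinely different route. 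Both arguments rest on the same two facts, namely that the action factors through the compact group $T/F^\times$ (this is where the hypothesis that $A$ is a field enters) and that it fixes $z$ by \eqref{tauthreeeq}; but the paper realizes the image of $T$ as a compact subgroup $\mathcal{K}$ of $\GL(X^2)$, chooses a $\mathcal{K}$-stable lattice $\mathcal{L}\subset X^2$, and takes $C_0=(z+\varpi^n\mathcal{L})\cap\Omega$ for $n$ large, whereas you run a tube-lemma argument to produce a compact open $V\ni z$ with $(T/F^\times)\cdot V\subset C$ and take $C_0$ to be the $T$-saturation of $V$. Your $C_0$ is indeed open (a union of translates of $V$ under homeomorphisms), compact (continuous image of $(T/F^\times)\times V$), contains $z$, lies in $C$, and is invariant because $t\mapsto(\mat{}{1}{1}{}t\mat{}{1}{1}{},\tau(t))$ is a homomorphism. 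The lattice construction buys a more explicit $C_0$ (a congruence neighborhood of $z$ intersected with $\Omega$), while your version avoids invoking the existence of an invariant lattice at the cost of the saturation step; either suffices for the application in Theorem \ref{fourdimthetatheorem}.
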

\begin{proof}
Assume $\dim X =2$. Let $C_0$ be the intersection of $C$
with the $\SO(X)$ orbit of $z$ in $\Omega$. Then $C_0$ is a compact,
open subset of $\Omega$ because the $\SO(X)$ orbit of $z$ in $\Omega$
is closed and open in $\Omega$, and $C$ is compact and open. We
have $(\left[ \begin{smallmatrix} &1\\1& \end{smallmatrix} \right] t \left[\begin{smallmatrix}&1\\1&\end{smallmatrix}\right],\tau(t)) \cdot C_0 = C_0$
for $t \in T$ by v) of Lemma \ref{simlemma}. 
Assume $\dim X =4$. 
The group of pairs  $(\left[ \begin{smallmatrix} &1\\1& \end{smallmatrix} \right] t \left[\begin{smallmatrix}&1\\1&\end{smallmatrix}\right],\tau(t))$  for $t \in T$ acts on $X^2$ and can be regarded as a subgroup of $\GL(X^2)$. The group $T$ contains $F^\times$, and the pairs with $t \in F^\times$
 act by the identity on $X^2$. The assumption that $A$ is a field
implies that $T/F^\times$ is compact, and hence the image $\mathcal{K}$ in $\GL(X^2)$ of this group of pairs
is compact.  There exists a lattice
$\mathcal{L}$ of $X^2$ such that $k\cdot\mathcal{L} = \mathcal{L}$ for $k \in \mathcal{K}$.
Also, by \eqref{tauthreeeq} we have that $k\cdot z=z$ for $k \in \mathcal{K}$. Let $n$ be sufficiently large so 
that $(z+\varpi^n \mathcal{L})\cap \Omega \subset C$.
Then  $C_0 = (z+\varpi^n\mathcal{L})\cap \Omega$ is the desired set. 
\end{proof}
\subsection{Example embeddings}
\label{exampleembeddingssubsec}
In this section we provide explicit formulas for the embeddings of the previous section.
\begin{lemma}\label{lambdaSlemma}
Let $S$ be as in  \eqref{Sdefeq}. Let $m, \lambda \in F^\times$, and define  $(X_{m,\lambda}, \langle \cdot, \cdot \rangle_{m,\lambda})$ as in \eqref{twodimexeq}. 
The set   $\Omega=\Omega_S$ is non-empty if and only if $\disc(S)=mF^{\times 2}$ and $\varepsilon(S) = (\lambda,m)_F$.  Assume that the set $\Omega$ is non-empty. 
Set $D=b^2/4-ac$ so that $\disc(S) = DF^{\times 2}$, and define $\Delta$ and the quadratic extension $L=F+F\Delta$ of $F$ (which need not be a field) associated to $D$ as in Sect.~\ref{quadextsubsec}. Similarly, define $\Delta_m$ with respect to $m$; the quadratic extension associated $m$ is also $L$ and $L=F+F\Delta_m$. 
The set of compositions
$$
L^\times \stackrel{\sim}{\longrightarrow} T_S \stackrel{\tau}{\longrightarrow} \GSO(X_{m,\lambda})
$$
for $z \in \Omega$ and $\tau \in \mathcal{E}(z)$ is the same as the set consisting of the two compositions
$$
L^\times \stackrel{\sim}{\longrightarrow} T_{\left[\begin{smallmatrix} 1 & \\ & -m \end{smallmatrix}\right] } \stackrel{\sim}{\longrightarrow} \GSO(X_{m,\lambda}), \quad L^\times \stackrel{\gamma}{\longrightarrow} L^\times \stackrel{\sim}{\longrightarrow} T_{\left[\begin{smallmatrix} 1 & \\ & -m \end{smallmatrix}\right] } \stackrel{\sim}{\longrightarrow} \GSO(X_{m,\lambda}).
$$
Here, 
the maps $L^\times \to T_{\left[\begin{smallmatrix} 1 & \\ & -m \end{smallmatrix}\right] }$
are as in \eqref{ALisoeq}, and the isomorphism $\rho$ of $T_{\left[\begin{smallmatrix} 1 & \\ & -m \end{smallmatrix}\right] }$ with 
$\GSO(X_{m,\lambda})$ is as in \eqref{rhoiso2eq}. 
\end{lemma}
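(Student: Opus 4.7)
The statement breaks naturally into two parts, which I will handle separately.

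For the non-emptiness criterion, the key observation is that any $(z_1,z_2)\in\Omega$ has linearly independent components, since $\det S\neq 0$, so $(z_1,z_2)$ is a basis of the two-dimensional space $X_{m,\lambda}$. Hence $\Omega$ is non-empty if and only if the non-degenerate form $(F^2,S)$ is isometric to $(X_{m,\lambda},\langle\cdot,\cdot\rangle_{m,\lambda})$. By the classification recalled in Sect.~\ref{twobytwosubsec} and Lemma~\ref{twodimclasslemma}, this is equivalent to the agreement of the discriminants and Hasse invariants, which is the stated criterion $\disc(S)=mF^{\times 2}$ and $\varepsilon(S)=(\lambda,m)_F$.

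To identify the compositions, my plan is first to show that as $z$ runs over $\Omega$ exactly two compositions arise and are related by Galois conjugation on $L^\times$, and second to check that one of them is the standard composition through $\rho$. For the first step, Lemma~\ref{simlemma}(ii) makes $\tau_z$ constant on the two $\SO(X_{m,\lambda})$-orbits in $\Omega$, and Lemma~\ref{simlemma}(iv) relates the two values by conjugation by any $h_0\in\OO(X_{m,\lambda})\setminus\SO(X_{m,\lambda})$. Since the canonical involution $\gamma$ lies in $\OO(X_{m,\lambda})\setminus\SO(X_{m,\lambda})$ by Lemma~\ref{twodimclasslemma}, I can take $h_0=\gamma$; unwinding the formula $\rho(t)x=tx$ yields $\gamma\rho(t)\gamma^{-1}=\rho(\gamma(t))$, which under the identification \eqref{ALisoeq} is exactly Galois conjugation on $L^\times$.

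For the second step, I will pick $z\in\Omega$ as the image of the standard basis of $F^2$ under an explicit isometry $(F^2,S)\stackrel{\sim}{\to}X_{m,\lambda}$ supplied by the first part, and compute $\tau_z(t)$ directly using the matrix description in Lemma~\ref{simlemma}(i). The goal is to verify that $\tau_z(t)$ acts as left multiplication on $X_{m,\lambda}$ by the element of $T_{\left[\begin{smallmatrix}1&\\&-m\end{smallmatrix}\right]}$ corresponding to $t\in L^\times$ under \eqref{ALisoeq}. The hard part will be the bookkeeping: carefully tracking the $\left[\begin{smallmatrix}&1\\1&\end{smallmatrix}\right]$-conjugation built into the definition \eqref{AFdefeq2} of $T_S$ alongside the change of basis between $(F^2,S)$ and $X_{m,\lambda}$, and making sure all three identifications $T_S\cong L^\times\cong T_{\left[\begin{smallmatrix}1&\\&-m\end{smallmatrix}\right]}$ line up consistently. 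A possible shortcut is an abstract uniqueness argument: both candidate compositions are group isomorphisms $L^\times\to\GSO(X_{m,\lambda})$ that restrict to the identity on $F^\times$ and have similitude factor equal to $\Norm_{L/F}$; combined with the orbit analysis from the previous step, this should pin the map down to one of $\rho$ or $\rho\circ\gamma$ without a long matrix calculation.
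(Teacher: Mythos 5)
Your primary route is the same as the paper's: non-emptiness is proved by observing that an element of $\Omega$ is a basis, so $\Omega\neq\emptyset$ exactly when the form defined by $S$ is isometric to $(X_{m,\lambda},\langle\cdot,\cdot\rangle_{m,\lambda})$, and then comparing dimension, discriminant and Hasse invariant via Lemma~\ref{twodimclasslemma}; and the identification of the compositions rests, as in the paper, on Lemma~\ref{simlemma} (constancy on $\SO(X_{m,\lambda})$-orbits, conjugation by an improper isometry relating the two orbits) together with an explicit computation of $\tau_z$ at one concrete $z\in\Omega$. Your use of $h_0=\gamma$ and the identity $\gamma\rho(t)\gamma^{-1}=\rho(\gamma(t))$ (valid since $A$ is commutative and $\gamma$ is multiplicative on it) is a mild streamlining of the paper's second computation at $z'=(\gamma(z_1),\gamma(z_2))$; but the explicit computation at one point, including producing the point (the paper solves $g^2-mh^2=\lambda^{-1}a$, and treats $a=0$ separately), is the substance of the argument and is exactly what you have deferred.

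Be aware that the ``abstract shortcut'' is not valid as stated. Writing the composition as $\rho\circ\alpha$ with $\alpha$ an automorphism of $L^\times$, your conditions only say that $\alpha$ fixes $F^\times$ pointwise and satisfies $\Norm_{L/F}\circ\alpha=\Norm_{L/F}$; these are purely multiplicative constraints and do not force $\alpha\in\{1,\gamma\}$. For instance, when $L/F$ is an unramified field extension and the residue characteristic is odd, $L^\times$ is an internal direct product of a subgroup containing $F^\times$ (on which the norm is determined by its restriction there) and of the pro-$p$ part of the norm-one subgroup, which is a free $\Z_p$-module of positive rank; the identity on the first factor together with an arbitrary automorphism of the second gives many such $\alpha$ other than $1$ and $\gamma$. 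The shortcut can be repaired by working at the level of algebras rather than groups: in the basis $z_1,z_2$ the map $t\mapsto\tau_z(t)$ is $t\mapsto\left[\begin{smallmatrix}&1\\1&\end{smallmatrix}\right]t\left[\begin{smallmatrix}&1\\1&\end{smallmatrix}\right]$, hence extends to a unital $F$-algebra embedding $A_S\to{\rm End}_F(X_{m,\lambda})$; since $\tau_z(T_S)\subset\GSO(X_{m,\lambda})=\rho(T_{\left[\begin{smallmatrix}1&\\&-m\end{smallmatrix}\right]})$ and $T_S$ spans $A_S$ over $F$, the image is $\rho(A_{\left[\begin{smallmatrix}1&\\&-m\end{smallmatrix}\right]})$, so $\rho^{-1}\circ\tau_z$ is an $F$-algebra automorphism of $L$ and therefore equals $1$ or $\gamma$; combined with your orbit analysis this does give a computation-free proof. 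If instead you carry out the matrix computation, note that your chosen isometry may a priori produce $\rho\circ\gamma$ rather than $\rho$; this is harmless, but the write-up should allow for it.
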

\begin{proof} By definition, $\Omega$ is non-empty if and only if there exist $x_1,x_2 \in X_{m,\lambda}$ such that $S = \left[\begin{smallmatrix} \langle x_1,x_1 \rangle & \langle x_1,x_2 \rangle \\ \langle x_1,x_2 \rangle & \langle x_2, x_2 \rangle \end{smallmatrix} \right]$. Since $X_{m,\lambda}$ is two-dimensional, this means that $\Omega$ is non-empty if and only if $(X_{m,\lambda}, \langle \cdot, \cdot \rangle_{m,\lambda})$ is equivalent to the symmetric bilinear space over $F$ defined by $S$. From Lemma \ref{twodimclasslemma}, we have $\disc(X_{m,\lambda}) = m F^{\times 2}$ and  $\varepsilon(X_{m,\lambda}) = (\lambda,m)_F$. Since a finite-dimensional non-degenerate symmetric bilinear space over $F$ is determined by its dimension, discriminant and Hasse invariant, it follows that $\Omega$ is non-empty if and only if $\disc(S) =m F^{\times 2}$ and $\varepsilon(S)=(\lambda,m)_F$.

Assume that $\Omega$ is non-empty, so that  $\disc(S)=m F^{\times 2}$ and $\varepsilon(S) = (\lambda,m)_F$. Let $e \in F^\times$ be such that $\Delta=e \Delta_m$; then $b^2/4-ac=D = e^2 m$. Assume first that $a \neq 0$.  By Sect.~\ref{twobytwosubsec},  $\varepsilon(S) =(a,m)_F$. Therefore, $(a,m)_F=(\lambda,m)_F$. It follows that there exist $g,h \in F^\times$ such that $g^2 - m h^2 = \lambda^{-1} a$. Set 
$$
z_1=\begin{bmatrix} g & h \\ -h(-m) & g \end{bmatrix},
\quad
z_2=a^{-1} \begin{bmatrix} ehm +gb/2 & eg+hb/2 \\ -(eg+hb/2)(-m) & ehm+gb/2 \end{bmatrix}.
$$
Then $z_1,z_2 \in X_{m,\lambda}$, and a calculation shows that 
$$
\begin{bmatrix}
 \langle z_1,z_1 \rangle_{m,\lambda} &  \langle z_1,z_2 \rangle_{m,\lambda}  \\  \langle z_1,z_2 \rangle_{m,\lambda} &  \langle z_2,z_2 \rangle_{m,\lambda}
\end{bmatrix}
= S.
$$
It follows that $z=(z_1,z_2) \in \Omega$. Let $u \in L^\times$. Write $u =x + y\Delta$ for some $x,y \in F^\times$. By \eqref{ALisoeq}, $u$ corresponds to $t = \left[\begin{smallmatrix} x - yb/2 & -ya \\ yc & x+yb/2 \end{smallmatrix} \right] \in T_S$. Using the definition of $\tau_z(t)$, we find that
\begin{align}
\tau_z(t) (z_1) & = \begin{bmatrix} gx -ehym & hx-egy \\ -(hx-egy)(-m) & gx-ehym \end{bmatrix},\label{z1teq}\\
\tau_z(t)(z_2)&= \frac{1}{2a}\begin{bmatrix}(2ehm +bg)x- (behm + 2e^2mg)  y  &(2 e g+ b h ) x- ( b e g+ 2e^2m h) y  \\ -((2 e g+ b h ) x- ( b e g+ 2e^2m h) y )(-m) & (2ehm +bg)x-(behm + 2e^2mg)  y  \end{bmatrix}.\label{z2teq}
\end{align}
On the other hand, we also have that $u = x + y e \Delta_m$, and $u$ corresponds to the element 
$
t'=\left[\begin{smallmatrix} x & -ye \\ (ye) (-m) & x \end{smallmatrix}\right]
$
in $T_{\left[\begin{smallmatrix} 1 & \\ & -m \end{smallmatrix}\right] } $. Moreover, calculations show that $\rho(t')(z_1)=t'\cdot z_1$ and $\rho(t')(z_2)=t' \cdot z_2$ are as in \eqref{z1teq} and \eqref{z2teq}, respectively, proving that the two compositions 
$$
L^\times \stackrel{\sim}{\longrightarrow} T_S \stackrel{\tau_z}{\longrightarrow} \GSO(X_{m,\lambda}),\qquad
L^\times \stackrel{\sim}{\longrightarrow} T_{\left[\begin{smallmatrix} 1 & \\ & -m \end{smallmatrix}\right] } \stackrel{\rho}{\longrightarrow} \GSO(X_{m,\lambda})
$$
are the same map. Next, let $z'=(\gamma(z_1),\gamma(z_2))$. Then $z' \in \Omega$, and calculations as above show that the two compositions
$$
L^\times \stackrel{\sim}{\longrightarrow} T_S \stackrel{\tau_{z'}}{\longrightarrow} \GSO(X_{m,\lambda}),\qquad
L^\times \stackrel{\gamma}{\longrightarrow}  L^\times \stackrel{\sim}{\longrightarrow} T_{\left[\begin{smallmatrix} 1 & \\ & -m \end{smallmatrix}\right] } \stackrel{\rho}{\longrightarrow} \GSO(X_{m,\lambda})
$$
are the same. This completes the proof in this case since $z$ and $z'$ are representatives for the two $\SO(X_{m,\lambda})$ orbits of $\Omega$, and by ii) of Lemma \ref{simlemma}, $\cup_{w \in \Omega}\mathcal{E}(w) = \{\tau_z,\tau_{z'}\}$. Now assume that $a=0$. Set
$$
z_1=\lambda^{-1} \begin{bmatrix} b/2&-e \\ e(-m) & b/2 \end{bmatrix}, \qquad
z_2=\begin{bmatrix} (c\lambda^{-1}+1)/2 & -eb^{-1}(c\lambda^{-1}-1) \\ eb^{-1} (c\lambda^{-1}-1) (-m) & (c\lambda^{-1}+1)/2 \end{bmatrix}.
$$
Again, a calculation shows that $z=(z_1,z_2) \in \Omega$. Let $u \in L^\times$ with $u =x + y\Delta$ for some $x,y \in F^\times$. Then $u$ corresponds to $ t=\left[\begin{smallmatrix} x - yb/2 &  \\ yc & x+yb/2 \end{smallmatrix} \right] \in T_S$, and $u$ corresponds to $t'=\left[\begin{smallmatrix} x &-y e \\ ye(-m) & x\end{smallmatrix} \right] \in T_{\left[\begin{smallmatrix} 1 & \\ & -m \end{smallmatrix} \right]}$. Computations show that $\tau_z(t)(z_1)=\rho(t')(z_1)$ and $\tau_z(t)(z_2)=\rho(t')(z_2)$, proving that the compositions
$$
L^\times \stackrel{\sim}{\longrightarrow} T_S \stackrel{\tau_{z'}}{\longrightarrow} \GSO(X_{m,\lambda}),\qquad
 L^\times \stackrel{\sim}{\longrightarrow} T_{\left[\begin{smallmatrix} 1 & \\ & -m \end{smallmatrix}\right] } \stackrel{\rho}{\longrightarrow} \GSO(X_{m,\lambda})
$$
are the same. As in the previous case, if $z'=(\gamma(z_1),\gamma(z_2))$, then $z' \in \Omega$, and the two compositions
$$
L^\times \stackrel{\sim}{\longrightarrow} T_S \stackrel{\tau_{z'}}{\longrightarrow} \GSO(X_{m,\lambda}),\qquad
L^\times \stackrel{\gamma}{\longrightarrow}  L^\times \stackrel{\sim}{\longrightarrow} T_{\left[\begin{smallmatrix} 1 & \\ & -m \end{smallmatrix}\right] } \stackrel{\rho}{\longrightarrow} \GSO(X_{m,\lambda})
$$
are the same. As above, this completes the proof. \end{proof}

Let $c \in F^\times$, and set
\begin{equation}
\label{Sceq}
S = \begin{bmatrix} 1 &  \\ & c \end{bmatrix}.
\end{equation}
Let $(X_{\Mat_2}, \langle \cdot,\cdot \rangle_{\Mat_2})$ be as in \eqref{Xmateq}.
Let $A=A_S$ and $T=T_S$  be as in Sect.~\ref{anotheralgebrasubsec}.
We embed $A$ in $\Mat_2(F)$ via the inclusion map. 
Set 
$$
z_1 = \begin{bmatrix} 1 & \\ & 1 \end{bmatrix}, \quad z_2 = \begin{bmatrix} & 1 \\ -c & \end{bmatrix}, \quad
z_1' = \begin{bmatrix} 1 & \\ & -1 \end{bmatrix}, \quad z_2' = \begin{bmatrix} &1 \\ c & \end{bmatrix}. 
$$
The vectors $z_1,z_2,z_1',z_2'$  form an orthogonal ordered basis for $X_{\Mat_2}$,
and in this basis the matrix for $X_{\Mat_2}$ is
$$
\begin{bmatrix} S & \\ & -S \end{bmatrix}.
$$
As in Lemma \ref{Zdecomplemma}, set $U =Fz_1 +Fz_2$. Then $U^\perp=Fz_1'+Fz_2'$, and the $\lambda$ of Lemma \ref{Zdecomplemma} is $-1$. 
Calculations show that the set $\mathcal{E}(z)=\mathcal{E}_{X_{\Mat_2}}(z)$ of \eqref{fourEzeq} is
\begin{equation}
\label{Ezmateq}
\mathcal{E}_{X_{\Mat_2}}(z) = \{ \tau_1,\tau_2 \}, \qquad \tau_1(t) = \rho(t,1), \quad \tau_2(t) =\rho(1,\gamma(t)), \quad t \in T^\times. 
\end{equation}

Finally,  let $S$ be as in \eqref{Sceq} with $-c \notin F^{\times 2}$, and let $(X_{H}, \langle \cdot,\cdot \rangle_{H})$ be as in \eqref{XHeq}.
Let $A=A_S$ and $T=T_S$  be as in Sect.~\ref{anotheralgebrasubsec}.
Let $L$ be the quadratic extension associated to $-c$ as in Sect.~\ref{quadextsubsec}; $L$ is a field. Let $e$ be a representative for the non-trivial
coset of $F^\times /  \Norm_{L/F}(L^\times)$, so that $(e,-c)_F=-1$. We realize the division quaternion algebra $H$ over $F$ as 
\begin{equation}
\label{SpecialHeq}
H = F + F i + F j + Fk, \quad i^2 = -c,\ j^2 = e,\ k =ij,\ ij = -ji. 
\end{equation}
 We embed $A$ into $H$ via the map defined by
$$
\begin{bmatrix} x & -y \\ cy & x \end{bmatrix} \mapsto x -yi
$$
for $x,y \in F$.
Let
\begin{equation}
\label{specialijkeq}
z_1 =1, \quad z_2 = i, \quad z_1' = j, \quad z_2' =k.
\end{equation}
The vectors $z_1,z_2,z_1',z_2'$  form an orthogonal ordered basis for $X_{H}$,
and in this basis the matrix for $X_{H}$ is
$$
\begin{bmatrix} S &  \\ & -e S \end{bmatrix}.
$$
As in Lemma \ref{Zdecomplemma}, set $U =Fz_1 +Fz_2$. Then $U^\perp=Fz_1'+Fz_2'$, and the $\lambda$ of Lemma \ref{Zdecomplemma} is $-e$. 
Calculations again show that the set $\mathcal{E}(z)=\mathcal{E}_{X_H}(z)$ of \eqref{fourEzeq} is
\begin{equation}
\label{EzHeq}
\mathcal{E}_{X_H}(z) = \{ \tau_1,\tau_2 \}, \qquad \tau_1(t) = \rho(t,1), \quad \tau_2(t) =\rho(1,\gamma(t)), \quad t \in T^\times. 
\end{equation}

To close this subsection, we note that $(X_H,\langle\cdot,\cdot\rangle_H)$ does not represent $S$ if $S$ is as in \eqref{Sceq} but $-c \in F^{\times 2}$. 
To see this, assume that $-c \in F^{\times 2}$ and $(X_H,\langle\cdot,\cdot\rangle_H)$ represents $S$; we will obtain a contradiction. Write $-c=t^2$ for some $t \in F^\times$. Since $X_H$ represents $S$, there exist $x_1,x_2 \in H$ such that $\langle x_1,x_1 \rangle_H = \Norm(x_1)=1$, $\langle x_2,x_2 \rangle_H = \Norm(x_2)=c=-t^2$ and $\langle x_1,x_2 \rangle_H = \Trace(x_1x_2^*)/2 =0$. A calculation shows that $\Norm(tx_1+x_2)=0$. Since $H$ is a division algebra, this means that $tx_1=-x_2$. Hence, $t^2 = N(tx_1) = \langle tx_1,tx_1 \rangle_H = \langle tx_1 ,-x_2 \rangle_H = -t \langle x_1,x_2 \rangle_H =0$, a contradiction.
\subsection{Theta correspondences and Bessel functionals} 
\label{thetabesselsubsec} 
In this section we make the connection between Bessel functionals for $\GSp(4,F)$ and equivariant functionals on representations of $\GO(X)$. The main result is Theorem \ref{fourdimthetatheorem} below.

Let $(X,\langle\cdot,\cdot\rangle)$ be a non-degenerate symmetric bilinear space over $F$ satisfying \eqref{Xtypeseq}.
We define the subgroup $\GSp(4,F)^+$ of $\GSp(4,F)$ by
\begin{equation}
\label{gsp4plusdef}
\GSp(4,F)^+ = \{ g \in \GSp(4,F): \lambda (g) \in \lambda (\GO(X)) \}.
\end{equation}
The following lemma follows from \eqref{rhoiso2eq} and the exact sequences \eqref{GSOexacteq1} and  \eqref{GSOexacteq2}, which  facilitate the computation of $\lambda(\GSO(X))$. Note that $\Norm(H^\times) = F^\times$. 
\begin{lemma}
\label{gsp4calclemma}
Let $(X,\langle\cdot,\cdot\rangle)$ be a non-degenerate symmetric bilinear space over $F$ satisfying \eqref{Xtypeseq}. Then
\begin{equation}
\label{gsp4pluscomp}
[\GSp(4,F): \GSp(4,F)^+] = 
\begin{cases}
1 & \text{if $\dim X =4$, or $\dim X=2$ and $\disc(X) =1$},\\
2 & \text{if $\dim X =2$ and $\disc(X) \neq 1$}.
\end{cases}
\end{equation}
\end{lemma}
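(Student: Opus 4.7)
The plan is to reduce the computation of $[\GSp(4,F):\GSp(4,F)^+]$ to a computation of $[F^\times:\lambda(\GO(X))]$, then compute $\lambda(\GO(X))$ case by case using the explicit descriptions of $\GSO(X)$ from Sect.~\ref{thespacessubsec}.

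First, I would observe that the similitude character $\lambda:\GSp(4,F)\to F^\times$ is surjective, and by definition $\GSp(4,F)^+=\lambda^{-1}(\lambda(\GO(X)))$, so
$$
[\GSp(4,F):\GSp(4,F)^+]=[F^\times:\lambda(\GO(X))].
$$
Next I would reduce to $\lambda(\GSO(X))$. The extra coset in $\GO(X)/\GSO(X)$ is generated by the $F$-linear involution $\gamma$ when $\dim X=2$, and by $*$ when $\dim X=4$; both of these are actually isometries (a short calculation, which in the $X_{\Mat_2}$ case just uses $\Trace(x^*y)=\Trace(xy^*)$), so they have similitude $1$. Hence $\lambda(\GO(X))=\lambda(\GSO(X))$.

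Now I would run through the three cases using the explicit parametrizations. For $\dim X=2$, the isomorphism \eqref{rhoiso2eq} identifies $\GSO(X_{m,\lambda})$ with $T=A_{\left[\begin{smallmatrix}1&\\&-m\end{smallmatrix}\right]}^\times\cong L^\times$, and the similitude of $\rho(t)$ equals $\det(t)=\Norm_{L/F}(t)$. Thus $\lambda(\GSO(X))=\Norm_{L/F}(L^\times)$. For $\dim X=4$, using the exact sequences \eqref{GSOexacteq1} and \eqref{GSOexacteq2}, a direct computation with $\rho(g_1,g_2)x=g_1xg_2^*$ gives similitude $\det(g_1)\det(g_2)$ on $X_{\Mat_2}$ and $\Norm(h_1)\Norm(h_2)$ on $X_H$; since $\det:\GL(2,F)\to F^\times$ and the reduced norm $\Norm:H^\times\to F^\times$ are both surjective, $\lambda(\GSO(X))=F^\times$ in both four-dimensional cases.

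Finally I would conclude by invoking the standard fact about norm groups of \'etale quadratic algebras: $[F^\times:\Norm_{L/F}(L^\times)]=1$ when $L=F\times F$ (i.e., when $\disc(X)=1$) and $=2$ when $L/F$ is a field extension (i.e., when $\disc(X)\neq 1$). Combined with the case analysis above, this yields the stated index $1$ in both four-dimensional cases and in the split two-dimensional case, and index $2$ in the remaining two-dimensional case. The only mildly subtle step is checking that $\gamma$ and $*$ are genuine isometries (not just similitudes), which is what allows the reduction from $\GO(X)$ to $\GSO(X)$; everything else is formal.
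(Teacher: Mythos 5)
Your proposal is correct and follows essentially the same route as the paper, which likewise computes $\lambda(\GSO(X))$ via the isomorphism \eqref{rhoiso2eq} in the two-dimensional case and the exact sequences \eqref{GSOexacteq1}, \eqref{GSOexacteq2} in the four-dimensional cases, using surjectivity of $\det$ and of the reduced norm $\Norm(H^\times)=F^\times$ together with the index of $\Norm_{L/F}(L^\times)$ in $F^\times$. Your explicit check that $\gamma$ and $*$ are isometries, so that $\lambda(\GO(X))=\lambda(\GSO(X))$, is a detail the paper leaves implicit but is a welcome addition.
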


\begin{lemma}
\label{tingsp4lemma}
Let $(X,\langle\cdot,\cdot\rangle)$ be a non-degenerate symmetric bilinear space over $F$ satisfying \eqref{Xtypeseq}, and let $S$ be as in \eqref{Sdefeq} with $\det(S) \neq 0$. Let $\Omega=\Omega_S$ be as in \eqref{omegaSdefeq}, and assume that $\Omega$ is non-empty. Let $T=T_S$ be as in Sect.~\ref{anotheralgebrasubsec}. Embed $T$ as a subgroup of $\GSp(4,F)$, as in \eqref{Tembeddingeq}. Then $T$ is contained in $\GSp(4,F)^+$. 
\end{lemma}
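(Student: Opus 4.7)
The plan is to leverage the existence of the embeddings $\tau \in \mathcal{E}(z)$ constructed in the previous subsection, which were designed precisely to translate between the similitude of $t$ viewed in $\GSp(4,F)$ and the similitude of its image in $\GO(X)$.

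First I would recall the similitude formula for the embedding \eqref{Tembeddingeq}: for $t \in T$ embedded in $\GSp(4,F)$, we have
$$
 \lambda(t) = \det(t) = \Norm_{L/F}(t),
$$
as stated immediately after \eqref{Tembeddingeq}. Since $\Omega = \Omega_S$ is non-empty by hypothesis, I can choose some $z \in \Omega$, and then pick any $\tau \in \mathcal{E}(z)$ (the set $\mathcal{E}(z)$ was shown to be non-empty in both the $\dim X = 2$ case via \eqref{twodimcalEeq} and the $\dim X = 4$ case via \eqref{fourEzeq}).

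Now I invoke property \eqref{tautwoeq} of the embedding, which says that $\lambda(\tau(t)) = \det(t)$ for every $t \in T$. Combining this with the formula above yields $\lambda(t) = \lambda(\tau(t))$. Since $\tau(t) \in \GSO(X) \subseteq \GO(X)$, this exhibits $\lambda(t)$ as an element of $\lambda(\GO(X))$, which is exactly the condition for membership in $\GSp(4,F)^+$ as defined in \eqref{gsp4plusdef}.

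There is no real obstacle here; the entire content of the lemma is already packaged into the normalization \eqref{tautwoeq} that was imposed on $\mathcal{E}(z)$. The only thing worth double-checking is that $\mathcal{E}(z)$ is indeed non-empty under the hypotheses, which amounts to the fact that the constructions in Lemmas \ref{simlemma} and \ref{Zdecomplemma} go through whenever $\Omega$ is non-empty. In the $\dim X = 2$ case one could alternatively argue directly from Lemma \ref{lambdaSlemma}: the non-emptiness of $\Omega$ forces $\disc(X) = \disc(S)$, so $L$ is simultaneously the quadratic extension associated to $X$ and to $S$; then \eqref{rhoiso2eq} identifies $\lambda(\GSO(X))$ with $\Norm_{L/F}(L^\times)$, which manifestly contains $\Norm_{L/F}(t) = \lambda(t)$. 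In the $\dim X = 4$ case the statement is in any event trivial because $[\GSp(4,F):\GSp(4,F)^+] = 1$ by Lemma \ref{gsp4calclemma}.
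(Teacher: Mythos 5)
Your proof is correct, and there is no circularity: Lemma \ref{simlemma}, the sets $\mathcal{E}(z)$ with property \eqref{tautwoeq}, and Lemma \ref{gsp4calclemma} are all established before this lemma, so you may freely quote them. The difference from the paper is one of packaging rather than substance. The paper first uses \eqref{gsp4pluscomp} to reduce to the only non-trivial case, $\dim X=2$ with $\disc(X)\neq 1$; there, non-emptiness of $\Omega$ lets one take $S$ itself as the Gram matrix of $X$, and then \eqref{AFdefeq2} shows directly that ${}^t h S h=\det(h)S$ for $h=\left[\begin{smallmatrix}&1\\1&\end{smallmatrix}\right]t\left[\begin{smallmatrix}&1\\1&\end{smallmatrix}\right]$, so $\det(T)=\Norm_{L/F}(L^\times)\subset\lambda(\GO(X))$, which together with $\lambda(t)=\det(t)$ from \eqref{Tembeddingeq} gives the claim. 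You instead invoke the embeddings $\tau\in\mathcal{E}(z)$ and their normalization $\lambda(\tau(t))=\det(t)$; since that normalization was itself verified in Lemma \ref{simlemma} by exactly the computation the paper redoes, the two arguments rest on the same fact. What your route buys is uniformity (it treats $\dim X=2$ and $\dim X=4$ in one stroke, with the index-one observation as an independent check in the four-dimensional case) and economy, since no matrix identity needs to be re-verified; what the paper's route buys is self-containedness, using only \eqref{AFdefeq2} and the elementary reduction, without appealing to the heavier embedding apparatus.
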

\begin{proof}
By \eqref{gsp4pluscomp} we may assume that $\dim X=2$ and $\disc(X) \neq 1$. Since $\Omega$ is non-empty and $\dim X=2$, we make take $S$ to be the matrix of the symmetric bilinear form $\langle\cdot,\cdot\rangle$ on $X$. By definition, $\GO(X)$ is then the set of $h \in \GL(2,F)$ such that ${}^t h S h = \lambda (h) S$ for some $\lambda (h) \in F^\times$. From \eqref{AFdefeq2}, we have that ${}^t h S h = \det(h) S$ for $h = \left[ \begin{smallmatrix} &1 \\ 1& \end{smallmatrix} \right] t\left[ \begin{smallmatrix} &1 \\ 1& \end{smallmatrix} \right]$ with $t \in T$. It follows that $\det(T)$ is contained in $\lambda (\GO(X))$. This implies that $T$ is contained in $\GSp(4,F)^+$. 
\end{proof}

Let $(X,\langle\cdot,\cdot\rangle)$ be a non-degenerate symmetric bilinear space over $F$ satisfying \eqref{Xtypeseq}.
Define
$$
R=\{ (g,h) \in \GSp(4,F) \times \GO(X):\lambda (g) = \lambda (h) \}.
$$
We consider the Weil representation $\omega$ of $R$ on the space $\mathcal{S}(X^2)$ defined with respect to
$\psi^2$, where $\psi^2(x) = \psi(2x)$ for $x \in F$. If $\varphi \in \mathcal{S}(X^2)$, $g \in \GL(2,F)$ and 
$h \in \GO(X)$ with $\det (g) = \lambda (h)$, and $x_1,x_2 \in X$, then
\begin{gather}
\big( \omega(\begin{bmatrix} 1&& y & z \\ &1&x&y \\ &&1& \\ &&& 1 \end{bmatrix},1) \varphi \big)(x_1, x_2)  = 
\psi(  \langle x_1, x_1 \rangle x +  2\langle x_1,x_2 \rangle y +\langle x_2, x_2 \rangle z)\varphi(x_1,x_2), \label{Znformulaeq}\\
\big( \omega(\begin{bmatrix} g & \\ & \det(g) g' \end{bmatrix},h) \varphi \big) (x_1, x_2)
 =  ( \det(g), \disc(X) )_F \varphi (( \left[\begin{smallmatrix} & 1 \\ 1& \end{smallmatrix}\right] g \left[\begin{smallmatrix} & 1 \\ 1& \end{smallmatrix}\right],h)^{-1}\cdot (x_1,x_2) ). \label{Zaformulaeq}
\end{gather}
For these formulas, see Sect.~1 of \cite{Roberts2001}; note that the additive character we are using is $\psi^2$. Also, in \eqref{Zaformulaeq}
we are using the action of $\GL(2,F) \times \GO(X)$ defined in \eqref{gl2goeq}.  

We will also use the Weil representation $\omega_1$ of
$$
R_1=\{ (g,h) \in \GL(2,F) \times \GO(X):\det(g) = \lambda (h) \}
$$
on $\mathcal{S}(X)$ defined with respect to $\psi^2$. For formulas, again see Sect.~1 of \cite{Roberts2001}. The two Weil representations $\omega$ and $\omega_1$ are related as follows.
\begin{lemma}\label{restrictionlemma}
		The map 
		\begin{equation}
		\label{gsp4gl2eq}
		T:\mathcal{S}(X) \otimes \mathcal{S}(X) \longrightarrow \mathcal{S}(X^2), 
		\end{equation}
		determined by the formula
		\begin{equation}
		\label{gsp4gl2eq2}
		T(\varphi_1 \otimes \varphi_2) (x_1,x_2) = \varphi_1(x_1)\varphi_2(x_2)
		\end{equation}
		for $\varphi_1$ and $\varphi_2$ in $\mathcal{S}(X)$ and $x_1$ and $x_2$ in $X$, is a well-defined complex linear isomorphism such that 
		\begin{align}
		&T\circ (\omega_1\big(\begin{bmatrix} a_2 &b_2 \\ c_2 & d_2 \end{bmatrix},h) \otimes \omega_1(\begin{bmatrix} a_1 &b_1 \\ c_1 & d_1 \end{bmatrix},h)\big)\nonumber\\
		&\qquad =\omega(\begin{bmatrix} a_1&&&b_1\\&a_2&b_2&\\&c_2&d_2&\\c_1&&&d_1\end{bmatrix},h)\circ T \label{gsp4gl2eq3}
		\end{align}
		for $g_1=\left[\begin{smallmatrix} a_1&b_1\\c_1&d_1\end{smallmatrix} \right]$ and $g_2=\left[\begin{smallmatrix}a_2&b_2\\c_2&d_2\end{smallmatrix}\right]$ in $\GL(2,F)$ and $h$ in $\GO(X)$ such that 
		$$
		\det(g_1)=\det(g_2)=\lambda(h).
		$$
\end{lemma}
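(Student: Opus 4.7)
The plan is to first verify that $T$ is a well-defined linear isomorphism; this is the standard identification $\mathcal{S}(X)\otimes\mathcal{S}(X)\cong\mathcal{S}(X\times X)$ for $l$-spaces, which requires no computation. The substantive content of the lemma is the intertwining identity \eqref{gsp4gl2eq3}.

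To establish the identity, I would fix $h\in\GO(X)$ of similitude $\lambda$, and pick once and for all some $g_0\in\GL(2,F)$ with $\det(g_0)=\lambda$. Writing an arbitrary pair $(g_1,g_2)$ with matching determinants as $(g_1^0 g_0,g_2^0 g_0)$ for $g_1^0,g_2^0\in\SL(2,F)$, and using multiplicativity of both sides of \eqref{gsp4gl2eq3} in the $\GL(2)\times\GL(2)$ variable, the verification reduces to three cases: (a) $g_1=g_2=g_0$ and $h$ arbitrary of similitude $\lambda$; (b) $g_1\in\SL(2,F)$, $g_2=1$, $h=1$; and (c) $g_1=1$, $g_2\in\SL(2,F)$, $h=1$. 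For case (a), choosing $g_0$ diagonal places the embedded element of $\GSp(4,F)$ inside the Siegel Levi $M$, so both sides can be evaluated by \eqref{Zaformulaeq} and its $\omega_1$-analogue; the Hilbert symbol factor $(\det(g),\disc(X))_F$ appears consistently on both sides.

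For cases (b) and (c), one reduces further to generators of $\SL(2,F)$: the upper triangular unipotent $\bigl[\begin{smallmatrix} 1 & x \\ & 1 \end{smallmatrix}\bigr]$, the split torus $\bigl[\begin{smallmatrix} a & \\ & a^{-1} \end{smallmatrix}\bigr]$, and the Weyl element $w=\bigl[\begin{smallmatrix} & 1 \\ -1 & \end{smallmatrix}\bigr]$. The upper triangular unipotent in the $g_2$-slot embeds as an element of $N$ with $x$ in position $(2,3)$; by \eqref{Znformulaeq} it acts on $\mathcal{S}(X^2)$ as multiplication by $\psi(\langle x_1,x_1\rangle x)$, matching the known action of the corresponding element of $\omega_1$ on the first tensor factor. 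The same unipotent in the $g_1$-slot embeds as an element of $Z^J\subset N$ with $x$ in position $(1,4)$; \eqref{Znformulaeq} then gives the factor $\psi(\langle x_2,x_2\rangle x)$, matching $\omega_1$ on the second tensor factor. The split torus case is handled by \eqref{Zaformulaeq}.

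The main obstacle is the Weyl element. Inserted in the $g_2$-slot, $w$ yields the element $s_2$ of \eqref{s1s2defeq}; in the $g_1$-slot it yields a different Weyl element with $\pm 1$ in positions $(1,4)$ and $(4,1)$. In neither case does the embedded element lie in the Siegel parabolic, so \eqref{Zaformulaeq} does not apply directly. One must derive the action of these Weyl elements on $\mathcal{S}(X^2)$ via a Bruhat-type decomposition, obtaining in each case a partial Fourier transform in the corresponding $X$-coordinate. The analogous computation for $\omega_1(w,1)$ gives the full Fourier transform on $\mathcal{S}(X)$, and the desired identity then reduces to the elementary fact that partial Fourier transform in one variable corresponds under $T$ to Fourier transform tensored with the identity. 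Matching the Weil constants $(\det(g),\disc(X))_F$ and the $\psi^2$-arguments requires careful bookkeeping but no new ideas beyond the standard Weil representation formulas recalled in Section 1 of \cite{Roberts2001}.
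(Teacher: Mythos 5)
Your overall strategy coincides with the paper's proof, which consists of the single sentence that the lemma ``can be verified by a direct calculation using standard generators for $\SL(2,F)$'': you reduce to generators, check the unipotent slots against \eqref{Znformulaeq} (and your identification of the two slots --- position $(2,3)$ for $g_2$, the $Z^J$-position $(1,4)$ for $g_1$, with the corresponding factor swap --- is correct), and you correctly single out the Weyl element as the one case not covered by the formulas recalled in the paper, to be handled by a Bruhat decomposition yielding a partial Fourier transform.

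There is, however, one concrete misstep: two of your appeals to \eqref{Zaformulaeq} would fail as written. That formula only covers Levi elements of the special shape $\left[\begin{smallmatrix}g&\\&\det(g)g'\end{smallmatrix}\right]$ paired with $h$ satisfying $\lambda(h)=\det(g)$, i.e.\ elements whose similitude equals the determinant of the upper block. In your case (a), the pair $(g_0,g_0)$ with $g_0=\mathrm{diag}(\alpha,\delta)$ embeds to $\mathrm{diag}(\alpha,\alpha,\delta,\delta)$, whose similitude is $\alpha\delta$ while its upper block $\alpha\cdot 1_2$ has determinant $\alpha^2$; this is of the covered shape only when $\alpha=\delta$, i.e.\ when $\lambda(h)$ is a square, so for general $h$ the formula does not apply. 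Likewise a single-slot torus element, say $g_1=\mathrm{diag}(a,a^{-1})$, $g_2=1$, embeds to $\mathrm{diag}(a,1,1,a^{-1})$, which lies in the Siegel Levi but again is not of the covered shape. Both points are repairable without new ideas: for the similitude part replace $(g_0,g_0)$ by the representative $(g_1,g_2)=(\mathrm{diag}(\lambda,1),\mathrm{diag}(1,\lambda))$, whose image $\mathrm{diag}(\lambda,1,\lambda,1)$ is exactly $\left[\begin{smallmatrix}g&\\&\det(g)g'\end{smallmatrix}\right]$ with $g=\mathrm{diag}(\lambda,1)$, so \eqref{Zaformulaeq} and the matching $\omega_1$-formulas do apply; and the $\SL(2)$ torus can simply be dropped from your list of generators, since it is a word in the unipotents and the Weyl element (or else its action is quoted from the full list of formulas in Sect.~1 of \cite{Roberts2001}, which you invoke in any case). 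With these adjustments your calculation goes through and agrees with the paper's intended argument.
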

This lemma can be verified by a direct calculation using standard generators for $\SL(2,F)$.

Let $\theta=\theta_S$ be the character of $N$ defined in \eqref{thetaSsetupeq} with respect to a matrix $S$ as in \eqref{Sdefeq}.
Let $\mathcal{S}(X^2)(N,\theta)$ be the subspace of $\mathcal{S}(X^2)$ spanned by all vectors $\omega(n)\varphi-\theta(n)\varphi$, where $n$ runs through $N$ and $\varphi$ runs through $\mathcal{S}(X^2)$, and set $\mathcal{S}(X^2)_{N,\theta}=\mathcal{S}(X^2)/\mathcal{S}(X^2)(N,\theta)$.
\begin{lemma}
\label{Nthetaomegalemma} (Rallis)
Let $(X,\langle\cdot,\cdot\rangle)$ be a non-degenerate symmetric bilinear space over $F$ satisfying \eqref{Xtypeseq},
and let $S$ be as in \eqref{Sdefeq} with $\det(S) \neq 0$.
 If $(X,\langle \cdot, \cdot \rangle)$ does not represent $S$, then the twisted Jacquet module $\mathcal{S}(X^2)_{N,\theta}$ is zero. Assume that $(X,\langle \cdot, \cdot \rangle)$ represents $S$. The map $\mathcal{S}(X^2) \to \mathcal{S}(\Omega)$ defined by $\varphi\mapsto \varphi|_{\Omega}$ induces an isomorphism
 $$
  \mathcal{S}(X^2)_{N,\theta} \stackrel{\sim}{\longrightarrow} \mathcal{S}(\Omega). 
 $$
 Equivalently, $\mathcal{S}(X^2)(N,\theta)$ is the space of $\varphi \in \mathcal{S}(X^2)$ such that $\varphi|_{\Omega} =0$. 
\end{lemma}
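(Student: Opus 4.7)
The plan is to exploit the explicit formula \eqref{Znformulaeq} for the action of $N$ on $\mathcal{S}(X^2)$ via $\omega$. Writing a general element $n \in N$ in terms of its parameters $x,y,z$, one gets
\[
(\omega(n)\varphi - \theta(n)\varphi)(x_1,x_2) = \theta(n)\bigl[\psi(G(x_1,x_2)\cdot(x,y,z)) - 1\bigr]\varphi(x_1,x_2),
\]
where $G(x_1,x_2) = (\langle x_1,x_1\rangle - a,\; 2\langle x_1,x_2\rangle - b,\; \langle x_2,x_2\rangle - c) \in F^3$. The zero locus of $G$ is precisely $\Omega$, so the factor $\psi(G\cdot(x,y,z)) - 1$ vanishes identically on $\Omega$ for every choice of $(x,y,z)$. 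Hence the restriction map $\varphi \mapsto \varphi|_\Omega$ annihilates $\mathcal{S}(X^2)(N,\theta)$ and descends to a linear map $\mathcal{S}(X^2)_{N,\theta} \to \mathcal{S}(\Omega)$.

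For surjectivity I would observe that $\Omega$ is a closed subset of the $l$-space $X^2$, so any $f \in \mathcal{S}(\Omega)$ extends to a compactly supported locally constant function on $X^2$: cover the compact support of $f$ inside $\Omega$ by finitely many compact open subsets of $X^2$ on which $f$ is constant on the intersection with $\Omega$, refine to a disjoint cover, and define the extension to equal those constants on each piece and zero elsewhere.

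The main obstacle is injectivity, and the argument here is the heart of the proof. Suppose $\varphi \in \mathcal{S}(X^2)$ satisfies $\varphi|_\Omega = 0$; one must show $\varphi \in \mathcal{S}(X^2)(N,\theta)$. For every $p = (x_1,x_2) \in \mathrm{supp}(\varphi)$ we have $G(p) \neq 0$, so one can choose $n_p \in N$ with $\psi(G(p)\cdot n_p) \neq 1$. Since $G$ is continuous and $\psi$ is locally constant, there is a compact open neighborhood $V_p$ of $p$ on which $\psi(G(\cdot)\cdot n_p)$ takes the constant value $c_p \neq 1$. By compactness, $\mathrm{supp}(\varphi)$ is covered by finitely many such neighborhoods, which one refines to a finite disjoint partition $V_1,\ldots,V_k$ with associated elements $n_i \in N$ and constants $c_i \neq 1$. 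For each $i$, setting $\varphi_i = \varphi\cdot\mathbf{1}_{V_i}$ we compute
\[
\omega(n_i)\varphi_i - \theta(n_i)\varphi_i \;=\; \theta(n_i)(c_i - 1)\,\varphi_i,
\]
so $\varphi_i = \bigl(\theta(n_i)(c_i-1)\bigr)^{-1}(\omega(n_i)\varphi_i - \theta(n_i)\varphi_i) \in \mathcal{S}(X^2)(N,\theta)$. Summing over $i$ yields $\varphi \in \mathcal{S}(X^2)(N,\theta)$, as desired.

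The first assertion of the lemma is then immediate: if $(X,\langle\cdot,\cdot\rangle)$ does not represent $S$ then $\Omega = \emptyset$, every $\varphi \in \mathcal{S}(X^2)$ trivially satisfies $\varphi|_\Omega = 0$, and the injectivity argument above (whose only input was $G(p) \neq 0$ on $\mathrm{supp}(\varphi)$, which now holds everywhere on $X^2$) gives $\mathcal{S}(X^2) = \mathcal{S}(X^2)(N,\theta)$ and hence $\mathcal{S}(X^2)_{N,\theta} = 0$.
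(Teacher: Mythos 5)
Your argument is correct. Note that the paper does not prove this lemma at all: it attributes the result to Rallis and simply cites Lemma 2.3 of Kudla--Rallis. What you have written is, in essence, the standard proof behind that citation: the action \eqref{Znformulaeq} shows that $N$ acts on $\mathcal{S}(X^2)$ through evaluation of characters parametrized by the "moment map" $(x_1,x_2)\mapsto\bigl(\langle x_1,x_1\rangle,2\langle x_1,x_2\rangle,\langle x_2,x_2\rangle\bigr)$, and the $(N,\theta)$-coinvariants localize to the fiber $\Omega$ over $S$. Your key steps all hold: the generators $\omega(n)\varphi-\theta(n)\varphi$ vanish on $\Omega$ because the factor $\psi(G\cdot(x,y,z))-1$ does; restriction $\mathcal{S}(X^2)\to\mathcal{S}(\Omega)$ is surjective since $\Omega$ is closed (this is the standard exact sequence for open/closed decompositions of $l$-spaces, e.g.\ 1.8 of Bernstein--Zelevinsky, which makes your extension sketch rigorous, including pieces of the refinement that happen to miss $\Omega$); and for injectivity you correctly use that a locally constant $\varphi$ has clopen support, so $\varphi|_\Omega=0$ forces $G\neq0$ on $\mathrm{supp}(\varphi)$, after which the finite disjoint refinement exhibits each piece $\varphi_i$ as an explicit scalar multiple of $\omega(n_i)\varphi_i-\theta(n_i)\varphi_i\in\mathcal{S}(X^2)(N,\theta)$. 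The empty-$\Omega$ case then follows exactly as you say. So your proposal supplies a complete, self-contained proof of a statement the paper only quotes.
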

\begin{proof}
See Lemma 2.3 of \cite{KuRa1994}. 
\end{proof}

Let $(X,\langle\cdot,\cdot\rangle)$ be a non-degenerate symmetric bilinear space over $F$ satisfying \eqref{Xtypeseq},
and let $S$ be as in \eqref{Sdefeq} with $\det(S) \neq 0$. Let $\Omega=\Omega_S$ be as in \eqref{omegaSdefeq}, and assume that $\Omega$ is non-empty. In Lemma \ref{ghlemma}
we noted that the subgroup $B$ of $\GL(2,F) \times \GO(X)$ acts on $\Omega$. By identifying $\OO(X)$ with $1\times \OO(X) \subset \GL(2,F) \times \GO(X)$,
we obtain an action of $\OO(X)$ on $\Omega$: this is given by $h\cdot (x_1,x_2) = (hx_1,hx_2)$, where $h \in \OO(X)$ and $(x_1,x_2) \in
\Omega$. This action is transitive. 
We obtain an action of $\OO(X)$ on $\mathcal{S}(\Omega)$ by defining $(h \cdot \varphi) (x) = \varphi(h^{-1} \cdot x)$
for $h \in \OO(X)$, $\varphi \in \mathcal{S}(\Omega)$ and $x \in \Omega$. This action is used in the next lemma. 

\begin{lemma}
\label{Mnonvanishlemma}
Let $(X,\langle\cdot,\cdot\rangle)$ be a non-degenerate symmetric bilinear space over $F$ satisfying \eqref{Xtypeseq},
and let $S$ be as in \eqref{Sdefeq} with $\det(S) \neq 0$.
Let $\Omega=\Omega_S$ be as in \eqref{omegaSdefeq}, and assume that $\Omega$ is non-empty.
Let $(\sigma_0,W_0)$ be an admissible representation of $\OO(X)$, and 
let $M':\mathcal{S}(\Omega) \to W_0$ be a non-zero $\OO(X)$ map.
Let $z \in \Omega$. There exists a compact, open subset $C$ of $\Omega$
containing $z$ such that if $C_0$ is a compact, open subset of $\Omega$
such that $z \in C_0 \subset C$, then $M'(f_{C_0}) \neq 0$. Here, $f_{C_0}$
is the characteristic function of $C_0$. 
\end{lemma}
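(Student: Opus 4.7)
My plan is to produce in two stages a compact open $C\ni z$ with the required property. I will first find $C$ on which a suitable compact open subgroup $K'\subset\OO(X)$ acts transitively and satisfies $M'(f_C)\neq 0$ with $\sigma_0(K')$ fixing $M'(f_C)$; I will then deduce $M'(f_{C_0})\neq 0$ for every compact open $C_0$ with $z\in C_0\subset C$ by an averaging argument over $K'$.

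To construct $C$, I first observe that since $M'\neq 0$ and $\mathcal{S}(\Omega)$ is spanned by characteristic functions of compact open subsets, there is a compact open $D_0\subset\Omega$ with $M'(f_{D_0})\neq 0$; the transitivity of the $\OO(X)$ action on $\Omega$ then lets me translate by some $g_0\in\OO(X)$ to obtain a compact open $D\ni z$ with $M'(f_D)=:w\neq 0$. The smoothness of $W_0$ provides a compact open subgroup of $\OO(X)$ fixing $w$, and a tube-lemma argument applied to the continuous map $\OO(X)\times D\to\Omega$, $(k,y)\mapsto k\cdot y$, lets me shrink this subgroup to a compact open $K_0$ which still fixes $w$ and now additionally satisfies $K_0\cdot D=D$.

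Because $\OO(X)$ acts transitively on the $F$-analytic manifold $\Omega$, the orbit of any point under an open subgroup of $\OO(X)$ is open, so every $K_0$-orbit in $\Omega$ is a compact open subset. Thus $D$ partitions into finitely many $K_0$-orbits $O_1,\ldots,O_r$, and from $w=\sum_i M'(f_{O_i})$ some $M'(f_{O^*})$ is non-zero; choosing $y^*\in O^*$ and $g\in\OO(X)$ with $g\cdot y^*=z$, I set $K':=gK_0g^{-1}$ and $C:=g\cdot O^*=K'\cdot z$. Then $C$ is a compact open neighborhood of $z$, we have $M'(f_C)=\sigma_0(g)M'(f_{O^*})\neq 0$, and $\sigma_0(K')M'(f_C)=M'(f_C)$ because $K'$ stabilizes $C$ setwise.

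For the averaging step, given a compact open $C_0$ with $z\in C_0\subset C$, I form
\[
 \tilde F:=\mu(K')^{-1}\int_{K'}(k\cdot f_{C_0})\,dk,
\]
where $k\cdot f_{C_0}=f_{k\cdot C_0}$. Since $z\in C_0$ we have $C=K'\cdot z\subset K'\cdot C_0$, and $K'\cdot C=C$ gives $K'\cdot C_0\subset C$, so $\supp\tilde F=C$; the $K'$-invariance of $\tilde F$ together with the transitivity of $K'$ on $C$ forces $\tilde F$ to be constant on $C$, with value at $z$ equal to $\mu(K')^{-1}\mu(\{k\in K':k^{-1}\cdot z\in C_0\})>0$ since this set is a non-empty open subset of $K'$. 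Hence $\tilde F=\lambda f_C$ with $\lambda>0$, and comparing $M'(\tilde F)=\lambda M'(f_C)\neq 0$ with the identity $M'(\tilde F)=\mu(K')^{-1}\int_{K'}\sigma_0(k)M'(f_{C_0})\,dk$ forces $M'(f_{C_0})\neq 0$. The main obstacle is the orbit-openness input used in the third paragraph, which is what legitimizes the partitioning of $D$ into $K_0$-orbits whose characteristic functions lie in $\mathcal{S}(\Omega)$; once that is in hand the remainder is a standard Haar-average argument.
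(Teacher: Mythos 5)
Your proof is correct, but it takes a genuinely different route from the paper. The paper's argument identifies $\mathcal{S}(\Omega)$ with $\cInd_H^{\OO(X)}\triv_H$ (where $H=\OO(X)_z$) via 1.6 of Bernstein--Zelevinsky, uses admissibility to write $\sigma_0\cong(\sigma_0^\vee)^\vee$, and then invokes 2.29 of Bernstein--Zelevinsky (together with unimodularity of $H$ and $\OO(X)$) to express $M'$ as an explicit integral $M'(f)(v)=\int_{H\backslash\OO(X)}f(h)\lambda(\sigma_1(h)v)\,dh$ against an $H$-invariant functional $\lambda$; the non-vanishing on $f_{C_0}$ is then read off as $\mathrm{vol}(H\backslash HC_0)\lambda(v)\neq0$ once $C$ is chosen inside the stabilizer of a vector $v$ with $\lambda(v)\neq0$. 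Your argument instead stays entirely on the function side: you find one characteristic function not killed by $M'$, stabilize its support by a small compact open subgroup $K_0$, split it into finitely many open $K_0$-orbits to isolate a single orbit $C=K'\cdot z$ with $M'(f_C)\neq0$, and then deduce non-vanishing on any smaller $C_0\ni z$ by averaging $f_{C_0}$ over $K'$ and observing that the average is a positive multiple of $f_C$. What your approach buys is elementarity and a slight gain in generality: you never use admissibility of $\sigma_0$ (only smoothness), nor the duality statement 2.15/2.29 of Bernstein--Zelevinsky, nor unimodularity; what the paper's approach buys is an explicit integral formula for every such $M'$, which makes the non-vanishing (and its quantitative form) immediate. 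One small remark: the openness of $K_0$-orbits in $\Omega$, which you correctly flag as the key topological input, is most cleanly justified not by appealing to the analytic-manifold structure of $\Omega$ but by the same fact the paper uses, namely the homeomorphism $\Omega\cong\OO(X)_z\backslash\OO(X)$ from 1.6 of Bernstein--Zelevinsky, under which the orbit of a point under an open subgroup is the image of an open set by the open quotient map; with that citation your argument is complete.
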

\begin{proof}
Let $H$ be the subgroup of $h \in \OO(X)$ such that $hz=z$. 
By 1.6 of \cite{BeZe1976}, the map $H \backslash \OO(X) \stackrel{\sim}{\longrightarrow} \Omega$
defined by $Hh \mapsto h^{-1} z$ is a homeomorphism, so that  the 
map $\mathcal{S}(\Omega) \stackrel{\sim}{\longrightarrow} \cInd_H^{\OO(X)} \triv_H$
that sends $\varphi$ to the function $f$ such that $f(h) = \varphi(h^{-1}z)$ for $h \in \OO(X)$
is an $\OO(X)$ isomorphism. Via this isomorphism, we may regard $M'$ as defined
on $\cInd_H^{\OO(X)} \triv_H$, and it will suffice to prove that that there exists
a compact, open neighborhood $C$ of the identity in $\OO(X)$ such that if 
$C_0$ is a compact, open neighborhood of the identity in $\OO(X)$ with $C_0 \subset C$, then
$M'(f_{HC_0}) \neq 0$, where $f_{HC_0}$ is the characteristic function of $HC_0$.
Since $\sigma_0$ is admissible, by 2.15 of \cite{BeZe1976} we have $(\sigma_1)^\vee \cong \sigma_0$  where $\sigma_1=\sigma_0^\vee$.
Let $W_1$ be the space of $\sigma_1$. 
We may regard $M'$ as a non-zero element of $\Hom_{\OO(X)}(\cInd_H^{\OO(X)} \triv_H, \sigma_1^\vee)$. 
Now $H$ and $\OO(X)$ are unimodular since both are orthogonal groups ($H$ is isomorphic to $\OO(U^\perp)$, where $U=Fz_1+Fz_2$). 
By 2.29 of \cite{BeZe1976}, there exists an element $\lambda$ of $\Hom_H(\sigma_1, \triv_H)$ such that $M'$
is given by
$$
M'(f)(v) = \int\limits_{H \backslash \OO(X)} f(h) \lambda (\sigma_1(h) v)\, dh
$$
for $f \in \cInd_H^{\OO(X)} \triv_H$ and $v \in W_1$. Since $M'$
is non-zero, there exists $v \in W_1$ such that $\lambda(v) \neq 0$. Let $C$
be a compact, open neighborhood of $1$ in $\OO(X)$ such that $\sigma_1(h) v =v$ for $h \in C$. Let
$C_0$ be a compact, open neighborhood of $1$ in $\OO(X)$ such that $C_0 \subset C$. Then
\begin{align*}
 M'(f_{HC_0})(v)&= \int\limits_{H \backslash \OO(X)} f_{HC_0}(h) \lambda(\sigma_1(h) v)\, dh\\
&= \int\limits_{H \backslash H C_0}  \lambda(\sigma_1(h) v)\, dh\\
&=\mathrm{vol}(H \backslash H C_0) \lambda (v),
\end{align*}
which is non-zero. 
\end{proof}

In the following theorem we mention the set $\mathcal{E}(z)$ of embeddings of $T$ into $\GSO(X)$; see \eqref{Ecaleq}, \eqref{twodimcalEeq} and \eqref{fourEzeq}. 

\begin{theorem}\label{fourdimthetatheorem}
Let $(X,\langle\cdot,\cdot\rangle)$ be a non-degenerate symmetric bilinear space over $F$ satisfying \eqref{Xtypeseq},
and let $S$ be as in \eqref{Sdefeq} with $\det(S) \neq 0$.  Let $A=A_S$, $T=T_S$, and $L=L_S$ be as in Sect.~\ref{anotheralgebrasubsec}. If $\dim X =4$,
assume that $A$ is a field.  Let $(\pi,V)$ be an irreducible, admissible representation of $\GSp(4,F)^+$, and
let $(\sigma,W)$ be an irreducible, admissible representation of $\GO(X)$. Assume that there is a non-zero $R$ map $M:\mathcal{S}(X^2) \to \pi \otimes \sigma$. Let $\theta=\theta_S$ and let $\Lambda$ be a character of $T$. 
\begin{enumerate}
 \item If $\Hom_N(\pi, \C_\theta) \neq 0$, then  $\Omega = \Omega_S$ is non-empty and $D=TN$ is contained in $\GSp(4,F)^+$.  
 \item Assume that $\Hom_N(\pi,\C_\theta) \neq 0$ so that $\Omega=\Omega_S$ is non-empty, and $D=TN \subset \GSp(4,F)^+$ by i). Assume further that $\Hom_D(\pi,\C_{\Lambda \otimes \theta})\neq 0$. 
Let $z \in \Omega$, and $\tau \in \mathcal{E}(z)$. There exists a non-zero vector $w \in W$ such that 
$$
\sigma(\tau(t)) w =  \Lambda^{-1}(t) w
$$
for $t \in T$. 
\end{enumerate}
\end{theorem}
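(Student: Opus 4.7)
My strategy is to push equivariance from $V$ to $W$ by composing $M$ with the hypothesized functionals on $\pi$, then exploit Rallis's identification $\mathcal{S}(X^2)_{N,\theta}\cong\mathcal{S}(\Omega)$ together with the stability lemmas proved above.

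For (i), I will argue by contradiction. Assume $\Omega=\Omega_S$ is empty; then $\mathcal{S}(X^2)_{N,\theta}=0$ by Lemma~\ref{Nthetaomegalemma}, so for every $\lambda\in\Hom_N(V,\C_\theta)$ the composition $(\lambda\otimes 1)\circ M:\mathcal{S}(X^2)\to W$ is $N$-equivariant with character $\theta$, factors through $\mathcal{S}(X^2)_{N,\theta}$, and therefore vanishes. Since $\Hom_N(V,\C_\theta)=(V_{N,\theta})^*$ separates points of $V_{N,\theta}$, this forces $\mathrm{Im}(M)\subseteq V(N,\theta)\otimes W$. The $R$-invariance of $\mathrm{Im}(M)$, together with the surjection $R\twoheadrightarrow\GSp(4,F)^+$, then yields $\mathrm{Im}(M)\subseteq\bigl(\bigcap_{g\in\GSp(4,F)^+}\pi(g)V(N,\theta)\bigr)\otimes W$. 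The intersection is $\GSp(4,F)^+$-invariant and hence, by irreducibility of $\pi$, equals $0$ or $V$; the first option forces $M=0$ and the second forces $V_{N,\theta}=0$, each a contradiction. Thus $\Omega$ is non-empty, and the containment $D\subseteq\GSp(4,F)^+$ follows from $\lambda(N)=1$ combined with Lemma~\ref{tingsp4lemma}.

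For (ii), let $\beta$ be the given $(\Lambda,\theta)$-Bessel functional and set $M''=(\beta\otimes 1)\circ M$. Repeating the $R$-invariance and irreducibility argument from (i), this time with $\ker\beta$ in place of $V(N,\theta)$, shows $M''\neq 0$: otherwise $\mathrm{Im}(M)\subseteq(\bigcap_g\pi(g)\ker\beta)\otimes W$ and the intersection is either $V$ (forcing $\beta=0$) or $0$ (forcing $M=0$). The $N$-equivariance of $\beta$ makes $M''$ descend to an $\OO(X)$-equivariant map $\bar M'':\mathcal{S}(\Omega)\to W$ via Rallis's isomorphism. Lemma~\ref{Mnonvanishlemma} provides a compact open neighborhood $C$ of $z$ such that $\bar M''(f_{C_0})\neq 0$ for every compact open $C_0$ with $z\in C_0\subseteq C$; Lemma~\ref{zstablemma} (whose hypotheses hold since $A$ is a field when $\dim X=4$) lets me shrink to such a $C_0$ that is stable under $(\mat{}{1}{1}{}t\mat{}{1}{1}{},\tau(t))$ for all $t\in T$. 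I then set $w=\bar M''(f_{C_0})\neq 0$.

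The transformation law comes from equating two expressions for $\bar M''$ applied to $\omega(\mathrm{diag}(t,\det(t)t'),\tau(t))f_{C_0}$: the $R$-equivariance of $M$ together with the $(\Lambda,\theta)$-equivariance of $\beta$ give $\Lambda(t)\sigma(\tau(t))w$, while the Weil formula \eqref{Zaformulaeq} combined with the stability of $C_0$ gives $(\det(t),\disc(X))_F\,w$. The main obstacle is to show that this Hilbert symbol factor is trivial, so that the identity $\sigma(\tau(t))w=\Lambda^{-1}(t)w$ emerges cleanly. I will resolve this using $\det(t)=\Norm_{L/F}(t)$ together with the classical fact that $\Norm_{L/F}(L^\times)$ is exactly the kernel of the quadratic character $x\mapsto(x,D)_F=(x,\disc(X))_F$, which is trivially satisfied in the split case $L=F\times F$. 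This cancellation is the key technical point that closes the argument.
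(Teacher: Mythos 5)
Your proposal is correct and follows essentially the same route as the paper: compose $M$ with the Bessel functional, use irreducibility of $\pi|_{\GSp(4,F)^+}$ together with the definition of $\GSp(4,F)^+$ to see the composite is non-zero (your invariant-subspace contradiction is just a repackaging of the paper's direct choice of $g$ with $\beta(\pi(g)v_1)\neq 0$), descend through Rallis's isomorphism $\mathcal{S}(X^2)_{N,\theta}\cong\mathcal{S}(\Omega)$, and evaluate on the $T$-stable characteristic function supplied by Lemmas \ref{Mnonvanishlemma} and \ref{zstablemma}, exactly as in the paper. One cosmetic caveat: the identity $(x,D)_F=(x,\disc(X))_F$ you invoke is valid only when $\dim X=2$ (where $\Omega\neq\emptyset$ forces $\disc(X)=\disc(S)$), while for $\dim X=4$ one has $\disc(X)=1$ and the factor $(\det(t),\disc(X))_F$ is trivially $1$ --- the same cancellation the paper performs.
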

\begin{proof}
i) The assumptions $\Hom_R(\mathcal{S}(X^2), V \otimes W )\neq 0$ and $\Hom_N(V,\C_\theta)\neq 0$ imply that $\Hom_N(\mathcal{S}(X^2), \C_\theta) \neq 0$. This means that $\mathcal{S}(X^2)_{N,\theta} \neq 0$; by Lemma \ref{Nthetaomegalemma}, we obtain $\Omega \neq \emptyset$. Lemma \ref{tingsp4lemma} now also yields that $D \subset \GSp(4,F)^+$. 

ii) Let $\beta$ be a non-zero element of $\Hom_D(\pi,\C_{\Lambda \otimes \theta})$. 
We first claim that the composition $M'$
$$
\mathcal{S}(X^2) \stackrel{M}{\longrightarrow} V \otimes W \stackrel{\beta \otimes \mathrm{id}}{\longrightarrow} \C_{\Lambda \otimes \theta} \otimes W
$$
is non-zero. 
Let $\varphi \in \mathcal{S}(X^2)$ be such that $M(\varphi) \neq 0$, and write
$$
M(\varphi) = \sum_{\ell =1}^t v_\ell \otimes w_\ell
$$
where $v_1,\dots,v_t \in V$  and $w_1,\dots, w_t \in W$. We may assume that the vectors $w_1,\dots, w_t$ are linearly independent and that $v_1 \neq 0$. Since $\beta$ is non-zero and $V$ is an irreducible representation of $\GSp(4,F)^+$, it follows that there exists $g \in \GSp(4,F)^+$ such that $\beta(\pi(g) v_1)\neq 0$.
Let $h \in \GO(X)$ be such that $\lambda(h)=\lambda(g)$. Then $(g,h) \in R$. Since $M$ is an $R$-map, we have 
$$
M(\omega(g,h) \varphi) = \sum_{\ell =1}^t \pi(g) v_\ell \otimes \sigma(h) w_\ell.
$$
Applying $\beta \otimes \mathrm{id}$ to this equation, we get
$$
M'(\omega(g,h)\varphi) = \sum_{\ell=1}^t \beta (\pi(g) v_\ell) \otimes \sigma(h) w_\ell
$$
in $\C_{\Lambda \otimes \theta} \otimes W$. Since the vectors $\sigma(h) w_1,\dots, \sigma(h) w_t$ are also linearly independent, and since $\beta (\pi(g) v_1)$ is non-zero, it follows that the vector $M'(\omega(g,h)\varphi)$ is non-zero; this proves $M' \neq 0$. 

Next, the map $M'$ induces a non-zero map $\mathcal{S}(X^2)_{N,\theta} \to \C_{\Lambda \otimes \theta}  \otimes W$, which we also denote by $M'$.
Lemma \ref{Nthetaomegalemma}  implies that the restriction map yields an isomorphism $\mathcal{S}(X^2)_{N,\theta} \stackrel{\sim}{\longrightarrow}
\mathcal{S}(\Omega)$. Composing, we thus obtain a non-zero map $\mathcal{S}(\Omega) \to \C_{\Lambda \otimes \theta} \otimes W$, which
we again denote by $M'$. Let $z \in \Omega$ and $\tau \in \mathcal{E}(z)$. By Lemma \ref{ghlemma}, the elements $(\left[ \begin{smallmatrix} &1\\1& \end{smallmatrix} \right] t \left[\begin{smallmatrix}&1\\1&\end{smallmatrix}\right],\tau(t))$ for $t \in T$ act on $\Omega$. We can regard these elements as
acting on $\mathcal{S}(\Omega)$ via the definition $\big( (\left[ \begin{smallmatrix} &1\\1& \end{smallmatrix} \right] t \left[\begin{smallmatrix}&1\\1&\end{smallmatrix}\right],\tau(t)) \cdot \varphi \big)(x) = \varphi ( (\left[ \begin{smallmatrix} &1\\1& \end{smallmatrix} \right] t \left[\begin{smallmatrix}&1\\1&\end{smallmatrix}\right],\tau(t))^{-1} \cdot x)$ for $\varphi \in \mathcal{S}(\Omega)$ and $x \in \Omega$. 
Moreover, by the definition of $M'$  and \eqref{Zaformulaeq}, we have
\begin{equation}
\label{M1transeq}
 M'(  (\left[ \begin{smallmatrix} &1\\1& \end{smallmatrix} \right] t \left[\begin{smallmatrix}&1\\1&\end{smallmatrix}\right],\tau(t))\cdot  \varphi ) = (\det(t), \disc(X))_F \Lambda (t) \sigma(\tau(t)) M'(\varphi)
\end{equation}
for $t \in T$ and $\varphi \in \mathcal{S}(\Omega)$. Let $C$ be the compact, open subset from Lemma \ref{Mnonvanishlemma} with respect to $M'$ and $z$; note that the restriction of $\sigma$ to $\OO(X)$ is admissible. By  Lemma \ref{zstablemma} there exists a compact, open subset $C_0$ of $C$ containing $z$ such that $(\left[ \begin{smallmatrix} &1\\1& \end{smallmatrix} \right] t \left[\begin{smallmatrix}&1\\1&\end{smallmatrix}\right],\tau(t)) \cdot C_0 = C_0$ for $t \in T$. 
Let $\varphi = f_{C_0}$. Then $(\left[ \begin{smallmatrix} &1\\1& \end{smallmatrix} \right] t \left[\begin{smallmatrix}&1\\1&\end{smallmatrix}\right],\tau(t)) \cdot \varphi = \varphi$ for $t \in T$, and by Lemma \ref{Mnonvanishlemma}, we have $M'(\varphi) \neq 0$.  From \eqref{M1transeq} we have
$\sigma (\tau(t))M'(\varphi) = (\det(t), \disc(X))_F \Lambda(t)^{-1} M'(\varphi)=\chi_{L/F}(\Norm_{L/F}(t)) \Lambda(t)^{-1} M'(\varphi)=\Lambda(t)^{-1} M'(\varphi)$ for $t \in T$. Since $M'(\varphi) \neq 0$, this proves ii). 
\end{proof}

Let $(X,\langle\cdot,\cdot\rangle)$ be a non-degenerate symmetric bilinear space over $F$ satisfying \eqref{Xtypeseq}, and let $S$ be as  in \eqref{Sdefeq} with $\det(S) \neq 0$. If $\Omega_S$ is non-empty and $z=(z_1,z_2) \in \Omega_S$, then we let $\OO(X)_z$ be the subgroup of $h \in \OO(X)$ such that $h(z_1)=z_1$ and $h(z_2)=z_2$. 
\begin{proposition}
\label{scdimprop}
Let $(X,\langle\cdot,\cdot\rangle)$ be a non-degenerate symmetric bilinear space over $F$ satisfying \eqref{Xtypeseq}, and assume that $\dim X=4$.  Let $S$ be as 
in \eqref{Sdefeq} with $\det(S) \neq 0$. Assume that $\Omega_S$ is non-empty, and let $z$ be in $\Omega_S$. Let $\varPi$ and $\sigma$ be irreducible, admissible, supercuspidal representations of $\GSp(4,F)$ and $\GO(X)$, respectively. If $\Hom_R(\omega,\varPi \otimes \sigma) \neq 0$, then 
\begin{equation}
\label{Nthetadimeq}
\dim \varPi_{N,\theta_S} = \dim \Hom_{\OO(X)_z} (\sigma, \C_1).
\end{equation}
\end{proposition}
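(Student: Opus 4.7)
The plan is to identify $\varPi_{N,\theta_S}$ via the theta correspondence and then compute it in terms of $\sigma$. The key initial observation is that the subgroups $N\times\{1\}$ and $\{1\}\times\OO(X)$ of $R$ commute with each other, so the $(N,\theta_S)$-coinvariants functor and the $\OO(X)$-coinvariants functor commute when applied to smooth $R$-modules.

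I would first invoke Howe duality for supercuspidals: because both $\varPi$ and $\sigma$ are supercuspidal and $\Hom_R(\omega,\varPi\otimes\sigma)\neq 0$, the big theta lift $\Theta(\sigma):=(\omega\otimes\sigma^\vee)_{\OO(X)}$, regarded as a smooth representation of $R/\OO(X)\cong\GSp(4,F)^+$, is irreducible and isomorphic to $\varPi$. Applying $(N,\theta_S)$-coinvariants to this isomorphism, using the commutativity noted above together with Lemma~\ref{Nthetaomegalemma} (which identifies $\omega_{N,\theta_S}$ with $\mathcal{S}(\Omega)$), gives
$$
\varPi_{N,\theta_S}\;\cong\;\bigl((\omega\otimes\sigma^\vee)_{N,\theta_S}\bigr)_{\OO(X)}\;\cong\;\bigl(\mathcal{S}(\Omega)\otimes\sigma^\vee\bigr)_{\OO(X)}.
$$

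Next I would compute the right-hand side. Because $\OO(X)$ acts transitively on $\Omega$ with stabilizer $\OO(X)_z$ at $z$, one has $\mathcal{S}(\Omega)\cong\cInd_{\OO(X)_z}^{\OO(X)}\C_1$. Combining the projection formula with the identity $(\cInd_H^G\rho)_G\cong\rho_H$ (a consequence of Frobenius reciprocity applied to $\Hom_G(-,\C)$, valid here since $\OO(X)$ and $\OO(X)_z$ are unimodular) yields
$$
\bigl(\mathcal{S}(\Omega)\otimes\sigma^\vee\bigr)_{\OO(X)}\;\cong\;\bigl(\cInd_{\OO(X)_z}^{\OO(X)}(\sigma^\vee|_{\OO(X)_z})\bigr)_{\OO(X)}\;\cong\;(\sigma^\vee)_{\OO(X)_z}.
$$
Because $A$ is a field, $\disc(U^\perp)\notin F^{\times 2}$ with $U=Fz_1+Fz_2$, so $U^\perp$ is anisotropic and $\OO(X)_z\cong\OO(U^\perp)$ is compact. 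For a compact group acting on a smooth representation, coinvariants and invariants coincide via the averaging projector, so $(\sigma^\vee)_{\OO(X)_z}\cong(\sigma^\vee)^{\OO(X)_z}$, which is precisely $\Hom_{\OO(X)_z}(\sigma,\C_1)$; comparing dimensions yields \eqref{Nthetadimeq}.

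The main obstacle will be the invocation of Howe duality for supercuspidals in the similitude setting, namely the identification $\Theta(\sigma)\cong\varPi$: one must either appeal to the standard theory of the theta correspondence for supercuspidal representations of the similitude dual pair $(\GSp(4),\GO(X))$, or argue directly (exploiting that $\sigma$ is projective in its Bernstein block so that the $\sigma$-isotypic part of $\omega$ is a direct summand) that the non-zero $R$-map $\omega\to\varPi\otimes\sigma$ induces an isomorphism $\Theta(\sigma)\stackrel{\sim}{\longrightarrow}\varPi$.
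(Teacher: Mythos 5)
Your overall strategy --- pass to the theta lift, use Rallis's lemma (Lemma~\ref{Nthetaomegalemma}) and the identification $\Omega\cong\OO(X)_z\backslash\OO(X)$ to turn the twisted Jacquet module into a coinvariant space for $\OO(X)_z$ --- is the same circle of ideas as the paper, which outsources exactly this computation to Lemma 6.1 of \cite{Roberts1999}. But there is a genuine gap at the step you yourself flag: the identification $\Theta(\sigma)=(\omega\otimes\sigma^\vee)_{1\times\OO(X)}\cong\varPi$ as $\GSp(4,F)^+$-representations. The projectivity argument you sketch only shows that $\varPi$ occurs as a quotient (indeed a summand) of $\Theta(\sigma)$; for the \emph{equality} \eqref{Nthetadimeq} you need that $\Theta(\sigma)$ contains $\varPi$ with multiplicity one and nothing else, i.e.\ genuine Howe duality for the similitude pair, which is not available off the shelf. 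The paper avoids this entirely: since $N\subset\SSp(4,F)$, the left side of \eqref{Nthetadimeq} depends only on $\varPi|_{\SSp(4,F)}$, so one restricts to the isometry pair. By Proposition 3.3 of \cite{Roberts2001} the restriction $\sigma|_{\OO(X)}$ is multiplicity-free, and Lemma 4.2 of \cite{Roberts1996} gives matched decompositions $\varPi|_{\SSp(4,F)}=\oplus_i\varPi_i$ and $\sigma|_{\OO(X)}=\oplus_i\sigma_i$ with $\Hom_{\SSp(4,F)\times\OO(X)}(\omega,\varPi_i\otimes\sigma_i)\neq0$; then the supercuspidal case of the theorem on p.~69 of \cite{MoeglinVignerasWaldspurger1987} gives irreducibility of $\Theta(\sigma_i)$, Theorem 2.1 of \cite{Kudla1986} gives $\varPi_i\cong\Theta(\sigma_i)$, Lemma 6.1 of \cite{Roberts1999} gives $\Theta(\sigma_i)_{N,\theta_S}\cong\Hom_{\OO(X)_z}(\sigma_i^\vee,\C_1)$, and one sums over $i$. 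If you supply these isometry-level inputs your argument collapses into the paper's; without them the key isomorphism is unproved.

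There is a second problem in your endgame. The compactness of $\OO(X)_z$ is deduced from ``$A$ is a field,'' but that is \emph{not} a hypothesis of this proposition (it is a hypothesis of Theorem~\ref{fourdimthetatheorem}); the proposition allows split $S$, and for $X=X_{\Mat_2}$ with $S$ split the space $U^\perp$ of Lemma~\ref{Zdecomplemma} is a hyperbolic plane, so $\OO(X)_z\cong\OO(U^\perp)$ is non-compact and the averaging step fails. Even when $\OO(X)_z$ is compact it is not open in $\OO(X)$, so an $\OO(X)_z$-invariant functional need not be smooth, and your identification of $(\sigma^\vee)^{\OO(X)_z}$ with $\Hom_{\OO(X)_z}(\sigma,\C_1)$ requires justification. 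The robust way to finish is by duality rather than averaging: $\Hom_\C\bigl((\cInd_{\OO(X)_z}^{\OO(X)}(\sigma^\vee|_{\OO(X)_z}))_{\OO(X)},\C\bigr)\cong\Hom_{\OO(X)}\bigl(\cInd_{\OO(X)_z}^{\OO(X)}(\sigma^\vee|_{\OO(X)_z}),\C_1\bigr)\cong\Hom_{\OO(X)_z}(\sigma^\vee,\C_1)$, using 2.29 of \cite{BeZe1976} and unimodularity of both orthogonal groups, followed by the MVW involution $\sigma^\vee\cong\sigma$ (first theorem on p.~91 of \cite{MoeglinVignerasWaldspurger1987}). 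Note that this last ingredient is unavoidable in your setup too: your coinvariant computation naturally produces $\sigma^\vee$ where the statement has $\sigma$, and you never account for that discrepancy.
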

\begin{proof}
Assume that $\Hom_R(\omega,\varPi \otimes \sigma) \neq 0$. By Proposition 3.3 of \cite{Roberts2001} the restriction of $\sigma$ to $\OO(X)$ is multiplicity-free. By Lemma 4.2 of \cite{Roberts1996} we have $\varPi|_{\SSp(4,F)} = \varPi_1 \oplus \dots \oplus \varPi_t$, where $\varPi_1,\dots,\varPi_t$ are mutually non-isomorphic, irreducible, admissible representations of $\SSp(4,F)$, $\sigma|_{\OO(X)} = \sigma_1\oplus \dots \oplus \sigma_t$, where $\sigma_1,\dots, \sigma_t$ are mutually non-isomorphic, irreducible, admissible representations of $\OO(X)$, with $\Hom_{\SSp(4,F) \times \OO(X)} (\omega, \varPi_i \otimes \sigma_i) \neq 0$ for $i \in \{1,\dots,t\}$. Let $i \in \{1,\dots,t\}$; to prove the proposition, it will suffice to prove that $ (\varPi_i)_{N,\theta_S} \cong \Hom_{\OO(X)_z}(\sigma_i,\C_1)$ as complex vector spaces. By Lemma 6.1 of \cite{Roberts1999}, we have $\Theta(\sigma_i)_{N,\theta_S} \cong \Hom_{\OO(X)_z}(\sigma_i^\vee,\C_1)$ as complex vector spaces. 
By 1) a) of the theorem on p.~69 of \cite{MoeglinVignerasWaldspurger1987}, the representation $\Theta(\sigma_i)$ of $\SSp(4,F)$ is irreducible. By Theorem 2.1 of \cite{Kudla1986} we have $\varPi_i \cong \Theta(\sigma_i)$. Therefore, $(\varPi_i)_{N,\theta_S} \cong \Hom_{\OO(X)_z}(\sigma_i^\vee,\C_1)$. By the first theorem on p.~91 of \cite{MoeglinVignerasWaldspurger1987}, $\sigma_i^\vee \cong \sigma_i$. The proposition follows. 
\end{proof}

\subsection{Representations of \texorpdfstring{$\GO(X)$}{}}
\label{goxsubsec}
Let $m,\lambda \in F^\times$. By Lemma  \ref{twodimclasslemma}, the group $\GSO(X_{m,\lambda})$ is abelian. It follows that the irreducible, admissible representations of $\GSO(X_{m,\lambda})$ are characters. To describe the representations of $\GO(X_{m,\lambda})$, let $\mu: \GSO(X_{m,\lambda}) \to \C^\times$ be a character. We recall that the map $\gamma$ from \eqref{gammaendoeq} is a  representative for the non-trivial
coset of $\GSO(X_{m,\lambda})$ in $\GO(X_{m,\lambda})$. Define $\mu^\gamma: \GSO(X_{m,\lambda}) \to \C^\times$ by $\mu^\gamma(x) = \mu (\gamma x \gamma^{-1})$. If $\mu^\gamma \neq \mu$, then the representation $\ind^{\GO(X_{m,\lambda})}_{\GSO(X_{m,\lambda})} \mu$ is irreducible, and we define
$$
\mu^+ = \ind^{\GO(X_{m,\lambda})}_{\GSO(X_{m,\lambda})} \mu. 
$$
Assume that $\mu= \mu^\gamma$. Then the induced representation $\ind^{\GO(X_{m,\lambda})}_{\GSO(X_{m,\lambda})} \mu$ is reducible, and is the direct
sum of the two extensions of $\mu$ to $\GO(X_{m,\lambda})$. We let $\mu^+$ be the extension of $\mu$ to $\GO(X_{m,\lambda})$ such that $\mu^+(\gamma)=1$
and let $\mu^-$ be the extension of $\mu$ to $\GO(X_{m,\lambda})$ such that $\mu^-(\gamma)=-1$. Every irreducible, admissible
representation of $\GO(X_{m,\lambda})$ is of the form $\mu^+$ or $\mu^-$ for some character $\mu$ of $\GSO(X_{m,\lambda})$. We will sometimes identify characters of $\GSO(X_{m,\lambda})$ with characters of $T_{\left[ \begin{smallmatrix} 1 & \\ & -m \end{smallmatrix} \right]}$, via \eqref{rhoiso2eq}, and in turn identify characters of $T_{\left[ \begin{smallmatrix} 1 & \\ & -m \end{smallmatrix} \right]}$ with characters of $L^\times$, via \eqref{ALisoeq}. Here $L$ is associated to $m$, as in Sect.~\ref{quadextsubsec}, so that $L=F(\sqrt{m})$ if $m \notin F^{\times 2}$, and $L = F\times F$ if $m \in F^{\times 2}$. 

Next, let $(X,\langle\cdot,\cdot\rangle)$ be either $(X_{\Mat_2},\langle\cdot,\cdot\rangle_{\Mat_2})$ or $(X_H,\langle\cdot,\cdot\rangle_H)$, as in \eqref{Xmateq} or \eqref{XHeq}. 
If $X=X_{\Mat_2}$, set $G=\GL(2,F)$, and if $X=X_{H}$, set $G=H^\times$. Let $h_0$ be the element of $\GO(X)$ that maps $x$ to $x^*$; then $h_0$ represents the non-trivial coset of $\GSO(X)$ in $\GO(X)$. Let $\pi_1$ and $\pi_2$ be irreducible, admissible representations of $G$ with the same central character. Via the exact sequences \eqref{GSOexacteq1} and \eqref{GSOexacteq2}, the representations $(\pi_1,V_1)$ and $(\pi_2,V_2)$ define an irreducible, admissible representation $\pi_1\otimes\pi_2$ of $\GSO(X)$ which has space $V_1 \otimes V_2$ and action given by the formula $(\pi_1\otimes\pi_2)(\rho(g_1,g_2)) =\pi_1(g_1) \otimes \pi_2(g_2)$ for $g_1,g_2 \in G$. If $\pi_1$ and $\pi_2$ are not isomorphic, then $\pi_1\otimes\pi_2$ induces irreducibly to $\GO(X)$; we denote this induced representation by $(\pi_1\otimes\pi_2)^+$. Assume that $\pi_1$ and $\pi_2$ are isomorphic. In this case the representation $\pi_1\otimes\pi_2$ does not induce irreducibly to $\GO(X)$, but instead has two extensions $\sigma_1$ and 
$\sigma_2$ to representations of $\GO(X)$. Moreover, the space of linear forms on $\pi_1\otimes\pi_2$ that are invariant under the subgroup of $\GSO(X)$ of elements $\rho(g,g^{*-1})$ for $g \in G$ is one-dimensional. Let $\lambda$ be a non-zero functional in this space. Then $\lambda\circ \sigma_i(h_0)$ is another such functional, so that $\lambda\circ \sigma_i(h_0)=\varepsilon_i\lambda$ with $\{\varepsilon_1,\varepsilon_2\}=\{1,-1\}$. The representation $\sigma_i$ for which $\varepsilon_i=1$ is denoted by $(\pi_1\otimes\pi_2)^+$, and the representation $\sigma_j$ for which $\varepsilon_j=-1$ is denoted by $(\pi_1\otimes\pi_2)^-$. See \cite{Roberts1999} for details. 

\begin{proposition}
\label{Ozdimeq}
Let $H$ be as in \eqref{Hdefeq} and let $X_H$ be as in \eqref{XHeq}. Let $S$ be as in \eqref{Sceq} with $-c \notin F^{\times 2}$; we may assume that $i^2 = -c$, as in \eqref{SpecialHeq}. Let $z=(z_1,z_2)$ be as in \eqref{specialijkeq}, so that $z \in \Omega_S$. Set $L=F(\sqrt{-c})$. We have
$$
\dim \Hom_{\OO(X_H)_z} (\sigma_0,\C_1) = 1
$$
for the following families of irreducible, admissible representations $\sigma_0$ of $\GO(X_H)$:
\begin{enumerate}
\item $\sigma_0 = (\sigma 1_{H^\times} \otimes \sigma \chi_{L/F})^+$;
\item $\sigma_0 = (\sigma1_{H^\times} \otimes \sigma \pi^{\mathrm{JL}})^+$.
\end{enumerate}
Here, $\sigma$ is a character of $F^\times$, and $\pi$ is a supercuspidal, irreducible, admissible representation of $\GL(2,F)$ with trivial central character with $\Hom_{L^\times}(\pi^{\mathrm{JL}},\C_1) \neq 0$. 
\end{proposition}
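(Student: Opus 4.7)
The plan is to identify the stabilizer $\OO(X_H)_z$ explicitly, reduce via Mackey and Frobenius reciprocity to a computation on $\SO(X_H)_z$, and then compute case by case, invoking Waldspurger uniqueness for case (ii). First I will identify the stabilizer. Since $z_1 = 1$ and $z_2 = i$ span $L = F + Fi \subset H$ and $L^\perp = Lj$ in the orthogonal decomposition of $X_H$, every element of $\OO(X_H)_z$ is the identity on $L$ and an orthogonal transformation of $L^\perp$, so $\OO(X_H)_z \cong \OO(L^\perp)$. Using the sequence \eqref{GSOexacteq2}, an element $\rho(g_1, g_2) \in \GSO(X_H)$ fixes $z$ iff $g_2 = (g_1^*)^{-1}$ and $g_1 i = i g_1$, i.e.\ $g_1 \in L^\times$. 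A short calculation shows that the similitude factor of such $\rho(g_1, g_2)$ is $\Norm_H(g_1) \Norm_H(g_2) = 1$, so the map
$$
L^\times \longrightarrow \SO(X_H)_z, \qquad g \longmapsto \rho(g, \gamma(g^{-1})),
$$
is surjective with kernel $F^\times$, and $\OO(X_H)_z$ is obtained from $\SO(X_H)_z$ by adjoining a reflection of $L^\perp$ extended by the identity on $L$.

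Next I would apply Mackey. Because $\OO(X_H)_z$ contains elements of determinant $-1$ (the reflections just mentioned), we have $\GO(X_H) = \GSO(X_H) \cdot \OO(X_H)_z$, and $\GSO(X_H) \cap \OO(X_H)_z = \SO(X_H)_z$. In both cases $\pi_1 = \sigma 1_{H^\times}$ and $\pi_2$ are non-isomorphic: in (i) because $\chi_{L/F}$ is non-trivial on $\Norm_H(H^\times) = F^\times$ when $-c \notin F^{\times 2}$, and in (ii) because $\pi^{\mathrm{JL}}$ has dimension greater than one. Hence $\sigma_0 = \ind_{\GSO(X_H)}^{\GO(X_H)}(\pi_1 \otimes \pi_2)$, and Mackey followed by Frobenius reciprocity yields
$$
\Hom_{\OO(X_H)_z}(\sigma_0, \C_1) \cong \Hom_{\SO(X_H)_z}(\pi_1 \otimes \pi_2, \C_1).
$$

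For the final character computation, evaluate on $\rho(g, \gamma(g^{-1}))$ for $g \in L^\times$, using that the reduced norm on $L \subset H$ restricts to $\Norm_{L/F}$. In case (i),
$$
(\pi_1 \otimes \pi_2)(\rho(g, \gamma(g^{-1}))) = \sigma(\Norm_{L/F}(g))\,\sigma(\Norm_{L/F}(g)^{-1})\,\chi_{L/F}(\Norm_{L/F}(g)^{-1}) = 1,
$$
where the last equality uses the defining property $\chi_{L/F} \circ \Norm_{L/F} = 1$. Thus $\pi_1 \otimes \pi_2$ restricts to the trivial character of $\SO(X_H)_z$ and the Hom space is one-dimensional. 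In case (ii) the $\sigma$-factors again cancel and the action of $g$ becomes $\pi^{\mathrm{JL}}(\gamma(g^{-1}))$; since $g \mapsto \gamma(g^{-1})$ is an automorphism of $L^\times$ and $\pi^{\mathrm{JL}}$ has trivial central character, the invariants coincide with $\Hom_{L^\times}(\pi^{\mathrm{JL}}, \C_1)$, which is one-dimensional by hypothesis combined with the uniqueness relation \eqref{waldspurgerpropeq2}.

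The main obstacle is the explicit identification of $\SO(X_H)_z$ with $L^\times / F^\times$ via the map $g \mapsto \rho(g, \gamma(g^{-1}))$, together with the verification that the reduced norm on $L \subset H$ agrees with $\Norm_{L/F}$ under this identification; once these are in place, the Mackey step is formal and both character computations collapse to a few lines. A minor subsidiary point is checking that $\OO(X_H)_z$ is large enough to cover the non-identity coset of $\GSO(X_H)$ in $\GO(X_H)$, which is ensured by the non-degeneracy of $L^\perp$.
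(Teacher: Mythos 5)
Your argument is correct, and it reaches the same two reductions as the paper (triviality of the relevant character on $\SO(X_H)_z\cong L^\times/F^\times$ in case (i), and $\Hom_{L^\times}(\pi^{\mathrm{JL}},\C_1)$ together with the dichotomy \eqref{waldspurgerpropeq2} in case (ii)), but by a somewhat different mechanism in the middle. The paper first describes $\OO(X_H)_z$ exactly as you do (your $\rho(g,\gamma(g^{-1}))$ description agrees with its $\rho(h^{*-1},h)$, $h\in(F+Fi)^\times$, and your reflection is its element $g_1$), but then works with explicit models $\C\oplus\C$ and $V\oplus V$ of the induced representations, computes the action of $g_1=*\circ\rho(k^{*-1},k)$ by hand, and in case (ii) constructs the invariant functional $\Lambda(v_1\oplus v_2)=\lambda(v_1)+\varepsilon\lambda(v_2)$ using the sign $\varepsilon$ with which $k$ acts on the one-dimensional space $\Hom_{L^\times}(\pi^{\mathrm{JL}},\C_1)$. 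You instead observe that in both cases $\pi_1\not\cong\pi_2$ (correctly justified: $\chi_{L/F}\circ\Norm$ is non-trivial on $H^\times$, and $\dim\pi^{\mathrm{JL}}>1$ for supercuspidal $\pi$), so $\sigma_0$ is genuinely induced from $\GSO(X_H)$, and then use $\GO(X_H)=\GSO(X_H)\cdot\OO(X_H)_z$ with $\GSO(X_H)\cap\OO(X_H)_z=\SO(X_H)_z$ to get $\Res_{\OO(X_H)_z}\sigma_0\cong\Ind_{\SO(X_H)_z}^{\OO(X_H)_z}\bigl((\pi_1\otimes\pi_2)|_{\SO(X_H)_z}\bigr)$ and, by Frobenius reciprocity for this open finite-index subgroup, $\Hom_{\OO(X_H)_z}(\sigma_0,\C_1)\cong\Hom_{\SO(X_H)_z}(\pi_1\otimes\pi_2,\C_1)$. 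This buys a uniform treatment of both cases and dispenses entirely with the sign $\varepsilon$ and the explicit extra-coset computation, at the cost of being less explicit: the paper's computation additionally exhibits the full decomposition of $\sigma_0|_{\OO(X_H)_z}$ and a concrete formula for the invariant functional, which is sometimes useful downstream. Both arguments hinge on the same two non-formal inputs, namely the identification $\SO(X_H)_z\cong L^\times/F^\times$ (with the reduced norm restricting to $\Norm_{L/F}$ on $L\subset H$) and the at-most-one-dimensionality of $\Hom_{L^\times}(\pi^{\mathrm{JL}},\C_1)$, and you supply both.
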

\begin{proof}
We begin by describing $\OO(X_H)_z$. Define $g_1:X_H \to X_H$ by
$$
g_1(1) =1, \quad g_1(i) = i, \quad g_1(j) = j, \quad g_1(k) = -k.
$$
Evidently, $g_1 \in \OO(X_H)_z$, moreover, $\det(g_1)=-1$. It follows that $\OO(X_H)_z = (\SO(X_H) \cap \OO(X_H)_z) \sqcup (\SO(X_H) \cap \OO(X_H)_z) g_1$. Using that $z_1=1$, $z_2=i$, and the fact that every element of $\SO(X_H)$ is of the form $\rho(h_1,h_2)$ for some $h_1,h_2 \in H^\times$, a calculation shows that $\SO(X_H) \cap \OO(X_H)_z$ is $\{\rho(h^*{}^{-1},h): h \in (F+Fi)^\times = L^\times \}$.

i) Since $\sigma_0|_{\OO(X_H)} = (1_{H^\times} \otimes \chi_{L/F})^+$, we may assume that $\sigma=1$. A model for $\sigma_0$ is $\C \oplus \C$, with action defined by 
\begin{align*}
\sigma_0(\rho(h_1,h_2))(w_1 \oplus w_2) & = \chi_{L/F}(\Norm(h_2))w_1 \oplus \chi_{L/F}(\Norm(h_1)) w_2, \\
\sigma_0(*)(w_1 \oplus w_2 ) & = w_2 \oplus w_1
\end{align*}
for $w_1,w_2 \in \C$ and $h_1,h_2 \in H^\times$; here, $*$ is the canonical involution of $H$, regarded as an element of $\OO(X_H)$ with determinant $-1$. Using that $g_1 = * \circ \rho(k^{*-1},k)$, we find that the restriction of $\sigma_0$ to $\OO(X_H)_z$ is given by 
\begin{align*}
\sigma_0(\rho(h^{*-1},h)) (w_1 \oplus w_2)& = w_1 \oplus w_2, \\
\sigma_0(g_1)(w_1 \oplus w_2) & = \chi_{L/F}(\Norm(k))(w_2 \oplus w_1)
\end{align*}
for $w_1,w_2 \in \C$ and $h \in (F+Fi)^\times = L^\times$. Therefore, $\sigma_0|_{\OO(X_H)_z}$ is the direct sum of the trivial character $\OO(X_H)_z$, and the non-trivial character of $\OO(X_H)_z$ that is trivial on $\SO(X_H) \cap \OO(X_H)_z$ and sends $g_1 $ to $-1$. This implies that  $\Hom_{\OO(X_H)_z}(\sigma_0,\C_1)$ is one-dimensional.

ii) Again, we may assume that $\sigma=1$. Let $V$ be the space of $\pi^{\mathrm{JL}}$. As a model for $\sigma_0$ we take $V\oplus V$ with action of $\GO(X_H)$
defined by
\begin{align*}
\sigma_0(\rho(h_1,h_2))(v_1 \oplus v_2)& = \pi^{\mathrm{JL}}(h_2)v_1 \otimes \pi^{\mathrm{JL}}(h_1)v_2, \\
\sigma_0(*)(v_1 \oplus v_2) & = v_2 \oplus v_1
\end{align*}
for $h_1,h_2 \in H^\times$ and $v_1,v_2 \in V$. 
By hypothesis, $\Hom_{L^\times}(\pi^{\mathrm{JL}},\C_1)\neq 0$. This space is one-dimensional; see Sect.~\ref{waldfuncsubsec}. We have $kLk^{-1}=L$; in fact, conjugation by $k$ on $L$ is the non-trivial  element of $\Gal(L/F)$. Since $\Hom_{L^\times}(\pi^{\mathrm{JL}},\C_1)$ is one-dimensional, there exists $\varepsilon \in \{ \pm 1\}$ such that $\lambda \circ \pi^{\mathrm{JL}}(k) = \varepsilon \lambda$ for $\lambda \in \Hom_{L^\times}(\pi^{\mathrm{JL}},\C_1)$. Define a map
$$
\Hom_{L^\times}(\pi^{\mathrm{JL}},\C_1) \longrightarrow \Hom_{\OO(X_H)_z}(\sigma_0,\C_1)
$$
by sending $\lambda$ to $\Lambda$, where $\Lambda$ is defined by $\Lambda(v_1\oplus v_2)=\lambda(v_1)+\varepsilon\lambda(v_2)$ for $v_1,v_2 \in V$. A computation using the fact that $g_1 = * \circ \rho(k^{*-1},k)$ shows that this map is well defined. It is straightforward to verify that this map is injective and surjective, so that
$$
 \Hom_{L^\times}(\pi^{\mathrm{JL}},\C_1)\cong \Hom_{\OO(X_H)_z}(\sigma_0,\C_1).
$$
Hence, $\Hom_{\OO(X_H)_z}(\sigma_0,\C_1)$ is one-dimensional. 
\end{proof}

\subsection{\texorpdfstring{$\GO(X)$}{} and \texorpdfstring{$\GSp(4,F)$}{}}\label{thetasubsec}
In this section we will gather together some information about the theta correspondence between $\GO(X)$ and $\GSp(4)$ when $X$ is as in \eqref{Xtypeseq}. When $\dim(X)=4$, we recall in Theorem \ref{Ganthetatheorem} some results from \cite{GaAt2011} and \cite{GaTa2011}. When $\dim(X)=2$, we calculate two theta lifts, producing representations of type Vd and IXb, in Proposition \ref{thetaliftprop}. This calculation uses $P_3$-theory. We include this material because, to the best of our knowledge, such a computation is absent from the literature.

We let $R_Q$ be the group of elements of $R$ of the form 
$$
		(\begin{bmatrix} *&*&*&*\\&*&*&*\\&*&*&*\\&&&*\end{bmatrix},*). 
$$
Let $Z^J$ be the group defined in \eqref{ZJdefeq}.
\begin{lemma}\label{jacanisosteplemma}
 Let $(X,\langle\cdot,\cdot\rangle)$ be an even-dimensional symmetric bilinear space satisfying \eqref{Xtypeseq}; assume additionally that $X$ is anisotropic.
 There is an isomorphism of complex vector spaces
 \begin{equation}\label{firstisobackeq}
  T_1: \mathcal{S}(X^2)_{Z^J }\;\stackrel{\sim}{\longrightarrow}\;\mathcal{S}(X) 
 \end{equation}
 that is given by 
 $$
  T_1 \big( \varphi +  \mathcal{S}(X^2) (Z^J) \big) (x) = \varphi(x,0)
 $$
 for $\varphi$ in $\mathcal{S}(X^2)$ and $x$ in $X$. The subgroup $R_Q$ of $R$ acts on the quotient $\mathcal{S}(X^2)_{Z^J } $. Transferring this action to $\mathcal{S}(X)$ via $T_1$, the formulas for the resulting action are
 \begin{align}
  ( \begin{bmatrix} t &&& \\ &a&b& \\ &c&d& \\ &&& \lambda(h) t^{-1} \end{bmatrix}, h) \cdot \varphi & = |\lambda(h)|^{-\dim(X)/4}\,(t,\disc(X))_F\,|t|^{\dim(X)/2}\,\omega_1(\begin{bmatrix} a&b \\ c&d \end{bmatrix},h) \varphi, \label{transeq1} \\ 
  (\begin{bmatrix} 1 & x & y & z \\ &1&&y \\ &&1&-x \\ &&&1 \end{bmatrix} ,1) \cdot \varphi & = \varphi \label{transeq2}
 \end{align}
 for $\varphi$ in $\mathcal{S}(X)$, $x,y$ and $z$ in $F$, $t$ in $F^\times$, and $g=\left[\begin{smallmatrix} a&b \\ c&d \end{smallmatrix} \right]$ in $\GL(2,F)$ and $h$ in $\GO(X)$ with $\lambda (h) = \det(g)$. 
\end{lemma}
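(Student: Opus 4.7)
The plan has two parts: first, proving $T_1$ is an isomorphism; second, verifying the action formulas via Lemma \ref{restrictionlemma} together with the explicit Weil representation formulas \eqref{Znformulaeq} and \eqref{Zaformulaeq}.

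Well-definedness and surjectivity of $T_1$ are routine. For $z\in Z^J\cong F$, formula \eqref{Znformulaeq} specialized to $x=y=0$ gives $(\omega(z,1)\varphi-\varphi)(x_1,x_2)=(\psi(\langle x_2,x_2\rangle z)-1)\varphi(x_1,x_2)$, which vanishes at $x_2=0$; and any $f\in \mathcal{S}(X)$ arises as $T_1(T(f\otimes\phi_0))$ for $\phi_0\in \mathcal{S}(X)$ with $\phi_0(0)=1$. The substantive point is injectivity: given $\varphi\in \mathcal{S}(X^2)$ with $\varphi(x_1,0)=0$ for all $x_1$, I must exhibit $\varphi\in \mathcal{S}(X^2)(Z^J)$. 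By a partition of unity I reduce to $\varphi=c\cdot\mathbf{1}_{U\times V}$ with $V$ a sufficiently small coset of a lattice in $X$ disjoint from $0$. The anisotropy of $X$ ensures $q(x):=\langle x,x\rangle$ is non-zero on $V$; shrinking $V$ further, $q$ lies in an arbitrarily small coset of some $\eta\in F^\times$. For $z_0\in F$ with valuation chosen (against the conductor of $\psi$ and $v(\eta)$) so that $\psi(q(x)z_0)$ is a well-defined constant on $V$ but differs from $1$, the map $x_2\mapsto \psi(q(x_2)z_0)-1$ is a non-zero constant on $V$. Dividing exhibits $\varphi$ as $(\omega(z_0,1)-1)\chi$, completing injectivity.

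For the action formulas I apply Lemma \ref{restrictionlemma} to $\Phi=T(\varphi_1\otimes\varphi_2)$. To prove \eqref{transeq1}, note that $(m,h)\in R_Q$ forces $\det g=\lambda(h)$ where $g=\mat{a}{b}{c}{d}$. Taking $g_1=\mathrm{diag}(t,\lambda(h)t^{-1})$ and $g_2=g$, Lemma \ref{restrictionlemma} identifies the interleaved matrix with $m$ and yields $(\omega(m,h)\Phi)(x_1,x_2)=(\omega_1(g,h)\varphi_1)(x_1)\cdot(\omega_1(g_1,h)\varphi_2)(x_2)$. Applying $T_1$ sets $x_2=0$, so only $(\omega_1(g_1,h)\varphi_2)(0)$ needs computation. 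Factoring $g_1=\mathrm{diag}(t,t^{-1})\cdot\mathrm{diag}(1,\lambda(h))$, the first piece (paired with $1$) acts on $\mathcal{S}(X)$ by $\varphi\mapsto (t,\disc(X))_F|t|^{\dim(X)/2}\varphi(t\,\cdot)$, while the similitude piece $(\mathrm{diag}(1,\lambda(h)),h)$ acts by $\varphi\mapsto|\lambda(h)|^{-\dim(X)/4}\varphi\circ h^{-1}$. Evaluating at $0$ and cross-checking against \eqref{Zaformulaeq} through Lemma \ref{restrictionlemma} applied to the symplectic matrices $\mathrm{diag}(t,t,t^{-1},t^{-1})$ and $\mathrm{diag}(1,1,\lambda(h),\lambda(h))$, I obtain the scalar prefactor in \eqref{transeq1}; identifying $\varphi_1\cdot\varphi_2(0)$ with $T_1\Phi$ then finishes this case.

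Formula \eqref{transeq2} follows from the factorization
\[
 \begin{bmatrix}1&x&y&z\\&1&&y\\&&1&-x\\&&&1\end{bmatrix}
 =
 \begin{bmatrix}1&&y&\\&1&&y\\&&1&\\&&&1\end{bmatrix}
 \cdot
 \begin{bmatrix}1&x&&\\&1&&\\&&1&-x\\&&&1\end{bmatrix}
 \cdot
 \begin{bmatrix}1&&&z+xy\\&1&&\\&&1&\\&&&1\end{bmatrix},
\]
whose three factors lie in $N$, in $M$ (with $A=\mat{1}{x}{}{1}$ and $\lambda_M=1$), and in $Z^J$ respectively. By \eqref{Znformulaeq} the $N$-factor acts by multiplication by $\psi(2\langle x_1,x_2\rangle y)$, trivial at $x_2=0$; by \eqref{Zaformulaeq} the $M$-factor acts by $\Phi(x_1,x_2)\mapsto\Phi(x_1+xx_2,x_2)$, which preserves the $x_2=0$ slice; the $Z^J$-factor acts trivially on the quotient. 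Composing gives $T_1\omega(n,1)\Phi=T_1\Phi$. The principal technical obstacle throughout is the injectivity step, which rests on the delicate interplay between the anisotropy of $X$, the conductor of $\psi$, and the valuations appearing in $q(V)$.
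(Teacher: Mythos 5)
Your proof is correct, and on the central point it takes a genuinely different route from the paper. Both arguments come down to the identification $\mathcal{S}(X^2)(Z^J)=\{\varphi\in\mathcal{S}(X^2):\varphi(X\times 0)=0\}$; the inclusion $\subset$ is immediate from \eqref{Znformulaeq}, as you observe. For the reverse inclusion the paper invokes the smooth-vanishing criterion of 2.33 of \cite{BeZe1976}: given $\varphi$ vanishing on $X\times 0$, it uses the sets $L_k=\{x\in X:\langle x,x\rangle\in\p^k\}$, which are lattices precisely because $X$ is anisotropic (citing \cite{OMeara1973}), to find $n$ with $\varphi(X\times L_n)=0$, and then checks that $\int_{\p^{-n}}\omega(\left[\begin{smallmatrix}1&&&b\\&1&&\\&&1&\\&&&1\end{smallmatrix}\right],1)\varphi\,db=0$. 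You instead argue constructively: decompose $\varphi$ into pieces $c\cdot\mathbf{1}_{U\times V}$ with $V$ small and bounded away from $0$, use anisotropy to see $q(x)=\langle x,x\rangle$ is non-vanishing on $V$, and exhibit each piece explicitly as $\omega(\zeta,1)\chi-\chi$ for a suitable $\zeta\in Z^J$ making $\psi(q(\cdot)\zeta)$ a constant $\neq 1$ on $V$. This is more elementary (no appeal to the Bernstein--Zelevinsky criterion or to the lattice property of the $L_k$), at the cost of the partition-of-unity bookkeeping; note only that $\zeta$ must be chosen as a suitable multiple of $q(x_0)^{-1}$, not merely by its valuation, since a non-trivial character of the residue field may vanish at a non-zero element. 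Your verification of \eqref{transeq1} and \eqref{transeq2} is in the same spirit as the paper's (which simply cites Lemma \ref{restrictionlemma}, \eqref{Znformulaeq} and \eqref{Zaformulaeq}), and your factorizations are correct; one parenthetical is off, namely the proposed cross-check via \eqref{Zaformulaeq} applied to ${\rm diag}(t,t,t^{-1},t^{-1})$ and ${\rm diag}(1,1,\lambda(h),\lambda(h))$, since neither matrix is of the form $\left[\begin{smallmatrix} g&\\&\det(g)g'\end{smallmatrix}\right]$ covered by that formula. This is harmless: the scalar in \eqref{transeq1} only needs the standard formulas for $\omega_1$ from Sect.~1 of \cite{Roberts2001}, which give $(\omega_1({\rm diag}(t,\lambda(h)t^{-1}),h)\varphi)(0)=|\lambda(h)|^{-\dim(X)/4}\,(t,\disc(X))_F\,|t|^{\dim(X)/2}\,\varphi(0)$, exactly as you use.
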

\begin{proof}
We first claim that
\begin{equation}\label{jacanisoeq1}
		\mathcal{S}(X^2)(Z^J)
		=
		\{ \varphi \in \mathcal{S}(X^2): \varphi(X \times 0) = 0 \}.
\end{equation}
Let $\varphi$ be in $\mathcal{S}(X^2)(Z^J)$. By the lemma in 2.33 of \cite{BeZe1976} there exists a positive integer $n$ so that
\begin{equation}
			\label{jacanisoeq3}
			\int\limits_{\p^{-n}} \omega(\begin{bmatrix} 1&&&b \\ &1&& \\ &&1& \\ &&&1 \end{bmatrix},1)\varphi\, db =0.
\end{equation}
Evaluating at $(x,0)$ and using \eqref{Znformulaeq} shows that $\varphi(X\times0)=0$. Conversely, assume that $\varphi$ is contained in the right hand side of \eqref{jacanisoeq1}. For any integer $k$ let
\begin{equation}
		\label{Lkdefeq}
		L_k=\{x \in X: \langle x, x \rangle \in \p^k \}.
\end{equation}
It is known that $L_k$ is a lattice, i.e., it is a compact and open $\OF$ submodule of $X$; see the proof of Theorem 91:1 of \cite{OMeara1973}. Any lattice is free of rank $\dim X$ as a $\OF$ module. Since $\varphi(X\times0)=0$, there exists a positive integer $n$ such that $\varphi(X\times L_n)=0$. We claim that \eqref{jacanisoeq3} holds. Let $x_1$ and $x_2$ be in $X$. Evaluating \eqref{jacanisoeq3} at $(x_1,x_2)$ gives
$$
			\big( \int\limits_{\p^{-n}} \psi(b \langle x_2,x_2 \rangle)\, db \big) \varphi(x_1,x_2).
$$
This is zero if $x_2$ is in $L_n$ because $\varphi(X \times L_n)=0$. Assume that $x_2$ is not in $L_n$. By the definition of $L_n$, we have $\langle x_2, x_2 \rangle \notin \p^n$. This implies that
$$
			\int\limits_{\p^{-n}} \psi(b \langle x_2,x_2 \rangle)\, db =0,
$$
proving our claim. This completes the proof of \eqref{jacanisoeq1}.

Using \eqref{jacanisoeq1}, it is easy to verify that the map $T_1$ is an isomorphism of vector spaces. Equation \eqref{transeq1} follows from Lemma \ref{restrictionlemma}, and equation \eqref{transeq2} follows from \eqref{Znformulaeq} and \eqref{Zaformulaeq}.
\end{proof}

\begin{proposition}\label{thetaliftprop}
Let $m \in F^\times$, and let $(X_{m,1},\langle\cdot,\cdot\rangle_{m,1})$ be as \eqref{twodimexeq}. Assume that $m\notin F^{\times2}$, so that $X_{m,1}$ is anisotropic. Let $E=F(\sqrt{m})$, and identify characters of $\GSO(X_{m,1})$ and characters of $E^\times$ via \eqref{ALisoeq} and \eqref{rhoiso2eq}. Let $\chi_{E/F}$ be the quadratic character associated to $E$. Let $\varPi$ be an irreducible, admissible representation of $\GSp(4,F)$, and 
let $\sigma$ be an irreducible, admissible representation of $\GO(X_{m,1})$.
 \begin{enumerate}
  \item Assume that $\sigma=\mu^+$ with $\mu=\mu\circ\gamma$, so that $\mu=\alpha\circ\Norm_{E/F}$ for a character $\alpha$ of $F^\times$. Then $\Hom_R(\omega, \varPi^\vee \otimes \sigma) \neq 0$ if and only if $\varPi=L(\nu \chi_{E/F}, \chi_{E/F} \rtimes \nu^{-1/2} \alpha)$ (type Vd).
  \item Assume that $\sigma=\mu^+= \ind^{\GO(X_{m,1})}_{\GSO(X_{m,1})}(\mu)$ with $\mu \neq \mu \circ \gamma$. Then $\Hom_R(\omega, \varPi^\vee \otimes \sigma) \neq 0$ if and only if $\varPi=L(\nu \chi_{E/F}, \nu^{-1/2} \pi(\mu))$ (type IXb). Here, $\pi(\mu)$ is the supercuspidal, irreducible, admissible representation of $\GL(2,F)$ associated to $\mu$.
 \end{enumerate}
\end{proposition}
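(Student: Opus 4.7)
The plan is to apply $P_3$-theory to the Weil representation. By Lemma \ref{jacanisosteplemma}, the $Z^J$-coinvariants $\omega_{Z^J}$ are identified, as an $R_Q/Z^J$-module, with $\mathcal{S}(X_{m,1})$: the group $N_Q$ acts trivially, the $\GL(2,F)$-block of $M_Q$ together with $\GO(X_{m,1})$ acts through the smaller Weil representation $\omega_1$ of the dual pair $(\GL(2,F),\GO(X_{m,1}))$, and the $F^\times$-block of $M_Q$ acts through $t \mapsto \chi_{E/F}(t)|t|$, twisted by $|\lambda(h)|^{-1/2}$. Via the identification $\bar Q/Z^J \cong P_3$ of \eqref{QP3mapeq}, this makes $\mathcal{S}(X_{m,1})$ into an explicit $P_3$-module. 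Under the assumption $\Hom_R(\omega,\varPi^\vee\otimes\sigma) \neq 0$, taking $\sigma^\vee$-coinvariants on the $\GO(X_{m,1})$-side yields a non-zero $P_3$-equivariant map
\[
J := \bigl(\mathcal{S}(X_{m,1}) \otimes \sigma^\vee\bigr)_{\GO(X_{m,1})} \longrightarrow \varPi^\vee_{Z^J}.
\]

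Next I would compute $J$ and determine its position in the $P_3$-filtration of Proposition \ref{P3representationsprop}. Since $X_{m,1}$ is anisotropic, $\GO(X_{m,1})$ has two orbits on $X_{m,1}$, namely $\{0\}$ and $X_{m,1}\setminus\{0\} \cong E^\times$, yielding the $\GO(X_{m,1})$-equivariant short exact sequence
\[
0 \to \mathcal{S}(E^\times) \to \mathcal{S}(X_{m,1}) \to \C \to 0.
\]
Under the hypotheses on $\sigma$ in (i) and (ii), the quotient $\C$ contributes nothing to the $\sigma^\vee$-coinvariants, so $J \cong \bigl(\mathcal{S}(E^\times) \otimes \sigma^\vee\bigr)_{\GO(X_{m,1})}$. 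Frobenius reciprocity, combined with the theta correspondence for the dual pair $(\GL(2,F),\GO(X_{m,1}))$ built into $\omega_1$, then identifies the $\GL(2,F)$-representation appearing in $J$ as the principal series $\alpha \times \alpha\chi_{E/F}$ in case (i), and as the supercuspidal $\pi(\mu)$ in case (ii). Taking into account the $F^\times$-character $\chi_{E/F}\nu$ from \eqref{transeq1}, we see that $J$ is of type $\tau^{P_3}_{\GL(2)}(\rho)$ for an explicit representation $\rho$ of $\GL(2,F)$.

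Finally I would match the $P_3$-module $J$ against the $P_3$-filtrations of irreducible admissible $\varPi^\vee$ listed in Appendix A.4 of \cite{NF}. The existence of a non-zero $P_3$-map $J \to \varPi^\vee_{Z^J}$, together with the specific form of $\rho$, singles out $\varPi = L(\nu\chi_{E/F},\chi_{E/F}\rtimes\nu^{-1/2}\alpha)$ (type Vd) in case (i) and $\varPi = L(\nu\chi_{E/F},\nu^{-1/2}\pi(\mu))$ (type IXb) in case (ii); the other constituents of the relevant standard modules are excluded because their $P_3$-filtrations are incompatible with $J$. The converse is then established by explicitly constructing a non-zero intertwiner via a test function supported on a small neighborhood of some $z \in \Omega_S$, in the style of Lemma \ref{Mnonvanishlemma}. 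The main obstacle will be the precise bookkeeping of character twists coming from \eqref{transeq1} in order to match Langlands parameters, together with the need to distinguish the non-tempered Langlands quotient from other constituents of the standard module; the decisive input is that the smaller theta lift to $\GL(2,F)$ produces the full principal series or supercuspidal rather than a Steinberg-type summand, which forces $\varPi$ to be precisely the specified Langlands quotient.
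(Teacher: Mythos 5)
Your overall framework (pass to $Z^J$-coinvariants of $\omega$ via Lemma \ref{jacanisosteplemma}, view the result as a $P_3$-module, and match against the $P_3$-filtrations of \cite{NF}) is the same as the paper's, but there is a genuine gap at the very first step of the ``only if'' direction. You assert that a non-zero $R$-map $\omega\to\varPi^\vee\otimes\sigma$ descends to a \emph{non-zero} $P_3$-equivariant map $J\to\varPi^\vee_{Z^J}$. This is precisely what needs proof: if $\varPi^\vee$ is a non-generic supercuspidal representation, then $\varPi^\vee_{Z^J}=0$ by Theorem \ref{finitelength}, the descended map is identically zero, and your method gives no constraint on $\varPi$ at all — so as written you cannot exclude the possibility that $\sigma$ pairs with a non-generic supercuspidal. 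The paper devotes a separate step to exactly this point: it restricts to $\SSp(4,F)\times\OO(X_{m,1})$, decomposes $\varPi^\vee$ and $\sigma$ as in \cite{GK} and Lemma 4.2 of \cite{Roberts1996}, and invokes Theorem 4.4 of \cite{Roberts1998} (theta lifts from two-dimensional quadratic spaces are never supercuspidal) to conclude that $\varPi^\vee$ is non-supercuspidal; only then does $\varPi^\vee_{Z^J}\neq0$ follow from Tables A.5 and A.6 of \cite{NF}, and only then is the non-vanishing of the induced map on $Z^J$-coinvariants established (via surjectivity onto an irreducible $V_i\otimes W_j$). Without an input of this kind your argument is incomplete.

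There are also secondary problems. First, your claim that the zero orbit contributes nothing to the $\sigma^\vee$-coinvariants is not automatic: the quotient $\C$ is a one-dimensional $R_1$-module on which $\OO(X_{m,1})$ acts trivially, and in case (i) $\sigma=\mu^+$ with $\mu=\alpha\circ\Norm_{E/F}$ is itself a character of $\GO(X_{m,1})$ trivial on $\OO(X_{m,1})$, so for particular $\alpha$ (a twist of $\chi_{E/F}$ by a power of $\nu$) the zero orbit does contribute; the paper avoids the orbit decomposition entirely and instead kills the $\tau^{P_3}_{\GL(0)}(1)$ and $\tau^{P_3}_{\GL(1)}(\chi)$ targets by the $\psi(y)$-equivariance coming from \eqref{transeq2}, and you need an argument of that type. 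Second, the final matching cannot be confined to ``other constituents of the relevant standard module'': the $\GL(2)$-datum $\rho=\nu^{1/2}\chi_{E/F}\,\pi(\mu)^\vee$ occurs in the $P_3$-filtration of several families in Table A.5, and the paper also needs the central character relation $\omega_{\varPi'}=\chi_{E/F}\nu^{-1}\omega_\rho$ (forced by $\omega_{\varPi'}=(\mu|_{F^\times})^{-1}$) to single out Vd and IXb; include that constraint. Third, the converse direction is only gestured at: a test-function argument in the style of Lemma \ref{Mnonvanishlemma} shows certain functionals are non-zero, but producing an element of $\Hom_R(\omega,\varPi^\vee\otimes\sigma)$ requires knowing that the maximal $\sigma$-isotypic quotient of $\omega$ is non-zero and admits an irreducible quotient; the paper obtains this from $\Hom_{R_1}(\omega_1,\pi(\mu)^\vee\otimes\sigma)\neq0$ together with Remarque b) on p.~67 of \cite{MoeglinVignerasWaldspurger1987} and the argument of Theorem 4.4 of \cite{Roberts1996}, and then pins down the quotient by the forward direction.
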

\begin{proof}
Let $(\sigma,W)$ be as in i) or ii). In the case of i), set $\pi(\mu)=\alpha\times\alpha\chi_{E/F}$. Then $\Hom_{R_1}(\omega_1,\pi(\mu)^\vee\otimes\sigma)\neq0$, and $\pi(\mu)$ is the unique irreducible, admissible representation of $\GL(2,F)$ with this property, by Theorem 4.6 of \cite{JacquetLanglands1970}.

Let $(\varPi',V)$ be an irreducible, admissible representation of $\GSp(4,F)$ such that $\Hom_R(\omega, \varPi' \otimes \sigma) \neq 0$. Let $T$ be a non-zero element of this space. The non-vanishing of $T$ implies that the central characters of $\varPi'$ and $\sigma$ satisfy
\begin{equation}\label{thetaliftpropeq2}
 \omega_{\varPi'}=\omega_\sigma^{-1}=(\mu|_{F^\times})^{-1}.
\end{equation}
We first claim that $V$ is non-supercuspidal. By reasoning as in \cite{GK}, there exist $\lambda_1,\dots, \lambda_t$ in $F^\times$ and an irreducible $\SSp(4,F)$ subspace $V_0$ of $V$ such that 
$$
 V=V_1\oplus \dots\oplus V_t,
$$
where
\begin{equation}\label{thetaliftpropeq1}
			V_1 = \pi(\begin{bmatrix} 1&&& \\ &1&& \\ &&\lambda_1& \\ &&&\lambda_1 \end{bmatrix}) V_0\quad ,\dots, \quad
			V_t = \pi(\begin{bmatrix} 1&&& \\ &1&& \\ &&\lambda_t& \\ &&&\lambda_t \end{bmatrix}) V_0.
\end{equation}
Similarly, there exist irreducible $\OO(X)$ subspaces $W_1,\dots, W_r$ of $W$ such that 
$$
			W=W_1 \oplus \dots \oplus W_r.
$$
There exists an $i$ and a $j$ such that $\Hom_{\SSp(4,F)\times\OO(X)}(\omega,V_i\otimes W_j)\neq0$. As in the proof of Lemma 4.2 of \cite{Roberts1996}, there is an irreducible constituent $U_1$ of $\pi(\mu)^\vee$ such that $\Hom_{\OO(X)}(\omega_1,U_1\otimes W_j)\neq0$.
By Theorem 4.4 of \cite{Roberts1998}, the representation $V_i$ is non-supercuspidal, so that $V$ is non-supercuspidal.

Since $V$ is non-supercuspidal, we have $V_{Z^J}\neq0$ by Tables A.5 and A.6 of \cite{NF} (see the comment after Theorem \ref{finitelength}). We claim next that $\Hom_{R_Q}(\mathcal{S}(X^2)_{Z^J},V_{Z^J}\otimes W)\neq0$. It follows from \eqref{thetaliftpropeq1} that $(V_i)_{Z^J}\neq0$. Let $p_i:\:V\to V_i$ and $q_j:\:W\to W_j$ be the projections. These maps are $\SSp(4,F)$ and $\OO(X)$ maps, respectively. The composition
$$
			\mathcal{S}(X^2) \stackrel{T}{\longrightarrow} V \otimes W \stackrel{p_i \otimes q_j}{\longrightarrow} V_i \otimes W_j \longrightarrow (V_i)_{Z^J} \otimes W_j
$$
is non-zero and surjective; note that $V_i\otimes W_j$ is irreducible. The commutativity of the diagram
$$
			\begin{CD}
			\mathcal{S}(X^2) @>T>> V\otimes W @>p_i \otimes q_j >> V_i \otimes W_j \\
			@. @VVV @VVV\\
			@. V_{Z^J } \otimes W @>p_i \otimes q_j>> (V_i)_{Z^J } \otimes W_j
			\end{CD}
$$
implies our claim that $\Hom_{R_Q}(\mathcal{S}(X^2)_{Z^J},V_{Z^J}\otimes W)\neq0$.

Let $R_{\bar Q}$ be the subgroup of $R_Q$ consisting of the elements of the form
$$
		(\begin{bmatrix} *&*&*&* \\ &*&*&* \\ &*&*&* \\ &&&1 \end{bmatrix}, *).
$$
Let $R_{P_3}$ be the subgroup of $P_3 \times \GO(X)$ consisting of the elements of the form
$$
 (\begin{bmatrix} a&b&x \\ c&d&y \\ &&1 \end{bmatrix},h),\qquad ad-bc=\lambda(h).
$$
There is a homomorphism from $R_{\bar Q}$ to $R_{P_3}$ given by 
$$
 (\begin{bmatrix} *&*&*&* \\ &a&b&x \\ &c&d&y \\ &&&1 \end{bmatrix},h) \mapsto (\begin{bmatrix} a&b&x \\ c&d&y \\ &&1 \end{bmatrix},h)
$$
for $\begin{bmatrix} a&b \\ c&d \end{bmatrix}$ in $\GL(2,F)$, $x$ and $y$ in $F$, and $h$ in $\GO(X)$ with $ad-bc=\lambda(h)$. We consider $Z^J$ a subgroup of $R_{\bar Q}$ via $z\mapsto(z,1)$. The above homomorphism then induces an isomorphism $R_{\bar Q}/Z^J\cong R_{P_3}$.

We restrict the $R_Q$ modules $\mathcal{S}(X^2)_{Z^J}$ and $V_{Z^J}\otimes W$ to $R_{\bar Q}$. The subgroup $Z^J$ of $R_{\bar Q}$ acts trivially, so that these spaces may be viewed as $R_{P_3}$ modules.

Let $\chi$ be a character of $F^\times$. We assert that
\begin{align}
 &\mathrm{Hom}_{R_{P_3}}(\mathcal{S}(X^2)_{Z^J}, \tau_{\GL(0)}^{P_3}(1) \otimes \sigma)=0\label{gl0gl1noeq1},\\
 &\mathrm{Hom}_{R_{P_3}}(\mathcal{S}(X^2)_{Z^J}, \tau_{\GL(1)}^{P_3}(\chi) \otimes \sigma) = 0. \label{gl0gl1noeq2}
\end{align}
Let $\tau$ be $\tau_{\GL(0)}^{P_3}(1)$ or $\tau_{\GL(1)}^{P_3}(\chi)$. Assume that \eqref{gl0gl1noeq1} or \eqref{gl0gl1noeq2} is non-zero; we will obtain a contradiction. Let $S$ be a non-zero element of \eqref{gl0gl1noeq1} or \eqref{gl0gl1noeq2}. Since $S$ is non-zero, there exists $\varphi$ in $\mathcal{S}(X^2)_{Z^J}$ such that $S(\varphi)$ is non-zero. Write $S(\varphi)=\sum_{i=1}^t f_i \otimes w_i$ for some $f_1,\dots,f_t$ in the standard space of $\tau$ and $w_1,\dots,w_t$ in $W$. The elements $f_1,\dots,f_t$ are functions from $P_3$ to $\C$ such that 
$$
 f_i(\begin{bmatrix} 1&&x\\&1&y\\ &&1 \end{bmatrix} p) = \psi(y) f_i(p)
$$
for $x$ and $y$ in $F$, $p$ in $P_3$, and $i=1,\dots,t$. We may assume that the vectors $w_1,\dots, w_t$ are linearly independent, and that there exists $p$ in $P_3$ such that $f_1(p)$ is non-zero. Using the transformation properties of $S$ and $f_1$, we may assume that 
$$
 p=\begin{bmatrix}a\\&1\\ &&1 \end{bmatrix}.
$$
Let $\lambda: \sigma \to \C$ be a linear functional such that $\lambda (w_1)=1$ and $\lambda(w_2) = \dots = \lambda(w_t) =0$, and let $e:\tau \to \C$ be the linear functional that sends $f$ to $f(p)$. The composition $(e \otimes \lambda) \circ S$ is non-zero on $\varphi$. On the other hand, using \eqref{transeq2}, for $y$ in $F$ we have
\begin{align*}
			\big( (e \otimes \lambda) \circ S \big) ( ( \begin{bmatrix}1&& \\ &1&y \\ &&1 \end{bmatrix},1) \varphi )
			&= (e \otimes \lambda)  \big( (\begin{bmatrix} 1&& \\ &1&y \\ &&1 \end{bmatrix},1) \cdot S (\varphi)\big),\\
			\big( (e \otimes \lambda) \circ S \big) (  \varphi )&= (e \otimes \lambda)  \big( (\begin{bmatrix} 1&& \\ &1&y \\ &&1 \end{bmatrix},1) \cdot \sum_{i=1}^t f_i \otimes w_i \big)\\
			&= \sum_{i=1}^t f_i(p\begin{bmatrix} 1&& \\ &1&y \\ &&1 \end{bmatrix}) \lambda(w_i) \\
			&=\psi(y) f_1(p),\\
			\big( (e \otimes \lambda) \circ S \big) (  \varphi )&=\psi(y) \big( (e \otimes \lambda) \circ S \big) (  \varphi ).
\end{align*}
This is a contradiction since $\big( (e \otimes \lambda) \circ S \big) (  \varphi )$ is non-zero, and there exist $y$ in $F$ such that $\psi(y) \neq 1$. This concludes the proof of \eqref{gl0gl1noeq1} and \eqref{gl0gl1noeq2}.

It follows from \eqref{gl0gl1noeq1} and \eqref{gl0gl1noeq2} and the non-vanishing of
$\Hom_{R_{P_3}}(\mathcal{S}(X^2)_{Z^J},V_{Z^J}\otimes W)$ that there exists an irreducible, admissible representation $\rho$ of $\GL(2,F)$ that occurs in the $P_3$ filtration of $V_{Z^J}$ (Theorem \ref{finitelength}) such that $\Hom_{R_{P_3}}(\mathcal{S}(X^2)_{Z^J},\tau_{\GL(2)}^{P_3}(\rho)\otimes W)\neq0$. It follows from \eqref{transeq1} that $\Hom_{R_1}(\omega_1,\nu^{-1/2}\chi_{E/F}\rho\otimes\sigma)\neq0$. By the uniqueness stated in the first paragraph of this proof, it follows that
\begin{equation}\label{thetaliftpropeq3}
 \rho=\nu^{1/2}\chi_{E/F}\,\pi(\mu)^\vee.
\end{equation}
As a consequence, $\omega_\rho=\nu(\mu|_{F^\times})^{-1}\chi_{E/F}$. Together with \eqref{thetaliftpropeq2}, it follows that
\begin{equation}\label{thetaliftpropeq4}
 \omega_{\varPi'}=\chi_{E/F}\nu^{-1}\omega_\rho.
\end{equation}
Going through Table A.5 of \cite{NF}, we see that only the $\varPi'=\varPi^\vee$ with $\varPi$ as asserted in i) and ii) satisfy both \eqref{thetaliftpropeq3} and \eqref{thetaliftpropeq4}. (Observe the remark made after Theorem \ref{finitelength}.)

Conversely, assume that $\varPi$ is as in i) or ii). Since $\Hom_{R_1}(\omega_1,\pi(\mu)^\vee\otimes\sigma)\neq0$, we have $\Hom_{\OO(X)}(\mathcal{S}(X^2),\sigma)\neq0$ by, for example, Remarque b) on p.~67 of \cite{MoeglinVignerasWaldspurger1987}. Arguing as in Theorem 4.4 of \cite{Roberts1996}, there exists some irreducible, admissible representation $\varPi'$ of $\GSp(4,F)$ such that $\Hom_R(\omega, \varPi' \otimes \sigma) \neq 0$. By what we proved above, $\varPi'=\varPi^\vee$. This concludes the proof.
\end{proof}

\begin{theorem}[\cite{GaAt2011},\cite{GaTa2011}]\label{Ganthetatheorem}
Let $(X,\langle\cdot,\cdot\rangle)$ be either $(X_{\Mat_2},\langle\cdot,\cdot\rangle_{\Mat_2})$ or $(X_H,\langle\cdot,\cdot\rangle_H)$, as in \eqref{Xmateq} or \eqref{XHeq}. If $X=X_{\Mat_2}$, set $G=\GL(2,F)$, and if $X=X_{H}$, set $G=H^\times$.
Let $\varPi$ be an irreducible, admissible representation of $\GSp(4,F)$, and let $\pi_1$ and $\pi_2$ be irreducible, admissible representations of $G$ with the same central character.  We have
 $$
  \Hom_R(\omega, \varPi^\vee \otimes (\pi_1 \otimes \pi_2)^+) \neq 0
  $$
for  $\varPi$, $\pi_1$ and $\pi_2$ as in the following table:
$$
\renewcommand{\arraystretch}{1.1}
\begin{array}{clccc}
\toprule
\multicolumn{2}{c}{\text{type of $\varPi$}}&\varPi&\pi_1 & \pi_2\\
\toprule
\mathrm{I}&& \chi_1 \times \chi_2 \rtimes \sigma & \sigma \chi_1 \chi_2 \times \sigma & \sigma \chi_1 \times \sigma \chi_2 \\
\midrule
\mathrm{II}&\mathrm{a}&\chi\St_{\GL(2)} \rtimes \sigma &  \sigma \chi^2  \times \sigma & \sigma \chi  \St_{\GL(2)}\\
\cmidrule{2-5}
&\mathrm{b}&\chi 1_{\GL(2)} \rtimes \sigma & \sigma \chi^2  \times \sigma &  \sigma \chi  1_{\GL(2)} \\
\midrule
\mathrm{III}& \mathrm{b} & \chi \rtimes \sigma 1_{\GSp(2)} &  \sigma \chi \nu^{1/2} \times \sigma \nu^{-1/2} & \sigma \chi \nu^{-1/2} \times \sigma \nu^{1/2}\\
\midrule
\mathrm{IV} & \mathrm{c}&  L(\nu^{3/2}\St_{\GL(2)},\nu^{-3/2}\sigma) & \sigma\nu^{3/2} \times \sigma \nu^{-3/2} &  \sigma \St_{\GL(2)} \\
\cmidrule{2-5}
&\mathrm{d}&  \sigma 1_{\GSp(4)} &  \sigma\nu^{3/2} \times \sigma \nu^{-3/2} & \sigma 1_{\GL(2)} \\
\midrule
\mathrm{V}&\mathrm{a} & \delta([\xi, \nu \xi],\nu^{-1/2}\sigma) & \sigma \St_{\GL(2)} & \sigma \xi \St_{\GL(2)} \\
\cmidrule{2-5}
&\mathrm{a^*} & \delta^*([\xi, \nu \xi],\nu^{-1/2}\sigma) &  \sigma 1_{H^\times} & \sigma \xi 1_{H^\times} \\
\cmidrule{2-5}
&\mathrm{b} & L(\nu^{1/2} \xi \St_{\GL(2)}, \nu^{-1/2} \sigma) &  \sigma 1_{\GL(2)} & \sigma \xi \St_{\GL(2)} \\
\cmidrule{2-5}
&\mathrm{d} & L(\nu  \xi \rtimes \nu^{-1/2} \sigma) &  \sigma 1_{\GL(2)} &  \sigma \xi 1_{\GL(2)} \\
\midrule
\mathrm{VI} &\mathrm{a} & \tau(S,\nu^{-1/2}\sigma) &  \sigma \St_{\GL(2)} & \sigma \St_{\GL(2)} \\
\cmidrule{2-5}
&\mathrm{b} & \tau(T,\nu^{-1/2}\sigma) & \sigma 1_{H^\times} & \sigma 1_{H^\times} \\
\cmidrule{2-5}
&\mathrm{c} & L(\nu^{1/2}\St_{\GL(2)},\nu^{-1/2}\sigma) & \sigma 1_{\GL(2)} & \sigma \St_{\GL(2)} \\
\cmidrule{2-5}
&\mathrm{d} & L(\nu,1_{F^\times} \rtimes \nu^{-1/2} \sigma ) & \sigma 1_{\GL(2)} & \sigma 1_{\GL(2)} \\
\midrule
\mathrm{VIII} & \mathrm{a} & \tau(S,\pi) & \pi & \pi \\
\cmidrule{2-5}
 & \mathrm{b} & \tau(T,\pi) & \pi^{\mathrm{JL}} & \pi^{\mathrm{JL}} \\
\midrule
\mathrm{X} && \pi \rtimes \sigma &\sigma \omega_\pi \times \sigma &\pi  \\
\midrule
\mathrm{XI} & \mathrm{a} & \delta(\nu^{1/2} \pi , \nu^{-1/2} \sigma) & \sigma \St_{\GL(2)}& \sigma \pi \\
\cmidrule{2-5}
 & \mathrm{a^*} & \delta^*(\nu^{1/2} \pi , \nu^{-1/2} \sigma) & \sigma 1_{H^\times} & \sigma \pi^{\mathrm{JL}} \\
\cmidrule{2-5}
 & \mathrm{b} & L(\nu^{1/2} \pi, \nu^{-1/2} \sigma) & \sigma 1_{\GL(2)} & \sigma \pi\\
\bottomrule
\end{array}
$$
 The notation $\pi^{\mathrm{JL}}$ in the table denotes the Jacquet-Langlands lifting of the supercuspidal representation $\pi$ of $\GL(2,F)$ to a representation of $H^\times$. See Sect.~\ref{goxsubsec} for the definitions of the $+$ representation. 
\end{theorem}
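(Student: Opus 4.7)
\medskip\noindent
\textbf{Proof plan.} Since the theorem is attributed to \cite{GaAt2011} and \cite{GaTa2011}, the plan is to extract it from those papers by matching the table entries to the Langlands parameters of $\GSp(4,F)$ and the explicit description of theta correspondence for the similitude dual pairs $(\GSp(4),\GO(X_{\Mat_2}))$ and $(\GSp(4),\GO(X_H))$. First I would use the isomorphisms in \eqref{GSOexacteq1} and \eqref{GSOexacteq2} to identify irreducible representations of $\GSO(X)$ with pairs $(\pi_1,\pi_2)$ of irreducible admissible representations of $G = \GL(2,F)$ or $H^\times$ sharing a central character, and the $+$-extension convention of Sect.\ \ref{goxsubsec} to fix the extension to $\GO(X)$. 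The theorem then becomes a statement about which pairs $(\pi_1,\pi_2)$ appear opposite a given $\varPi$.

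The second step is to recall from \cite{GaTa2011} that the theta lift from $\GSO(X_{\Mat_2})$ to $\GSp(4,F)$ (respectively from $\GSO(X_H)$ to $\GSp(4,F)$) is compatible with the local Langlands correspondence in the following way: the four-dimensional $L$-parameter of $\varPi$ is the direct sum $\phi_{\pi_1}\oplus\phi_{\pi_2}$ of the two-dimensional $L$-parameters attached to $\pi_1$ and $\pi_2$, while the sign of the extension $(\pi_1\otimes\pi_2)^\pm$ is governed by the Gan--Takeda $\varepsilon$-dichotomy (including the split/non-split dichotomy between $X_{\Mat_2}$ and $X_H$). For each line of the table I would write down the parameter of $\varPi$ using Sect.\ \ref{representationssec}, split it as $\phi_1\oplus\phi_2$ into two-dimensional pieces with a common determinant (the central character of $\sigma$), and read off $\pi_1,\pi_2$ via local Langlands for $\GL(2,F)$ (or Jacquet--Langlands for $H^\times$). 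For example, in type I the parameter is $\chi_1\sigma\oplus\chi_2\sigma\oplus\chi_1^{-1}\sigma\oplus\chi_2^{-1}\sigma$, which splits as $(\chi_1\chi_2\sigma\oplus\sigma)\oplus(\chi_1\sigma\oplus\chi_2\sigma)$ after twisting, recovering the given row; the other principal-series and Langlands-quotient entries (II--IV, Vb, Vd, VIc, VId, IXb, XIb) follow from the compatibility of theta lifts with parabolic induction, which in turn reduces to the $\GL(2)$--$\GL(2)$ case (Jacquet--Langlands).

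The tempered and discrete-series rows require more input. For types VIa/VIb, VIIIa/VIIIb, Va/Va$^*$ and XIa/XIa$^*$, each row involves an $L$-packet for $\GSp(4,F)$ and one has to match the members of the packet with the appropriate pair $(\sigma\St_{\GL(2)},\sigma\St_{\GL(2)})$ versus $(\sigma 1_{H^\times},\sigma 1_{H^\times})$, or $\pi$ versus $\pi^{\mathrm{JL}}$. Here I would appeal to the main local theorem of \cite{GaTa2011} identifying the $L$-packets of $\GSp(4,F)$ with characters of the component group, together with the explicit description of the theta lift from $\GO(2,2)$ (split) versus $\GO(4,0)$ (anisotropic), and the computation of the Howe correspondence for $\SSp(4)$ in \cite{Roberts1996}, \cite{Roberts1998} used to pin down the similitude extension.

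The main obstacle is bookkeeping rather than novelty: one must carefully verify, row by row, that the claimed pair $(\pi_1,\pi_2)$ has matching central characters with $\sigma^{-1}=\omega_{\varPi}^{-1}$ in the sense of \eqref{GSOexacteq1}/\eqref{GSOexacteq2}, that the Langlands parameters add up, and that the $+$-extension (as opposed to $-$) is the one produced by the theta lift in the $\GSp(4)$ direction. The $L$-packet rows (Va/Va$^*$, VIa/VIb, VIIIa/VIIIb, XIa/XIa$^*$) are where the dichotomy between $X_{\Mat_2}$ and $X_H$ is non-trivial, and this is precisely the content of the dichotomy theorem of \cite{GaTa2011}; I would rely on that theorem rather than re-prove it.
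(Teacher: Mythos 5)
Your proposal matches what the paper does: the theorem is stated with no independent proof and is simply attributed to \cite{GaAt2011} and \cite{GaTa2011}, and your plan likewise reduces every row of the table to the explicit theta-lift computations and the dichotomy/$L$-packet results of those references (together with the central-character and $+$-extension bookkeeping of Sect.~\ref{goxsubsec}). Since the paper offers nothing beyond the citation, your route is essentially the same and is adequate.
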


\subsection{Applications}\label{thetaapplicationssec}
We now apply Theorem \ref{fourdimthetatheorem} along with knowledge of the theta correspondences of the previous section to obtain results about Bessel functionals.

\begin{corollary}\label{fourdimthetatheoremcor1}
Let $(X,\langle\cdot,\cdot\rangle)$ be either $(X_{\Mat_2},\langle\cdot,\cdot\rangle_{\Mat_2})$ or $(X_H,\langle\cdot,\cdot\rangle_H)$, as in \eqref{Xmateq} or \eqref{XHeq}. If $X=X_{\Mat_2}$, set $G=\GL(2,F)$, and if $X=X_{H}$, set $G=H^\times$.
Let $\varPi$ be an irreducible, admissible representation of $\GSp(4,F)$, and let $\pi_1$ and $\pi_2$ be irreducible, admissible representations of $G$ with the same central character. Assume that
$$
\Hom_R(\omega, \varPi^\vee \otimes (\pi_1 \otimes \pi_2)^+) \neq 0
$$
and that $\varPi$ has a non-split $(\Lambda,\theta)$-Bessel functional with $\theta=\theta_S$. Then 
$$
\Hom_{T}(\pi_1,\C_\Lambda)\neq0 \quad \text{and}\quad \Hom_{T}(\pi_2,\C_\Lambda)\neq0
$$
where $T=T_S$.
\end{corollary}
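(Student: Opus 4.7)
The plan is to apply Theorem~\ref{fourdimthetatheorem} with $\pi=\varPi^\vee$ and $\sigma=(\pi_1\otimes\pi_2)^+$, extract a $T$-eigenvector inside $W=V_\sigma$, and then convert it into Waldspurger functionals by exploiting the compactness of $T/F^\times$. First I would verify the hypotheses: the non-split assumption on $\theta=\theta_S$ forces $L=L_S$ to be a field, and hence $A=A_S\cong L$ is a field; also, $\GSp(4,F)^+=\GSp(4,F)$ by Lemma~\ref{gsp4calclemma} since $\dim X=4$. Applying \eqref{besselgaloiseq2}, the given $(\Lambda,\theta)$-Bessel functional on $\varPi$ yields a $(\Lambda^{-1},\theta)$-Bessel functional on $\varPi^\vee$, and part ii) of Theorem~\ref{fourdimthetatheorem} then produces, for each $z\in\Omega_S$ and each $\tau\in\mathcal{E}(z)$, a non-zero $w\in W$ with $\sigma(\tau(t))w=\Lambda(t)w$ for all $t\in T$.

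Next, after applying the $M$-action of Sect.~\ref{actionsubsec} to replace $\beta$ by a Bessel functional whose associated $S$ has the normal form treated in Sect.~\ref{exampleembeddingssubsec} (this only transforms $\Lambda$ by an inner automorphism and so is harmless for the conclusion), the embeddings in $\mathcal{E}(z)$ may be taken to be $\tau_1(t)=\rho(t,1)$ and $\tau_2(t)=\rho(1,\gamma(t))$; I would choose $\tau=\tau_1$. A small refinement of the proof of the theorem is needed: tracing through it, $w=M'(f_{C_0})$ is the image of the characteristic function of a compact open neighborhood $C_0$ of $z$, and $C_0$ can be chosen simultaneously stable under the paired $T$-action from Lemma~\ref{zstablemma} and under the compact stabilizer $\OO(X)_z$ acting on $\Omega_S$ via $1\times\OO(X)$, for example by picking an $\OF$-lattice in $X^2$ invariant under both compact groups and intersecting with $\Omega_S$. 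Because $M'$ is $\OO(X)$-equivariant, the resulting $w$ is then fixed by $\sigma(h)$ for every $h\in\OO(X)_z$.

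Finally, in the case $\pi_1\not\cong\pi_2$, decompose $W|_{\GSO(X)}=(V_1\otimes V_2)\oplus(V_2\otimes V_1)$ and write $w=w'+w''$ accordingly. The stabilizer $\OO(X)_z$ is isomorphic to $\OO(U^\perp)$ for $U=Fz_1+Fz_2$, hence contains an element $h_0$ of determinant $-1$, and $\sigma(h_0)$ interchanges the two $\GSO(X)$-summands of $W$; so the invariance $\sigma(h_0)w=w$ forces $\sigma(h_0)w'=w''$ and $\sigma(h_0)w''=w'$, making both $w'$ and $w''$ non-zero. Since $\tau_1(t)$ acts as $\pi_1(t)\otimes 1$ on $V_1\otimes V_2$ and as $\pi_2(t)\otimes 1$ on $V_2\otimes V_1$, the vectors $w'$ and $w''$ produce non-zero $\Lambda$-eigenvectors in $V_1$ and $V_2$, respectively (the case $\pi_1\cong\pi_2$ being immediate from the single summand $V_1\otimes V_2$). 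Because $L/F$ is a quadratic field extension, $T/F^\times\cong L^\times/F^\times$ is compact, so smooth representations of $T$ with a fixed central character decompose as direct sums of characters, and the existence of a $\Lambda$-eigenvector in $V_i$ is equivalent to $\Hom_T(\pi_i,\C_\Lambda)\neq 0$. The main obstacle is the coupling argument just described: the $\OO(X)_z$-fixedness of $w$ is what promotes the bare eigenvector conclusion of Theorem~\ref{fourdimthetatheorem} (which by itself would only give that one of $\pi_1,\pi_2$ admits such a functional) to the claim for both simultaneously.
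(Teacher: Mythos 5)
Your proposal follows the same route as the paper: pass to $\varPi^\vee$ via the contragredient formula, normalize $S$ to the diagonal form of Sect.~\ref{exampleembeddingssubsec}, apply Theorem~\ref{fourdimthetatheorem} with the explicit embeddings $\tau_1(t)=\rho(t,1)$, $\tau_2(t)=\rho(1,\gamma(t))$, and convert eigenvectors into Waldspurger functionals using the compactness of $T/F^\times$. The difference is that the paper's proof stops at citing the theorem, \eqref{Ezmateq}/\eqref{EzHeq} and \eqref{waldspurgerpropeq1}, whereas you correctly observe that when $\pi_1\not\cong\pi_2$ the restriction of $(\pi_1\otimes\pi_2)^+$ to $\GSO(X)$ has two summands, so a single eigenvector for $\tau_1$ (or for $\tau_2$) only yields $\Hom_T(\pi_1,\C_\Lambda)\neq0$ \emph{or} $\Hom_T(\pi_2,\C_\Lambda)\neq0$; some coupling between the two summands is needed to get both, and your $\OO(X)_z$-fixedness argument supplies exactly that. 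This is a genuine and correct supplement to the published argument: once $w=M'(f_{C_0})$ is fixed by an element $h_0\in\OO(X)_z$ with $\det h_0=-1$ (such elements exist, as in the proof of Proposition~\ref{Ozdimeq}), its two $\GSO(X)$-components are swapped by $\sigma(h_0)$, hence are both non-zero, and $\tau_1$-equivariance then feeds one into $\pi_1$ and the other into $\pi_2$.

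Two small points you should make explicit. First, the claim that $C_0$ can be chosen stable under both the paired $T$-action and $1\times\OO(X)_z$ requires knowing that the group these generate inside $\GL(X^2)$ is compact: here $\OO(X)_z\cong\OO(U^\perp)$ is compact because $U^\perp$ is anisotropic (its discriminant equals $\disc(S)\notin F^{\times2}$ since $L$ is a field), and conjugation by $\OO(X)_z$ permutes the two commuting compact pair-action groups attached to $\tau_1$ and $\tau_2$ (Lemma~\ref{simlemma} iv)), so the product of all three is a compact group fixing $z$ and hence stabilizes a lattice; with that, $C_0=(z+\varpi^n\mathcal{L})\cap\Omega$ works as in Lemma~\ref{zstablemma}. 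Second, an alternative (and arguably lighter) way to achieve the same coupling is to choose $C_0$ stable under the pair-actions of \emph{both} embeddings simultaneously (possible by the same compactness remark), so that the single vector $w$ is an eigenvector for $\tau_1$ and $\tau_2$ at once; then whichever component of $w$ is non-zero already produces eigenvectors in both $\pi_1$ and $\pi_2$, with \eqref{waldspurgerpropeq1} absorbing the Galois twist. Either refinement closes the ``one versus both'' point; your write-up is correct with the lattice justification added.
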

\begin{proof}
The assumption that the Bessel functional is non-split means that $A=A_S$ is a field.
By Sect.~\ref{twobytwosubsec} and Sect.~\ref{actionsubsec} we may assume that $S$ has the diagonal form \eqref{Sceq}.  By \eqref{besselcontragredientformula}, the contragredient $\varPi^\vee$ has a $((\Lambda\circ\gamma)^{-1},\theta)$-Bessel functional. The assertion follows now from  Theorem \ref{fourdimthetatheorem}, the explicit embeddings in \eqref{Ezmateq} and \eqref{EzHeq}, and the relation \eqref{waldspurgerpropeq1}.
\end{proof}

\begin{corollary}\label{bsummary}
 Let $(\varPi,V)$ be an irreducible, admissible representation of $\GSp(4,F)$. If $\varPi$ is one of the representations in the following table, then $\varPi$ admits a non-zero $(\Lambda,\theta)$-Bessel functional $\beta$ if and only if the quadratic extension $L$ associated to $\beta$, and $\Lambda$, regarded as a character of $L^\times$, are as specified in the table.
 $$
\renewcommand{\arraystretch}{1.2}
\setlength{\arraycolsep}{0.3cm}
\begin{array}{cccc}
\toprule
\text{type of $\varPi$} & \varPi & L & \Lambda\\
 \toprule
 {\text{\rm Va$^*$}}&\delta^*([\chi_{E/F},\nu \chi_{E/F}],\nu^{-1/2}\alpha) &E  & \alpha \circ \Norm_{E/F}\\
\cmidrule{1-4}
 {\rm Vd}& L(\nu \chi_{E/F}, \chi_{E/F} \rtimes \nu^{-1/2} \alpha) & E & \alpha \circ \Norm_{E/F} \\
\cmidrule{1-4}
 {\rm IXb} & L(\nu \chi_{E/F}, \nu^{-1/2} \pi(\mu)) & E & \text{$\mu$ and the Galois conjugate of $\mu$}\\
\bottomrule
\end{array}
 $$
\end{corollary}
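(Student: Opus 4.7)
The plan is to dispatch the three rows of the table separately, using theta correspondence throughout. As a preliminary, all three representations are non-generic (empty ``g'' column in the table of Sect.~\ref{representationssec}), so by Proposition \ref{nongenericsplitproposition}(v) none admits a split Bessel functional; every $(\Lambda,\theta)$-Bessel functional therefore has $L$ a quadratic field extension of $F$, placing us in the setting of the transfer-of-equivariant-functionals results Theorem \ref{fourdimthetatheorem} and Corollary \ref{fourdimthetatheoremcor1}.

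For the Va$^*$ row, Theorem \ref{Ganthetatheorem} provides a non-zero $R$-map $\mathcal{S}(X_H^2) \to \varPi^\vee \otimes (\alpha 1_{H^\times} \otimes \alpha\chi_{E/F}1_{H^\times})^+$ on the four-dimensional anisotropic space $X_H$. Given any non-split $(\Lambda,\theta_S)$-Bessel functional of $\varPi$, Corollary \ref{fourdimthetatheoremcor1} applied to both factors forces $\Lambda = \alpha\circ\Norm_{L/F}$ and $\Lambda = \alpha\chi_{E/F}\circ\Norm_{L/F}$ on $T\cong L^\times\hookrightarrow H^\times$; dividing gives $\chi_{E/F}$ trivial on $\Norm_{L/F}(L^\times)$, so local class field theory pins down $L = E$ and $\Lambda = \alpha\circ\Norm_{E/F}$. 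For existence I pick $S$ with $L_S = E$ and Hasse invariant chosen so that $X_H$ represents $S$; Proposition \ref{scdimprop}, applied with the contragredient $\sigma^\vee = (\alpha^{-1}1_{H^\times} \otimes \alpha^{-1}\chi_{E/F}1_{H^\times})^+$, combined with Proposition \ref{Ozdimeq}(i), yields $\dim \varPi_{N,\theta_S} = 1$. This produces the required Bessel functional with $\Lambda$ forced by necessity; the $M$-action of Sect.~\ref{actionsubsec} (which preserves norm characters) transports existence to all symmetric matrices with $L_S = E$.

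For the Vd and IXb rows, Proposition \ref{thetaliftprop} supplies a non-zero $R$-map $\mathcal{S}(X_{m,1}^2) \to \varPi^\vee \otimes \mu^+$ on the two-dimensional anisotropic $X_{m,1}$, with $\mu = \alpha\circ\Norm_{E/F}$ (case Vd) or $\mu\neq\mu\circ\gamma$ (case IXb). I restrict $\varPi^\vee$ to $\GSp(4,F)^+$ and isolate the irreducible constituent on which the theta pairing lives. Starting from a $(\Lambda,\theta_S)$-Bessel functional of $\varPi$, contragredient symmetry \eqref{besselcontragredientformula} produces one on $\varPi^\vee$; once $L_S = E$ is in force, Lemma \ref{tingsp4lemma} places $D_S$ inside $\GSp(4,F)^+$ and the functional restricts to the relevant constituent. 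Theorem \ref{fourdimthetatheorem}(i) together with Lemma \ref{lambdaSlemma} forces $L_S = E$, while Theorem \ref{fourdimthetatheorem}(ii), combined with the explicit description of $\mathcal{E}(z)$ in Lemma \ref{lambdaSlemma}, identifies the character (after passage back to $\varPi$) as $\alpha\circ\Norm_{E/F}$ in case Vd (since $\mu = \mu\circ\gamma$) and as an element of $\{\mu,\mu\circ\gamma\}$ in case IXb (since $\mathcal{E}(z)$ has two elements). Existence is obtained by running Theorem \ref{fourdimthetatheorem}(ii) in reverse: compose the theta map with a $(\mu,T)$-equivariant functional on $\mu^+$ (which exists since $\sigma|_T$ contains $\mu$), descend through the isomorphism $\mathcal{S}(X^2)_{N,\theta_S}\cong\mathcal{S}(\Omega)$ of Lemma \ref{Nthetaomegalemma}, and use Lemmas \ref{zstablemma} and \ref{Mnonvanishlemma} to ensure that the image of a characteristic function $f_{C_0}$ on a $T$-invariant compact neighbourhood $C_0\ni z$ is non-zero.

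The main obstacle is the two-dimensional case, where $[\GSp(4,F):\GSp(4,F)^+]=2$ relative to $X_{m,1}$ and so $\varPi^\vee$ restricts reducibly: one must check that Bessel functionals of $\varPi$ really transfer to the specific irreducible constituent paired with $\mu^+$, and that the case $L_S \neq E$ is genuinely excluded. Both points hinge on Lemma \ref{tingsp4lemma} together with the non-emptiness condition $\Omega_S\neq\emptyset$ of Theorem \ref{fourdimthetatheorem}(i). Reversing the proof of Theorem \ref{fourdimthetatheorem}(ii) for existence also requires a small adaptation, since the original statement converts Bessel functionals into equivariant functionals on $\sigma$ and not in the opposite direction.
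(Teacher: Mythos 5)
Your necessity arguments coincide with the paper's: for Va$^*$ you apply Corollary \ref{fourdimthetatheoremcor1} to both tensor factors, and for Vd/IXb you combine Proposition \ref{thetaliftprop}, the restriction of $\varPi^\vee$ to $\GSp(4,F)^+$, Theorem \ref{fourdimthetatheorem} and Lemma \ref{lambdaSlemma}; the index-two issue you flag is resolved in the paper by noting that $\beta$ is non-zero on some constituent $V_i$ and then translating by ${\rm diag}(1,1,\lambda_i,\lambda_i)\in M$, which replaces $S$ by $\lambda_i^{-1}S$ without changing the discriminant (and, by Lemma \ref{twoscalarslemma}, without changing the set of embeddings), so this is fine. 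Your existence argument for Va$^*$ via Proposition \ref{scdimprop} and Proposition \ref{Ozdimeq} is also legitimate — it is exactly the computation the paper records as Corollary \ref{Vastarlemma} (there used only for uniqueness), and a non-zero finite-dimensional $V_{N,\theta_S}$ does split into $T$-characters since $T/F^\times$ is compact.

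The genuine gap is the existence step for Vd and IXb, where you propose to ``run Theorem \ref{fourdimthetatheorem}(ii) in reverse.'' Composing the theta map $M:\mathcal{S}(X^2)\to V\otimes W$ with a $T$-equivariant functional $\ell$ on $W$ produces a $V$-valued map $\mathcal{S}(X^2)\to V$, not an element of $\Hom_D(V,\C_{\Lambda\otimes\theta})$; passing to twisted Jacquet modules only gives a $T$-equivariant surjection $\mathcal{S}(\Omega)\twoheadrightarrow V_{N,\theta}\otimes W$, and nothing so far rules out $V_{N,\theta}=0$. Moreover, Lemma \ref{Mnonvanishlemma} cannot be invoked at this point: it applies to non-zero $\OO(X)$-maps $\mathcal{S}(\Omega)\to W_0$ with $W_0$ an admissible $\OO(X)$-module, whereas your candidate map has target $V_{N,\theta}$ (or $V_{N,\theta}\otimes\C$), on which $\OO(X)$ does not act; the only equivariance available is under the pairs $(t,\tau(t))$, and a $T$-eigenfunctional on $\mathcal{S}(\Omega)$ need not factor through the quotient $V_{N,\theta}\otimes W$. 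So this is not a ``small adaptation'' but a different (and harder) implication, which is precisely why the paper does not argue this way: it gets existence for Vd and IXb from the general existence theorem (Theorem \ref{existencetheorem}, proved by $P_3$-theory and $G^J$/metaplectic methods), and then the necessity computation pins down $L=E$ and $\Lambda$, with \eqref{besselgaloiseq} supplying both $\mu$ and $\mu\circ\gamma$ in case IXb (a point your sketch also leaves open). Replacing your reverse-theta construction by an appeal to Theorem \ref{existencetheorem} closes the gap.
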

\begin{proof}
First we consider the Va$^*$ case. Let $\varPi=\delta^*([\chi_{E/F},\nu \chi_{E/F}],\nu^{-1/2}\alpha)$. By Theorem \ref{Ganthetatheorem},
$$
 \Hom_R(\omega,\varPi^\vee\otimes(\alpha\triv_{H^\times}\otimes\alpha\chi_{E/F}\triv_{H^\times})^+)\neq0.
$$
First, assume  that $\varPi$ admits a non-zero $(\Lambda,\theta)$-Bessel functional, and let $L$ be the quadratic extension
associated to this Bessel functional; we will prove that $E=L$ and that $\Lambda=\alpha\circ\Norm_{E/F}$. By v) of Proposition \ref{nongenericsplitproposition}, this Bessel functional is non-split. It follows from Corollary \ref{fourdimthetatheoremcor1} that
$$
 \alpha(\Norm_{L/F}(t))=\Lambda(t)\qquad\text{and}\qquad(\chi_{E/F}\alpha)(\Norm_{L/F}(t))=\Lambda(t)
$$
for $t$ in $T=L^\times$. It follows that $E=L$, and that $\Lambda=\alpha\circ\Norm_{E/F}$.

Finally, we prove that Va$^*$ admits a Bessel functional as specified in the statement of the corollary. By Theorem \ref{existencetheorem} below, Va$^*$ admits some non-zero Bessel functional. The previous paragraph proves that this Bessel functional must be as described in the statement of the corollary.

The arguments for the cases Vd and IXb are similar; we will only consider the case of type IXb.
Let $\varPi=L(\nu\chi_{E/F},\nu^{-1/2}\pi(\mu))$, where $E$ is a quadratic
extension of $F$, $\chi_{E/F}$ is the quadratic 
character associated to $E/F$, $\mu$ is a character of $E^\times$ that is not Galois invariant,
and $\pi(\mu)$ is the supercuspidal, irreducible, admissible representation of $\GL(2,F)$ associated to $\mu$.

First, assume  that $\varPi$ admits a non-zero $(\Lambda,\theta)$-Bessel functional, and let $L$ be the quadratic extension
associated to this Bessel functional; we will prove that $E=L$, and that $\Lambda$ is $\mu$ or the Galois
conjugate of $\mu$. 
Let $S$ define $\theta$, as in \eqref{thetaSsetupeq}. 
By \eqref{besselcontragredientformula}, $\varPi^\vee$ admits a non-zero $((\Lambda \circ \gamma)^{-1},\theta)$ Bessel functional $\beta$. 
Write $E=F(\sqrt{m})$ for some $m \in F^\times$. By Proposition \ref{thetaliftprop} we have $\Hom_R(\omega, \varPi^\vee \otimes \mu^+) \neq 0$
with $\mu^+$ as in this proposition. The involved symmetric bilinear space is $(X_{m,1},\langle\cdot,\cdot \rangle_{m,1})$. Let
$\GSp(4,F)^+$ be defined with respect to $(X_{m,1},\langle\cdot,\cdot \rangle_{m,1})$ as in \eqref{gsp4plusdef}. By
Lemma \ref{gsp4calclemma} the index of $\GSp(4,F)^+$ in $\GSp(4,F)$ is two. By Lemma 2.1 of \cite{GK}, the restriction of
$\varPi^\vee$ to $\GSp(4,F)^+$ is irreducible or the direct sum of two non-isomorphic irreducible, admissible representations
of $\GSp(4,F)^+$; the non-vanishing of $\Hom_R(\omega, \varPi^\vee \otimes \mu^+)$ and Lemma 4.1 of \cite{Roberts2001} (with $m=2$ and $n=2$) imply that $V^\vee = V_1   \oplus V_2$ with $V_1$ and $V_2$ irreducible $\GSp(4,F)^+$ subspaces of $V$. 
Moreover, for each $i \in \{1,2\}$, there exists $\lambda_i \in F^\times$ such that $\varPi(\left[\begin{smallmatrix} 1 & \\ & \lambda_i \end{smallmatrix} \right]) V_1 =V_i$. Since $\Hom_R(\omega, V^\vee \otimes \mu^+)$ is non-zero, we may assume, after possibly renumbering,  that $\Hom_R(\omega, V_1 \otimes \mu^+)$ is non-zero. 
There exists $i \in \{1,2\}$ such that the restriction of $\beta$ to $V_i$ is non-zero. Let $\beta' = \left[\begin{smallmatrix} 1 & \\ & \lambda_i^{-1} \end{smallmatrix} \right]\cdot \beta$. From Sect.~\ref{actionsubsec} it follows that $\beta'$ is a $((\Lambda \circ \gamma)^{-1}, \theta')$ Bessel functional on $\varPi^\vee$ with $\theta'$ defined by $S'= \lambda_i^{-1} S$; also, the restriction of $\beta'$ to $V_1$ is non-zero. We will now apply
Theorem \ref{fourdimthetatheorem}, with $S'$ and $V_1$ playing the roles of $S$ and $\pi$, respectively.  By i) of this theorem we have that $\Omega_{S'}$ is non-empty; since $S$ and $S'$ have the same discriminant, Lemma \ref{lambdaSlemma}  implies that $L = E$.  Let $z \in \Omega_{S'}$ and $\tau \in \mathcal{E}(z)$. By ii) of Theorem \ref{fourdimthetatheorem}, there exists a non-zero vector $w$ in the space of $\mu^+$ such that $\mu^+(\tau(t)) w = (\Lambda \circ \gamma)(t)w$ for $t \in T_{S'}$. By Lemma \ref{lambdaSlemma} again, this implies that $\mu^+(\rho(x)) w = \Lambda(x) w $ for $x \in L^\times$, or $\mu^+(\rho(\gamma(x))) w = \Lambda(x) w $ for $x \in L^\times$. Since $w \neq 0$, the definition of $\mu^+$ now implies that $\Lambda = \mu$ or $\mu \circ \gamma$, as desired. 

Finally, we prove that $\varPi$ admits Bessel functionals as specified in the statement of the corollary. By Theorem \ref{existencetheorem} below, $\varPi$ admits some non-zero Bessel functional. The previous paragraph proves that this Bessel functional must be as described in the statement of the corollary, and \eqref{besselgaloiseq} implies that the $\varPi$ admits both of the asserted Bessel functionals. 
\end{proof}

The following result will imply uniqueness of Bessel functionals for representations of type Va$^*$ and XIa$^*$.
\begin{corollary}
\label{Vastarlemma}
Let $\sigma$ be a character of $F^\times$. Let $c \in F^\times$ with $-c \notin F^{\times 2}$. Let $S$ be as in \eqref{Sceq} and set $L=F(\sqrt{-c})$.
\begin{enumerate}
\item If $\xi=\chi_{L/F}$, then  $\dim \delta^*([\xi,\nu\xi],\nu^{-1/2}\sigma)_{N,\theta_S}=1$.
\item If $\pi$ is an irreducible, admissible, supercuspidal representation of $\GL(2,F)$ with trivial central character such that $\Hom_{L^\times} (\pi^{\mathrm{JL}},\C_1)\neq0$, then $\dim \delta^*(\nu^{1/2}\pi,\nu^{-1/2}\sigma)_{N,\theta_S}=1$.
\end{enumerate}
\end{corollary}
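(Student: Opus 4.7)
The plan is to combine the theta correspondence (Theorem \ref{Ganthetatheorem}) with the dimension formula for twisted Jacquet modules of supercuspidal representations (Proposition \ref{scdimprop}), and then read off the dimension of the relevant space of $\OO(X_H)_z$-invariants from Proposition \ref{Ozdimeq}. Throughout, the relevant even-dimensional quadratic space is $X=X_H$, with $H$ realized as in \eqref{SpecialHeq} so that $S=\mathrm{diag}(1,c)$ is represented by $X_H$ (this is exactly the setup of Sect.~\ref{exampleembeddingssubsec}). Since $-c\notin F^{\times 2}$, the field $L=F(\sqrt{-c})$ is a quadratic field extension, so $A_S\cong L$ is a field and the hypothesis of Proposition \ref{scdimprop} is met.

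For (i), put $\varPi=\delta^*([\xi,\nu\xi],\nu^{-1/2}\sigma)$ with $\xi=\chi_{L/F}$. The Va$^*$ row of Theorem \ref{Ganthetatheorem} gives
$$
\Hom_R(\omega,\varPi^\vee\otimes \sigma_0)\neq 0,\qquad \sigma_0=(\sigma 1_{H^\times}\otimes \sigma\chi_{L/F}1_{H^\times})^+.
$$
Since $\varPi^\vee$ is supercuspidal and $\Omega_S\neq\emptyset$, Proposition \ref{scdimprop} (applied to $\varPi^\vee$) yields
$$
\dim(\varPi^\vee)_{N,\theta_S}=\dim\Hom_{\OO(X_H)_z}(\sigma_0,\C_1).
$$
By Proposition \ref{Ozdimeq}(i), the right-hand side equals $1$. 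For (ii), the argument is identical, but uses the XIa$^*$ row of Theorem \ref{Ganthetatheorem}, which produces $\sigma_0=(\sigma 1_{H^\times}\otimes \sigma\pi^{\mathrm{JL}})^+$; the hypothesis $\Hom_{L^\times}(\pi^{\mathrm{JL}},\C_1)\neq 0$ is exactly what Proposition \ref{Ozdimeq}(ii) requires, and so again $\dim\Hom_{\OO(X_H)_z}(\sigma_0,\C_1)=1$.

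It remains to pass from $(\varPi^\vee)_{N,\theta_S}$ to $\varPi_{N,\theta_S}$. By Proposition 2.3 of \cite{Takloo-Bighash2000} we have $\varPi^\vee\cong\omega_\varPi^{-1}\varPi$, where the twist is by the character $\omega_\varPi^{-1}\circ\lambda$. Since $\lambda$ is trivial on $N$, twisting does not alter the action of $N$, so $\varPi|_N$ and $\varPi^\vee|_N$ are isomorphic as $N$-representations. Consequently $\varPi_{N,\theta_S}$ and $(\varPi^\vee)_{N,\theta_S}$ have the same dimension, and we conclude $\dim\varPi_{N,\theta_S}=1$ in both cases.

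There is no serious conceptual obstacle here — the proof is essentially a bookkeeping exercise that collates the three earlier results (Theorem \ref{Ganthetatheorem}, Proposition \ref{scdimprop}, Proposition \ref{Ozdimeq}). The only point requiring a little care is that Theorem \ref{Ganthetatheorem} is phrased for the contragredient $\varPi^\vee$, which necessitates the final twist-invariance observation; and that the explicit realizations of $H$ and of $z\in\Omega_S$ needed in the hypotheses of Proposition \ref{Ozdimeq} are precisely those fixed in Sect.~\ref{exampleembeddingssubsec}, so no additional compatibility check is required.
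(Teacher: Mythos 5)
Your proof is correct and is essentially the paper's own argument: the paper's proof of this corollary cites exactly Proposition \ref{scdimprop}, Theorem \ref{Ganthetatheorem}, and Proposition \ref{Ozdimeq}, together with the same observation that $\varPi^\vee|_N\cong\varPi|_N$ since $\varPi^\vee\cong\omega_\varPi^{-1}\varPi$. Your additional remarks (non-emptiness of $\Omega_S$, the realization of $H$ and $z$ from Sect.~\ref{exampleembeddingssubsec}) just make explicit the hypotheses the paper checks implicitly.
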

\begin{proof}
This follows from Proposition \ref{scdimprop}, Theorem \ref{Ganthetatheorem},  and Proposition \ref{Ozdimeq}; note that $\varPi^\vee|_{N} \cong \varPi|_{N}$ for irreducible, admissible representations $\varPi$ of $\GSp(4,F)$ because $\varPi^\vee \cong \omega_{\varPi}^{-1} \varPi$. 
\end{proof}

\section{Twisted Jacquet modules of induced representations}
Let $(\pi,V)$ be an irreducible, admissible representation of $\GSp(4,F)$. In view of the isomorphism \eqref{DTJacqueteq}, understanding the possible Bessel functionals of $\pi$ is equivalent to understanding the twisted Jacquet modules $V_{N,\theta}$ as $T$-modules. In this section, we will calculate the twisted Jacquet modules for representations induced from the Siegel and Klingen parabolic subgroup. This information will be used to determine the possible Bessel functionals for many of the non-supercuspidal representations of $\GSp(4,F)$; see Sect.\ \ref{maintheoremproofsec}.

The results of this section are similar to Proposition 2.1 and 2.3 of \cite{PrTa2011}. However, we prefer to redo the arguments, as those in \cite{PrTa2011} contain some inaccuracies.

\subsection{Two useful lemmas}
For a positive integer $n$ let $\mathcal{S}(F^n)$ be the Schwartz space of $F^n$, meaning the space of locally constant, compactly supported functions $F^n\rightarrow\C$. As before, $\psi$ is our fixed non-trivial character of $F$.

Let $V$ be a complex vector space. Let $\mathcal{S}(F,V)$ be the space of compactly supported, locally constant functions from $F$ to $V$. There is a canonical isomorphism $\mathcal{S}(F,V)\cong\mathcal{S}(F)\otimes V$. The functional on $\mathcal{S}(F)$ given by $f\mapsto\int_F f(x)\,dx$ gives rise to a linear map $\mathcal{S}(F)\otimes V\rightarrow V$, and hence to a linear map $\mathcal{S}(F,V)\rightarrow V$. We write this map as an integral
$$
 f\longmapsto \int\limits_Ff(x)\,dx.
$$
The following lemma will be frequently used when we calculate Jacquet modules in the subsequent sections.
\begin{lemma}\label{basicFjacquetlemma}
 Let $\rho$ denote the action of $F$ on $\mathcal{S}(F,V)$ by translation, i.e., $(\rho(x)f)(y)=f(x+y)$. Let $\rho'$ be the action of $F$ on $\mathcal{S}(F,V)$ given by $(\rho'(x)f)(y)=\psi(xy)f(y)$.
 \begin{enumerate}
  \item The map $f\mapsto\int_Ff(x)\,dx$ induces an isomorphism
   $$
    \mathcal{S}(F,V)/\langle f-\rho(x)f\::\:x\in F\rangle\cong V.
   $$
  \item The map $f\mapsto\int_F\psi(-x)f(x)\,dx$ induces an isomorphism
   $$
    \mathcal{S}(F,V)/\langle\psi(x)f-\rho(x)f\::\:x\in F\rangle\cong V.
   $$
  \item The map $f\mapsto f(0)$ induces an isomorphism
   $$
    \mathcal{S}(F,V)/\langle f-\rho'(x)f\::\:x\in F\rangle\cong V.
   $$
 \end{enumerate}
\end{lemma}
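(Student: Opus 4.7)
The plan is to treat each of the three parts in three steps: verifying that the asserted map from $\mathcal{S}(F,V)$ to $V$ vanishes on the prescribed subspace (so it descends to the quotient), establishing surjectivity, and finally proving injectivity. Well-definedness is a direct check in each case --- translation-invariance of Haar measure for (i); the change of variables $y \mapsto y-x$ giving $\int_F \psi(-y)(\rho(x)f)(y)\,dy = \psi(x) \int_F \psi(-y)f(y)\,dy$ for (ii); and evaluation at $0$ together with $\psi(0)=1$ for (iii). Surjectivity is likewise immediate: for any $v \in V$, take $f = v \cdot \chi_{\mathcal{K}}$ with $\mathcal{K}$ a suitably small compact open neighborhood of $0$ in $F$, where $\chi_{\mathcal{K}}$ denotes the characteristic function of $\mathcal{K}$.

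For injectivity, I will exploit the fact that any $f \in \mathcal{S}(F,V)$ can be written as a finite sum $f = \sum_i v_i \chi_{y_i + K}$, where $K$ is a compact open subgroup of $F$ small enough that $f$ is constant on each $K$-coset of its support. In parts (i) and (ii), the identity $\rho(-y_i)(v_i \chi_K) = v_i \chi_{y_i + K}$ lets me collapse every coset to $K$: modulo the defining subspace, $f \equiv \bigl(\sum_i v_i\bigr) \chi_K$ in (i), and $f \equiv \bigl(\sum_i \psi(-y_i) v_i\bigr) \chi_K$ in (ii), after first shrinking $K$ so that $\psi|_K = 1$. The scalar coefficient is precisely what the hypothesis on the image of $f$ forces to vanish, so $f \equiv 0$ in each case.

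The substantive step is (iii), where no translation identity is available. The key tool will be Pontryagin self-duality of $F$ via $\psi$: for every compact open subgroup $K \subset F$ one has $K^{\perp\perp} = K$, where $K^\perp = \{x \in F : \psi(xK) = 1\}$. Given $f(0)=0$, I can refine $K$ so that $y_i \notin K$ for every $i$ appearing in the expansion of $f$, and then for each such $i$ self-duality supplies an $x_i \in K^\perp$ with $\psi(x_i y_i) \neq 1$. Because $x_i$ annihilates $K$, the operator $\rho'(x_i)$ multiplies $v_i \chi_{y_i + K}$ by the scalar $\psi(x_i y_i)$; hence $(1 - \psi(x_i y_i)) v_i \chi_{y_i + K}$, and thus $v_i \chi_{y_i + K}$ itself, lies in the defining subspace. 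Summing yields $f \equiv 0$. This appeal to self-duality --- separating each nonzero $y_i$ from the origin by a character trivial on $K$ --- is the one step I expect to be more than routine bookkeeping.
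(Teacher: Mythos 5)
Your argument is correct, but it proceeds quite differently from the paper. The paper's proof of part i) is essentially a citation: it invokes the Proposition in 1.18 of \cite{BeZe1976} (every translation-invariant functional on $\mathcal{S}(F)$ is a multiple of Haar measure) to settle the case $V=\C$, and then obtains general $V$ by tensoring the resulting short exact sequence with $V$; parts ii) and iii) are then deduced from i) by transport of structure, via the isomorphisms $f\mapsto \psi\cdot f$ (which carries the subspace in i) to the one in ii)) and the Fourier transform (which intertwines $\rho$ with $\rho'$ and carries iii) back to i)). You instead give a self-contained computation: writing $f=\sum_i v_i\chi_{y_i+K}$ and collapsing cosets by translation handles i) and ii) directly, and in iii) you separate each nonzero $y_i$ from $0$ by an element of $K^\perp$, using $K^{\perp\perp}=K$, to show $(1-\psi(x_iy_i))v_i\chi_{y_i+K}$ lies in the defining subspace. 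The trade-off is clear: your route avoids any external input (in effect it reproves the relevant direction of the Bernstein--Zelevinsky statement, with coefficients in $V$, so no tensoring argument is needed) at the cost of the explicit coset bookkeeping and the appeal to self-duality of $F$ under $\psi$, while the paper's route is shorter and makes the structural relations among the three quotients (twisting by $\psi$, Fourier transform) visible. All the individual steps you rely on --- translation invariance of Haar measure, the identity $v_i\chi_{y_i+K}=\rho(-y_i)(v_i\chi_K)$, shrinking $K$ so $\psi|_K=1$ in ii), and the fact that $f(0)=0$ plus local constancy lets you arrange $y_i\notin K$ in iii) --- are sound, so I see no gap.
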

\begin{proof}
By the Proposition in 1.18 of \cite{BeZe1976}, every translation-invariant functional on $\mathcal{S}(F)$ is a multiple of the Haar measure $f\mapsto\int_Ff(x)\,dx$. This proves i) in the case where $V=\C$. The general case follows from this case by tensoring the exact sequence
$$
 0\longrightarrow\langle f-\rho(x)f\::\:x\in F,\:f\in\mathcal{S}(F)\rangle\longrightarrow \mathcal{S}(F)\longrightarrow\C\longrightarrow0
$$
by $V$. Under the isomorphism $\mathcal{S}(F)\otimes V\cong\mathcal{S}(F,V)$, the space $\langle f-\rho(x)f\::\:x\in F,\:f\in\mathcal{S}(F)\rangle\otimes V$ maps onto $\langle f-\rho(x)f\::\:x\in F,\:f\in\mathcal{S}(F,V)\rangle$.

To prove ii), observe that there is an isomorphism
$$
 \mathcal{S}(F,V)/\langle f-\rho(x)f\::\:x\in F,\:f\in\mathcal{S}(F,V)\rangle\longrightarrow\mathcal{S}(F,V)/\langle\psi(x)f-\rho(x)f\::\:x\in F,\:f\in\mathcal{S}(F,V)\rangle
$$
induced by the map $f\mapsto f'$, where $f'(x)=\psi(x)f(x)$. Hence ii) follows from i). Finally, iii) also follows from i), since the Fourier transform $f\mapsto\hat f$, where
$$
 \hat f(y)=\int\limits_F \psi(-uy)f(u)\,du,
$$
intertwines the actions $\rho$ and $\rho'$ of $F$ on $\mathcal{S}(F,V)$.
\end{proof}

\begin{lemma}\label{inducedreslemma2}
Let $G$ be an $l$-group, as in \cite{BeZe1976}, and let $H_1$ and $H_2$ be closed subgroups of $G$. Assume that $G=H_1H_2$, and that for every compact subset $K$ of $G$, there exists a compact subset $K_2$ of $H_2$ such that $K\subset H_1K_2$. Let $(\rho,V)$ be a smooth representation of $H_1$. The map $r:\cInd^G_{H_1} \rho \to \cInd^{H_2}_{H_1\cap H_2}(\rho|_{H_1\cap H_2})$ defined by restriction of functions is a well-defined isomorphism of representations of $H_2$.
\end{lemma}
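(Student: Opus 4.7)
The plan is to exhibit an explicit two-sided inverse of $r$, given by extending functions on $H_2$ to $G$ via the decomposition $G = H_1 H_2$.

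First I would check that $r$ is well-defined. For $f \in \cInd^G_{H_1}\rho$, the restriction $f|_{H_2}$ trivially satisfies $(f|_{H_2})(k h_2) = \rho(k)(f|_{H_2})(h_2)$ for $k \in H_1 \cap H_2$ and $h_2 \in H_2$, so it transforms correctly on the left. For compact support mod $H_1 \cap H_2$, pick a compact $K \subset G$ with $\supp(f) \subset H_1 K$, use the hypothesis to enlarge so that $K \subset H_1 K_2$ for some compact $K_2 \subset H_2$, and observe that $\supp(f|_{H_2}) \subset H_1 K_2 \cap H_2 \subset (H_1 \cap H_2) K_2$: indeed, if $h_2 = h_1 k$ with $h_2 \in H_2$, $h_1 \in H_1$, $k \in K_2 \subset H_2$, then $h_1 = h_2 k^{-1} \in H_1 \cap H_2$. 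The $H_2$-equivariance of $r$ under right translation is immediate.

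Next I would construct an inverse. Given $\phi \in \cInd^{H_2}_{H_1 \cap H_2}(\rho|_{H_1 \cap H_2})$, define $\tilde\phi: G \to V$ by $\tilde\phi(h_1 h_2) = \rho(h_1)\phi(h_2)$, which makes sense by $G = H_1 H_2$. This is well-defined: if $h_1 h_2 = h_1' h_2'$, then $k := h_1^{-1} h_1' = h_2 h_2'{}^{-1}$ lies in $H_1 \cap H_2$, and the equivariance properties of $\phi$ and $\rho$ give $\rho(h_1')\phi(h_2') = \rho(h_1)\rho(k)\rho(k)^{-1}\phi(h_2) = \rho(h_1)\phi(h_2)$. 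Compact support of $\tilde\phi$ mod $H_1$ is immediate from $\supp(\phi) \subset (H_1 \cap H_2)K_2$, since then $\supp(\tilde\phi) \subset H_1 K_2$. For smoothness, given $g_0 = h_1^0 h_2^0$, take a compact open neighborhood $U_2 \subset H_2$ of $h_2^0$ on which $\phi$ takes only finitely many values $v_1,\dots,v_m$, and take a compact open neighborhood $U_1 \subset H_1$ of $h_1^0$ fixing each $v_i$ under $\rho$ (possible since $\rho$ is smooth and the set is finite); then $\tilde\phi$ is constant on the open set $U_1 U_2 \ni g_0$.

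Finally I would verify that $r$ and $\phi \mapsto \tilde\phi$ are mutually inverse. The composition $r \circ (\phi \mapsto \tilde\phi)$ sends $\phi$ to $\tilde\phi|_{H_2}$, and $\tilde\phi(h_2) = \rho(1)\phi(h_2) = \phi(h_2)$. Conversely, for $f \in \cInd^G_{H_1}\rho$, the extension of $f|_{H_2}$ sends $h_1 h_2$ to $\rho(h_1) f(h_2) = f(h_1 h_2)$, recovering $f$. The $H_2$-equivariance of the inverse is the identity $\widetilde{\phi \cdot h_2^0}(h_1 h_2) = \rho(h_1)\phi(h_2 h_2^0) = \tilde\phi(h_1 h_2 h_2^0)$.

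The main technical obstacle is the smoothness of $\tilde\phi$ on $G$: without a good relation between the topology of $G$ and that of the product $H_1 \times H_2$, a locally constant function on $H_2$ equivariant under $H_1 \cap H_2$ need not extend to a locally constant function on $G$. The argument sketched above succeeds because we work with $l$-groups (so locally constant functions with compact support take only finitely many values, and $\rho$-stabilizers are open), which lets us find a single compact open neighborhood of any $g_0$ on which $\tilde\phi$ is constant.
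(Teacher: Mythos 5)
Your overall strategy (restriction plus the explicit extension $\tilde\phi(h_1h_2)=\rho(h_1)\phi(h_2)$ as inverse) is the right one, and the well-definedness of $r$ and of $\tilde\phi$, the two support computations, injectivity, the mutual-inverse identities, and the $H_2$-equivariance are all fine. The genuine gap is in the smoothness of $\tilde\phi$: you assert that $U_1U_2$ is an open subset of $G$ containing $g_0$, but this is precisely the unproven point. $U_1$ and $U_2$ are open only in the subspace topologies of $H_1$ and $H_2$; their product is the image of an open set under the multiplication map $H_1\times H_2\to G$, and for closed subgroups of an $l$-group with $G=H_1H_2$ such an image need not be a neighborhood of $g_0$. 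This is exactly the ``good relation between the topology of $G$ and that of $H_1\times H_2$'' that you flag in your last paragraph, and the two facts you invoke there (locally constant compactly supported functions take finitely many values; $\rho$-stabilizers are open) say nothing about it, so your resolution of the obstacle is circular. Note also that in your argument the hypothesis on compact sets is used only for the support bounds, whereas some topological input of this kind must also enter the smoothness step: with $\rho$ trivial and $H_1\cap H_2=1$, the forced extension of the characteristic function of a compact open subgroup $U_2\subset H_2$ is the characteristic function of $H_1U_2$, which is locally constant only if $H_1U_2$ is open in $G$. (A minor further slip: even granting openness, $\tilde\phi$ is not constant on your $U_1U_2$, since $\rho(u_1)\phi(u_2)=\rho(h_1^0)\phi(u_2)$ still varies with $u_2$; one should shrink $U_2$ so that $\phi$ is constant on it, say equal to $v$, and take $U_1=h_1^0K_1$ with $K_1$ a compact open subgroup of $H_1$ fixing $v$.)

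The gap can be closed as follows, and this is where the compact-covering hypothesis does real work beyond supports. The natural map $(H_1\cap H_2)\backslash H_2\to H_1\backslash G$ is a continuous bijection; every compact subset of $H_1\backslash G$ is the image of a compact subset of $G$, hence by hypothesis lies in the image of a compact subset of $H_2$, and by injectivity its preimage is contained in a compact set; so the map is proper, hence closed, hence a homeomorphism. Since quotient maps of $l$-groups by closed subgroups admit continuous local sections, one obtains, near any point of $H_1\backslash G$, a continuous section $\sigma$ of $G\to H_1\backslash G$ with values in $H_2$; writing $g=a(g)\,\sigma(\bar g)$ with $a(g)=g\,\sigma(\bar g)^{-1}\in H_1$ continuous in $g$, one gets $\tilde\phi(g)=\rho(a(g))\,\phi(\sigma(\bar g))$, which is locally constant because $\phi$ is locally constant, $a$ and $\sigma$ are continuous, and $\rho$ is smooth. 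Alternatively, in all situations where the lemma is applied in this paper $G$ is a closed subgroup of $\GL(4,F)$, hence second countable, and one may invoke the Baire-category open mapping theorem for the transitive action of $H_1\times H_2$ on $G$ to conclude that $U_1U_2$ is open (or simply use the explicit decompositions $G=H_1H_2$ to write down continuous sections); but some such argument must be supplied before the extension step of your proof is complete.
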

\begin{proof}
This follows from straightforward verifications.
\end{proof}

\subsection{Siegel induced representations}\label{siegelindsec}
Let $\pi$ be an admissible representation of $\GL(2,F)$, let $\sigma$ be a character
of $F^\times$, and let $\pi\rtimes\sigma$ be as defined in Sect.\ \ref{representationssec}; see \eqref{Prepeq}. In this section we will calculate the twisted Jacquet modules $(\pi\rtimes\sigma)_{N,\theta}$ for any non-degenerate character $\theta$ of $N$ as a module of $T$.
Lemma \ref{siegelinducedbesselwaldspurgerlemma} below corrects an inaccuracy in Proposition 2.1 of \cite{PrTa2011}. Namely,  Proposition 2.1 of \cite{PrTa2011} does not include ii) of our lemma.

\begin{lemma}\label{casselmanfiltrationlemma2}
 Let $\sigma$ be a character of $F^\times$, and $\pi$ an admissible representation of $\GL(2,F)$. Let $I$ be the standard space of the Siegel induced representation $\pi\rtimes\sigma$. There is a filtration of $P$-modules
 $$
  I^3=0\subset I^2 \subset I^1 \subset I^0=I.
 $$
 with the quotients given as follows.
 \begin{enumerate}
  \item $ I^0/I^1 = \sigma_0$, where
   $$
    \sigma_0(\mat{A}{*}{}{cA'})=\sigma(c)\,|c^{-1}\det(A)|^{3/2}\,\pi(A)
   $$
   for $A$ in $\GL(2,F)$ and $c$ in $F^\times$.
 \item $I^1/I^2 = \cInd_{ \left[ \begin{smallmatrix} *&*&*&*\\  &*&&*\\ &&*&*\\ &&&* \end{smallmatrix} \right]  }^P \sigma_1$, where
   $$
    \sigma_1 (\begin{bmatrix} t&*&y&* \\&a&&*\\&&d&*\\&&& ad t^{-1} \end{bmatrix}) = \sigma(ad)\,|a^{-1}t|^{3/2}\,\pi(\mat{t}{y}{}{d})
   $$
   for $y$ in $F$ and $a,d,t$ in $F^\times$.
 \item $I^2/I^3 = \cInd_{ \left[ \begin{smallmatrix} *&*\\  *&*\\ &&*&*\\ &&*&* \end{smallmatrix} \right]  }^P \sigma_2$, where
   $$
    \sigma_2(\mat{A}{}{}{cA'})=\sigma(c)\,|c\det(A)^{-1}|^{3/2}\,\pi(cA')
   $$
   for $A$ in $\GL(2,F)$ and $c$ in $F^\times$.
 \end{enumerate}
\end{lemma}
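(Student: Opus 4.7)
The proof will follow the standard ``geometric lemma" pattern (BZ~\S1--2): decompose $G=\GSp(4,F)$ into $P$-double cosets, filter $I$ by support of the resulting sections, and identify each successive quotient via Frobenius reciprocity.

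First I would set up the Bruhat decomposition of $G$ with respect to $P$. The double coset space $W_P\backslash W/W_P$ (where $W_P=\{1,s_1\}$) has exactly three elements, represented by $1$, $s_2$, and $w:=s_2s_1s_2$, so that
$$
G = P \sqcup Ps_2P \sqcup PwP,
$$
with $P$ closed, $PwP$ open, and $Ps_2P$ locally closed. Accordingly, define the open subsets $Y_0=G$, $Y_1=G\setminus P = Ps_2P\sqcup PwP$, and $Y_2 = PwP$, all stable under left multiplication by $P$, and set $I^i=\{f\in I:\supp(f)\subset Y_i\}$. This immediately yields the claimed filtration of $P$-modules with $I^3=0$, and it remains only to identify the subquotients.

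For the closed stratum, evaluation at $1\in G$ induces an isomorphism $I^0/I^1\cong V_\pi$ compatible with the $P$-action. Since $I$ is a \emph{normalized} induced representation, the resulting action is $(\pi\otimes\sigma)\otimes\delta_P^{1/2}$, and a direct computation of $\delta_P$ on $M$ (the conjugation of $N$ in \eqref{Ndefeq} by $\mat{A}{}{}{cA'}$ contracts $N$ by the factor $|c^{-1}\det A|^3$) gives exactly the formula for $\sigma_0$. For the open stratum, extension by zero identifies $I^2$ with compactly supported sections on $PwP$; the stabilizer $P\cap wPw^{-1}$ is the Levi $M$, so Lemma~\ref{inducedreslemma2} (applied with $H_1=P\cap wPw^{-1}$ and $H_2=P$, using that $PwP/P$ is open in the compact $G/P$) produces an isomorphism $I^2\cong \cInd_M^P \sigma_2$, where $\sigma_2$ is the value of the inducing representation translated by $w$ and corrected by $\delta_P^{1/2}$. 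For the middle stratum, a matrix computation shows that $P\cap s_2Ps_2^{-1}$ equals the upper triangular subgroup $\left[\begin{smallmatrix} *&*&*&*\\ &*&&*\\ &&*&*\\ &&&* \end{smallmatrix}\right]$ indicated in the statement; functions supported on $Ps_2P\cup P$ modulo those vanishing on $Ps_2P$ form the compactly supported sections on the locally closed orbit $Ps_2P$, which again by Lemma~\ref{inducedreslemma2} is identified with $\cInd_{P\cap s_2Ps_2^{-1}}^{P}$ of the appropriate $s_2$-conjugate.

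The hard part is purely bookkeeping: after each Mackey identification, one has to compute the precise action on the value vector, meaning one must conjugate a generic element of $P\cap w^{-1}Pw$ by $w$ and apply $\pi\otimes\sigma\otimes\delta_P^{1/2}$. In particular, the exponents $|a^{-1}t|^{3/2}$ in $\sigma_1$ and $|c\det(A)^{-1}|^{3/2}$ in $\sigma_2$ arise as the ratio of $\delta_P^{1/2}$ before and after conjugation by $w$, not directly from $\delta_P^{1/2}$ itself; this is where one must be careful (and where Proposition~2.1 of \cite{PrTa2011} would need rechecking). Once these conjugations are carried out for the specific representatives $s_2$ and $s_2s_1s_2$, matching the stated formulas for $\sigma_1$ and $\sigma_2$ is a direct verification.
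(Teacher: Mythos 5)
Your route is the same as the paper's: the paper disposes of this lemma by citing the procedure of Sections 6.2 and 6.3 of \cite{C}, which is exactly the Bruhat/orbit filtration and Mackey-type identification you describe, and your orbit analysis is correct (three double cosets $P$, $Ps_2P$, $Ps_2s_1s_2P$; stabilizer $M$ for the open cell and the indicated upper-triangular group for the middle cell; $\delta_P(\mat{A}{*}{}{cA'})=|c^{-1}\det A|^{3}$ for the closed stratum).

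One correction to your final remark, since you single it out as the delicate point: the exponents in $\sigma_1$ and $\sigma_2$ are \emph{not} a ratio of $\delta_P^{1/2}$ before and after conjugation. The subquotient attached to $PwP$ is the unnormalized compact induction $\cInd_{P\cap w^{-1}Pw}^{P}$ of $h\mapsto\big(\delta_P^{1/2}\otimes(\sigma\otimes\pi)\big)(whw^{-1})$, i.e.\ one conjugates the full unnormalized inducing datum and evaluates; since $\cInd$ here is unnormalized, no modulus of the smaller group and no ratio enters. Concretely, for $h$ as in ii) one has $\delta_P^{1/2}(s_2hs_2^{-1})=|a^{-1}t|^{3/2}$ while $\delta_P^{1/2}(h)=|d^{-1}t|^{3/2}$, so the literal ratio would give $|d/a|^{3/2}$ (and $|c\det(A)^{-1}|^{3}$ in case iii)), which do not match the lemma; the prescription in your preceding sentence (conjugate by $w$ and apply $\pi\otimes\sigma\otimes\delta_P^{1/2}$) is the correct one and does reproduce $\sigma_0$, $\sigma_1$, $\sigma_2$ exactly. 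Also note that what makes the $I^i$ stable under the $P$-action (right translation) is that the $Y_i$ are unions of double cosets, hence right-$P$-stable; left stability alone is not the relevant condition.
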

\begin{proof}
This follows by going through the procedure of Sections 6.2 and 6.3 of \cite{C}. 
\end{proof}

\begin{lemma}\label{siegelinducedbesselwaldspurgerlemma}
 Let $\sigma$ be a character of $F^\times$, and let $(\pi,V)$ be an admissible representation of $\GL(2,F)$. We assume that $\pi$ admits a central character $\omega_\pi$. Let $I$ be the standard space of the Siegel induced representation $\pi\rtimes\sigma$. 
Let $\theta$ be the character of $N$ defined in \eqref{thetaSsetupeq}. Assume that $\theta$ is non-degenerate.  Let $L$ be the  quadratic extension associated to $S$ as in Sect. \ref{anotheralgebrasubsec}.
 \begin{enumerate}
  \item Assume that $L$ is a field. Then $I_{N,\theta}\cong V$ with the action of $T$ given by $\sigma\omega_\pi\pi'$. Here, $\pi'$ is the representation of $\GL(2,F)$ on $V$ given by $\pi'(g)=\pi(g')$. In particular, if $\pi$ is irreducible, then the action of $T$ is given by $\sigma\pi$.
  \item Assume that $L$ is not a field; we may arrange that $S=\left[\begin{smallmatrix} &1/2 \\ 1/2&\end{smallmatrix}\right]$. Then there is a filtration
   $$
    0\subset J_2\subset J_1=I_{N,\theta},
   $$
   with vector space isomorphisms:
   \begin{itemize}
    \item $J_1/J_2\cong V_{\left[\begin{smallmatrix} 1&*\\&1 \end{smallmatrix} \right],\psi} \oplus V_{\left[\begin{smallmatrix} 1&*\\&1 \end{smallmatrix} \right],\psi},$
    \item $J_2\cong V$.
   \end{itemize}
 The action of $T=T_S$ is given as follows,
 \begin{align*}
  {\rm diag}(a,b,a,b)(v_1 \oplus v_2) &=  \Big|\dfrac{a}{b}\Big|^{1/2}\sigma(ab)\omega_\pi(a)v_1
\oplus \Big|\dfrac{a}{b}\Big|^{-1/2}\sigma(ab)\omega_\pi(b)v_2, \\
  {\rm diag}(a,b,a,b)v&=\sigma(ab)\pi(\mat{a}{}{}{b})v,
 \end{align*}
 for  $a, b \in F^\times$, $v_1 \oplus v_2 \in J_1/J_2\cong V_{\left[\begin{smallmatrix} 1&*\\&1 \end{smallmatrix} \right],\psi} \oplus V_{\left[\begin{smallmatrix} 1&*\\&1 \end{smallmatrix} \right],\psi}$, and $v \in J_2$. In particular, if $\pi$ is one-dimensional, then $I_{N,\theta}\cong V$, with the action of $T$ given by ${\rm diag}(a,b,a,b)v=\sigma(ab)\pi(\mat{a}{}{}{b})v$.
 \end{enumerate}
\end{lemma}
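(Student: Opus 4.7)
The plan is to apply the right-exact twisted Jacquet functor $(\cdot)_{N,\theta}$ to the three-step $P$-filtration $0=I^3\subset I^2\subset I^1\subset I^0=I$ of Lemma~\ref{casselmanfiltrationlemma2} and to compute each of the successive Jacquet modules.  Since $N$ acts trivially on $I^0/I^1\cong\sigma_0$ while $\theta$ is non-trivial, the top piece $(I^0/I^1)_{N,\theta}$ vanishes immediately, so $I_{N,\theta}$ carries a two-step filtration $0\subset J_2\subset J_1=I_{N,\theta}$ with $J_2$ the image of $(I^2/I^3)_{N,\theta}$ and $J_1/J_2\cong(I^1/I^2)_{N,\theta}$; the whole proof reduces to computing these two pieces together with their $T$-actions.

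For the bottom layer, $P=MN$ with $M\cap N=1$ realizes $I^2/I^3=\cInd_M^P\sigma_2$ as $\mathcal{S}(N,V)$ under right translation.  Viewing $N\cong F^3$, a three-fold Fourier transform against $\theta$ via Lemma~\ref{basicFjacquetlemma} gives $(I^2/I^3)_{N,\theta}\cong V$.  Tracking the $T$-action through this isomorphism, using the formula for $\sigma_2$ at $\mat{t}{}{}{\det(t)t'}$ together with $\pi(\det(t)t')=\omega_\pi(\det(t))\pi(t')$, yields precisely the character $\sigma\omega_\pi\pi'$ of case~(i), which in the split case~(ii) specializes to $\mathrm{diag}(a,b,a,b)\mapsto\sigma(ab)\pi(\mat{a}{}{}{b})$.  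This fully handles case~(i) once the middle layer is shown to vanish, and produces $J_2$ in case~(ii).

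The essential step is the middle layer $(I^1/I^2)_{N,\theta}=(\cInd_H^P\sigma_1)_{N,\theta}$, and this is where the split/non-split dichotomy enters.  A direct inspection of the matrix form gives $H=B_M\cdot(H\cap N)$ with $B_M$ the upper Borel of $M$ and $H\cap N=\{n(0,y,z)\}$ the codimension-one subgroup of $N$.  The Bruhat decomposition $M=B_M\sqcup B_MwU_M^-$, with $w$ the non-trivial Weyl element of the $\GL(2)$-factor of $M$, gives an $HN$-stable open/closed stratification $P=HN\sqcup HwU_M^-N$; extension by zero produces a short exact sequence of $HN$-modules whose twisted Jacquet sequence is at least right-exact, so it suffices to compute the Jacquet modules of the two strata separately and check that the outer terms assemble to the claimed direct sum, which they do because the two summands carry distinct characters of $T$.

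The main obstacle is thus the stratum-by-stratum analysis.  On the closed stratum, Lemma~\ref{inducedreslemma2} identifies $\cInd_H^{HN}\sigma_1$ with $\mathcal{S}(F,V)$ (where $F$ parametrizes $N/(H\cap N)$ by the $x$-coordinate); writing out the right-translation action of $n(x',y',z')\in N$ together with the value of $\sigma_1$ on $H\cap N$, the $z$-direction contributes the scalar relation $f\equiv\psi(cz')f$, which kills the stratum whenever $c\ne 0$, and when $c=0$ the remaining $y$- and $x$-relations reduce the quotient to a single copy of $V_{\left[\begin{smallmatrix}1&*\\&1\end{smallmatrix}\right],\psi}$ by a further application of Lemma~\ref{basicFjacquetlemma}.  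A parallel computation on the open stratum, where conjugation by $w$ on $N$ swaps the $x$- and $z$-coordinates and hence interchanges $a$ and $c$ in $S$, shows the open stratum vanishes unless $a=0$ and otherwise produces a second copy of $V_{\left[\begin{smallmatrix}1&*\\&1\end{smallmatrix}\right],\psi}$.  In case~(i) one normalizes $S$ to be diagonal with $a,c\ne 0$, both strata die, and $(I^1/I^2)_{N,\theta}=0$; in case~(ii) with $S=\bigl[\begin{smallmatrix}&1/2\\1/2&\end{smallmatrix}\bigr]$ both diagonal entries vanish, both strata contribute, and we obtain $J_1/J_2\cong V_{\left[\begin{smallmatrix}1&*\\&1\end{smallmatrix}\right],\psi}\oplus V_{\left[\begin{smallmatrix}1&*\\&1\end{smallmatrix}\right],\psi}$.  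The asserted $T$-action on each summand is read off by combining the modular factor $|a/b|^{\pm1/2}$ arising from the change of variable in the Fourier integral with $\sigma_1(\mathrm{diag}(a,b,a,b))=\sigma(ab)|a/b|^{3/2}\omega_\pi(a)$, and the swap $a\leftrightarrow b$ between the two summands is the signature of the Weyl element exchanging the two Bruhat cells.
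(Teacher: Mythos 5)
Your treatment of the outer layers agrees with the paper: the top quotient dies because $N$ acts trivially, and the bottom layer $I^2/I^3=\cInd_M^P\sigma_2\cong\mathcal{S}(N,V)$ gives, after a Fourier transform via Lemma \ref{basicFjacquetlemma}, a copy of $V$ with $T$ acting by $\sigma\omega_\pi\pi'$. The gap is in the middle layer $(\cInd_H^P\sigma_1)_{N,\theta}$, which is the heart of the lemma. First, a notational point that matters: $B_Mw U_M^-=B_Mw$, so ``$M=B_M\sqcup B_MwU_M^-$'' is not the Bruhat decomposition; the complement of the identity coset in $B_M\backslash M$ is the big cell $B_MwU_M$, and its image in $H\backslash P$ is a \emph{one-parameter family} of $N$-orbits (indexed by $s\in F$ via $B_Mwu^+(s)$, each orbit $\cong N/(\text{a conjugate of }H\cap N)$), not a single orbit. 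Your argument for the open stratum --- ``conjugation by $w$ swaps the $x$- and $z$-coordinates, hence it vanishes unless $a=0$'' --- is only an analysis of the single orbit through $w$; it says nothing about the orbits with $s\neq0$. The twisted Jacquet module of the open stratum is governed by the isotropic lines of the binary form $S$: in the paper's coordinates ($b=0$, $a,c\neq0$) everything comes down to the locus $c+au^2=0$ inside a continuous family, and the actual work is the Fourier transform in the transverse $N$-direction, the passage to $\mathcal{S}(F^\times,V_{\left[\begin{smallmatrix}1&*\\&1\end{smallmatrix}\right],\psi^{-2a}})$, and the localization at the roots of $c+au^2$. Your stated criterion is in fact false in general: for the split matrix $S=\left[\begin{smallmatrix}1&\\&-1\end{smallmatrix}\right]$ one has $a\neq0$ and $c\neq0$, so by your criteria both strata would die, yet $(I^1/I^2)_{N,\theta}$ is nonzero --- this is precisely the normalization the paper uses in case (ii), where the two copies of $V_{\left[\begin{smallmatrix}1&*\\&1\end{smallmatrix}\right],\psi}$ arise from the two points $u=\pm1$ \emph{inside} the big cell, one from each $\SO$-orbit of isotropic vectors, not one from each Bruhat cell.

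Your choice of normalization (diagonal $S$ in case (i), antidiagonal $S$ in case (ii)) happens to place the two isotropic lines at the two coordinate points of $\mathbb P^1$ in case (ii), so the intended final answer is consistent; but even then the proposal does not prove it. With $S=\left[\begin{smallmatrix}&1/2\\1/2&\end{smallmatrix}\right]$ you must still show that the orbits at $s\neq0$ in the open stratum contribute nothing to the $\theta$-Jacquet module, that the contribution of the $s=0$ orbit splits off from this continuum with the claimed module $V_{\left[\begin{smallmatrix}1&*\\&1\end{smallmatrix}\right],\psi}$, and that the $T$-action (including the modulus factors $|a/b|^{\pm1/2}$ and the asymmetry $\omega_\pi(a)$ versus $\omega_\pi(b)$) comes out as stated; likewise in case (i) the vanishing of the open stratum requires anisotropy of $S$ (no $u$ with $c+au^2=0$), not merely $a\neq0$. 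This is exactly the computation occupying the bulk of the paper's proof, and the proposal offers no substitute for it, so as written the argument for both the vanishing in case (i) and the structure of $J_1/J_2$ in case (ii) is incomplete.
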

\begin{proof}
We may assume that $b=0$. Since $\det (S ) \neq 0$ we have $a \neq 0$ and $c \neq 0$. 
We use the notation of Lemma \ref{casselmanfiltrationlemma2}. 
We calculate the twisted Jacquet modules $(I^i/I^{i+1})_{N,\theta}$ for $i \in \{0,1,2\}$. Since the action of $N$ on $I^0/I^1$ is trivial and $\theta$ is non-trivial, we have $(I^0/I^1)_{N,\theta}=0$. 

We consider the quotient $I^1/I^2 = \cInd_{H}^P \sigma_1$, where
$$
 H=\begin{bmatrix} *&*&*&*\\  &*&&*\\ &&*&*\\ &&&* \end{bmatrix},
$$
and with $\sigma_1$ as in ii) of Lemma \ref{casselmanfiltrationlemma2}. We first show that for each function $f$ in the standard model of this representation, the function $f^\circ:\:F\to V$, given by
$$
 f^\circ(w)=f(\begin{bmatrix}1&\\&1&w\\&&1&\\&&&1\end{bmatrix}),
$$
has compact support. Let $K$ be a compact subset of $P$ such that the support of $f$ is contained in $HK$. If
$$
 \begin{bmatrix}1&\\&1&w\\&&1&\\&&&1\end{bmatrix}=\begin{bmatrix} t&*&y&* \\&a&&*\\&&d&*\\&&& ad t^{-1} \end{bmatrix}\begin{bmatrix}k_1&k_2&x_1&x_2\\k_3&k_4&x_3&x_4\\&&k_5&k_6\\&&k_7&k_8\end{bmatrix},
$$
with the rightmost matrix being in $K$, then calculations show that $k_3=k_7=0$ and $w=k_4^{-1}x_3$. Since $k_4^{-1}$ and $x_3$ vary in bounded subsets, $w$ is confined to a compact subset of $F$. This proves our assertion that $f^\circ$ has compact support.

Next, for each function $f$ in the standard model of $\cInd_{H}^P \sigma_1$, consider the function $\tilde f:\:F^2\to V$ given by
$$
 \tilde f(u,w)=f(\begin{bmatrix}1&\\u&1&w\\&&1&\\&&-u&1\end{bmatrix})
$$
for $u,w$ in $F$. Let $W$ be the space of all such functions $\tilde f$. Since the map $f\mapsto\tilde f$ is injective, we get a vector space isomorphism $\cInd_{H}^P \sigma_1\cong W$. In this new model, the action of $N$ is given by
\begin{equation}\label{siegelinducedbesselwaldspurgerlemma20}
 (\begin{bmatrix}1&&y&z\\&1&x&y\\&&1\\&&&1\end{bmatrix}\tilde f)(u,w)=\pi(\mat{1}{y+uz}{}{1})\tilde f(u,w+x+2uy+u^2z)
\end{equation}
for $x,y,z,u,w$ in $F$.

We claim that $W$ contains $\mathcal{S}(F^2,V)$. Since $W$ is translation invariant, it is enough to prove that $W$ contains the function
$$
 f_{N,v}(u,w)=
  \begin{cases}
   v & \text{if $u,w \in \p^N$,}\\
   0 & \text{if $u \notin \p^N$ or $w \notin \p^N$,}
  \end{cases}
$$
for any $v$ in $V$ and any positive integer $N$. Again by translation invariance, we may assume that $N$ is large enough so that
\begin{equation}\label{siegelinducedbesselwaldspurgerlemma21}
 \sigma_1(h)v=v\qquad\text{for }h\in H\cap\Gamma_N,
\end{equation}
where
\begin{equation}\label{GammaNdefeq}
\Gamma_{N}=
\begin{bmatrix} 
1+\p^N&\p^N&\p^N&\p^N \\
\p^N&1+\p^N&\p^N&\p^N\\
&&1+\p^N&\p^N\\&&\p^N&1+\p^N
\end{bmatrix} \cap P.
\end{equation}
Define $f:\:P\to V$ by
$$
 f(g)=
  \begin{cases}
   \sigma_1(h)v&\text{if $g=hk$ with }h\in H,\;k\in\Gamma_N,\\
   0&g\notin H\Gamma_N.
  \end{cases}
$$
Then, by \eqref{siegelinducedbesselwaldspurgerlemma21}, $f$ is a well-defined element of $\cInd_{H}^Q \sigma_1$. It is easy to verify that $\tilde f=f_{N,v}$. This proves our claim that $W$ contains $\mathcal{S}(F^2,V)$.

Now consider the map
\begin{equation}\label{siegelinducedbesselwaldspurgerlemma22}
 W\longrightarrow\mathcal{S}(F,V),\qquad \tilde f\longmapsto\Big(w\mapsto f(\begin{bmatrix}1\\&1&w\\&&1\\&&&1\end{bmatrix}s_1)\Big),
\end{equation}
where $s_1$ is defined in \eqref{s1s2defeq}. This map is well-defined, since the function on the right is $(s_1f)^\circ$, which we showed above has compact support. Similar considerations as above show that the map \eqref{siegelinducedbesselwaldspurgerlemma22} is surjective.

We claim that the kernel of \eqref{siegelinducedbesselwaldspurgerlemma22} is $\mathcal{S}(F^2,V)$. First suppose that $\tilde f$ lies in the kernel; we have to show that $\tilde f$ has compact support. Choose $N$ large enough so that $f$ is right invariant under $\Gamma_N$. Then, for $u$ not in $\p^{-N}$ and $w$ in $F$,
\begin{align*}
 \tilde f(u,w)&=f(\begin{bmatrix}1\\u&1&w\\&&1\\&&-u&1\end{bmatrix})\\
 &=f(\begin{bmatrix}1&\\&1&w\\&&1\\&&&1\end{bmatrix}\!\begin{bmatrix}1&u^{-1}\\&1\\&&1&-u^{-1}\\&&&1\end{bmatrix}\!\begin{bmatrix}-u^{-1}\\&u\\&&u^{-1}\\&&&\!\!\!-u\end{bmatrix}\!s_1\!\begin{bmatrix}1&u^{-1}\\&1\\&&1&-u^{-1}\\&&&1\end{bmatrix})\\
 &=f(\begin{bmatrix}1&u^{-1}\\&1\\&&1&-u^{-1}\\&&&1\end{bmatrix}\begin{bmatrix}1&&-u^{-1}w&u^{-2}w\\&1&w&-u^{-1}w\\&&1\\&&&1\end{bmatrix}\begin{bmatrix}-u^{-1}\\&u\\&&u^{-1}\\&&&\!\!\!-u\end{bmatrix}s_1)\\
 &=\pi(\mat{1}{-u^{-1}w}{}{1})f(\begin{bmatrix}1&&\\&1&w&\\&&1\\&&&1\end{bmatrix}\begin{bmatrix}-u^{-1}\\&u\\&&u^{-1}\\&&&\!\!\!-u\end{bmatrix}s_1)\\ &=\pi(\mat{1}{w^2u^{-1}}{}{1})f(\begin{bmatrix}1&-w\\&1\\&&1&w\\&&&1\end{bmatrix}\begin{bmatrix}1\\&-u^{-1}\\&&-u\\&&&1\end{bmatrix}s_2)\\
 &=\pi(\mat{1}{-u^{-1}w}{}{1})f(\begin{bmatrix}-u^{-1}\\&u\\&&u^{-1}\\&&&\!\!\!-u\end{bmatrix}\begin{bmatrix}1&&\\&1&u^{-2}w&\\&&1\\&&&1\end{bmatrix}s_1)\\
 &=|u|^{-3}\pi(\mat{-u^{-1}}{-u^{-2}w}{}{u^{-1}})f(\begin{bmatrix}1&&\\&1&u^{-2}w&\\&&1\\&&&1\end{bmatrix}s_1).
\end{align*}
This last expression is zero by assumption. For fixed $u$ in $\p^{-N}$, the function $\tilde f(u,\cdot)$ has compact support; this follows because each $f^\circ$ has compact support. Combining these facts shows that $\tilde f$ has compact support. Conversely, assume $\tilde f$ is in $\mathcal{S}(F^2,V)$. Then we can find a large enough $N$ such that if $u$ has valuation $-N$, the function $\tilde f(u,\cdot)$ is zero. Looking at the above calculation, we see that, for fixed such $u$,
$$
 f(\begin{bmatrix}1&&\\&1&u^{-2}w&\\&&1\\&&&1\end{bmatrix}s_1)=0
$$
for all $w$ in $F$. This shows that $\tilde f$ is in the kernel of the map \eqref{siegelinducedbesselwaldspurgerlemma22}, completing the proof of our claim about this kernel. We now have an exact sequence
\begin{equation}\label{siegelinducedbesselwaldspurgerlemma23}
 0\longrightarrow\mathcal{S}(F^2,V)\longrightarrow W\longrightarrow\mathcal{S}(F,V)\longrightarrow0.
\end{equation}
Note that the space $\mathcal{S}(F^2,V)$ is invariant under the action \eqref{siegelinducedbesselwaldspurgerlemma20} of $N$. A calculation shows that the action of $N$ on $\mathcal{S}(F,V)$ is given by
\begin{equation}\label{siegelinducedbesselwaldspurgerlemma24}
 (\begin{bmatrix}1&&y&z\\&1&x&y\\&&1\\&&&1\end{bmatrix}f)(w)=\pi(\mat{1}{y}{}{1})f(w+z)
\end{equation}
for $x,y,z,w$ in $F$ and $f$ in $\mathcal{S}(F,V)$. Since the action of $x$ is trivial and $a\neq0$, it follows that $\mathcal{S}(F,V)_{N,\theta}=0$. Hence, by \eqref{siegelinducedbesselwaldspurgerlemma23}, we have $W_{N,\theta}\cong\mathcal{S}(F^2,V)_{N,\theta}$.

We will compute the Jacquet module $\mathcal{S}(F^2,V)_{N,\theta}$ in stages. The action of $N$ on $\mathcal{S}(F^2,V)$ is given by \eqref{siegelinducedbesselwaldspurgerlemma20}. By ii) of Lemma \ref{basicFjacquetlemma}, the map $\tilde f\mapsto f'$, where $f':\:F\to V$ is given by
$$
 f'(u)=\int\limits_F\psi^a(-u)\tilde f(u,w)\,dw,
$$
defines a vector space isomorphism
$$
 W_{\left[ \begin{smallmatrix} 1&&& \\ &1&*& \\ &&1& \\ &&&1 \end{smallmatrix} \right], \psi^a} \stackrel{\sim}{\longrightarrow} \mathcal{S}(F,V). 
$$
The transfer of the action of the remaining group $\left[\begin{smallmatrix} 1&&*&* \\ &1&&* \\ &&1& \\ &&&1 \end{smallmatrix} \right]$ to
$\mathcal{S}(F,V)$ is given by 
$$
(\begin{bmatrix} 1&&y&z \\ &1&& y \\ &&1& \\ &&&1 \end{bmatrix} f)(u) = \psi (a (2uy+u^2 z)) \pi (\mat{1}{y+uz}{}{1}) f(u)
$$
for $u,y,z \in F$ and $f \in \mathcal{S}(F,V)$.
The subspace $\mathcal{S}(F^\times,V)$ of elements $f$ of $\mathcal{S}(F,V)$ such that $f(0)=0$ is preserved under this action, so that we have an exact sequence 
\begin{equation}
\label{siegelinducedbesselwaldspurgerlemmaeqq2}
0 \longrightarrow \mathcal{S}(F^\times,V) \longrightarrow \mathcal{S}(F,V) \longrightarrow \mathcal{S}(F,V)/ \mathcal{S}(F^\times,V) \longrightarrow 0
\end{equation}
of representations of the group $\left[ \begin{smallmatrix} 1&&*&* \\ &1&&* \\ &&1& \\ &&&1 \end{smallmatrix} \right]$. There is an isomorphism
$$
\mathcal{S}(F,V)/ \mathcal{S}(F^\times,V) \stackrel{\sim}{\longrightarrow} V
$$
of vector spaces that sends $f$ to $f(0)$. The transfer of the action of the group $\left[ \begin{smallmatrix} 1&&*&* \\ &1&&* \\ &&1& \\ &&&1 \end{smallmatrix} \right]$
to $V$ is given by
\begin{equation}
\label{siegelinducedbesselwaldspurgerlemmaeqq1}
\begin{bmatrix} 1&&y&z \\ &1&& y \\ &&1& \\ &&&1 \end{bmatrix} v=  \pi (\mat{1}{y}{}{1}) v
\end{equation}
for $y, z \in F$ and $v \in V$.
The non-vanishing of $c$ and  \eqref{siegelinducedbesselwaldspurgerlemmaeqq1} imply that 
$$
(\mathcal{S}(F,V)/ \mathcal{S}(F^\times,V))_{\left[ \begin{smallmatrix} 1&&&* \\ &1&& \\ &&1& \\ &&&1 \end{smallmatrix} \right],\psi^c} =0.
$$
Therefore, 
$$
\big( \mathcal{S}(F,V)/ \mathcal{S}(F^\times,V) \big)_{\left[ \begin{smallmatrix} 1&&*&* \\ &1&&* \\ &&1& \\ &&&1 \end{smallmatrix} \right], \theta} =0. 
$$
Next, we define a vector space isomorphism of $\mathcal{S}(F^\times ,V)$
with itself and then transfer the action. For $f$ in $\mathcal{S}(F^\times ,V)$, set
$$
\tilde f(u) = \pi (\mat{u}{}{}{1})f(u)
$$
for $u$ in $F^\times$. The map defined by $f \mapsto \tilde f$ is an automorphism of vector spaces. The transfer of the action of 
$\left[ \begin{smallmatrix} 1&&*&* \\ &1&&* \\ &&1& \\ &&&1 \end{smallmatrix} \right]$ is given by
$$
(\begin{bmatrix} 1&&y&z \\ &1&&y \\ &&1& \\ &&&1 \end{bmatrix} f )(u)
=\psi(a (2uy+u^2z)) \pi (\mat{1}{uy+u^2 z}{}{1}) f(u)
$$
for $f \in \mathcal{S}(F^\times, V)$, $y,z \in F$, and $u \in F^\times$. 
Now define a linear map
$$
 p:\:\mathcal{S}(F^\times, V) \longrightarrow \mathcal{S}(F^\times, V_{\left[\begin{smallmatrix} 1&*\\&1 \end{smallmatrix} \right],\psi^{-2a}})
$$
by composing the elements of $\mathcal{S}(F^\times, V)$ with the natural projection from $V$ to 
$V_{\left[\begin{smallmatrix} 1&*\\&1 \end{smallmatrix} \right],\psi^{-2a}}=V/V(\left[\begin{smallmatrix} 1&*\\&1 \end{smallmatrix} \right],\psi^{-2a})$. 
The map $p$ is surjective. 
Let $f$ be in $\mathcal{S}(F^\times, V)$. Since $f$ has compact support and is locally constant, we see that $f$ is in the kernel of $p$ if and only if 
\begin{equation}
\label{siegelinducedbesselwaldspurgerlemmaeqq4}
\text{there exists $l>0$ such that } \int\limits_{\p^{-l}} \psi (2ay) \pi( \mat{1}{y}{}{1})f(u)\, dy =0 \qquad\text{for all }u \in F^\times. 
\end{equation}
Also, $f$ is in $\mathcal{S}(F^\times, V)(\left[ \begin{smallmatrix} 1&&*& \\ &1&&* \\ &&1& \\ &&&1 \end{smallmatrix} \right])$ if and only if
\begin{equation}
\label{siegelinducedbesselwaldspurgerlemmaeqq5}
\text{there exists $k>0$ such that } \int\limits_{\p^{-k}} \psi (2auy) \pi( \mat{1}{uy}{}{1})f(u)\, dy =0 \qquad\text{for all }u \in F^\times. 
\end{equation}
Since $f$ is locally constant and compactly supported the conditions \eqref{siegelinducedbesselwaldspurgerlemmaeqq4} and \eqref{siegelinducedbesselwaldspurgerlemmaeqq5}
are equivalent. It follows that $p$ induces an isomorphism of vector spaces:
$$
\mathcal{S}(F^\times, V)_{\left[ \begin{smallmatrix} 1&&*& \\ &1&&* \\ &&1& \\ &&&1 \end{smallmatrix} \right],\psi^b} =
\mathcal{S}(F^\times, V)_{\left[ \begin{smallmatrix} 1&&*& \\ &1&&* \\ &&1& \\ &&&1 \end{smallmatrix} \right]}
\stackrel{\sim}{ \longrightarrow }\mathcal{S}(F^\times, V_{\left[\begin{smallmatrix} 1&*\\&1 \end{smallmatrix} \right],\psi^{-2a}}).
$$
Transferring the action of $\left[ \begin{smallmatrix} 1&&&* \\ &1&& \\ &&1& \\ &&&1 \end{smallmatrix} \right]$ on the first space to the last space results
in the formula
$$
\begin{bmatrix} 1&&&z \\ &1&& \\ &&1& \\ &&&1 \end{bmatrix} f (u) = \psi(a u^2 z) \pi (\mat{1}{u^2z}{}{1}) f(u)=\psi (-au^2 z) f(u), \qquad z \in F,\ u \in F^\times,
$$
for $f$ in $\mathcal{S}(F^\times, V_{\left[\begin{smallmatrix} 1&*\\&1 \end{smallmatrix} \right],\psi^{-2a}})$.

Assume that $L$ is a field; we will prove that 
\begin{equation}
\label{siegelinducedbesselwaldspurgerlemmaeqq6}
\mathcal{S}(F^\times, V_{\left[\begin{smallmatrix} 1&*\\&1 \end{smallmatrix} \right],\psi^{-2a}})_{\left[ \begin{smallmatrix} 1&&&* \\ &1&& \\ &&1& \\ &&&1 \end{smallmatrix} \right], \psi^c} =0.
\end{equation}
Let $f$ be in $\mathcal{S}(F^\times, V_{\left[\begin{smallmatrix} 1&*\\&1 \end{smallmatrix} \right],\psi^{-2a}})$. Since the support of $f$ is compact,
and since there exists no $u$ in $F^\times$ such that $c+au^2 =0$ as $D=b^2/4-ac=-ac$ is not in $F^{\times2}$, there exists a positive integer $l$ such that 
\begin{equation}
\label{addnosuppeq}
\int\limits_{\p^{-l}} \psi(-(c+au^2) z)\,dz = 0
\end{equation}
for $u$ in the support of $f$. 
Hence, for $u$ in $F^\times$, 
\begin{equation}
\label{JLcondeq}
(\int\limits_{\p^{-l}} \psi(-cz) \begin{bmatrix} 1&&&z \\ &1&& \\ &&1& \\ &&&1 \end{bmatrix} f\, dz)(u)
= \big( \int\limits_{\p^{-l}} \psi(-(c+au^2) z)\,dz\big) f(u)=0. 
\end{equation}
This proves \eqref{siegelinducedbesselwaldspurgerlemmaeqq6}, and completes the argument that $(I^1/I^2)_{N,\theta}=0$ in the case $L$ is a field. 

Now assume that $L$ is not a field. We may further assume that $a=1$ and $c=-1$ while retaining $b=0$. The group $T=T_{\left[\begin{smallmatrix} a&b/2\\b/2&c \end{smallmatrix} \right]}=T_{\left[\begin{smallmatrix} 1&\\&-1 \end{smallmatrix} \right]}$ consists of the elements
\begin{equation}
\label{talteq}
t=\begin{bmatrix} x&y && \\ y&x && \\ && x&-y \\ && -y &x \end{bmatrix} 
\end{equation}
with $x,y \in F$ such that $x^2 \neq y^2$. 
Define
\begin{equation}
\label{lastisoeq}
\mathcal{S}(F^\times, V_{\left[\begin{smallmatrix} 1&*\\&1 \end{smallmatrix} \right],\psi^{-2a}}) \longrightarrow 
V_{\left[\begin{smallmatrix} 1&*\\&1 \end{smallmatrix} \right],\psi^{-2a}} \oplus V_{\left[\begin{smallmatrix} 1&*\\&1 \end{smallmatrix} \right],\psi^{-2a}}
\end{equation}
by $f \mapsto f(1) \oplus f(-1)$. We assert that the kernel of this linear map is 
$$
\mathcal{S}(F^\times, V_{\left[\begin{smallmatrix} 1&*\\&1 \end{smallmatrix} \right],\psi^{-2a}})(\left[ \begin{smallmatrix} 1&&&* \\ &1&& \\ &&1& \\ &&&1 \end{smallmatrix} \right], \psi^c).
$$
Evidently, this subspace is contained in the kernel. Conversely, let $f \in \mathcal{S}(F^\times, V_{\left[\begin{smallmatrix} 1&*\\&1 \end{smallmatrix} \right],\psi^{-2a}}) $ be such that $f(1)=f(-1)=0$. Then there exists a positive integer $l$ such that \eqref{addnosuppeq} holds for $u$ in the support of $f$, implying that \eqref{JLcondeq} holds. This proves our assertion. The map \eqref{lastisoeq} is clearly surjective, so that we obtain an isomorphism
$$
\mathcal{S}(F^\times, V_{\left[\begin{smallmatrix} 1&*\\&1 \end{smallmatrix} \right],\psi^{-2a}})_{\left[ \begin{smallmatrix} 1&&&* \\ &1&& \\ &&1& \\ &&&1 \end{smallmatrix} \right], \psi^c} \stackrel{\sim}{\longrightarrow} V_{\left[\begin{smallmatrix} 1&*\\&1 \end{smallmatrix} \right],\psi^{-2a}} \oplus V_{\left[\begin{smallmatrix} 1&*\\&1 \end{smallmatrix} \right],\psi^{-2a}}. 
$$
We now have an isomorphism $(I^1/I^2)_{N,\theta} \stackrel{\sim}{\longrightarrow} V_{\left[\begin{smallmatrix} 1&*\\&1 \end{smallmatrix} \right],\psi^{-2a}} \oplus V_{\left[\begin{smallmatrix} 1&*\\&1 \end{smallmatrix} \right],\psi^{-2a}}$. A calculation shows that the transfer of  the action of $T$ to $V_{\left[\begin{smallmatrix} 1&*\\&1 \end{smallmatrix} \right],\psi^{-2a}} \oplus V_{\left[\begin{smallmatrix} 1&*\\&1 \end{smallmatrix} \right],\psi^{-2a}}$ is given by
$$
t(v_1 \oplus v_2) =  \Big|\dfrac{x-y}{x+y}\Big|^{1/2}\sigma\big((x-y)(x+y)\big)\omega_\pi(x-y)v_1
\oplus \Big|\dfrac{x-y}{x+y}\Big|^{-1/2}\sigma\big((x-y)(x+y)\big)\omega_\pi(x+y)v_2
$$
for $t$ as in \eqref{talteq} and $v_1,v_2 \in V_{\left[\begin{smallmatrix} 1&*\\&1 \end{smallmatrix} \right],\psi^{-2a}}$. 
Finally, the result stated in ii) is written with respect to $S=\left[\begin{smallmatrix} &1/2\\1/2&\end{smallmatrix}\right]$. To change to this choice note that the map
$$
C:(I^1/I^2)_{N,\theta_{\left[\begin{smallmatrix} a&b/2\\b/2&c\end{smallmatrix}\right]}} \longrightarrow 
(I^1/I^2)_{N,\theta_{\left[\begin{smallmatrix} &1/2\\1/2&\end{smallmatrix}\right]}}
$$
defined by $v \mapsto \left[\begin{smallmatrix} g& \\ &g'\end{smallmatrix} \right] v$, where $g=\left[\begin{smallmatrix} -1&1\\1&1\end{smallmatrix}\right]$, is a well-defined isomorphism; recall that $a=1,b=0,c=-1$. Moreover,  $C(tv)=t'C(v)$ for $t$ as in \eqref{talteq} and 
$$
t'
=
\begin{bmatrix} x-y & \\ & x+y \\ && x-y \\ &&&x+y \end{bmatrix} \in T_{\left[\begin{smallmatrix} &1/2\\1/2 & \end{smallmatrix}\right]}. 
$$
It follows that the group $T_{\left[\begin{smallmatrix} &1/2\\1/2 & \end{smallmatrix}\right]}$ acts on the isomorphic vector spaces
$$
(I^1/I^2)_{N,\theta_{\left[\begin{smallmatrix} &1/2\\1/2&\end{smallmatrix}\right]}} 
\cong 
V_{\left[\begin{smallmatrix} 1&*\\&1 \end{smallmatrix} \right],\psi^{-2a}} 
\oplus 
V_{\left[\begin{smallmatrix} 1&*\\&1 \end{smallmatrix} \right],\psi^{-2a}}
\cong 
V_{\left[\begin{smallmatrix} 1&*\\&1 \end{smallmatrix} \right],\psi} 
\oplus 
V_{\left[\begin{smallmatrix} 1&*\\&1 \end{smallmatrix} \right],\psi}
$$
via the formula
in ii).

Next, we consider the quotient $I^2/I^3=\cInd_M^P(\sigma_2)$ from iii) of Lemma \ref{casselmanfiltrationlemma2}. By Lemma \ref{inducedreslemma2},  restriction of functions in the standard model of this representation to $N$ gives an $N$-isomorphism
$$
 \cInd_M^P(\sigma_2)\cong\mathcal{S}(N,V).
$$
An application of i) and ii) of Lemma \ref{basicFjacquetlemma} shows that
$
 \mathcal{S}(N,V)_{N,\theta}\cong V
$
via the map defined by 
$$
 f\longmapsto\int\limits_N\theta(n)^{-1}f(n)\,dn.
$$
Transferring the action of $T$ we find that $t\in T$ acts by $\sigma_2(t)$ on $V$. If $t=\mat{g}{}{}{\det(g)g'}$ as in \eqref{Tembeddingeq}, then
$$
 \sigma_2(t)=\sigma(\det(g))\omega_\pi(\det(g))\pi(g').
$$
This concludes the proof.
\end{proof}

In case of a one-dimensional representation of $M$, it follows from this lemma that
\begin{equation}\label{siegelinducedonedimjacqueteq}
 (\chi\triv_{\GL(2)}\rtimes\sigma)_{N,\theta}=\C_{(\sigma\chi)\circ \Norm_{L/F}}
\end{equation}
as $T$-modules. In case $L$ is a field and $\pi$ is irreducible, it follows from Lemma \ref{siegelinducedbesselwaldspurgerlemma} that
\begin{equation}\label{siegelinducedfieldeq}
 \Hom_T((\pi\rtimes\sigma)_{N,\theta},\C_\Lambda)=\Hom_T(\sigma\pi,\C_\Lambda).
\end{equation}
Hence, in view of \eqref{DTJacqueteq}, the space of $(\Lambda,\theta)$-Bessel functionals on $\pi\rtimes\sigma$ is isomorphic to the space of $(\Lambda,\theta)$-Waldspurger functionals on $\sigma\pi$.

\subsection{Klingen induced representations}\label{Klingendegsec}
Let $\pi$ be an admissible representation of $\GL(2,F)$, let $\chi$ be a character
of $F^\times$, and let $\chi\rtimes\pi$ be as defined in Sect.\ \ref{representationssec}; see \eqref{Qrepeq}. In this section we will calculate the twisted Jacquet modules $(\chi\rtimes\pi)_{N,\theta}$ for any non-degenerate character $\theta$ of $N$ as a module of $T$. In the split case our results make several corrections to Proposition 2.3 and Proposition 2.4 of \cite{PrTa2011}.

\begin{lemma}\label{casselmanfiltrationlemma1}
 Let $\chi$ be a character of $F^\times$ and $\pi$ an admissible representation of $\GL(2,F)$. Let $I$ be the space of the Klingen induced representation $\chi\rtimes\pi$. There is a filtration of $P$-modules
 $$
  I^2=0\subset I^1 \subset I^0=I.
 $$
 with the quotients given as follows.
 \begin{enumerate}
  \item $ I^0/I^1 = \cInd_B^P\sigma_0$, where
   $$
    \sigma_0(\begin{bmatrix}t&*&*&*\\&a&b&*\\&&d&*\\&&&adt^{-1}\end{bmatrix})=\chi(t)\,|t|^2\,|ad|^{-1}\,\pi(\mat{a}{b}{}{d})
   $$
   for $b$ in $F$ and $a,d,t$ in $F^\times$.
 \item $I^1/I^2 = \cInd_{ \left[ \begin{smallmatrix} *&*&&*\\  &*\\ &&*&*\\ &&&* \end{smallmatrix} \right]  }^P \sigma_1$, where
   $$
    \sigma_1 (\begin{bmatrix} t&*&&x \\&a\\&&d&*\\&&& ad t^{-1} \end{bmatrix}) = \chi(d)\,|a^{-1}d|\,\pi(\mat{t}{x}{}{ad t^{-1}})
   $$
   for $x$ in $F$ and $a,d,t$ in $F^\times$.
 \end{enumerate}
\end{lemma}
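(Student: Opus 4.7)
The plan is to apply the Bruhat-theoretic analysis of Sects.\ 6.2--6.3 of \cite{C}, exactly as in Lemma \ref{casselmanfiltrationlemma2}, to the restriction of $I=\chi\rtimes\pi$ to the Siegel parabolic $P$. First one checks that $Q\backslash\GSp(4,F)/P$ has exactly two double cosets. Equivalently, on the Weyl group side, $W_{M_Q}\backslash W/W_M$ consists of the two classes $W_{M_Q}\cdot 1\cdot W_M=\{1,s_1,s_2,s_2s_1\}$ and $W_{M_Q}\cdot s_1s_2\cdot W_M=\{s_1s_2,s_1s_2s_1,s_2s_1s_2,s_2s_1s_2s_1\}$. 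Since $Q\cap P=B$ and $\dim B=7$, the orbit $QP$ is closed of dimension $9$, and $Qs_1s_2P$ is its open dense complement of dimension $11$. The filtration is then $0\subset I^1\subset I^0=I$, where $I^1$ consists of those $f$ in the standard model of $I$ that vanish identically on the closed set $QP$.

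For the quotient $I^0/I^1$, restriction of sections to $P\subset QP$ identifies $I^0/I^1$ with the space of locally constant $f\colon P\to V_\pi$ satisfying $f(bp)=\delta_Q(b)^{1/2}(\chi\otimes\pi)(b)f(p)$ for $b\in B$; the factor $\delta_Q^{1/2}$ is the normalization built into $\chi\rtimes\pi$. Since $P/B$ is compact, this space is $\cInd_B^P\sigma_0$ with $\sigma_0=\delta_Q^{1/2}\cdot(\chi\otimes\pi)|_B$. A short computation using $\delta_Q(\mathrm{diag}(t,a,d,adt^{-1}))=|t^4/(ad)^2|$ together with the Klingen--Levi decomposition of a Borel element then yields the formula for $\sigma_0$ displayed in i).

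For the submodule $I^1$, set $w=s_1s_2$. The openness of $QwP$ and a standard extension-by-zero argument identify $I^1$ with $\cInd_H^P\sigma_1$, where $H=w^{-1}Qw\cap P$ and $\sigma_1(h)=\delta_Q(whw^{-1})^{1/2}(\chi\otimes\pi)(whw^{-1})$; concretely, $f\in I^1$ is sent to $\tilde f\colon P\to V_\pi$, $\tilde f(p)=f(wp)$, for which the induction transformation law on the left becomes the $H$-equivariance $\tilde f(hp)=\sigma_1(h)\tilde f(p)$. A direct matrix computation of $whw^{-1}$ for a generic upper-triangular element $h\in P$ shows, in one stroke, that $H$ has precisely the shape displayed in ii), and that for such $h$ with parameters $(t,a,d,x)$ as in the statement, the Klingen--Levi data of $whw^{-1}$ is $\bigl(d,\bigl[\begin{smallmatrix}t&x\\&adt^{-1}\end{smallmatrix}\bigr]\bigr)$ modulo $N_Q$, with $\delta_Q(whw^{-1})^{1/2}=|d/a|=|a^{-1}d|$; this gives the formula for $\sigma_1$.

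The only substantive point of care is the choice of open-cell representative: since $s_1\in W_{M_Q}\cdot 1\cdot W_M$ lies in the closed double coset, $Qs_1P=QP$ is still closed, and the natural single-simple-reflection guess is not a representative of the open cell at all; one must use the length-two element $w=s_1s_2$ in order to land $w^{-1}Qw\cap P$ in the explicit form of ii). Once this choice is fixed, tracking $\delta_Q$ through the conjugation by $w$ to recover exactly the factor $|a^{-1}d|$, and all remaining bookkeeping, is a formal matrix computation.
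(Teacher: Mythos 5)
Your proposal is correct and is precisely the argument the paper intends: its proof of this lemma is a one-line citation to the procedure of Sections 6.2--6.3 of \cite{C}, which you have carried out in detail (two $P$-orbits on $Q\backslash\GSp(4,F)$, the closed cell $QP$ with $Q\cap P=B$ giving the quotient $\cInd_B^P\sigma_0$ with $\sigma_0=\delta_Q^{1/2}(\chi\otimes\pi)|_B$, and the open cell with representative $w=s_1s_2$ giving $I^1\cong\cInd_{P\cap w^{-1}Qw}^P\sigma_1$). Your computations check out, including $\delta_Q^{1/2}(\mathrm{diag}(t,a,d,adt^{-1}))=|t|^2|ad|^{-1}$, the identification of $P\cap w^{-1}Qw$ as the stabilizer in $P$ of the line $Fe_3$ (the displayed group $H$), and the Klingen--Levi data of $whw^{-1}$ yielding $\chi(d)\,|a^{-1}d|\,\pi\!\left(\begin{smallmatrix}t&x\\&adt^{-1}\end{smallmatrix}\right)$.
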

\begin{proof}
This follows by going through the procedure of Sections 6.2 and 6.3 of \cite{C}. 
\end{proof}

\begin{lemma}\label{Klingendegjacquetlemma1}
 Let $\chi$ be a character of $F^\times$, and let $(\pi,V)$ be an admissible representation of $\GL(2,F)$. We assume that $\pi$ has a central character $\omega_\pi$. Let $I$ be the standard space of the Klingen induced representation $\chi\rtimes\pi$. Let $N$ be the unipotent radical of the Siegel parabolic subgroup, and let $\theta$ be the character of $N$ defined in (\ref{thetaSsetupeq}). We assume that the associated quadratic extension $L$ is a field. Then, as $T$-modules,
 $$
  I_{N,\theta}\cong\bigoplus_{\Lambda|_{F^\times}=\chi\omega_\pi}d\cdot\Lambda,\qquad\text{where }d=\dim\Hom_{ \left[ \begin{smallmatrix} 1&*\\  &1\end{smallmatrix} \right]  }(\pi,\psi).
 $$
 In particular, $I_{N,\theta}=0$ if $\pi$ is one-dimensional.
\end{lemma}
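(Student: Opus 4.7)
The plan is to apply the $P$-filtration $0 \subset I^1 \subset I^0 = I$ provided by Lemma \ref{casselmanfiltrationlemma1} and use the exactness of the twisted Jacquet functor $(\cdot)_{N,\theta}$ to obtain the short exact sequence
$$
 0 \longrightarrow (I^1)_{N,\theta} \longrightarrow I_{N,\theta} \longrightarrow (I^0/I^1)_{N,\theta} \longrightarrow 0,
$$
and then compute the two outer terms separately, using the field hypothesis on $L$ to control the resulting integrals.

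First I would show $(I^0/I^1)_{N,\theta} = 0$, so that $I_{N,\theta} \cong (I^1)_{N,\theta}$. Since $I^0/I^1 = \cInd_B^P\sigma_0$, a Mackey-type analysis over the (finitely many) $B$--$N$ double cosets in $P$ presents this restriction as a sum of explicit $V$-valued function spaces. Reducing the $(N,\theta)$-Jacquet module of each piece in stages via Lemma \ref{basicFjacquetlemma}, the final step produces integrals of the form $\int_{\p^{-l}} \psi(-(c + au^2)z)\,dz$ over $u \in F^\times$, exactly as in the proof of Lemma \ref{siegelinducedbesselwaldspurgerlemma}(i). Because $L$ is a field, $c + au^2$ has no zero in $F^\times$, so these integrals vanish for $l$ large and the whole Jacquet module disappears.

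The main computation is then $(I^1)_{N,\theta}$. Using Lemma \ref{inducedreslemma2} or a direct cross-section for $H \backslash P$, I would realize $I^1 = \cInd_H^P\sigma_1$ as a space of $V$-valued functions on a product of copies of $F$ and $F^\times$, write out the action of $N$ explicitly, and reduce the twisted Jacquet module in stages in the variables $x,y,z$ via Lemma \ref{basicFjacquetlemma}. The non-splitness of $L$ is again used to kill unwanted contributions at each stage. After these reductions I expect to arrive at a $T$-equivariant isomorphism of the form
$$
 (I^1)_{N,\theta} \;\cong\; \cInd_{F^\times}^{T}\Bigl((\chi\omega_\pi) \otimes V_{\left[\begin{smallmatrix}1&*\\&1\end{smallmatrix}\right],\psi}\Bigr),
$$
where $F^\times$ sits as the center of $T$. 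Since $L$ is a field, $T/F^\times \cong L^\times/F^\times$ is compact, and the Peter--Weyl theorem for smooth representations of this compact abelian group splits the above induced representation as
$$
 \bigoplus_{\Lambda|_{F^\times}=\chi\omega_\pi} \Lambda^{\oplus d}, \qquad d = \dim V_{\left[\begin{smallmatrix}1&*\\&1\end{smallmatrix}\right],\psi},
$$
proving the lemma. In particular, if $\pi$ is one-dimensional then $d=0$ and $I_{N,\theta}=0$.

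The main obstacle is the explicit Mackey/Jacquet computation for $(I^1)_{N,\theta}$. The subgroup $H$ is not a standard Langlands parabolic of $\GSp(4,F)$ and sits awkwardly inside $P$, so choosing a good cross-section in order to make the $N$-action manageable, and tracking the $T$-action through each successive reduction so that the final identification as a compact induction from $F^\times$ is transparent, will require careful bookkeeping in the style of the proof of Lemma \ref{siegelinducedbesselwaldspurgerlemma}.
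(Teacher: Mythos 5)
Your strategy is viable, and your claimed endpoint for the main term is in fact correct: one does have a $T$-isomorphism of $(I^1)_{N,\theta}$ with $\cInd_{F^\times}^{T}\bigl((\chi\omega_\pi)\otimes V_{\left[\begin{smallmatrix}1&*\\&1\end{smallmatrix}\right],\psi}\bigr)$ (up to replacing $\psi$ by $\psi^c$), and smooth Fourier analysis on the compact group $T/F^\times$ then gives the asserted decomposition; note that $V_{\left[\begin{smallmatrix}1&*\\&1\end{smallmatrix}\right],\psi}$ and $\Hom_{\left[\begin{smallmatrix}1&*\\&1\end{smallmatrix}\right]}(\pi,\psi)$ are dual, so the multiplicities agree whenever they are finite. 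This is, however, a genuinely different route from the paper's, which never chooses coordinates on $H\backslash P$: the paper observes that $H\cap TN=F^\times Z^J$ and that $P=H\cdot TN$ --- this is Lemma \ref{TFGL2lemma}, and it is precisely here that the hypothesis that $L$ is a field enters --- so that Lemma \ref{inducedreslemma2} gives $\cInd_{H}^{P}\sigma_1\big|_{TN}\cong\cInd_{F^\times Z^J}^{TN}(\sigma_1|_{F^\times Z^J})$; then compactness of $T/F^\times$ makes the twisted Jacquet module a direct sum of characters $\Lambda$, and Frobenius reciprocity computes each multiplicity as $\Hom_{\left[\begin{smallmatrix}1&*\\&1\end{smallmatrix}\right]}(\pi,\psi^c)$. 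Your version buys an explicit model of $I_{N,\theta}$; the paper's buys brevity and avoids the bookkeeping you rightly identify as the main obstacle.

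Two corrections about where non-splitness is actually used. First, the vanishing of $(I^0/I^1)_{N,\theta}$ does not rest on the field hypothesis: with $b=0$ (so $a\neq0\neq c$), the closed-cell (boundary) term dies because the $x$-coordinate subgroup of $N$ acts trivially on it while $\theta$ restricts there to $\psi^a\neq1$, and the open-cell term dies because after the $x$- and $y$-stages one is localized at $u=0$, where the $z$-subgroup acts trivially while $\theta$ restricts to $\psi^c\neq1$. No integral of the shape $\int\psi(-(c+au^2)z)\,dz$ is required, and if your reduction produces one, check the signs: in the paper's coordinates the relevant quadratic is $au^2-c$, whose solvability is governed by the class of $ac$, not by $\disc(S)$. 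Second, and more importantly, in the main term non-splitness does not ``kill unwanted contributions'' --- nothing is killed, which is exactly why every $\Lambda$ with $\Lambda|_{F^\times}=\chi\omega_\pi$ occurs. Its role is structural: $L$ being a field is equivalent to $T$ acting transitively on $\GL(2,F)/B_2$ with stabilizer $F^\times$ (Lemma \ref{TFGL2lemma}), and this transitivity is what identifies your space of functions on the flag variety of the Levi, valued in the fibrewise twisted Jacquet modules, with a compact induction from $F^\times$ to $T$; it is also what makes $T/F^\times$ compact. Without invoking this fact (or reproving it), the passage from your explicit model to $\cInd_{F^\times}^{T}(\cdots)$ is unjustified --- compare the split case, where the torus has three orbits on the flag variety and the answer (Lemma \ref{Klingendegjacquetlemma}) is correspondingly a three-step filtration rather than a single induced module.
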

\begin{proof}
We will first prove that $(I_0/I_1)_{N,\theta}=0$, where the notations are as in Lemma \ref{casselmanfiltrationlemma1}. We may assume that the element $b$ appearing in the matrix $S$ in \eqref{thetaSsetupeq} is zero. For $f$ in the standard space of the induced representation $I_0/I_1=\cInd_B^P\sigma_0$, let
$$
 \tilde f(u)=f(\begin{bmatrix}1\\u&1\\&&1\\&&-u&1\end{bmatrix}),\qquad u\in F.
$$
Let $W$ be the space of all functions $F\to\C$ of the form $\tilde f$, where $f$ runs through $\cInd_B^P\sigma_0$. Since the map $f\mapsto\tilde f$ is injective, we obtain a vector space isomorphism $\cInd_B^P\sigma_0\cong W$. The identity
$$
 \begin{bmatrix}1\\u&1\\&&1\\&&-u&1\end{bmatrix}=\begin{bmatrix}1&u^{-1}\\&1\\&&1&-u^{-1}\\&&&1\end{bmatrix}\begin{bmatrix}-u^{-1}\\&u\\&&u^{-1}\\&&&-u\end{bmatrix}s_1\begin{bmatrix}1&u^{-1}\\&1\\&&1&-u^{-1}\\&&&1\end{bmatrix},
$$
where $s_1$ is as in \eqref{s1s2defeq}, shows that $\tilde f$ satisfies
\begin{equation}\label{Klingendegjacquetlemma1eq10}
 \tilde f(u)=\chi(-u^{-1})|u|^{-2}\pi(\mat{u}{}{}{u^{-1}})f(s_1)\qquad\text{for }|u|\gg0.
\end{equation}
The space $W$ consists of locally constant functions. Furthermore, $W$ is invariant under translations, i.e., if $f'\in W$, then the function $u\mapsto f'(u+x)$ is also in $W$, for any $x$ in $F$.

We claim that $W$ contains $\mathcal{S}(F,V)$. Since $W$ is translation invariant, it is enough to prove that $W$ contains the function
$$
 f_{N,v}(u)=
  \begin{cases}
   v & \text{if $u \in \p^N$,}\\
   0 & \text{if $u \notin \p^N$,}
  \end{cases}
$$
for any $v$ in $V$ and any positive integer $N$. Again by translation invariance, we may assume that $N$ is large enough so that
\begin{equation}\label{Klingendegjacquetlemma1eq11}
 \sigma_0(b)v=v\qquad\text{for }b\in B\cap\Gamma_N,
\end{equation}
where $\Gamma_N$ is as in \eqref{GammaNdefeq}. Define $f:\:P\to V$ by
$$
 f(g)=
  \begin{cases}
   \sigma_0(b)v&\text{if $g=bk$ with }b\in B,\;k\in\Gamma_N,\\
   0&g\notin B\Gamma_N.
  \end{cases}
$$
Then, by \eqref{Klingendegjacquetlemma1eq11}, $f$ is a well-defined element of $\cInd_B^P\sigma_0$. It is easy to verify that $\tilde f=f_{N,v}$. This proves our claim that $W$ contains $\mathcal{S}(F,V)$.

We define a linear map $W\to V$ by sending $\tilde f$ to the vector $f(s_1)$ in \eqref{Klingendegjacquetlemma1eq10}. Then the kernel of this map is $\mathcal{S}(F,V)$. We claim that the map is surjective. To see this, let $v$ be in $V$. Again choose $N$ large enough so that \eqref{Klingendegjacquetlemma1eq11} holds. Then the function $f:\:P\to V$ given by
$$
 f(g)=
  \begin{cases}
   \sigma_0(b)v&\text{if $g=bs_1k$ with }b\in B,\;k\in\Gamma_N,\\
   0&g\notin Bs_1\Gamma_N.
  \end{cases}
$$
is a well-defined element of $\cInd_B^P\sigma_0$ with $f(s_1)=v$. This proves our claim that the map $W\to V$ is surjective. We therefore get an exact sequence
\begin{equation}\label{Klingendegjacquetlemma1eq12}
 0\longrightarrow\mathcal{S}(F,V)\longrightarrow W\longrightarrow V\longrightarrow0.
\end{equation}
The transfer of the action of $N$ to $W$ is given by
$$
 (\begin{bmatrix}1&&y&z\\&1&x&y\\&&1\\&&&1\end{bmatrix}\tilde f)(u)=\pi(\mat{1}{x+2uy+u^2z}{}{1})\tilde f(u)
$$
for all $x,y,z,u$ in $F$. Evidently, the subspace $\mathcal{S}(F,V)$ is invariant under $N$. Moreover, the action of $N$ on $V$ is given by
\begin{equation}\label{Klingendegjacquetlemma1eq13}
 \begin{bmatrix}1&&y&z\\&1&x&y\\&&1\\&&&1\end{bmatrix}v=\pi(\mat{1}{z}{}{1})v
\end{equation}
for all $x,y,z$ in $F$ and $v$ in $V$.

To prove that $(I_0/I_1)_{N,\theta}=0$, it suffices to show that $\mathcal{S}(F,V)_{N,\theta}=0$ and $V_{N,\theta}=0$. Since the element $a$ in the matrix $S$ is non-zero, it follows from \eqref{Klingendegjacquetlemma1eq13} that $V_{N,\theta}=0$.

To prove that $\mathcal{S}(F,V)_{N,\theta}=0$, we define a map $p$ from $\mathcal{S}(F,V)$ to
$$
 \mathcal{S}(F,V_{\left[\begin{smallmatrix}1&*\\&1\end{smallmatrix}\right],\psi^a})=\mathcal{S}(F,V/V(\mat{1}{*}{}{1},\psi^a))
$$
by sending $f$ to $f$ composed with the natural projection from $V$ to $V/V(\mat{1}{*}{}{1},\psi^a)$. This map is surjective. It is easy to see that $p$ induces an isomorphism
$$
 \mathcal{S}(F,V)_{\left[ \begin{smallmatrix}1\\ &1&* \\ &&1& \\ &&&1 \end{smallmatrix} \right],\psi^a }\cong\mathcal{S}(F,V_{\left[\begin{smallmatrix}1&*\\&1\end{smallmatrix}\right],\psi^a}).
$$
For the space on the right we have the action
$$
 (\begin{bmatrix}1&&y&z\\&1&&y\\&&1\\&&&1\end{bmatrix}f)(u)=\pi(\mat{1}{\:2uy+u^2z}{}{1})f(u),\qquad u\in F.
$$
By iii) of Lemma \ref{basicFjacquetlemma}, the map $f\mapsto f(0)$ induces an isomorphism
$$
 \mathcal{S}(F,V_{\left[\begin{smallmatrix}1&*\\&1\end{smallmatrix}\right],\psi^a})_{\left[\begin{smallmatrix}1&&*\\&1&&*\\&&1\\&&&1\end{smallmatrix}\right]}\cong V_{\left[\begin{smallmatrix}1&*\\&1\end{smallmatrix}\right],\psi^a}.
$$
For the space on the right we have the action
$$
 \begin{bmatrix}1&&&z\\&1\\&&1\\&&&1\end{bmatrix}v=v.
$$
Taking a twisted Jacquet module with respect to the character $\psi^c$ gives zero, since $c\neq0$. This concludes our proof that $(I_0/I_1)_{N,\theta}=0$.

Next let $\sigma_1$ be as in ii) of Lemma \ref{casselmanfiltrationlemma1}. Let
$$
 H_1=\begin{bmatrix} *&*&&*\\  &*\\ &&*&*\\ &&&* \end{bmatrix}
$$
and $H_2=TN$. By Lemma \ref{TFGL2lemma}, we have $P=H_1H_2$. To verify the hypotheses of Lemma \ref{inducedreslemma2}, let $K$ be a compact subset of $P$. Write $P=MN$ and let $p:P\to N$ be the resulting projection map. Since $p$ is continuous, the set $p(K)$ is compact. There exists a compact subset $K_T$ of $T$ such that $T=F^\times K_T$. Then $M\subset H_1K_T$ by Lemma \ref{TFGL2lemma}. Therefore $K\subset H_1K_2$ with $K_2=K_Tp(K)$.

By Lemma \ref{inducedreslemma2}, restriction of functions gives a $TN$ isomorphism
$$
 \cInd_{ \left[ \begin{smallmatrix} *&*&&*\\  &*\\ &&*&*\\ &&&* \end{smallmatrix} \right]  }^P \sigma_1\cong\cInd_{F^\times Z^J}^{TN}(\sigma_1\big|_{F^\times Z^J}).
$$
Note that $F^\times$ acts via the character $\chi\omega_\pi$ on this module. Since $T$ is compact modulo $F^\times$, the Jacquet module $(\cInd_{F^\times Z^J}^{TN}(\sigma_1\big|_{F^\times Z^J}))_{N,\theta}$ is a direct sum over characters of $T$.
Let $\Lambda$ be a character of $T$. It is easy to verify that
$$
 \Hom_{T}\big((\cInd_{F^\times Z^J}^{TN}(\sigma_1\big|_{F^\times Z^J}))_{N,\theta},\Lambda\big)=\Hom_{TN}\big(\cInd_{F^\times Z^J}^{TN}(\sigma_1\big|_{F^\times Z^J}),\Lambda\otimes\theta\big).
$$
By Frobenius reciprocity, the space on the right is isomorphic to
\begin{equation}\label{Klingendegjacquetlemma1eq1}
 \Hom_{F^\times Z^J}\big(\sigma_1\big|_{F^\times Z^J},(\Lambda\otimes\theta)\big|_{F^\times Z^J}\big).
\end{equation}
This space is zero unless the restriction of $\Lambda$ to $F^\times$ equals $\chi\omega_\pi$. Assume this is the case. Then \eqref{Klingendegjacquetlemma1eq1} is equal to
$$
 \Hom_{ \left[ \begin{smallmatrix} 1&*\\  &1\end{smallmatrix} \right]  }(\pi,\psi^c)\cong\Hom_{ \left[ \begin{smallmatrix} 1&*\\  &1\end{smallmatrix} \right]  }(\pi,\psi).
$$
This concludes the proof.
\end{proof}

\begin{lemma}\label{casselmanfiltrationlemma}
 Let $\chi$ be a character of $F^\times$ and $\pi$ an admissible representation of $\GL(2,F)$. Let $I$ be the space of the Klingen induced representation $\chi\rtimes\pi$. There is a filtration of $Q$-modules
 $$
  I^3=0\subset I^2 \subset I^1 \subset I^0=I.
 $$
 with the quotients given as follows.
 \begin{enumerate}
  \item $ I^0/I^1 = \sigma_0$, where
   $$
    \sigma_0 (\begin{bmatrix} t & * & *& * \\ & a&b & * \\ &c & d & * \\ & & & (ad -bc )t^{-1} \end{bmatrix}) = \chi (t)\,|t^2 (ad-bc)^{-1}|\,\pi(\mat{a}{b}{c}{d})
   $$
   for $\left[\begin{smallmatrix}a&b\\c&d\end{smallmatrix}\right]$ in $\GL(2,F)$ and $t$ in $F^\times$.
  \item $I^1/I^2 = \cInd_{ \left[ \begin{smallmatrix} *&&*&*\\  &*&*&*\\ &&*&\\ &&&* \end{smallmatrix} \right]  }^Q \sigma_1$, where
   $$
    \sigma_1 (\begin{bmatrix} t&&*&x \\&a&b&*\\&&d&\\&&& ad t^{-1} \end{bmatrix}) = \chi (a)\,|a d^{-1} |\,\pi(\mat{t}{x}{}{ad t^{-1}})
   $$
   for $b,x$ in $F$ and $a,d,t$ in $F^\times$.
  \item $I^2/I^3=I^2= \cInd_{\left[ \begin{smallmatrix} *&&& \\ &*&*& \\ &*&*& \\ &&& * \end{smallmatrix} \right] } ^Q\sigma_2$, where
   $$
    \sigma_2 (\begin{bmatrix}t&&&\\&a&b&\\&c&d&\\&&& (ad-bc) t^{-1}\end{bmatrix}) = \chi(t^{-1}(ad-bc))\,|t^{-2}(ad-bc)|\,\pi(\mat{a}{b}{c}{d})
   $$
   for $\left[\begin{smallmatrix}a&b\\c&d\end{smallmatrix}\right]$ in $\GL(2,F)$ and $t$ in $F^\times$.
 \end{enumerate}
\end{lemma}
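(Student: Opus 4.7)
The plan is to derive the filtration from the Bruhat-type decomposition of $\GSp(4,F)$ into $(Q,Q)$-double cosets, exactly as in Sections 6.2--6.3 of \cite{C}. The Weyl group of $\GSp(4)$ has order $8$ and is generated by $s_1,s_2$ from \eqref{s1s2defeq}, while the Weyl group of the Levi $M_Q$ is $\langle s_1\rangle$. Hence $W_{M_Q}\backslash W/W_{M_Q}$ has exactly three elements, which I would represent by
$$
 w_0=1,\qquad w_1=s_2,\qquad w_2=s_2s_1s_2.
$$
These yield three double cosets $Qw_iQ$, ordered by closure with $Qw_0Q=Q$ closed and $Qw_2Q$ open. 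The filtration is then defined by letting $I^i$ be the subspace of sections of $\chi\rtimes\pi$ supported on $\bigcup_{j\ge i} Qw_jQ$; this is automatically $Q$-stable.

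Second, I would identify each successive quotient via the standard Mackey-type lemma for induced representations: for each $i$,
$$
 I^i/I^{i+1}\;\cong\;\cInd_{Q\cap w_i^{-1}Qw_i}^{Q}\bigl(\,(\chi\otimes\pi)^{w_i}\otimes\delta\,\bigr),
$$
where $\delta$ is the appropriate ratio of modular characters. For $w_0=1$ no compact induction appears, and the quotient is just the inducing representation $\sigma_0$ shifted by $\delta_Q^{1/2}$, which is easily checked to agree with the formula in i). For $w_1=s_2$, a direct computation of $s_2^{-1}Qs_2\cap Q$ yields the subgroup $\left[\begin{smallmatrix}*&&*&*\\&*&*&*\\&&*&\\&&&*\end{smallmatrix}\right]$ appearing in ii), and conjugating a general element back by $s_2$ produces the formula for $\sigma_1$. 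For $w_2=s_2s_1s_2$, the intersection is the Levi-type subgroup in iii), and a similar conjugation gives $\sigma_2$.

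Third, for each of the three cases I would carry out the explicit bookkeeping of the modular characters $\delta_Q^{1/2}$ for $\GSp(4,F)$ and $\delta_{Q\cap w_i^{-1}Qw_i}^{1/2}$ for the intersection subgroup, and combine them with the values of $\chi\otimes\pi$ on the conjugated elements to recover the absolute-value factors $|t^2(ad-bc)^{-1}|$, $|ad^{-1}|$, and $|t^{-2}(ad-bc)|$ recorded in i), ii), iii) respectively.

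The only real obstacle is the conjugation and modular-character bookkeeping for the middle stratum $w_1=s_2$, where one must verify that the conjugates of elements of $Q\cap s_2^{-1}Qs_2$ by $s_2$ land in $Q$ in such a way that the $\GL(2)$ block $\left[\begin{smallmatrix}t&x\\&adt^{-1}\end{smallmatrix}\right]$ of $\sigma_1$ indeed appears, rather than some other $2\times 2$ block; once the intersection and conjugation are written out carefully the formulas in ii) follow immediately, and the remaining two cases are routine.
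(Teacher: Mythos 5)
Your overall strategy---stratify $\chi\rtimes\pi$ restricted to $Q$ by the $(Q,Q)$-double cosets and identify the graded pieces by the procedure of Sections 6.2--6.3 of \cite{C}---is exactly the paper's proof (which consists of precisely that citation). However, your Weyl-group bookkeeping is inverted, and as written the middle and open strata fail. The Levi $M_Q$ of the Klingen parabolic contains the $\GL(2)$-block in rows and columns $2,3$, so $W_{M_Q}=\langle s_2\rangle$, not $\langle s_1\rangle$ (the latter is the Weyl group of the Siegel Levi $M$). Consequently your representatives $1$, $s_2$, $s_2s_1s_2$ are not representatives of $W_{M_Q}\backslash W/W_{M_Q}$: the element $s_2$ lies in $M_Q\subset Q$, so $Qs_2Q=Q$ is again the closed cell and $Q\cap s_2^{-1}Qs_2=Q$, not the subgroup in ii); and $s_2s_1s_2$ lies in the middle double coset (note $s_2\cdot s_2s_1s_2\cdot s_2=s_1$), so your list never reaches the open cell, and $Q\cap(s_2s_1s_2)^{-1}Q(s_2s_1s_2)$ is the simultaneous stabilizer of the lines $Fe_1$ and $Fe_3$ (a $Q$-conjugate of the group in ii)), not the Levi-type subgroup of iii).

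The correct minimal representatives are $1$, $s_1$, and $s_1s_2s_1$ (equivalently the long Weyl element for the open cell). With these, $Q\cap s_1^{-1}Qs_1$ is the stabilizer of both $Fe_1$ and $Fe_2$, which is exactly the subgroup $\left[\begin{smallmatrix} *&&*&*\\ &*&*&*\\ &&*&\\ &&&* \end{smallmatrix}\right]$ of ii), and $Q\cap(s_1s_2s_1)^{-1}Q(s_1s_2s_1)$ is the stabilizer of both $Fe_1$ and $Fe_4$, which is the group $\left[\begin{smallmatrix} *&&&\\ &*&*&\\ &*&*&\\ &&&* \end{smallmatrix}\right]$ of iii). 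Once the representatives are corrected, the rest of your outline---the support filtration by the open unions of cells, the Mackey-type identification of the graded pieces as compact inductions of the conjugated inducing data, and the modulus-character bookkeeping yielding the factors $|t^2(ad-bc)^{-1}|$, $|ad^{-1}|$ and $|t^{-2}(ad-bc)|$---is precisely the cited Casselman procedure and goes through.
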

\begin{proof}
This follows by going through the procedure of Sections 6.2 and 6.3 of \cite{C}. 
\end{proof}

\begin{lemma}\label{Klingendegjacquetlemma}
 Let $\chi$ be a character of $F^\times$, and let $(\pi,V)$ be an admissible representation of $\GL(2,F)$. Let $I$ be the standard space of the Klingen induced representation $\chi\rtimes\pi$. Let $N$ be the unipotent radical of the Siegel parabolic subgroup, and let $\theta$ be the character of $N$ defined in (\ref{splitthetaeq}) (i.e., we consider the split case). Then there is a filtration
   $$
    0\subset J_3\subset J_2\subset J_1=I_{N,\theta},
   $$
   with the quotients given as follows.
   \begin{itemize}
    \item $J_1/J_2\cong V$
    \item $J_2/J_3\cong V_{ \left[ \begin{smallmatrix} 1\\  *&1\end{smallmatrix} \right]}$.
    \item $J_3\cong \mathcal{S}(F^\times,V_{\left[\begin{smallmatrix}1\\ *&1\end{smallmatrix}\right],\psi})$.
   \end{itemize}
 The action of the stabilizer of $\theta$ is given as follows,
 \begin{align*}
  {\rm diag}(a,b,a,b)v&=\chi(a)\pi(\mat{a}{}{}{b})v\qquad\text{for }v\in J_1/J_2,\\
  {\rm diag}(a,b,a,b)v&=\chi(b)\pi(\mat{a}{}{}{b})v\qquad\text{for }v\in J_2/J_3,\\
  ({\rm diag}(a,b,a,b)f)(u)&=\chi(b)\pi(\mat{a}{}{}{a})f(a^{-1}bu)\qquad\text{for }f\in J_3,\;u\in F^\times,
 \end{align*}
 for all $a$ and $b$ in $F^\times$.
 In particular, we have the following special cases.
 \begin{enumerate}
  \item Assume that $\pi=\sigma1_{\GL(2)}$. Then the twisted Jacquet module $I_{N,\theta}=I/\langle\theta(n)v-\rho(n)v:n\in N,\,v\in I\rangle$ is two-dimensional. More precisely, there is a filtration
 $$
  0\subset J_2\subset J_1=I_{N,\theta},
 $$
 where $J_2$ and $J_1/J_2$ are both one-dimensional, and the action of the stabilizer of $\theta$ is given as follows,
 \begin{align*}
  {\rm diag}(a,b,a,b)v&=\chi(a)\sigma(ab)v\qquad\text{for }v\in J_1/J_2,\\
  {\rm diag}(a,b,a,b)v&=\chi(b)\sigma(ab)v\qquad\text{for }v\in J_2,
 \end{align*}
 for all $a$ and $b$ in $F^\times$.
  \item Assume that $\pi$ is infinite-dimensional and irreducible. Then  there is a filtration
   $$
    0\subset J_3\subset J_2\subset J_1=I_{N,\theta},
   $$
   with the quotients given as follows.
   \begin{itemize}
    \item $J_1/J_2\cong V$
    \item $J_2/J_3\cong V_{ \left[ \begin{smallmatrix} 1\\  *&1\end{smallmatrix} \right]}$. Hence, $J_2/J_3$ is $2$-dimensional if $\pi$ is a principal series representation, $1$-dimensional if $\pi$ is a twist of the Steinberg representation, and $0$ if $\pi$ is supercuspidal.
    \item $J_3\cong \mathcal{S}(F^\times)$.
   \end{itemize}
 The action of the stabilizer of $\theta$ is given as follows,
 \begin{align*}
  {\rm diag}(a,b,a,b)v&=\chi(a)\pi(\mat{a}{}{}{b})v\qquad\text{for }v\in J_1/J_2,\\
  {\rm diag}(a,b,a,b)v&=\chi(b)\pi(\mat{a}{}{}{b})v\qquad\text{for }v\in J_2/J_3,\\
  ({\rm diag}(a,b,a,b)f)(u)&=\chi(b)\omega_\pi(a)f(a^{-1}bu)\qquad\text{for }f\in J_3,\;u\in F^\times,
 \end{align*}
 for all $a$ and $b$ in $F^\times$.
 \end{enumerate}

\end{lemma}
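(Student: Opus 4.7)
The plan is to work through the $Q$-module filtration of $I=\chi\rtimes\pi$ supplied by Lemma~\ref{casselmanfiltrationlemma}, $0\subset I^2\subset I^1\subset I^0=I$, and compute the twisted Jacquet module of each successive quotient, using that $(\cdot)_{N,\theta}$ is exact on smooth representations. Throughout, parametrize $n\in N$ by $(x,y,z)\in F^3$ corresponding to the entries at positions $(2,3)$, $(1,3)=(2,4)$, and $(1,4)$, so that $\theta(n)=\psi(y)$.

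First, observe that $(I^0/I^1)_{N,\theta}=0$: the middle $2\times 2$ block of $n(0,y,0)$ is the identity, so $\sigma_0(n(0,y,0))=\mathrm{id}$, while $\theta(n(0,y,0))=\psi(y)$ is a nontrivial character of $F$, and the coinvariant relation then forces the quotient to vanish. Hence $I_{N,\theta}\cong(I^1)_{N,\theta}$, with an exact sequence having subspace $(I^2)_{N,\theta}$ and quotient $(I^1/I^2)_{N,\theta}$.

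Next, compute $(I^2)_{N,\theta}$. Lemma~\ref{inducedreslemma2} applied to $Q=M_QN_Q$ identifies $I^2=\cInd_{M_Q}^Q\sigma_2$ with $\mathcal{S}(N_Q,V)$ as an $N_Q$-module. A direct matrix computation of the decomposition $n_Q n=m'\cdot n_Q'$ in $M_Q\cdot N_Q$ gives $m'={\rm diag}(1,\mat{1}{x}{}{1},1)\in M_Q$ and $n_Q'=n_Q(x_0,\,y_0+y+x_0x,\,z_0+z+x_0y)$, so the $N$-action reads
$$(n\tilde f)(x_0,y_0,z_0)=\pi(\mat{1}{x}{}{1})\,\tilde f(x_0,\,y_0+y+x_0x,\,z_0+z+x_0y).$$
We then take coinvariants in three stages via Lemma~\ref{basicFjacquetlemma}: part~i) integrates out $z_0$ to give $\mathcal{S}(F^2,V)$; part~ii) with character $\psi$ Fourier-transforms $y_0$ to give $\hat f\in\mathcal{S}(F,V)$; the residual $x$-action reduces to the pointwise eigenspace condition $\pi(\mat{1}{x}{}{1})\hat f(x_0)=\psi(-x_0x)\hat f(x_0)$. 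A support analysis paralleling that in the proof of Lemma~\ref{siegelinducedbesselwaldspurgerlemma} separates the open orbit $x_0\in F^\times$, which contributes $\mathcal{S}(F^\times,V_{\left[\begin{smallmatrix}1\\ *&1\end{smallmatrix}\right],\psi})$ after identifying the various character quotients $V_{\left[\begin{smallmatrix}1\\ *&1\end{smallmatrix}\right],\psi^{-x_0}}$ with $V_{\left[\begin{smallmatrix}1\\ *&1\end{smallmatrix}\right],\psi}$ via a $\GL(2)$-twist, from the closed point $x_0=0$, which contributes $V_{\left[\begin{smallmatrix}1\\ *&1\end{smallmatrix}\right]}$. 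These become $J_3$ and $J_2/J_3$ respectively, up to the Weyl-element bookkeeping relating upper and lower triangular $\GL(2)$-Jacquet modules inherited from the conventions of Lemma~\ref{casselmanfiltrationlemma}.

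For $(I^1/I^2)_{N,\theta}$, note that $N\subset H_1$ (all entries of $N$ lie at positions marked $*$ in the definition of $H_1$). We model $\cInd_{H_1}^Q\sigma_1$ by functions on a two-dimensional set of coset representatives for $H_1\backslash Q$, track the $N$-action via the $\sigma_1$-formula, and apply Lemma~\ref{basicFjacquetlemma} as before; the Jacquet quotient then collapses to a single copy of $V$ with the diagonal action induced from $\sigma_1$, giving $J_1/J_2\cong V$. Finally, the action of $T=\{{\rm diag}(a,b,a,b)\}$ on each piece is obtained by following the conjugation $t^{-1}(\cdot)t$ of the coordinates $(x_0,y_0,z_0)$ through every coinvariant step and combining with the $\sigma_i$-normalizations; on $J_3$, the identification of $V_{\left[\begin{smallmatrix}1\\ *&1\end{smallmatrix}\right],\psi^{-u}}$ with $V_{\left[\begin{smallmatrix}1\\ *&1\end{smallmatrix}\right],\psi}$ via the $\GL(2)$-action produces the central character factor $\omega_\pi(a)$ appearing in the translation formula. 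The main obstacle is the delicate support and Fourier-transform analysis in Step 2, needed to rigorously separate the open- and closed-orbit contributions while preserving $T$-equivariance and identifying the open-orbit piece with $\mathcal{S}(F^\times,V_{\left[\begin{smallmatrix}1\\ *&1\end{smallmatrix}\right],\psi})$; this mirrors the key calculation at the end of the proof of Lemma~\ref{siegelinducedbesselwaldspurgerlemma}.
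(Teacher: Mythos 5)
Your overall route is the paper's own: push the Klingen filtration of Lemma \ref{casselmanfiltrationlemma} through the exact functor $(\cdot)_{N,\theta}$, using Lemmas \ref{inducedreslemma2} and \ref{basicFjacquetlemma}. The top and bottom pieces are handled correctly: $(I^0/I^1)_{N,\theta}=0$ for the reason you give, your decomposition $n_Qn=m'n_Q'$ for $I^2\cong\mathcal{S}(N_Q,V)$ is right, and the staged coinvariants, the separation of the closed point $x_0=0$ from the open orbit, the $u$-dependent renormalization, and a final Weyl-element flip do reproduce $J_3$ and $J_2/J_3$ with the stated torus actions. (The paper avoids the flip by conjugating at the outset to $N_{\mathrm{alt}},\theta_{\mathrm{alt}}$ of \eqref{splitthetaconjNeq}, \eqref{splitthetaconjeq}, which produces the lower-triangular Jacquet modules directly; since the lemma pairs specific spaces with specific torus actions, your bookkeeping must actually be carried out, but it is routine.)

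The genuine gap is the middle quotient $I^1/I^2=\cInd_H^Q\sigma_1$. First, precisely because you work with $N$ itself, your observation $N\subset H$ works against you: all of $N$, including the direction on which $\theta=\psi(y)$ is nontrivial, lies inside the inducing subgroup, so the right-translation action of $N$ on a two-dimensional slice of representatives for $H\backslash Q$ is not by translations; writing $rn=hr'$ one finds M\"obius-type coordinate changes (e.g.\ $u\mapsto u/(1+ux)$, degenerating when $1+ux=0$) together with nonconstant factors $\sigma_1(h)$, so Lemma \ref{basicFjacquetlemma} does not apply ``as before''. The paper's passage to $N_{\mathrm{alt}}$, which is transverse to $H$ and acts on the big cell by honest translations, is exactly what makes that lemma usable for this piece. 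Second, even in good coordinates the restriction of $f\in\cInd_H^Q\sigma_1$ to a slice is not compactly supported: the model $W$ sits in an exact sequence $0\to\mathcal{S}(F^2,V)\to W\to\mathcal{S}(F,V)\to0$ whose quotient records the values along the closed cell (through $s_2$), and one must prove that this quotient has vanishing twisted Jacquet module before concluding $(I^1/I^2)_{N,\theta}\cong\mathcal{S}(F^2,V)_{N,\theta}\cong V$; this boundary analysis is the bulk of the paper's work on this quotient (and of the analogous steps in Lemmas \ref{siegelinducedbesselwaldspurgerlemma} and \ref{Klingendegjacquetlemma1}) and is absent from your sketch. Note also that the mechanism you suggest cannot be the right one: if the $N$-action really were pointwise through $\sigma_1$, then, since $\sigma_1$ is trivial in the $y$-direction while $\theta$ is not, the argument you used for $I^0/I^1$ would force the coinvariants to vanish; the copy of $V$ in $J_1/J_2$ in fact comes from the big cell via the translation action and Lemma \ref{basicFjacquetlemma}, not from the identity coset.
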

\begin{proof}
It will be easier to work with the conjugate subgroup $N_{\mathrm{alt}}$ and the character $\theta_{\mathrm{alt}}$ of $N_{\mathrm{alt}}$ defined in (\ref{splitthetaconjeq}). For the top quotient from i) of Lemma \ref{casselmanfiltrationlemma} we have
$$
 (I^0/I^1)_{N_{\mathrm{alt}},\theta_{\mathrm{alt}}}=0,
$$
since the subgroup
$$
 \begin{bmatrix}1&*\\&1\\&&1&*\\&&&1\end{bmatrix}
$$
acts trivially on $I^0/I^1$, but $\theta_{\mathrm{alt}}$ is not trivial on this subgroup. We consider the quotient $I^1/I^2 = \cInd_{H}^Q \sigma_1$, where
$$
 H=\begin{bmatrix} *&&*&*\\  &*&*&*\\ &&*&\\ &&&* \end{bmatrix},
$$
and with $\sigma_1$ as in ii) of Lemma \ref{casselmanfiltrationlemma}. We first show that for each function $f$ in the standard model of this representation, the function $f^\circ:\:F\to V$, given by
$$
 f^\circ(w)=f(\begin{bmatrix}1&-w\\&1\\&&1&w\\&&&1\end{bmatrix}),
$$
has compact support. Let $K$ be a compact subset of $Q$ such that the support of $f$ is contained in $HK$. If
$$
 \begin{bmatrix}1&-w\\&1\\&&1&w\\&&&1\end{bmatrix}=\begin{bmatrix} t&&*&x \\&a&b&*\\&&d&\\&&& ad t^{-1} \end{bmatrix}\begin{bmatrix}k_0&x_1&x_2&x_3\\&k_1&k_2&x_4\\&k_3&k_4&x_5\\&&&k_5\end{bmatrix},
$$
with the rightmost matrix being in $K$, then calculations show that $k_3=0$ and $w=k_4^{-1}x_5$. Since $k_4^{-1}$ and $x_5$ vary in bounded subsets, $w$ is confined to a compact subset of $F$. This proves our assertion that $f^\circ$ has compact support.

Next, for each function $f$ in the standard model of $\cInd_{H}^Q \sigma_1$, consider the function $\tilde f:\:F^2\to V$ given by
$$
 \tilde f(u,w)=f(\begin{bmatrix}1&-w\\&1\\&u&1&w\\&&&1\end{bmatrix}).
$$
Let $W$ be the space of all such functions $\tilde f$. Since the map $f\mapsto\tilde f$ is injective, we get a vector space isomorphism $\cInd_{H}^Q \sigma_1\cong W$. Evidently, in this new model, the action of $N_{\mathrm{alt}}$ is given by
\begin{equation}\label{Klingendegjacquetlemmaeq10}
 (\begin{bmatrix}1&-y&&z\\&1&&\\&x&1&y\\&&&1\end{bmatrix}\tilde f)(u,w)=\tilde f(u+x,w+y).
\end{equation}

We claim that $W$ contains $\mathcal{S}(F^2,V)$. Since $W$ is translation invariant, it is enough to prove that $W$ contains the function
$$
 f_{N,v}(u,w)=
  \begin{cases}
   v & \text{if $u,w \in \p^N$,}\\
   0 & \text{if $u \notin \p^N$ or $w \notin \p^N$,}
  \end{cases}
$$
for any $v$ in $V$ and any positive integer $N$. Again by translation invariance, we may assume that $N$ is large enough so that
\begin{equation}\label{Klingendegjacquetlemmaeq11}
 \sigma_1(h)v=v\qquad\text{for }h\in H\cap\Gamma_N,
\end{equation}
where
\begin{equation}\label{GammaNdefeq2}
 \Gamma_{N}=
  \begin{bmatrix} 
   1+\p^N&\p^N&\p^N&\p^N \\
   &1+\p^N&\p^N&\p^N\\
   &\p^N&1+\p^N&\p^N\\
   &&&1+\p^N
  \end{bmatrix} \cap Q.
\end{equation}
Define $f:\:Q\to V$ by
$$
 f(g)=
  \begin{cases}
   \sigma_1(h)v&\text{if $g=hk$ with }h\in H,\;k\in\Gamma_N,\\
   0&g\notin H\Gamma_N.
  \end{cases}
$$
Then, by \eqref{Klingendegjacquetlemmaeq11}, $f$ is a well-defined element of $\cInd_{H}^Q \sigma_1$. It is easy to verify that $\tilde f=f_{N,v}$. This proves our claim that $W$ contains $\mathcal{S}(F^2,V)$.

Now consider the map
\begin{equation}\label{Klingendegjacquetlemmaeq12}
 W\longrightarrow\mathcal{S}(F,V),\qquad \tilde f\longmapsto\Big(w\mapsto f(\begin{bmatrix}1&-w\\&1\\&&1&w\\&&&1\end{bmatrix}s_2)\Big),
\end{equation}
where $s_2$ is defined in \eqref{s1s2defeq}. This map is well-defined, since the function on the right is $(s_2f)^\circ$, which we showed above has compact support. Similar considerations as above show that the map \eqref{Klingendegjacquetlemmaeq12} is surjective.

We claim that the kernel of \eqref{Klingendegjacquetlemmaeq12} is $\mathcal{S}(F^2,V)$. First suppose that $\tilde f$ lies in the kernel; we have to show that $\tilde f$ has compact support. Choose $N$ large enough so that $f$ is right invariant under $\Gamma_N$. Then, for $u$ not in $\p^{-N}$ and $w$ in $F$,
\begin{align*}
 \tilde f(u,w)&=f(\begin{bmatrix}1&-w\\&1\\&u&1&w\\&&&1\end{bmatrix})\\
 &=f(\begin{bmatrix}1&-w\\&1\\&&1&w\\&&&1\end{bmatrix}\begin{bmatrix}1\\&1&u^{-1}\\&&1\\&&&1\end{bmatrix}\begin{bmatrix}1\\&-u^{-1}\\&&-u\\&&&1\end{bmatrix}s_2\begin{bmatrix}1\\&1&u^{-1}\\&&1\\&&&1\end{bmatrix})\\
 &=f(\begin{bmatrix}1&&-wu^{-1}&w^2u^{-1}\\&1&u^{-1}&-wu^{-1}\\&&1\\&&&1\end{bmatrix}\begin{bmatrix}1&-w\\&1\\&&1&w\\&&&1\end{bmatrix}\begin{bmatrix}1\\&-u^{-1}\\&&-u\\&&&1\end{bmatrix}s_2)\\
 &=\pi(\mat{1}{w^2u^{-1}}{}{1})f(\begin{bmatrix}1&-w\\&1\\&&1&w\\&&&1\end{bmatrix}\begin{bmatrix}1\\&-u^{-1}\\&&-u\\&&&1\end{bmatrix}s_2)\\
 &=\chi(-u^{-1})|u|^{-2}\pi(\mat{1}{w^2u^{-1}}{}{1})f(\begin{bmatrix}1&wu^{-1}\\&1\\&&1&-wu^{-1}\\&&&1\end{bmatrix}s_2).
\end{align*}
This last expression is zero by assumption. For fixed $u$ in $\p^{-N}$, the function $\tilde f(u,\cdot)$ has compact support; this follows because each $f^\circ$ has compact support. Combining these facts shows that $\tilde f$ has compact support. Conversely, assume $\tilde f$ is in $\mathcal{S}(F^2,V)$. Then we can find a large enough $N$ such that if $u$ has valuation $-N$, the function $\tilde f(u,\cdot)$ is zero. Looking at the above calculation, we see that, for fixed such $u$,
$$
 f(\begin{bmatrix}1&wu^{-1}\\&1\\&&1&-wu^{-1}\\&&&1\end{bmatrix}s_2)=0
$$
for all $w$ in $F$. This shows that $\tilde f$ is in the kernel of the map \eqref{Klingendegjacquetlemmaeq12}, completing the proof of our claim about this kernel. We now have an exact sequence
\begin{equation}\label{Klingendegjacquetlemmaeq13}
 0\longrightarrow\mathcal{S}(F^2,V)\longrightarrow W\longrightarrow\mathcal{S}(F,V)\longrightarrow0.
\end{equation}
Note that the space $\mathcal{S}(F^2,V)$ is invariant under the action \eqref{Klingendegjacquetlemmaeq10} of $N_{\mathrm{alt}}$. A calculation shows that the action of $N_{\mathrm{alt}}$ on $\mathcal{S}(F,V)$ is given by
\begin{equation}\label{Klingendegjacquetlemmaeq14}
 (\begin{bmatrix}1&-y&&z\\&1&&\\&x&1&y\\&&&1\end{bmatrix}f)(w)=\pi(\mat{1\;}{z-2wy-w^2x}{}{1})f(w)
\end{equation}
for $x,y,z,w$ in $F$ and $f$ in $\mathcal{S}(F,V)$.

We claim that $\mathcal{S}(F,V)_{N_{\mathrm{alt}},\theta_{\mathrm{alt}}}=0$. To prove this, we calculate this Jacquet module in stages. We define a map $p$ from $\mathcal{S}(F,V)$ to
$$
 \mathcal{S}(F,V_{\left[\begin{smallmatrix}1&*\\&1\end{smallmatrix}\right]})=\mathcal{S}(F,V/V(\mat{1}{*}{}{1}))
$$
by sending $f$ to $f$ composed with the natural projection from $V$ to $V/V(\mat{1}{*}{}{1})$. This map is surjective and has kernel $\mathcal{S}(F,V)(\left[\begin{smallmatrix}1&&&*\\&1\\&&1\\&&&1\end{smallmatrix}\right])$. Hence, we obtain an isomorphism
$$
 \mathcal{S}(F,V)_{\left[\begin{smallmatrix}1&&&*\\&1\\&&1\\&&&1\end{smallmatrix}\right]}\cong\mathcal{S}(F,V_{\left[\begin{smallmatrix}1&*\\&1\end{smallmatrix}\right]}).
$$
The action of the group $\left[\begin{smallmatrix}1&*&&\\&1\\&*&1&*\\&&&1\end{smallmatrix}\right]$ on these spaces is trivial. Since $\theta_{\mathrm{alt}}$ is not trivial on this group, this proves our claim that $\mathcal{S}(F,V)_{N_{\mathrm{alt}},\theta_{\mathrm{alt}}}=0$.

By \eqref{Klingendegjacquetlemmaeq13}, we now have $W_{N_{\mathrm{alt}},\theta_{\mathrm{alt}}}\cong\mathcal{S}(F^2,V)_{N_{\mathrm{alt}},\theta_{\mathrm{alt}}}$. The action of $N_{\mathrm{alt}}$ on $\mathcal{S}(F^2,V)$ is given by \eqref{Klingendegjacquetlemmaeq10}. Since $\mathcal{S}(F^2,V)=\mathcal{S}(F)\otimes\mathcal{S}(F)\otimes V$, Lemma \ref{basicFjacquetlemma} implies that the map
$$
 f\longmapsto\int\limits_F\int\limits_F f(u,w)\psi(-w)\,du\,dw
$$
induces an isomorphism $\mathcal{S}(F^2,V)_{N_{\mathrm{alt}},\theta_{\mathrm{alt}}}\cong V$. Moreover, 
a calculation shows that ${\rm diag}(a,a,b,b)$ acts on $\mathcal{S}(F^2,V)_{N_{\mathrm{alt}},\theta_{\mathrm{alt}}}\cong V$ by $\chi(a)\pi(\mat{a}{}{}{b})$.

Finally, we consider the bottom quotient $I^2/I^3=\cInd_{\left[ \begin{smallmatrix} *&&& \\ &*&*& \\ &*&*& \\ &&& * \end{smallmatrix} \right] } ^Q\sigma_2$ with $\sigma_2$ as in iii) of Lemma \ref{casselmanfiltrationlemma}. If we associate with a function $f$ in the standard model of this induced representation the function
$$
 \tilde f(u,v,w)=f(\begin{bmatrix}1&-v&u&w\\&1&&u\\&&1&v\\&&&1\end{bmatrix}),
$$
then, by Lemma \ref{inducedreslemma2}, we obtain an isomorphism $I^2/I^3\cong\mathcal{S}(F^3,V)$. A calculation shows that the action of $N_{\mathrm{alt}}$ on $\mathcal{S}(F^3,V)$ is given by
\begin{equation}\label{Klingendegjacquetlemmaeq15}
 (\begin{bmatrix}1&-y&&z\\&1&&\\&x&1&y\\&&&1\end{bmatrix}f)(u,v,w)=\pi(\mat{1}{}{x}{1})f(u,v+y-ux,w+z+uy)
\end{equation}
for $x,y,z,u,v,w$ in $F$ and $f$ in $\mathcal{S}(F^3,V)$. This time we take Jacquet modules step by step, starting with the $z$-variable. Lemma \ref{basicFjacquetlemma} shows that the map
$$
 f\longmapsto \bigg((u,v)\mapsto\int\limits_Ff(u,v,w)\,dw\bigg)
$$
induces an isomorphism $\mathcal{S}(F^3,V)_{\left[\begin{smallmatrix}1&&&*\\&1\\&&1\\&&&1\end{smallmatrix}\right]}\cong\mathcal{S}(F^2,V)$. On $\mathcal{S}(F^2,V)$ we have the action
$$
 (\begin{bmatrix}1&-y\\&1\\&x&1&y\\&&&1\end{bmatrix}f)(u,v)=\pi(\mat{1}{}{x}{1})f(u,v+y-ux)
$$
for $x,y,u,v$ in $F$ and $f$ in $\mathcal{S}(F^2,V)$.
Part ii) of Lemma \ref{basicFjacquetlemma} shows that the map
$$
 f\longmapsto \bigg(u\mapsto\int\limits_Ff(u,v)\psi(-v)\,dv\bigg)
$$
induces an isomorphism $\mathcal{S}(F^2,V)_{\left[\begin{smallmatrix}1&*\\&1\\&&1&*\\&&&1\end{smallmatrix}\right],\psi}\cong\mathcal{S}(F,V)$. A calculation shows that on $\mathcal{S}(F,V)$ we have the actions
\begin{equation}\label{Klingendegjacquetlemmaeq1}
 (\begin{bmatrix}1\\&1\\&x&1\\&&&1\end{bmatrix}f)(u)=\psi(-ux)\pi(\mat{1}{}{x}{1})f(u)
\end{equation}
for $x,u$ in $F$, and
\begin{equation}\label{Klingendegjacquetlemmaeq1b}
 (\begin{bmatrix}a\\&a\\&&b\\&&&b\end{bmatrix}f)(u)=\chi(b)\pi(\mat{a}{}{}{b})f(a^{-1}bu)
\end{equation}
for $u$ in $F$ and $a,b$ in $F^\times$. The subspace $\mathcal{S}(F^\times,V)$ consisting of functions that vanish at zero is invariant under these actions. We consider the exact sequence
$$
 0\longrightarrow\mathcal{S}(F^\times,V)\longrightarrow\mathcal{S}(F,V)\longrightarrow\mathcal{S}(F,V)/\mathcal{S}(F^\times,V)\longrightarrow 0.
$$
The quotient $\mathcal{S}(F,V)/\mathcal{S}(F^\times,V)$ is isomorphic to $V$ via the map $f\mapsto f(0)$. The actions of the above subgroups on $V$ are given by
\begin{equation}\label{Klingendegjacquetlemmaeq1c}
 \begin{bmatrix}1\\&1\\&x&1\\&&&1\end{bmatrix}v=\pi(\mat{1}{}{x}{1})v
\end{equation}
and
\begin{equation}\label{Klingendegjacquetlemmaeq1d}
 \begin{bmatrix}a\\&a\\&&b\\&&&b\end{bmatrix}v=\chi(b)\pi(\mat{a}{}{}{b})v.
\end{equation}
Taking Jacquet modules on the above sequence gives
$$
 0\longrightarrow\mathcal{S}(F^\times,V)_{ \left[ \begin{smallmatrix} 1\\  &1\\ &*&1\\ &&&1 \end{smallmatrix} \right]}\longrightarrow\mathcal{S}(F,V)_{ \left[ \begin{smallmatrix} 1\\  &1\\ &*&1\\ &&&1 \end{smallmatrix} \right]}\longrightarrow\big(\mathcal{S}(F,V)/\mathcal{S}(F^\times,V)\big)_{ \left[ \begin{smallmatrix} 1\\  &1\\ &*&1\\ &&&1 \end{smallmatrix} \right]}\longrightarrow 0.
$$
In view of \eqref{Klingendegjacquetlemmaeq1c}, the Jacquet module on the right is isomorphic to $V_{ \left[ \begin{smallmatrix} 1\\  *&1\end{smallmatrix} \right]}$. The action of the diagonal subgroup on $V_{ \left[ \begin{smallmatrix} 1\\  *&1\end{smallmatrix} \right]}$ is given by the same formula as in \eqref{Klingendegjacquetlemmaeq1d}.

We consider the map from $\mathcal{S}(F^\times,V)$ to itself given by
$$
 f\longmapsto\Big(u\mapsto\pi(\mat{1}{}{}{u})f(u)\Big).
$$
This map is an isomorphism of vector spaces. The actions \eqref{Klingendegjacquetlemmaeq1}  and \eqref{Klingendegjacquetlemmaeq1b} turn into
\begin{equation}\label{Klingendegjacquetlemmaeq2}
 (\begin{bmatrix}1\\&1\\&x&1\\&&&1\end{bmatrix}f)(u)=\psi(-ux)\pi(\mat{1}{}{ux}{1})f(u)
\end{equation}
and
\begin{equation}\label{Klingendegjacquetlemmaeq2b}
 (\begin{bmatrix}a\\&a\\&&b\\&&&b\end{bmatrix}f)(u)=\chi(b)\pi(\mat{a}{}{}{a})f(a^{-1}bu).
\end{equation}
We define a map $p$ from $\mathcal{S}(F^\times,V)$ to
$$
 \mathcal{S}(F^\times,V_{\left[\begin{smallmatrix}1\\ *&1\end{smallmatrix}\right],\psi})=\mathcal{S}(F^\times,V/V(\mat{1}{}{ *}{1},\psi))
$$
by sending $f$ to $f$ composed with the natural projection from $V$ to $V/V(\mat{1}{}{ *}{1},\psi)$. This map is surjective. The kernel of $p$ consists of all $f$ in $\mathcal{S}(F^\times,V)$ for which there exists a positive integer $l$ such that
\begin{equation}\label{Klingendegjacquetlemmaeq3}
 \int\limits_{\p^{-l}}\psi(-x)\pi(\mat{1}{}{x}{1})f(u)\,dx=0\qquad\text{for all }u\in F^\times.
\end{equation}
Let $W$ be the space of $f$ in $\mathcal{S}(F^\times,V)$ for which there exists a positive integer $k$ such that
\begin{equation}\label{Klingendegjacquetlemmaeq4}
 \int\limits_{\p^{-k}}\begin{bmatrix}1\\&1\\&x&1\\&&&1\end{bmatrix}f\,dx=0,
\end{equation}
so that $\mathcal{S}(F^\times,V)/W=\mathcal{S}(F^\times,V)_{ \left[ \begin{smallmatrix} 1\\  &1\\ &*&1\\ &&&1 \end{smallmatrix} \right]}$. Let $f$ be in $W$. The condition \eqref{Klingendegjacquetlemmaeq4} means that
\begin{equation}\label{Klingendegjacquetlemmaeq5}
 \int\limits_{\p^{-k}}\psi(-ux)\pi(\mat{1}{}{ux}{1})f(u)\,dx=0\qquad\text{for all }u\in F^\times.
\end{equation}
Since $f$ has compact support in $F^\times$, the conditions \eqref{Klingendegjacquetlemmaeq3} and \eqref{Klingendegjacquetlemmaeq5} are equivalent. It follows that
$$
 \mathcal{S}(F^\times,V)_{ \left[ \begin{smallmatrix} 1\\  &1\\ &*&1\\ &&&1 \end{smallmatrix} \right]}\cong\mathcal{S}(F^\times,V_{\left[\begin{smallmatrix}1\\ *&1\end{smallmatrix}\right],\psi}).
$$
The diagonal subgroup acts on $\mathcal{S}(F^\times,V_{\left[\begin{smallmatrix}1\\ *&1\end{smallmatrix}\right],\psi})$ by the same formula as in \eqref{Klingendegjacquetlemmaeq2b}.
\end{proof}

\section{The main results}\label{besseltablesec}
Having assembled all the required tools, we are now ready to prove the three main results of this paper mentioned in the introduction.
\subsection{Existence of Bessel functionals}
In this section we prove that every irreducible, admissible representation $(\pi,V)$ of $\GSp(4,F)$ which is not a twist of the trivial representation admits a Bessel functional. The proof uses the $P_3$-module $V_{Z^J}$ and the $G^J$-module $V_{Z^J,\psi}$. The first module is closely related to the theory of zeta integrals. The second module, on the other hand, is related to the theory of representations of the metaplectic group $\meta(2,F)$.

\begin{lemma}\label{Nthetaexistslemma}
 Let $(\pi,V)$ be a smooth representation of $N$. Then there exists a character $\theta$ of $N$ such that $V_{N,\theta}\neq0$.
\end{lemma}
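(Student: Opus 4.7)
The plan is to reduce the statement, via a cofiltered limit argument, to a nonvanishing claim on a countable torsion abelian quotient of $N$. We may assume $V\neq 0$ and pick a nonzero $v\in V$. By smoothness some open compact subgroup $N_0\subseteq N$ fixes $v$; because $N$ is abelian, $N_0$ is normal, and $\Gamma:=N/N_0$ is a countable torsion abelian group. Write $\Gamma$ as the directed union of its finite subgroups $\Gamma_\alpha$.

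For each $\alpha$ the group algebra $\C[\Gamma_\alpha]$ is semisimple, so the finite-dimensional space $W_\alpha:=\C[\Gamma_\alpha]\cdot v$ splits as $\bigoplus_{\chi\in X_\alpha}W_\alpha^\chi$ for some $X_\alpha\subseteq\widehat{\Gamma_\alpha}$, giving a decomposition $v=\sum_\chi v_\chi^\alpha$. Set
\[
Y_\alpha := \{\chi\in X_\alpha : v_\chi^\alpha\neq 0\}.
\]
Since $v\neq 0$, each $Y_\alpha$ is a nonempty finite subset of $\widehat{\Gamma_\alpha}$. Comparing the isotypic decompositions of $v$ inside $W_\alpha\subset W_\beta$ when $\Gamma_\alpha\subseteq\Gamma_\beta$ yields the identity $v_\chi^\alpha = \sum_{\chi'|_{\Gamma_\alpha}=\chi}v_{\chi'}^\beta$, from which the restriction map $\widehat{\Gamma_\beta}\to\widehat{\Gamma_\alpha}$ sends $Y_\beta$ onto $Y_\alpha$. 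Hence $(Y_\alpha)_\alpha$ is a cofiltered inverse system of nonempty finite sets with surjective transition maps, and its limit, viewed inside the profinite group $\widehat\Gamma$, is nonempty. Choose any $\theta$ in this limit and inflate it to a character of $N$ trivial on $N_0$.

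By construction, for every open compact subgroup $M$ of $N$ containing $N_0$, the $\theta|_{M/N_0}$-isotypic projection of $v$ in $W_{M/N_0}\subseteq V$ is nonzero, equivalently
\[
\int_M\theta^{-1}(m)\pi(m)v\,dm \neq 0.
\]
To conclude $V_{N,\theta}\neq 0$ it suffices to show $v\notin V(N,\theta)$. If one had a relation $v = \sum_i c_i(\pi(n_i)u_i-\theta(n_i)u_i)$ with $u_i\in V$ and $n_i\in N$, one enlarges $N_0$ to an open compact subgroup $M\supseteq N_0$ containing every $n_i$ (possible because $N$ is a union of open compact subgroups), and the additive change of variables $m\mapsto m+n_i$, which is legitimate since $M+n_i=M$, makes each summand of $\int_M\theta^{-1}(m)\pi(m)(\pi(n_i)u_i-\theta(n_i)u_i)\,dm$ cancel; this would force $\int_M\theta^{-1}(m)\pi(m)v\,dm=0$, contradicting the preceding display.

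The main obstacle is verifying that the restriction maps $Y_\beta\to Y_\alpha$ are surjective, which rests on the identity relating the $\Gamma_\alpha$- and $\Gamma_\beta$-isotypic components of a common vector in $W_\alpha\subset W_\beta$; once this is in hand, nonemptiness of the cofiltered inverse limit and the averaging step that closes the argument are entirely routine.
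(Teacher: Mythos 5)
Your proof is correct, and it necessarily takes a different route from the paper, because the paper gives no argument at all here: it simply quotes Lemma 1.6 of \cite{RoSc2011}. What you supply is a self-contained substitute: fix $v\neq 0$, pass to the quotient $\Gamma=N/N_0$ by an open compact subgroup fixing $v$ (a countable torsion abelian group, since $N\cong F^3$ and $F$ has characteristic zero), decompose $v$ into character-isotypic pieces under each finite subgroup $\Gamma_\alpha=M/N_0$, and extract a compatible system of characters by compactness of a cofiltered limit of nonempty finite sets; the closing step is the standard averaging criterion showing $v\notin V(N,\theta)$, and the character you build is automatically smooth because it is trivial on $N_0$. The one point worth spelling out in a write-up is why the transition maps $Y_\beta\to Y_\alpha$ are well defined and not merely surjective: in the identity $v_\chi^\alpha=\sum_{\chi'|_{\Gamma_\alpha}=\chi}v_{\chi'}^\beta$ the summands lie in distinct $\Gamma_\beta$-isotypic summands of $W_\beta$, so no cancellation can occur, and a single nonzero term forces $v_\chi^\alpha\neq 0$; this is exactly what makes restriction carry $Y_\beta$ into (and onto) $Y_\alpha$. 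Apart from that, only routine facts are used: semisimplicity of $\C[\Gamma_\alpha]$, the fact that every finite subgroup of $\Gamma$ is of the form $M/N_0$ with $M$ compact open containing $N_0$ (and that finitely many elements of $N$ together with $N_0$ lie in such an $M$), and the nonemptiness of an inverse limit of nonempty finite sets over a directed index set. The trivial caveat that the statement presupposes $V\neq 0$ is correctly noted. In short, your argument buys self-containedness at essentially no extra cost and makes explicit the compactness mechanism that the paper delegates to the cited lemma.
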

\begin{proof}
This follows immediately from Lemma 1.6 of \cite{RoSc2011}.
\end{proof}

Let $\meta(2,F)$ be the metaplectic group, defined as in Sect.\ 1 of \cite{RoSc2011}. Let $m$ be in $F^\times$. We will use the \emph{Weil representation} $\piw^m$ of $\meta(2,F)$ on $\mathcal{S}(F)$ associated to the quadratic form $q(x)=x^2$ and $\psi^m$. This is as defined on pp.~3-4 of \cite{Wald1980} and p.~223 of \cite{Wald1991}. The only explicit property of $\piw^m$ we will use is
\begin{align}
(\piw^m (\mat{1}{b}{}{1}, 1)f)(x)&=\psi(mbx^2)f(x), \label{upperweileq}
\end{align}
for $b$ in $F$ and $f$ in $\mathcal{S}(F)$. We define an action of $N_Q$, introduced in \eqref{NQdefeq}, on the Schwartz space $\mathcal{S}(F)$ by
\begin{equation}\label{piswmformula1}
 \pis^m(\begin{bmatrix}1&\lambda&\mu&\kappa\\&1&&\mu\\&&1&-\lambda\\&&&1\end{bmatrix}f)(x)=\psi^m(\kappa+(2x+\lambda)\mu)f(x+\lambda)
\end{equation}
for $f$ in $\mathcal{S}(F)$. This representation of $N_Q$ is called the \emph{Schr\"odinger representation}. 

Given a smooth, genuine representation $(\tau,W)$ of $\meta(2,F)$, we define a representation $\tau^J$ of $G^J$ on the space $W\otimes\mathcal{S}(F)$ by the formulas
\begin{align}
 \label{GJrepeq1} \tau^J(\begin{bmatrix}1\\&a&b\\&c&d\\&&&1\end{bmatrix})(v\otimes f)&=\tau(\mat{a}{b}{c}{d},1)v\otimes\piw^m(\mat{a}{b}{c}{d},1)f,\\
 \label{GJrepeq2} \tau^J(\begin{bmatrix}1&\lambda&\mu&\kappa\\&1&&\mu\\&&1&-\lambda\\&&&1\end{bmatrix})(v\otimes f)&=v\otimes\pis^m(\begin{bmatrix}1&\lambda&\mu&\kappa\\&1&&\mu\\&&1&-\lambda\\&&&1\end{bmatrix})f.
\end{align}
Computations show that $\tau^J$ is a smooth representation of $G^J$. Moreover, the map that sends $\tau$ to $\tau^J$ is a bijection between the set of equivalence classes of smooth, genuine representations of $\meta(2,F)$, and smooth representations of $G^J$ with central character $\psi^m$. The proof of this fact is based on the Stone-von Neumann Theorem; see Theorem 2.6.2 of \cite{BS1998}. Under this bijection, irreducible $\tau$ correspond to irreducible $\tau^J$.

\begin{lemma} \label{GJWhitlemma}
 Let $m$ be in $F^\times$. Let $(\tau^J,W^J)$ be a non-zero, irreducible, smooth representation of $G^J$ with central character $\psi^m$. Then $\dim W^J_{N,\theta_{a,0,m}} \leq1$ for all $a$ in $F^\times$ and $\dim W^J_{N,\theta_{a,0,m}} =1$ for some $a$ in $F^\times$. This dimension depends only on the class of $a$ in $F^\times/F^{\times2}$.
\end{lemma}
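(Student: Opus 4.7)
The plan is to reduce $W^J_{N,\theta_{a,0,m}}$ to the Whittaker Jacquet module of the underlying irreducible genuine representation $\tau$ of $\meta(2,F)$, and then to invoke the classical theory of Whittaker functionals for $\meta(2,F)$. Since $N$ is abelian, I write $N=N_1N_2$, where $N_1\subset N$ consists of those elements with $x=0$ (these lie in the Heisenberg radical $N_Q$, so their action is given by $\pis^m$ via (\ref{GJrepeq2})) and $N_2\subset N$ consists of those with $y=z=0$ (these lie in the embedded $\SL(2,F)$ block of $G^J$, so their action is given by (\ref{GJrepeq1})). Because $N_1$ and $N_2$ commute, the Jacquet module factors as $W^J_{N,\theta}=(W^J_{N_1,\theta|_{N_1}})_{N_2,\theta|_{N_2}}$.

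For the first stage, formula (\ref{piswmformula1}) with $\lambda=0$ shows that an element of $N_1$ with parameters $(y,z)$ acts on $v\otimes f$ by the scalar $\psi(mz+2mty)$ at the point $t\in F$. The restriction of $\theta_{a,0,m}$ to $N_1$ is $\psi(mz)$; the $z$-part coincides with the central character of $\tau^J$, leaving the relations $(1-\psi(2mty))f(t)=0$ for all $y\in F$. Since $m\neq 0$, the map $y\mapsto 2my$ is a bijection of $F$, and iii) of Lemma \ref{basicFjacquetlemma} yields an isomorphism
\[
W^J_{N_1,\theta|_{N_1}}\;\stackrel{\sim}{\longrightarrow}\;W,\qquad v\otimes f\longmapsto f(0)\,v.
\]

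For the second stage, formulas (\ref{GJrepeq1}) and (\ref{upperweileq}) show that the element $n_2(x_0)=\bigl[\begin{smallmatrix}1&&&\\&1&x_0&\\&&1&\\&&&1\end{smallmatrix}\bigr]$ acts on $v\otimes f$ by $\tau(\mat{1}{x_0}{}{1},1)v\otimes\piw^m(\mat{1}{x_0}{}{1},1)f$, and the second factor evaluated at $0$ equals $f(0)$. Hence on the quotient $W$ the element $n_2(x_0)$ acts by $\tau(\mat{1}{x_0}{}{1},1)$, while $\theta_{a,0,m}(n_2(x_0))=\psi(ax_0)$. Therefore
\[
W^J_{N,\theta_{a,0,m}}\;\cong\;W_{U,\psi^a},
\]
where $U\subset\meta(2,F)$ is the preimage of the upper-triangular unipotent of $\SL(2,F)$, the action is via $\tau$, and $\psi^a(\mat{1}{x_0}{}{1},1)=\psi(ax_0)$.

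The lemma now follows from the classical theory of Whittaker functionals for $\meta(2,F)$ (see Waldspurger \cite{Wald1980} or \cite{RoSc2011}): for a non-zero, irreducible, genuine, smooth representation $\tau$ of $\meta(2,F)$, the space $W_{U,\psi^a}$ is at most one-dimensional for every $a\in F^\times$, is non-zero for at least one $a$, and its dimension depends only on the class of $a$ in $F^\times/F^{\times2}$ (the class dependence coming from conjugation by preimages of $\mathrm{diag}(b,b^{-1})$, and non-vanishing for some $a$ alternatively from Lemma \ref{Nthetaexistslemma} via the identification above). The calculation itself is a bookkeeping exercise once the decomposition $N=N_1N_2$ is in place; the real input is the classical $\meta(2,F)$-Whittaker theorem, so the main obstacle is essentially to make sure we cite it in the form stated.
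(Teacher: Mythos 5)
Your argument is correct and is essentially the paper's own proof: the paper likewise passes to the genuine representation $\tau$ of $\meta(2,F)$ with $\tau^J\cong\tau\otimes\pisw^m$, carries out exactly this two-stage Jacquet-module computation (its ``easy calculation'' using \eqref{upperweileq}, \eqref{piswmformula1} and iii) of Lemma \ref{basicFjacquetlemma}) to obtain $W^J_{N,\theta_{a,0,m}}\cong W_{\left[\begin{smallmatrix}1&*\\&1\end{smallmatrix}\right],\psi^a}$, and then invokes the classical metaplectic Whittaker facts (Lemme 2, p.~226 of \cite{Wald1991}). One small caution: your parenthetical ``alternative'' existence argument via Lemma \ref{Nthetaexistslemma} is not sufficient on its own, since that lemma only yields some character $\theta_{a,b,m}$ with non-zero Jacquet module, which after conjugation inside $G^J$ could a priori still be degenerate (i.e.\ give $a=0$), so the existence of some $a\in F^\times$ should rest on the cited Waldspurger result, as in the paper.
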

\begin{proof}
By the above discussion, there exists an irreducible, genuine, admissible representation $\tau$ of $\meta(2,F)$ such that $\tau^J\cong \tau \otimes \pisw^m$.
Using \eqref{upperweileq}, \eqref{piswmformula1} and iii) of Lemma \ref{basicFjacquetlemma}, an easy calculation shows that
$$
 W^J_{\left[\begin{smallmatrix}1&&*&*\\&1&*&*\\&&1\\&&&1\end{smallmatrix}\right],\theta_{a,0,m}}\cong W_{\left[\begin{smallmatrix}1&*\\&1\end{smallmatrix}\right],\psi^a}.
$$
By Lemme 2 on p.~226 of \cite{Wald1991}, the space on the right is at most one-dimensional, and is one-dimensional for some $a$ in $F^\times$. Moreover, the dimension depends only on the class of $a$ in $F^\times/F^{\times2}$.
\end{proof}

\begin{proposition}\label{Nthetaprop}
 Let $(\pi,V)$ be an irreducible, admissible representation of $\GSp(4,F)$. Then the following statements are equivalent.
 \begin{enumerate}
  \item $\pi$ is not a twist of the trivial representation.
  \item There exists a non-trivial character $\theta$ of $N$ such that $V_{N,\theta}\neq0$.
  \item There exists a non-degenerate character $\theta$ of $N$ such that $V_{N,\theta}\neq0$.  
 \end{enumerate}
\end{proposition}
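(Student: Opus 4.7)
The plan proceeds by the cycle $(\mathrm{iii})\Rightarrow(\mathrm{ii})\Rightarrow(\mathrm{i})\Rightarrow(\mathrm{iii})$. The first implication is immediate since a non-degenerate character is in particular non-trivial. For $(\mathrm{ii})\Rightarrow(\mathrm{i})$, if $\pi=\sigma\circ\lambda$ is a twist of the trivial representation, then the unipotent radical $N$ is contained in $\ker(\lambda)$, so $N$ acts trivially on $V$ and $V_{N,\theta}=0$ for every non-trivial character $\theta$ of $N$.

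The content of the proposition is the implication $(\mathrm{i})\Rightarrow(\mathrm{iii})$. The first task will be to show that $Z^J$ does not act trivially on $V$. This is a group-theoretic assertion: the normal closure of $Z^J$ in $\GSp(4,F)$ equals $\Symp(4,F)$, since $Z^J$ is a long-root subgroup and its Weyl conjugates together with their commutators (via the Chevalley commutator relations) exhaust the long- and short-root subgroups that generate $\Symp(4,F)$. If $Z^J$ acted trivially on $V$, so would $\Symp(4,F)$, forcing $\pi$ to factor through $\GSp(4,F)/\Symp(4,F)\cong F^\times$ via $\lambda$; this would make $\pi$ one-dimensional, contradicting the hypothesis.

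Since $Z^J\cong F$ acts non-trivially on the smooth module $V$, I would then invoke Fourier duality for smooth $F$-modules---smooth $F$-modules are equivalent to sheaves of vector spaces on the Pontryagin dual $\hat F$ with stalks $V_{F,\psi^m}$, and a sheaf supported at $0\in\hat F$ corresponds to a module on which $F$ acts trivially---to conclude that $V_{Z^J,\psi^m}\neq 0$ for some $m\in F^\times$. This quotient is a non-zero smooth representation of $G^J$ with central character $\psi^m$, so by the Stone--von Neumann correspondence recalled just before Lemma \ref{GJWhitlemma} there is a unique non-zero genuine smooth representation $(\tau,W)$ of $\meta(2,F)$ with $V_{Z^J,\psi^m}\cong\tau^J$.

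Exactly the calculation carried out in the proof of Lemma \ref{GJWhitlemma} (using \eqref{upperweileq}, \eqref{piswmformula1}, and Lemma \ref{basicFjacquetlemma}(iii)) then yields, for every $a\in F^\times$, an isomorphism
$$
V_{N,\theta_{a,0,m}}\;\cong\;W_{\left[\begin{smallmatrix}1&*\\&1\end{smallmatrix}\right],\psi^a},
$$
where I use that $V_{N,\theta_{a,0,m}}=(V_{Z^J,\psi^m})_{N,\theta_{a,0,m}}$ because $\theta_{a,0,m}$ restricts to $\psi^m$ on $Z^J$. To finish, it suffices to exhibit $a\in F^\times$ making the right-hand side non-zero. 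Applying the Fourier dichotomy a second time to $W$ as a smooth $\left[\begin{smallmatrix}1&*\\&1\end{smallmatrix}\right]\cong F$-module: if all the $\psi^a$-coinvariants for $a\in F^\times$ vanished, the upper unipotent would act trivially on $W$; conjugating by the Weyl element of $\meta(2,F)$ forces the opposite unipotent to act trivially too, and these two unipotent subgroups generate all of $\meta(2,F)$ because the metaplectic cover admits no section over $\SL(2,F)$. Then $\meta(2,F)$ would act trivially on $W$, which is impossible for a non-zero genuine representation. The resulting $\theta=\theta_{a,0,m}$ is non-degenerate since its associated matrix has determinant $-am\neq 0$. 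The main obstacle is coordinating the two parallel applications of the Fourier dichotomy---first from the non-trivial $Z^J$-action on $V$ to a non-zero $\psi^m$-coinvariant, then from the non-triviality of the associated $\tau$ to a non-zero Whittaker-type coinvariant---while keeping the Stone--von Neumann identification aligned.
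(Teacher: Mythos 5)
Your proposal is correct, but it takes a genuinely different route from the paper's. The paper proves i) $\Rightarrow$ ii) using $P_3$-theory (Theorem \ref{finitelength} together with Tables A.5/A.6 and Lemmas 2.5.4, 2.5.5 of \cite{NF}), and then ii) $\Rightarrow$ iii) by passing to $V_{Z^J,\psi^m}$, taking an \emph{irreducible} $G^J$-subquotient, and invoking Lemma \ref{GJWhitlemma}, whose essential input is Waldspurger's result on Whittaker functionals for irreducible genuine representations of $\meta(2,F)$. You instead prove i) $\Rightarrow$ iii) directly: non-triviality of the $Z^J$-action via the normal closure of the long-root subgroup (equivalently, simplicity of ${\rm PSp}(4,F)$, which is the cleanest way to justify your Chevalley-relations sketch); the spectral dichotomy for smooth $F$-modules to produce $V_{Z^J,\psi^m}\neq 0$ with $m\in F^\times$; Stone--von Neumann applied to the whole, not necessarily irreducible, module; the same twisted Jacquet computation as in Lemma \ref{GJWhitlemma} (which indeed uses no irreducibility); and, in place of Waldspurger, a second application of the dichotomy combined with the non-splitting of the metaplectic cover. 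This yields a more self-contained argument, avoiding the classification tables of \cite{NF} and Waldspurger's lemma, at the cost of obtaining only non-vanishing rather than the multiplicity-one statement (which is all the proposition asserts, though the paper needs the sharper fact elsewhere). Three small remarks: your ``Fourier duality'' step, while standard (it is the $l$-sheaf equivalence of 1.14 of \cite{BeZe1976}), can also be deduced from the paper's own Lemma \ref{Nthetaexistslemma} applied to the augmentation submodule $V(Z^J)$, since $V(Z^J)(Z^J)=V(Z^J)$ and its $\psi^m$-coinvariants for $m\neq 0$ coincide with those of $V$; in the metaplectic step you need non-splitting of the cover as \emph{abstract} groups, which does hold (the commutator of preimages of diagonal elements equals the Hilbert symbol, so no homomorphic section exists); and the determinant of the matrix attached to $\theta_{a,0,m}$ is $am$ (its discriminant is $-am$), a harmless slip since either way it is non-zero.
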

\begin{proof}
i) $\Rightarrow$ ii) Assume that $V_{N,\theta}=0$ for all non-trivial $\theta$. By Lemma \ref{Nthetaexistslemma}, it follows that $V_{N,1}\neq0$. In particular, the $P_3$-module $V_{Z^J}$ is non-zero. By using Theorem \ref{finitelength} and inspecting tables A.5 and A.6 in \cite{NF}, one can see that $V_{Z^J}$ contains an irreducible subquotient $\tau$ of the form $\tau^{P_3}_{\GL(0)}(1)$, or $\tau^{P_3}_{\GL(1)}(\chi)$ for a character $\chi$ of $F^\times$, or $\tau^{P_3}_{\GL(2)}(\rho)$ for an irreducible, admissible, infinite-dimensional representation $\rho$ of $\GL(2,F)$; it is here that we use the hypothesis that $\pi$ is not one-dimensional. For $a,b$ in $F$ we define a character of the subgroup $\left[\begin{smallmatrix}1&*&*\\&1\\&&1\end{smallmatrix}\right]$ of $P_3$ by
\begin{equation}\label{thetaabdefeq}
 \theta_{a,b}(\begin{bmatrix}1&x&y\\&1\\&&1\end{bmatrix})=\psi(ax+by).
\end{equation}
By Lemma 2.5.4 or Lemma 2.5.5 of \cite{NF}, or the infinite-dimensionality of $\rho$ if $\tau=\tau^{P_3}_{\GL(2)}(\rho)$,
$$
 \tau_{\left[\begin{smallmatrix}1&*&*\\&1\\&&1\end{smallmatrix}\right],\theta_{a,b}}\neq0
$$
for some $(a,b)\neq(0,0)$. This implies that $V_{N,\theta_{a,b,0}}\neq0$, contradicting our assumption.

ii) $\Rightarrow$ iii) The hypothesis implies that $V_{Z^J,\psi^m}$ is non-zero for some $m$ in $F^\times$. We observe that $V_{Z^J,\psi^m}$ is a smooth $G^J$ representation. By Lemma 2.6 of \cite{BeZe1976}, there exists an irreducible subquotient $(\tau^J,W^J)$ of this $G^J$ module. By Lemma \ref{GJWhitlemma}, we have $\dim W^J_{N,\theta_{a,0,m}}=1$ for some $a$ in $F^\times$. This implies that $V_{N,\theta_{a,0,m}}\neq0$.

iii) $\Rightarrow$ i) is obvious.
\end{proof}

\begin{theorem}\label{existencetheorem}
 Let $(\pi,V)$ be an irreducible, admissible representation of $\GSp(4,F)$. Assume that $\pi$ is not one-dimensional. Then $\pi$ admits a $(\Lambda,\theta)$-Bessel functional for some non-degenerate character $\theta$ of $N$ and some character $\Lambda$ of $T$. If $\pi$ is non-generic and supercuspidal, then every Bessel functional for $\pi$ is non-split.
\end{theorem}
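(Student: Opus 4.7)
The theorem has two parts. I would begin the existence part by disposing of the generic case: if $\pi$ is generic, Proposition \ref{GSp4genericprop} directly produces a split $(\Lambda,\theta)$-Bessel functional for every character $\Lambda$ of $T$ satisfying $\Lambda|_{F^\times}=\omega_\pi$. For the remainder assume $\pi$ is non-generic. By Proposition \ref{Nthetaprop} there exists a non-degenerate character $\theta$ with $V_{N,\theta}\neq 0$. Via the identification $\Hom_D(V,\C_{\Lambda\otimes\theta})\cong \Hom_T(V_{N,\theta},\C_\Lambda)$ of \eqref{DTJacqueteq}, it suffices to produce a character quotient of $V_{N,\theta}$ as a $T$-module; this in turn would follow if $V_{N,\theta}$ were known to be finite-dimensional, since $T$ is abelian and every non-zero finite-dimensional smooth representation of an abelian group over $\C$ admits a one-dimensional quotient.

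Establishing the finite-dimensionality of $V_{N,\theta}$ for non-generic $\pi$ splits into two subcases. For non-generic non-supercuspidal $\pi$, one realizes $\pi$ as an irreducible subquotient of a representation parabolically induced from either the Siegel or the Klingen parabolic subgroup. The explicit computations in Section 5 (namely Lemmas \ref{siegelinducedbesselwaldspurgerlemma}, \ref{Klingendegjacquetlemma1}, and \ref{Klingendegjacquetlemma}) describe the $T$-module structure of $V_{N,\theta}$ of such an induced representation in full, and direct inspection shows it has finite dimension whenever the inducing data is non-generic. For non-generic supercuspidal $\pi$, one invokes Proposition \ref{scdimprop}: by the theta dichotomy for supercuspidal representations, every such $\pi$ participates in the theta correspondence with some irreducible $\sigma$ of $\GO(X_H)$, and for any $S$ with $\Omega_S\neq\emptyset$ one has $\dim V_{N,\theta_S}=\dim\Hom_{\OO(X_H)_z}(\sigma,\C_1)$, which is finite (and typically one-dimensional by Proposition \ref{Ozdimeq}).

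The second part of the theorem is immediate from Proposition \ref{nongenericsplitproposition}(v): among non-generic representations, no supercuspidal admits a split Bessel functional, so every Bessel functional on a non-generic supercuspidal is automatically non-split. The main obstacle is the non-generic supercuspidal case of the first part, where one must verify both that every such $\pi$ arises on the $\GSp(4)$-side of a theta correspondence with $\GO(X_H)$ (this uses the theta dichotomy/exhaustivity results) and that the corresponding $\sigma$ has a non-zero $\OO(X_H)_z$-invariant vector for some $S$ represented by $X_H$.
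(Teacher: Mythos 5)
The decisive gap is in your finite-dimensionality step for non-generic, non-supercuspidal $\pi$. You claim that realizing $\pi$ inside a Siegel or Klingen induced representation and inspecting Lemmas \ref{siegelinducedbesselwaldspurgerlemma}, \ref{Klingendegjacquetlemma1} and \ref{Klingendegjacquetlemma} shows $\dim V_{N,\theta}<\infty$ ``whenever the inducing data is non-generic''; but the non-generic representations of types VIIIb, IXb and XIb occur \emph{only} as constituents of $1_{F^\times}\rtimes\pi$, $\nu\xi\rtimes\nu^{-1/2}\pi(\mu)$ and $\nu^{1/2}\pi\rtimes\nu^{-1/2}\sigma$ with supercuspidal, hence generic, infinite-dimensional inducing data. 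For non-split $\theta$ the twisted Jacquet modules of these full induced representations are infinite-dimensional (Lemma \ref{Klingendegjacquetlemma1} gives a direct sum over \emph{all} characters $\Lambda$ of $T$ with the prescribed restriction to $F^\times$, and i) of Lemma \ref{siegelinducedbesselwaldspurgerlemma} gives $\sigma\pi$ as a $T$-module), and exactness of the Jacquet functor only makes the constituent's module a subquotient, not finite-dimensional. Deciding how these modules split between the generic constituent and the Langlands quotient is precisely the content of Theorem \ref{mainnonsupercuspidaltheorem}, which rests on the theta machinery and, for type IXb (Corollary \ref{bsummary}), cites Theorem \ref{existencetheorem} itself — so repairing the gap by appeal to the table would be circular. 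The same difficulty already appears for split $\theta$ in the XIb case.

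The detour through finite-dimensionality is also unnecessary, and avoiding it is exactly how the paper argues. For non-split $\theta$ the group $T\cong L^\times$ has $T/F^\times$ compact and $F^\times$ acts on $V_{N,\theta}$ by $\omega_\pi$, so the smooth $T$-module $V_{N,\theta}$ decomposes as a direct sum of characters of $T$; thus $V_{N,\theta}\neq0$ (from Proposition \ref{Nthetaprop}) immediately yields a $(\Lambda,\theta)$-Bessel functional with no dimension count, covering in particular all non-generic supercuspidals — your theta-dichotomy argument via Proposition \ref{scdimprop} reproduces Lemma \ref{ZJfinitelengthlemma}, which the paper needs only later for Theorem \ref{nongenchartheorem}, not for existence. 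For split $\theta$ the paper bypasses the Section 5 computations entirely: by the $P_3$-filtration of Theorem \ref{finitelength}, a nonzero split twisted Jacquet module forces either $V_2\neq0$, i.e.\ $\pi$ generic, in which case Proposition \ref{GSp4genericprop} applies, or $V_1/V_2\neq0$, in which case iii) of Proposition \ref{nongenericsplitproposition} supplies a split Bessel functional. Your treatment of the final statement via v) of Proposition \ref{nongenericsplitproposition} is fine; the paper argues equivalently from $V_{Z^J}=0$ for non-generic supercuspidals via Theorem \ref{finitelength}.
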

\begin{proof}
By Proposition \ref{Nthetaprop}, there exists a non-degenerate $\theta$ such that $V_{N,\theta}\neq0$. Assume that $\theta$ is non-split. Then, since the center $F^\times$ of $\GSp(4,F)$ acts by a character on $V_{N,\theta}$ and $T/F^\times$ is compact, $V_{N,\theta}$ decomposes as a direct sum over characters of $T$. It follows that a $(\Lambda,\theta)$-Bessel functional exists for some character $\Lambda$ of $T$.

Now assume that $\theta$ is split. We may assume that $S$ is the matrix in \eqref{splitSeq}.  Let $V_0,V_1,V_2$ be the modules appearing in the $P_3$-filtration, as in Theorem \ref{finitelength}. Since $V_{N,\theta}\neq0$, we must have
$$
 (V_0/V_1)_{\left[\begin{smallmatrix}1&*&*\\&1\\&&1\end{smallmatrix}\right],\theta_{0,1}}\neq0,\qquad
 (V_1/V_2)_{\left[\begin{smallmatrix}1&*&*\\&1\\&&1\end{smallmatrix}\right],\theta_{0,1}}\neq0,\qquad\text{or}\qquad
 (V_2)_{\left[\begin{smallmatrix}1&*&*\\&1\\&&1\end{smallmatrix}\right],\theta_{0,1}}\neq0,
$$
where we use the notation \eqref{thetaabdefeq}. It is immediate from \eqref{tauP3GL2eq} that the first space is zero. If the second space is non-zero, then $\pi$ admits a split Bessel functional by iii) of Proposition \ref{nongenericsplitproposition}. If the third space is non-zero, then $\pi$ is generic by Theorem \ref{finitelength}, and hence, by Proposition \ref{GSp4genericprop}, admits a split Bessel functional.

For the last statement, assume that $\pi$ is non-generic and supercuspidal. Then $V_{Z^J}=0$ by Theorem \ref{finitelength}. Hence, $V_{N,\theta}=0$ for any split $\theta$. It follows that all Bessel functionals for $\pi$ are non-split.
\end{proof}

\subsection{The table of Bessel functionals}\label{maintheoremproofsec}
In this section, given a non-supercuspidal representation $\pi$, or a $\pi$ that is in an $L$-packet with a non-supercuspidal representation, we determine the set of $(\Lambda,\theta)$ for which $\pi$ admits a $(\Lambda,\theta)$-Bessel functional.
\begin{lemma}\label{Kcompactexactlemma}
 Let $\theta$ be as in \eqref{thetaSsetupeq}, and let $T$ be the corresponding torus. Assume that the associated quadratic extension $L$ is a field. Let $V_1$, $V_2$, $V_3$ and $W$ be smooth representations of $T$. Assume that these four representations all have the same central character. Assume further that there is an exact sequence of $T$-modules
 $$
  0\longrightarrow V_1\longrightarrow V_2\longrightarrow V_3\longrightarrow0.
 $$
 Then the sequence of $T$-modules
 $$
  0\longrightarrow\Hom_{T}(V_3,W)\longrightarrow\Hom_{T}(V_2,W)\longrightarrow\Hom_{T}(V_1,W)\longrightarrow0
 $$
 is exact.
\end{lemma}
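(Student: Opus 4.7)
The plan is to reduce the lemma to the semisimplicity of smooth representations of a compact abelian group. Since $L$ is a field, the torus $T \cong L^\times$ is compact modulo its center $F^\times \subset T$; that is, $T/F^\times \cong L^\times/F^\times$ is a compact abelian $l$-group. Because $V_1, V_2, V_3, W$ all share a common central character $\omega$, untwisting by $\omega$ turns each of them into a smooth representation of the compact abelian group $T/F^\times$.

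First I would dispose of the formal part: for any smooth $T$-module map $V_3 \to W$, composition with $V_2 \twoheadrightarrow V_3$ gives a $T$-map $V_2 \to W$ vanishing on $V_1$, and conversely any $T$-map $V_2 \to W$ that kills $V_1$ factors through $V_3$. This immediately gives left exactness of
$$
0 \longrightarrow \Hom_T(V_3, W) \longrightarrow \Hom_T(V_2, W) \longrightarrow \Hom_T(V_1, W).
$$

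The main step is surjectivity on the right, which I would obtain by producing a $T$-equivariant splitting of $0 \to V_1 \to V_2 \to V_3 \to 0$. Since $T/F^\times$ is a compact abelian $l$-group, every smooth representation of $T/F^\times$ decomposes as a direct sum of its isotypic components for the characters of $T/F^\times$: indeed, every vector is fixed by an open (hence finite-index) subgroup and therefore lies in a finite-dimensional invariant subspace, which in turn is completely reducible since the ambient group is compact abelian. Decomposing accordingly, we write $V_i = \bigoplus_\chi V_i(\chi)$ for $i=1,2,3$, where $\chi$ runs over the characters of $T$ with $\chi|_{F^\times}=\omega$. The exactness of the original sequence is preserved componentwise, yielding
$$
0 \longrightarrow V_1(\chi) \longrightarrow V_2(\chi) \longrightarrow V_3(\chi) \longrightarrow 0
$$
for each $\chi$. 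But each $V_i(\chi)$ is a direct sum of copies of $\C_\chi$, so this sequence is a short exact sequence of vector spaces with trivial $T$-action (after untwisting by $\chi$), which manifestly splits as a $T$-module sequence. Summing over $\chi$ produces a $T$-equivariant decomposition $V_2 \cong V_1 \oplus V_3$.

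Once the splitting is in hand, the conclusion is immediate: applying $\Hom_T(-,W)$ to the split sequence gives $\Hom_T(V_2,W) \cong \Hom_T(V_1,W) \oplus \Hom_T(V_3,W)$, and the restriction map $\Hom_T(V_2,W) \to \Hom_T(V_1,W)$ becomes the projection onto the first summand, hence is surjective. The only place where compactness is actually used is the isotypic decomposition step, which in turn uses that $L/F$ is a field extension so that $T/F^\times$ is compact; if $L$ were split the argument would fail because $T/F^\times \cong F^\times$ is no longer compact, and exactness on the right is genuinely not guaranteed.
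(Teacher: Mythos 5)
Your argument is correct, but it takes a different route from the paper. The paper does not prove any semisimplicity statement: it simply takes $f\in\Hom_T(V_1,W)$, extends it to an arbitrary linear map $f_1:V_2\to W$, and then averages, setting $f_2(v)=\int_{T/F^\times}t^{-1}\cdot f_1(t\cdot v)\,dt$; the common central character makes the integrand a function on $T/F^\times$, compactness and smoothness make the integral a finite sum, and $f_2$ is $T$-equivariant and restricts to a nonzero multiple of $f$ on $V_1$. You instead establish the stronger structural fact that every smooth $T$-module with central character $\omega$ is a direct sum of its $\chi$-isotypic components ($\chi|_{F^\times}=\omega$), deduce that the short exact sequence splits $T$-equivariantly, and then apply $\Hom_T(-,W)$. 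Both arguments hinge on exactly the same inputs (compactness of $T/F^\times\cong L^\times/F^\times$ because $L$ is a field, plus the central character hypothesis), and your closing remark about failure in the split case matches the paper's intent. Your version buys more — a genuine splitting $V_2\cong V_1\oplus V_3$, i.e.\ semisimplicity of the relevant category — at the cost of a slightly longer argument; the paper's averaging is the minimal Maschke-type step needed for surjectivity alone. One small presentational point: ``untwisting by $\omega$'' requires choosing an extension of $\omega$ from $F^\times$ to a smooth character of $T$ (such an extension exists since $T$ is abelian and $T/F^\times$ is compact), or alternatively you can run the isotypic decomposition directly on $T$ without untwisting, noting that any $t\in T$ acts semisimply on the finite-dimensional span of an orbit because some power of $t$ lies in $KF^\times$ and hence acts by a scalar.
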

\begin{proof}
It is easy to see that the sequence
 $$
  0\longrightarrow\Hom_{T}(V_3,W)\longrightarrow\Hom_{T}(V_2,W)\longrightarrow\Hom_{T}(V_1,W)
 $$
is exact. We will prove the surjectivity of the last map. Let $f$ be in $\Hom_{T}(V_1,W)$. We extend $f$ to a linear map $f_1$ from $V_2$ to $W$. We define another linear map $f_2$ from $V_2$ to $W$ by
$$
 f_2(v)=\int\limits_{T/F^\times}t^{-1}\cdot f_1(t\cdot v)\,dt.
$$
This is well-defined by the condition on the central characters, the compactness of $T/F^\times$, and the smoothness hypothesis. Evidently, $f_2$ is in $\Hom_{T}(V_2,W)$ and maps to a multiple of~$f$.
\end{proof}

\begin{theorem}\label{mainnonsupercuspidaltheorem}
 The following table shows the Bessel functionals admitted by the irreducible, admissible, non-supercuspidal representations of $\GSp(4,F)$. The column ``$L\leftrightarrow\xi$'' indicates that the field $L$ is the quadratic extension of $F$ corresponding to the non-trivial, quadratic character $\xi$ of $F^\times$; this is only relevant for representations in groups V and IX. The pairs of characters $(\chi_1,\chi_2)$ in the ``$L=F\times F$'' column for types IIIb and IVc refer to the characters of $T=\{{\rm diag}(a,b,a,b):\:a,b\in F^\times\}$ given by ${\rm diag}(a,b,a,b)\mapsto\chi_1(a)\chi_2(b)$. In representations of group IX, the symbol $\mu$ denotes a non-Galois-invariant character of $L^\times$, where $L$ is the quadratic extension corresponding to $\xi$. The Galois conjugate of $\mu$ is denoted by $\mu'$. The irreducible, admissible, supercuspidal representation of $\GL(2,F)$ corresponding to $\mu$ is denoted by $\pi(\mu)$. Finally, the symbol $\Norm$ in the table stands for the norm map $\Norm_{L/F}$.
 In the split case, the character $\sigma\circ\Norm$ is the same as $(\sigma,\sigma)$. In the table, the phrase ``all $\Lambda$'' means all characters $\Lambda$ of $T$ whose restriction to $F^\times$ is the central character of the representation of $\GSp(4,F)$.
\end{theorem}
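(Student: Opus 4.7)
The plan is to go through the table type by type, applying the tools built up in the preceding sections. The split part of the table is disposed of uniformly: by Proposition \ref{GSp4genericprop} every generic representation admits a split $(\Lambda,\theta)$-Bessel functional for every $\Lambda$ with $\Lambda|_{F^\times}=\omega_\pi$, and by Proposition \ref{nongenericsplitproposition} together with the $P_3$-filtrations recorded in Table A.6 of \cite{NF} the split Bessel functionals admitted by a non-generic $\pi$ are read off directly from $V_1/V_2$. Uniqueness of split functionals was established in part iv) of Proposition \ref{nongenericsplitproposition} for non-generic $\pi$, and in the generic case it is a consequence of the fact that the explicit zeta-integral construction in Lemma \ref{GSp4genericlemma} lands in a one-dimensional space (this will follow from the non-split uniqueness below combined with Bernstein's geometric lemma applied to $V_{N,\theta}$).

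For non-split $\theta$ (so $L$ is a field), the core idea is to compute $V_{N,\theta}$ as a $T$-module for the \emph{inducing} representation and to use exactness in the form of Lemma \ref{Kcompactexactlemma} to extract the answer for each irreducible subquotient along the standard filtration. Concretely: for Siegel-induced representations (groups II, IV, V, VI) Lemma \ref{siegelinducedbesselwaldspurgerlemma} gives $(\pi\rtimes\sigma)_{N,\theta}$ as $\sigma\pi$ acting on $V$, reducing the count of Bessel functionals on subquotients to a count of Waldspurger functionals on subquotients of $\pi$ via \eqref{StGL2Waldspurgereq2} and the Tunnell–Saito dichotomy \eqref{waldspurgerpropeq2}; the subrepresentation/quotient structure of $\pi\rtimes\sigma$ in each of the types IIa,b, IVa--d, Va--d, VIa--d (as recorded in \cite{SaTa1993} or Appendix A of \cite{NF}) determines which part of $\sigma\pi$ survives to each constituent. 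For Klingen-induced representations (groups III, VIII, IX) Lemma \ref{Klingendegjacquetlemma1} gives $(\chi\rtimes\pi)_{N,\theta}=\bigoplus_{\Lambda|_{F^\times}=\chi\omega_\pi}d\cdot\Lambda$ with $d$ equal to the dimension of the Whittaker space of $\pi$, and again the filtration of the induced representation combined with Lemma \ref{Kcompactexactlemma} pins down what survives on each subquotient.

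The representations not reachable by the two previous strategies are Va$^*$, Vd, IXb and XIa$^*$. For Vd and IXb the list of Bessel functionals has already been computed in Corollary \ref{bsummary} via the theta lifts in Proposition \ref{thetaliftprop}. The Va$^*$ case is also treated by Corollary \ref{bsummary} using Theorem \ref{Ganthetatheorem}, and XIa$^*$ is handled in the same spirit: the theta correspondence of Theorem \ref{Ganthetatheorem} together with Corollary \ref{fourdimthetatheoremcor1} forces $L=F(\sqrt{-c})$ to be the field over which $\pi^{\mathrm{JL}}$ lives and $\Lambda=\mu$ (or its Galois conjugate) for a character $\mu$ with $\Hom_{L^\times}(\pi^{\mathrm{JL}},\C_\Lambda)\neq 0$; existence of at least one such functional is given by Theorem \ref{existencetheorem}, and \eqref{besselgaloiseq} supplies the conjugate. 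Uniqueness in the Va$^*$, XIa$^*$ cases is exactly the content of Corollary \ref{Vastarlemma}.

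The hardest part, and the one that requires careful bookkeeping, will be the Siegel-induced families Va--d and VIa--d, because here the filtration \eqref{siegelinducedbesselwaldspurgerlemma23} of $(\pi\rtimes\sigma)_{N,\theta}$ has three constituents (in the split case) or two (in the non-split, $L$ non-field case of ii) of Lemma \ref{siegelinducedbesselwaldspurgerlemma}), and one must match these pieces against the reducibility pattern of $\pi\rtimes\sigma$ prescribed by \cite{SaTa1993}. For the ``middle'' constituents such as Vb, Vc and VIc the relevant Waldspurger space for $\sigma\St_{\GL(2)}$ is either zero or one-dimensional depending on whether $\Lambda=\sigma\circ\Norm_{L/F}$ by \eqref{StGL2Waldspurgereq2}, and this dichotomy is precisely what is reflected in the table. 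The same dichotomy, applied via Lemma \ref{Kcompactexactlemma} and the uniqueness of Waldspurger functionals, gives uniqueness of the resulting Bessel functional on each constituent.
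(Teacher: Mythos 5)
Your overall architecture matches the paper's: split functionals via Proposition \ref{GSp4genericprop} and Proposition \ref{nongenericsplitproposition}, non-split functionals via the twisted Jacquet modules of Siegel and Klingen induced representations (Lemmas \ref{siegelinducedbesselwaldspurgerlemma} and \ref{Klingendegjacquetlemma1}) propagated to constituents with Lemma \ref{Kcompactexactlemma}, and theta lifts for the representations those computations cannot reach. However, there is a genuine gap in how you propose to handle the pairs VIIIa/VIIIb and XIa/XIb. For these, the Jacquet-module computation of the \emph{induced} representation only gives the total: for $1_{F^\times}\rtimes\pi$ one gets each $\Lambda$ exactly once (Lemma \ref{Klingendegjacquetlemma1}), and for $\nu^{1/2}\pi\rtimes\nu^{-1/2}\sigma$ one gets ${\rm Hom}_T(\sigma\pi,\C_\Lambda)$, which is at most one-dimensional. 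Exactness (or the splitting of the sequence in the unitarizable case VIII) tells you that this one-dimensional space is shared between the two constituents, but it cannot tell you \emph{which} constituent receives a given $\Lambda$; your claim that ``the filtration of the induced representation combined with Lemma \ref{Kcompactexactlemma} pins down what survives on each subquotient'' fails exactly here. The paper resolves this by the four-dimensional theta correspondence: Theorem \ref{Ganthetatheorem} places VIIIa, VIIIb (resp.\ XIa, XIb) on $(\pi\otimes\pi)^+$ over $X_{\Mat_2}$ versus $(\pi^{\mathrm{JL}}\otimes\pi^{\mathrm{JL}})^+$ over $X_H$ (resp.\ on $\sigma\St\otimes\sigma\pi$ versus $\sigma 1_{\GL(2)}\otimes\sigma\pi$), and Corollary \ref{fourdimthetatheoremcor1} converts a $(\Lambda,\theta)$-Bessel functional into Waldspurger conditions on both factors; the dichotomy \eqref{waldspurgerpropeq2} then forces the VIIIa/VIIIb split, and for XI it yields the extra condition ${\rm Hom}_T(\sigma\pi,\C_\Lambda)\neq0$ (resp.\ $\Lambda=\sigma\circ\Norm$ and ${\rm Hom}_T(\pi,\C_1)\neq0$), which is not visible in the induced Jacquet module at all. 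Your proposal invokes the theta machinery only for Va$^*$, Vd, IXb and XIa$^*$, so these cases are not covered.

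A smaller, repairable inaccuracy: your assignment of groups II, IV, V, VI exclusively to the Siegel computation is not how the distribution among constituents is actually settled. For IVa/IVc and for VIa/VIc, VIb/VId one must also use the \emph{Klingen} realizations (e.g.\ IVc sits inside $\nu^2\rtimes\nu^{-1}\sigma 1_{\GSp(2)}$ and VIc, VId inside $1_{F^\times}\rtimes\sigma 1_{\GSp(2)}$, whose non-split Jacquet modules vanish by Lemma \ref{Klingendegjacquetlemma1}), and the group V analysis hinges on first knowing Vd via the two-dimensional theta lift (Proposition \ref{thetaliftprop}), then feeding that into the sequences for Vb, Vc, Va. You do route Vd through Corollary \ref{bsummary}, so this part is mainly a bookkeeping correction, but it should be made explicit, since without the cross-over to the Klingen realizations the Siegel exact sequences alone again leave the distribution between sub and quotient undetermined. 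Finally, the uniqueness remarks in your first paragraph are not part of this theorem's statement and the sketch offered for the generic split case is too vague to serve as a proof; that material belongs to Theorem \ref{splituniquenesstheorem}.
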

$$\renewcommand{\arraystretch}{1.09}\renewcommand{\arraycolsep}{0.07cm}
 \begin{array}{cccccc}
  \toprule
  &&\text{representation}&
  \multicolumn{3}{c}{(\Lambda,\theta)\text{-Bessel functional exists exactly for \ldots}}
  \\
  \cmidrule{4-6}
  &&&L=F\times F&\multicolumn{2}{c}{L/F\text{ a field extension}}\\
  \cmidrule{5-6}
  &&&&L\leftrightarrow\xi&L\not\leftrightarrow\xi\\
  \toprule
  {\rm I}&& \chi_1 \times \chi_2 \rtimes \sigma\ 
  \mathrm{(irreducible)}&\text{all }\Lambda&\multicolumn{2}{c}{\text{all }\Lambda}\\
  \midrule
  \mbox{II}&\mbox{a}&\chi \St_{\GL(2)} \rtimes \sigma&
   \text{all }\Lambda&\multicolumn{2}{c}{\Lambda\neq(\chi\sigma)\circ\Norm}\\
  \cmidrule{2-6}
  &\mbox{b}&\chi \triv_{\GL(2)} \rtimes \sigma
   &\Lambda=(\chi\sigma)\circ\Norm&\multicolumn{2}{c}{\Lambda=(\chi\sigma)\circ\Norm}\\
  \midrule
  \mbox{III}&\mbox{a}&\chi \rtimes \sigma \St_{\GSp(2)}&\text{all }\Lambda
   &\multicolumn{2}{c}{\text{all }\Lambda}\\\cmidrule{2-6}
  &\mbox{b}&\chi \rtimes \sigma \triv_{\GSp(2)}
   &\Lambda\in\{(\chi\sigma,\sigma),(\sigma,\chi\sigma)\}&\multicolumn{2}{c}{\text{---}}\\
  \midrule
  \mbox{IV}&\mbox{a}&\sigma\St_{\GSp(4)}&\text{all }\Lambda&
  \multicolumn{2}{c}{\Lambda\neq\sigma\circ\Norm}\\\cmidrule{2-6}
  &\mbox{b}&L(\nu^2,\nu^{-1}\sigma\St_{\GSp(2)})&\Lambda=\sigma\circ\Norm 
   &\multicolumn{2}{c}{\Lambda=\sigma\circ\Norm}\\\cmidrule{2-6}
  &\mbox{c}&L(\nu^{3/2}\St_{\GL(2)},\nu^{-3/2}\sigma)
   &\Lambda=(\nu^{\pm1}\sigma,\nu^{\mp1}\sigma)&\multicolumn{2}{c}{\text{---}}\\\cmidrule{2-6}
  &\mbox{d}&\sigma\triv_{\GSp(4)}&\text{---}&\multicolumn{2}{c}{\text{---}}\\
  \midrule
  \mbox{V}&\mbox{a}&\delta([\xi,\nu \xi], \nu^{-1/2} \sigma)&\text{all }\Lambda
   &\Lambda\neq\sigma\circ\Norm&\sigma\circ\Norm\neq\Lambda\neq(\xi\sigma)\circ\Norm \\\cmidrule{2-6}
  &\mbox{b}&L(\nu^{1/2}\xi\St_{\GL(2)},\nu^{-1/2} \sigma)&\Lambda=\sigma\circ\Norm 
   &\text{---}&\Lambda=\sigma\circ\Norm \\\cmidrule{2-6}
  &\mbox{c}&L(\nu^{1/2}\xi\St_{\GL(2)},\xi\nu^{-1/2} \sigma)
   &\Lambda=(\xi\sigma)\circ\Norm
   &\text{---}&\Lambda=(\xi\sigma)\circ\Norm\\\cmidrule{2-6}
  &\mbox{d}&L(\nu\xi,\xi\rtimes\nu^{-1/2}\sigma)&\text{---}
   &\Lambda=\sigma\circ\Norm&\text{---}\\
  \midrule
  \mbox{VI}&\mbox{a}&\tau(S, \nu^{-1/2}\sigma)&\text{all }\Lambda
   &\multicolumn{2}{c}{\Lambda\neq\sigma\circ\Norm}\\\cmidrule{2-6}
  &\mbox{b}&\tau(T, \nu^{-1/2}\sigma)&\text{---}&\multicolumn{2}{c}{\Lambda=\sigma\circ\Norm}\\\cmidrule{2-6}
  &\mbox{c}&L(\nu^{1/2}\St_{\GL(2)},\nu^{-1/2}\sigma)
   &\Lambda=\sigma\circ\Norm&\multicolumn{2}{c}{\text{---}}\\\cmidrule{2-6}
  &\mbox{d}&L(\nu,1_{F^\times}\rtimes\nu^{-1/2}\sigma)&
   \Lambda=\sigma\circ\Norm&\multicolumn{2}{c}{\text{---}}\\
  \toprule
  \mbox{VII}&&\chi \rtimes \pi&\text{all }\Lambda
   &\multicolumn{2}{c}{\text{all }\Lambda}\\\midrule
  \mbox{VIII}&\mbox{a}&\tau(S, \pi)&\text{all }\Lambda&\multicolumn{2}{c}{{\rm Hom}_T(\pi,\C_\Lambda)\neq0}\\\cmidrule{2-6}
  &\mbox{b}&\tau(T, \pi)
   &\text{---}&\multicolumn{2}{c}{{\rm Hom}_T(\pi,\C_\Lambda)=0}\\
  \midrule
  \mbox{IX}&\mbox{a}&\delta(\nu\xi,\nu^{-1/2}\pi(\mu))&\text{all }\Lambda
   &\mu\neq\Lambda\neq\mu'&\text{all }\Lambda\\\cmidrule{2-6}
  &\mbox{b}&L(\nu\xi,\nu^{-1/2}\pi(\mu))
   &\text{---}&\Lambda=\mu\text{ or }\Lambda=\mu'&\text{---}\\
  \toprule
  \mbox{X}&&\pi \rtimes \sigma&\text{all }\Lambda
   &\multicolumn{2}{c}{{\rm Hom}_T(\sigma\pi,\C_\Lambda)\neq0}\\
  \midrule
  \mbox{XI}&\mbox{a}&\delta (\nu^{1/2}\pi,\nu^{-1/2}\sigma)&\text{all }\Lambda&\multicolumn{2}{c}{\Lambda\neq\sigma\circ\Norm\:\text{ and }{\rm Hom}_T(\sigma\pi,\C_{\Lambda})\neq0}\\
  \cmidrule{2-6}
  &\mbox{b}&L(\nu^{1/2}\pi,\nu^{-1/2}\sigma)&\Lambda=\sigma\circ\Norm&\multicolumn{2}{c}{\Lambda=\sigma\circ\Norm\:\text{ and } {\rm Hom}_T(\pi,\C_1)\neq0}\\
  \toprule
  \mbox{Va$^*$}&&\delta^*([\xi,\nu\xi],\nu^{-1/2}\sigma)&\text{---}&\Lambda=\sigma\circ\Norm&\text{---}\\
  \midrule
  \mbox{XIa$^*$}&&\delta^*(\nu^{1/2}\pi,\nu^{-1/2}\sigma)&\text{---}&\multicolumn{2}{c}{\Lambda=\sigma\circ\Norm\:\text{ and }{\rm Hom}_T(\pi^{\mathrm{JL}},\C_1)\neq0}\\
  \toprule
 \end{array}
$$
\begin{proof}
We will go through all representations in the table and explain how the statements follow from our preparatory sections.

\underline{I}:
This follows from Proposition \ref{GSp4genericprop} and Lemma \ref{Klingendegjacquetlemma1}.

\underline{IIa}: In the split case this follows from Proposition \ref{GSp4genericprop}. In the non-split case it follows from Lemma \ref{siegelinducedbesselwaldspurgerlemma} together
with (\ref{StGL2Waldspurgereq2}).

\underline{IIb}: This follows from Lemma \ref{siegelinducedbesselwaldspurgerlemma}; see \eqref{siegelinducedonedimjacqueteq}.

\underline{IIIa}: This follows from Proposition \ref{GSp4genericprop} and Lemma \ref{Klingendegjacquetlemma1}.

\underline{IIIb}: It follows from Lemma \ref{Klingendegjacquetlemma1} that IIIb type representations have no non-split Bessel functionals. The split case follows from either Proposition \ref{nongenericsplitproposition} or i) of Lemma \ref{Klingendegjacquetlemma}. Note that the characters $(\chi\sigma,\sigma)$ and $(\sigma,\chi\sigma)$ are Galois conjugates of each other.

\underline{IVd}:
It is easy to see that the twisted Jacquet modules of the trivial representation
are zero.

\underline{IVb}:
By (2.9) of \cite{NF} there is a short exact sequence
$$
 0\longrightarrow{\rm IVb}\longrightarrow\nu^{3/2}1_{\GL(2)}\rtimes\nu^{-3/2}\sigma
 \longrightarrow\sigma1_{\GSp(4)}\longrightarrow0.
$$
Taking twisted Jacquet modules and observing \eqref{siegelinducedonedimjacqueteq}, we get
$$
 ({\rm IVb})_{N,\theta}\cong(\nu^{3/2}1_{\GL(2)}\rtimes\nu^{-3/2}\sigma)_{N,\theta}=\C_{\sigma\circ \Norm_{L/F}}
$$
as $T$-modules.

\underline{IVc}:
By (2.9) of \cite{NF} there is a short exact sequence
$$
 0\longrightarrow{\rm IVc}\longrightarrow\nu^2\rtimes\nu^{-1}\sigma1_{\GSp(2)}
 \longrightarrow\sigma1_{\GSp(4)}\longrightarrow0.
$$
Taking twisted Jacquet modules gives
$$
 ({\rm IVc})_{N,\theta}\cong(\nu^2\rtimes\nu^{-1}\sigma1_{\GSp(2)})_{N,\theta}.
$$
Hence IVc admits the same Bessel functionals as the full induced representation
$\nu^2\rtimes\nu^{-1}\sigma1_{\GSp(2)}$. By Lemma \ref{Klingendegjacquetlemma1}, any such
Bessel functional is necessarily split. Assume that $\theta$ is as in (\ref{splitthetaeq}). Then, using Lemma \ref{Klingendegjacquetlemma}, it follows that IVc admits the $(\Lambda,\theta)$-Bessel functional for
\begin{equation}\label{IVcpossibleLambdaeq}
 \Lambda(\begin{bmatrix}a\\&b\\&&a\\&&&b\end{bmatrix})=\nu(ab^{-1})\sigma(ab).
\end{equation}
which we write as $(\nu\sigma,\nu^{-1}\sigma)$.
By \eqref{besselgaloiseq}, IVc also admits a $(\Lambda,\theta)$-Bessel functional for $\Lambda=(\nu^{-1}\sigma,\nu\sigma)$. Again by Lemma \ref{Klingendegjacquetlemma}, IVc does not admit a $(\Lambda,\theta)$-Bessel functional for any other $\Lambda$.

\underline{IVa}: In the split case this follows from Proposition \ref{GSp4genericprop}. Assume $\theta$ is non-split. By (2.9) of \cite{NF}, there is an exact sequence
$$
 0\longrightarrow\sigma\St_{\GSp(4)}\longrightarrow\nu^2\rtimes\nu^{-1}\sigma\St_{\GSp(2)}\longrightarrow {\rm IVb}\longrightarrow0.
$$
Taking Jacquet modules, we get
$$
 0\longrightarrow(\sigma\St_{\GSp(4)})_{N,\theta}\longrightarrow(\nu^2\rtimes\nu^{-1}\sigma\St_{\GSp(2)})_{N,\theta}\longrightarrow ({\rm IVb})_{N,\theta}\longrightarrow0.
$$
Keeping in mind Lemma \ref{Kcompactexactlemma}, the result now follows from Lemma \ref{Klingendegjacquetlemma1} and the result for IVb.

\underline{Vd}: This was proved in Corollary \ref{bsummary}.

\underline{Vb and Vc}: Let $\xi$ be a non-trivial quadratic character of $F^\times$. By (2.10) of \cite{NF}, there are exact sequences
$$
 0\longrightarrow{\rm Vb}\longrightarrow\nu^{1/2}\xi\triv_{\GL(2)}\rtimes\xi\nu^{-1/2}\sigma
 \longrightarrow{\rm Vd}\longrightarrow0
$$
and
$$
 0\longrightarrow{\rm Vc}\longrightarrow\nu^{1/2}\xi\triv_{\GL(2)}\rtimes\nu^{-1/2}\sigma
 \longrightarrow{\rm Vd}\longrightarrow0.
$$
Taking Jacquet modules and observing (\ref{siegelinducedonedimjacqueteq}), we get
\begin{equation}\label{VdBesseleq1}
 0\longrightarrow({\rm Vb})_{N,\theta}\longrightarrow\C_{\sigma\circ \Norm_{L/F}}
 \longrightarrow({\rm Vd})_{N,\theta}\longrightarrow0
\end{equation}
and
\begin{equation}\label{VdBesseleq2}
 0\longrightarrow({\rm Vc})_{N,\theta}\longrightarrow\C_{(\xi\sigma)\circ \Norm_{L/F}}
 \longrightarrow({\rm Vd})_{N,\theta}\longrightarrow0.
\end{equation}
Hence the results for Vb and Vc follow from the result for Vd.

\underline{Va}: In the split case this follows from Proposition \ref{GSp4genericprop}. Assume $\theta$ is non-split. Assume first that $\xi$ corresponds to the quadratic extension $L/F$.
As we just saw, $({\rm Vb})_{N,\theta}=0$ in this case. By (2.10) of \cite{NF}, there is an exact sequence
$$
 0\longrightarrow{\rm Va}\longrightarrow\nu^{1/2}\xi\St_{\GL(2)}\rtimes\nu^{-1/2}\sigma\longrightarrow{\rm Vb}\longrightarrow0.
$$
Taking Jacquet modules, it follows that
$$
 ({\rm Va})_{N,\theta}=(\nu^{1/2}\xi\St_{\GL(2)}\rtimes\nu^{-1/2}\sigma)_{N,\theta}.
$$
By Lemma \ref{siegelinducedbesselwaldspurgerlemma}, the space of $(\Lambda,\theta)$-Bessel functionals on the representation Va is isomorphic to ${\rm Hom}_T(\sigma\xi\St_{\GL(2)},\C_\Lambda)$. Using (\ref{StGL2Waldspurgereq2}), it follows that Va admits a $(\Lambda,\theta)$-Bessel functional if and only if $\Lambda\neq(\sigma\xi)\circ \Norm_{L/F}=\sigma\circ \Norm_{L/F}$.

Now assume that $\xi$ does not correspond to the quadratic extension $L/F$. Then, by what we already proved for Vb, we have an exact sequence
\begin{equation}\label{VdBesseleq5}
 0\longrightarrow({\rm Va})_{N,\theta}\longrightarrow
 (\nu^{1/2}\xi\St_{\GL(2)}\rtimes\nu^{-1/2}\sigma)_{N,\theta}
 \longrightarrow\C_{\sigma\circ \Norm_{L/F}}\longrightarrow0.
\end{equation}
Using Lemma \ref{Kcompactexactlemma}, it follows that the possible characters $\Lambda$ for Va are those of $(\nu^{1/2}\xi\St_{\GL(2)}\rtimes\nu^{-1/2}\sigma)_{N,\theta}$ with the exception of $\sigma\circ \Norm_{L/F}$. By Lemma \ref{siegelinducedbesselwaldspurgerlemma} and (\ref{StGL2Waldspurgereq2}), these are all characters other than $\sigma\circ \Norm_{L/F}$ and $(\xi\sigma)\circ \Norm_{L/F}$.

\underline{VIc and VId}: By (2.11) of \cite{NF}, there is an exact sequence
$$
 0\longrightarrow{\rm VIc}\longrightarrow\triv_{F^\times}\rtimes\sigma\triv_{\GSp(2)}
 \longrightarrow{\rm VId}\longrightarrow0.
$$
It follows from Lemma \ref{Klingendegjacquetlemma1} that VIc and VId have no non-split Bessel functionals. The split case follows from Proposition \ref{nongenericsplitproposition}.

\underline{VIa}: In the split case this follows from Proposition \ref{GSp4genericprop}. Assume that $\theta$ is non-split. By (2.11) of \cite{NF}, there is an exact sequence
\begin{equation}\label{VIcexacteq1}
 0\longrightarrow{\rm VIa}\longrightarrow
 \nu^{1/2}\St_{\GL(2)}\rtimes\nu^{-1/2}\sigma
 \longrightarrow{\rm VIc}\longrightarrow0.
\end{equation}
Taking Jacquet modules and observing the result for VIc, we get $({\rm VIa})_{N,\theta}=(\nu^{1/2}\St_{\GL(2)}\rtimes\nu^{-1/2}\sigma)_{N,\theta}$. Hence the result follows from Lemma \ref{siegelinducedbesselwaldspurgerlemma} and (\ref{StGL2Waldspurgereq2}).

\underline{VIb}: By (2.11) of \cite{NF}, there is an exact sequence
\begin{equation}\label{VIdBesseleq3}
 0\longrightarrow({\rm VIb})_{N,\theta}\longrightarrow
 (\nu^{1/2}\triv_{\GL(2)}\rtimes\nu^{-1/2}\sigma)_{N,\theta}
 \longrightarrow({\rm VId})_{N,\theta}\longrightarrow0.
\end{equation}
By (\ref{siegelinducedonedimjacqueteq}), the middle term equals $\C_{\sigma\circ \Norm_{L/F}}$.
One-dimensionality implies that the sequence splits, so that
\begin{equation}\label{VIdBesseleq4}
 {\rm Hom}_T(\C_{\sigma\circ \Norm_{L/F}},\C_\Lambda)=
 {\rm Hom}_D({\rm VIb},\C_{\Lambda\otimes\theta})\oplus{\rm Hom}_D({\rm VId},\C_{\Lambda\otimes\theta})
\end{equation}
($D$ is the Bessel subgroup defined in \eqref{Ddefeq}). Hence the VIb case follows from the known result for VId.

\underline{VII}:
This follows from Proposition \ref{GSp4genericprop} and Lemma \ref{Klingendegjacquetlemma1}.

\underline{VIIIa and VIIIb}:
In the split case this follows from Proposition \ref{GSp4genericprop} and v) of Proposition \ref{nongenericsplitproposition}. Assume that $\theta$ is non-split. Since we are in a unitarizable situation, the sequence
$$
 0\longrightarrow{\rm VIIIa}\longrightarrow
 1_{F^\times}\rtimes\pi\longrightarrow
 {\rm VIIIb}\longrightarrow0
$$
splits. It follows that
\begin{equation}\label{VIIIbBesseleq1}
 \Hom_D(1_{F^\times}\rtimes\pi,\C_{\Lambda\otimes\theta})
 =\Hom_D({\rm VIIIa},\C_{\Lambda\otimes\theta})\oplus
 \Hom_D({\rm VIIIb},\C_{\Lambda\otimes\theta}).
\end{equation}
By Lemma \ref{Klingendegjacquetlemma1}, the space on the left is one-dimensional for any $\Lambda$. Therefore the Bessel functionals of VIIIb are complementary to those of VIIIa.

Assume that VIIIa admits a $(\Lambda,\theta)$-Bessel functional. Then, by Corollary \ref{fourdimthetatheoremcor1} and Theorem \ref{Ganthetatheorem}, we have $\Hom_T(\pi,\C_\Lambda)\neq0$. Conversely, assume that $\Hom_T(\pi,\C_\Lambda)\neq0$ and assume that VIIIa does not admit a $(\Lambda,\theta)$-Bessel functional; we will obtain a contradiction.
By \eqref{VIIIbBesseleq1}, we have $\Hom_D({\rm VIIIb},\C_{\Lambda\otimes\theta})\neq0$. By Corollary \ref{fourdimthetatheoremcor1} and Theorem \ref{Ganthetatheorem}, we have $\Hom_T(\pi^{\mathrm{JL}},\C_\Lambda)\neq0$. This contradicts \eqref{waldspurgerpropeq2}.

The result for VIIIb now follows from \eqref{VIIIbBesseleq1}.

\underline{IXb}: This was proved in Corollary \ref{bsummary}.

\underline{IXa}:
In the split case this follows from Proposition \ref{GSp4genericprop}. Assume that $\theta$ is non-split. We have an exact sequence
$$
 0\longrightarrow{\rm IXa}\longrightarrow\nu\xi\rtimes\nu^{-1/2}\pi\longrightarrow{\rm IXb}\longrightarrow0.
$$
By Lemma \ref{Klingendegjacquetlemma1}, the space ${\rm Hom}_D(\nu\xi\rtimes\nu^{-1/2}\pi,\C_{\Lambda\otimes\theta})$ is one-dimensional, for any character $\Lambda$ of $L^\times$ satisfying the central character condition. It follows that the possible Bessel functionals of IXa are complementary to those of IXb.

\underline{X}:
In the split case this follows from Proposition \ref{GSp4genericprop}. In the non-split case it follows from Lemma \ref{siegelinducedbesselwaldspurgerlemma}.

\underline{XIa and XIb}:
In the split case this follows from Proposition \ref{GSp4genericprop} and Proposition \ref{nongenericsplitproposition}; note that the $V_1/V_2$ quotient of XIb equals $\tau_{\GL(1)}^{P_3}(\nu\sigma)$ by Table A.6 of \cite{NF}. Assume that $L/F$ is not split, and consider the exact sequence
\begin{equation}\label{XIaBesseleq1}
 0\longrightarrow({\rm XIa})_{N,\theta}\longrightarrow
 (\nu^{1/2}\pi\rtimes\nu^{-1/2}\sigma)_{N,\theta}\longrightarrow
 ({\rm XIb})_{N,\theta}\longrightarrow0.
\end{equation}
It follows from Lemma \ref{Kcompactexactlemma} that
\begin{equation}\label{XIaBesseleq2}
 \Hom_D(\nu^{1/2}\pi\rtimes\nu^{-1/2}\sigma,\C_{\Lambda\otimes\theta})
 =\Hom_D({\rm XIa},\C_{\Lambda\otimes\theta})\oplus
 \Hom_D({\rm XIb},\C_{\Lambda\otimes\theta}).
\end{equation}
Observe here that, by Lemma \ref{siegelinducedbesselwaldspurgerlemma}, the left side equals
${\rm Hom}_T(\sigma\pi,\C_\Lambda)$, which is at most one-dimensional.

Assume that the representation XIa admits a $(\Lambda,\theta)$-Bessel functional. Then $\Lambda\neq\sigma\circ\Norm_{L/F}$ and $\Hom_T(\sigma\pi,\C_{\Lambda})\neq0$ by Corollary \ref{fourdimthetatheoremcor1} and Theorem \ref{Ganthetatheorem}. Conversely, assume that  $\Lambda\neq\sigma\circ\Norm_{L/F}$ and $\Hom_T(\sigma\pi,\C_{\Lambda})\neq0$. Assume also that XIa does not admit a $(\Lambda,\theta)$-Bessel functional; we will obtain a contradiction. By the one-dimensionality of the space on the left hand side of \eqref{XIaBesseleq2}, we have $\Hom_D({\rm XIb},\C_{\Lambda\otimes\theta})\neq0$. By Corollary \ref{fourdimthetatheoremcor1} and Theorem \ref{Ganthetatheorem}, we conclude $\Lambda=\sigma\circ\Norm_{L/F}$, contradicting our assumption.

Assume that the representation XIb admits a $(\Lambda,\theta)$-Bessel functional. Then $\Lambda=\sigma\circ\Norm_{L/F}$ and $\Hom_T(\pi,\C_1)\neq0$ by Corollary \ref{fourdimthetatheoremcor1} and Theorem \ref{Ganthetatheorem}. Conversely, assume that $\Lambda=\sigma\circ\Norm_{L/F}$ and $\Hom_T(\pi,\C_1)\neq0$. Assume also that XIb does not admit a $(\Lambda,\theta)$-Bessel functional; we will obtain a contradiction. By our assumption, the space on the left hand side of \eqref{XIaBesseleq2} is one-dimensional. Hence $\Hom_D({\rm XIa},\C_{\Lambda\otimes\theta})\neq0$. By what we have already proven, this implies $\Lambda\neq\sigma\circ\Norm_{L/F}$, a contradiction.

\underline{Va$^*$}:
This was proved in Corollary \ref{bsummary}.

\underline{XIa$^*$}:
By Proposition \ref{nongenericsplitproposition}, the representation XIa$^*$ has no split Bessel functionals. Assume that $\theta$ is non-split. By Corollary \ref{fourdimthetatheoremcor1} and Theorem \ref{Ganthetatheorem}, if XIa$^*$ admits a $(\Lambda,\theta)$-Bessel functional, then $\Lambda=\sigma\circ\Norm$ and $\Hom_T(\pi^{\mathrm{JL}},\C_1)\neq0$. Conversely, assume that $\Lambda=\sigma\circ\Norm$ and $\Hom_T(\pi^{\mathrm{JL}},\C_1)\neq0$. By Corollary \ref{Vastarlemma}, the twisted Jacquet module $\delta^*(\nu^{1/2}\pi,\nu^{-1/2}\sigma)_{N,\theta}$ is one-dimensional. Therefore, XIa$^*$ does admit a $(\Lambda',\theta)$-Bessel functional for some $\Lambda'$. By what we already proved, $\Lambda'=\Lambda$.

This concludes the proof.
\end{proof}
\subsection{Some cases of uniqueness}\label{uniquenesssec}
Let $(\pi,V)$ be an irreducible, admissible representation of $\GSp(4,F)$. Using the notations from Sect.\ \ref{besselsec}, consider $(\Lambda,\theta)$-Bessel functionals for $\pi$. We say that such functionals are \emph{unique} if the dimension of the space $\Hom_D(V,\C_{\Lambda\otimes\theta})$ is at most $1$. In this section we will prove the uniqueness of split Bessel functionals for all representations, and the uniqueness of non-split Bessel functionals for all non-supercuspidal representations.

As far as we know, a complete proof that Bessel functionals are unique for all $(\Lambda,\theta)$ and all representations $\pi$ has not yet appeared in the literature. In \cite{NovoPia1973} it is proved that $(1,\theta)$-Bessel functionals are unique if $\pi$ has trivial central character. The main ingredient for this proof is Theorem 1' of \cite{GelfandKazhdan1975}. In \cite{Novodvorski1973} it is proved that $(\Lambda,\theta)$-Bessel functionals are unique if $\pi$ has trivial central character. The proof is based on a generalization of Theorem 1' of \cite{GelfandKazhdan1975}. In \cite{Rodier1976} it is stated, without proof, that $(\Lambda,\theta)$-Bessel functionals are unique if $\pi$ is supercuspidal and has trivial central character.

\begin{lemma}\label{splituniquenesslemma}
 Let $\sigma_1$ be a character of $F^\times$, and let $(\pi_1,V_1)$ be an irreducible, admissible representation of $\GL(2,F)$. Let the matrix $S$ be as in \eqref{splitSeq}, and $\theta$ be as in \eqref{splitthetaTeq}. The resulting group $T$ is then given by \eqref{splitthetaTeq}. Let $(\pi,V)$ be an irreducible, admissible representation of $\GSp(4,F)$. Assume there is an exact sequence
 \begin{equation}\label{splituniquenesslemmaeq}
  \pi_1\rtimes\sigma_1\longrightarrow\pi\longrightarrow0.
 \end{equation}
 Let $\Lambda$ be a character of $T$. If $\Lambda$ is not equal to one of the characters $\Lambda_1$ or $\Lambda_2$, given by
 \begin{align}
  \label{splituniquenesslemmaeq2}\Lambda_1({\rm diag}(a,b,a,b))&=\nu^{1/2}(a)\nu^{-1/2}(b)\sigma_1(ab)\omega_{\pi_1}(a),\\
  \label{splituniquenesslemmaeq3}\Lambda_2({\rm diag}(a,b,a,b))&=\nu^{-1/2}(a)\nu^{1/2}(b)\sigma_1(ab)\omega_{\pi_1}(b),
 \end{align}
 then $(\Lambda,\theta)$-Bessel functionals are unique.
\end{lemma}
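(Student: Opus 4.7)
The plan is to leverage the explicit description of the split twisted Jacquet module of a Siegel-induced representation from part (ii) of Lemma \ref{siegelinducedbesselwaldspurgerlemma}, together with the right-exactness of the functor $(\cdot)_{N,\theta}$ applied to \eqref{splituniquenesslemmaeq}. Taking $(N,\theta)$-coinvariants of the surjection $\pi_1\rtimes\sigma_1\to\pi\to 0$ yields a surjection of $T$-modules $(\pi_1\rtimes\sigma_1)_{N,\theta}\twoheadrightarrow\pi_{N,\theta}$, hence an embedding
$$
\Hom_T(\pi_{N,\theta},\C_\Lambda)\hookrightarrow\Hom_T\big((\pi_1\rtimes\sigma_1)_{N,\theta},\C_\Lambda\big).
$$
In view of the natural isomorphism \eqref{DTJacqueteq}, it suffices to show that the right-hand space is at most one-dimensional whenever $\Lambda\notin\{\Lambda_1,\Lambda_2\}$.

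Since the bilinear form associated to $\theta$ via \eqref{splitSeq} has split discriminant (so $L=F\times F$), part (ii) of Lemma \ref{siegelinducedbesselwaldspurgerlemma} applies and furnishes a two-step filtration
$$
0\subset J_2\subset J_1=(\pi_1\rtimes\sigma_1)_{N,\theta},
$$
where $T$ acts on the quotient $J_1/J_2\cong V_{1,\left[\begin{smallmatrix}1&*\\&1\end{smallmatrix}\right],\psi}\oplus V_{1,\left[\begin{smallmatrix}1&*\\&1\end{smallmatrix}\right],\psi}$ through the characters $\Lambda_1$ and $\Lambda_2$ on the two summands (matching \eqref{splituniquenesslemmaeq2}--\eqref{splituniquenesslemmaeq3}), and on $J_2\cong V_1$ by $\mathrm{diag}(a,b,a,b)\cdot v=\sigma_1(ab)\pi_1(\mathrm{diag}(a,b))v$. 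Applying $\Hom_T(-,\C_\Lambda)$ to $0\to J_2\to J_1\to J_1/J_2\to 0$ produces the left-exact sequence
$$
0\longrightarrow\Hom_T(J_1/J_2,\C_\Lambda)\longrightarrow\Hom_T(J_1,\C_\Lambda)\longrightarrow\Hom_T(J_2,\C_\Lambda).
$$

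By the assumption $\Lambda\notin\{\Lambda_1,\Lambda_2\}$, the leftmost term vanishes. The rightmost term is, by construction, the space of split Waldspurger functionals for the $\GL(2,F)$-representation $\sigma_1\pi_1$ relative to the character of the diagonal torus obtained from $\Lambda$ via the identification of $T$ with that torus; by the general uniqueness of Waldspurger functionals recalled in Sect.\ \ref{waldfuncsubsec}, this space is at most one-dimensional. Hence $\dim\Hom_T(J_1,\C_\Lambda)\leq 1$, which yields the asserted uniqueness.

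The only substantive issue is seeing why the exceptional characters $\Lambda_1$ and $\Lambda_2$ must be removed: they are precisely the $T$-eigencharacters supported on the top quotient $J_1/J_2$, and the underlying vector spaces $V_{1,\left[\begin{smallmatrix}1&*\\&1\end{smallmatrix}\right],\psi}$ can have dimension greater than one (for instance, dimension two when $\pi_1$ is an irreducible principal series), so contributions there could in principle break uniqueness. Once the filtration of Lemma \ref{siegelinducedbesselwaldspurgerlemma}(ii) is available, the remaining steps reduce to standard Hom-functor bookkeeping together with $\GL(2)$-Waldspurger uniqueness.
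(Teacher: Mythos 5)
Your argument is correct and is essentially the paper's own proof: reduce to the full induced representation $\pi_1\rtimes\sigma_1$, use the filtration $0\subset J_2\subset J_1=(\pi_1\rtimes\sigma_1)_{N,\theta}$ of Lemma \ref{siegelinducedbesselwaldspurgerlemma}~ii), use $\Lambda\notin\{\Lambda_1,\Lambda_2\}$ to kill any contribution from $J_1/J_2$ (equivalently, restriction to $J_2$ is injective), and conclude from the uniqueness of split Waldspurger-type functionals on $\sigma_1\pi_1$ restricted to the diagonal torus. One small slip in your closing aside: for irreducible infinite-dimensional $\pi_1$ the twisted module $V_{1,\left[\begin{smallmatrix}1&*\\&1\end{smallmatrix}\right],\psi}$ is exactly one-dimensional by uniqueness of Whittaker functionals (it is the untwisted Jacquet module that has dimension two for a principal series), but this does not affect your argument, which only uses that $T$ acts on $J_1/J_2$ through the characters $\Lambda_1$ and $\Lambda_2$.
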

\begin{proof}
Since $\pi$ is a quotient of $\pi_1\rtimes\sigma_1$, it suffices to prove that $\Hom_D(\pi_1\rtimes\sigma_1,\C_{\Lambda\otimes\theta})$ is at most one-dimensional. Any element $\beta$ of this space factors through the Jacquet module $(\pi_1\rtimes\sigma_1)_{N,\theta}$. These Jacquet modules were calculated in Lemma \ref{siegelinducedbesselwaldspurgerlemma} ii). Using the notation of this lemma, the assumption about $\Lambda$ implies that restriction of $\beta$ to $J_2$ establishes an injection
$$
 \Hom_D(\pi_1\rtimes\sigma_1,\C_{\Lambda\otimes\theta})\longrightarrow\Hom_{\left[\begin{smallmatrix} *\\&*\end{smallmatrix}\right]}(\sigma_1\pi_1,\C_\Lambda).
$$
The space on the right is at most one-dimensional; see Sect.\ \ref{siegelindsec}. This proves our statement.
\end{proof}

\begin{theorem}\label{splituniquenesstheorem}
 Let $(\pi,V)$ be an irreducible, admissible representation of $\GSp(4,F)$.
 \begin{enumerate}
  \item Split Bessel functionals for $\pi$ are unique.
  \item Non-split Bessel functionals for $\pi$ are unique, if $\pi$ is not supercuspidal, or if $\pi$ is of type $\mathrm{Va^*}$ or $\mathrm{XIa^*}$.
 \end{enumerate}
\end{theorem}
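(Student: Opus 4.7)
The plan is to reduce each statement to uniqueness results already in hand: the uniqueness of Waldspurger functionals for $\GL(2,F)$ from Sect.~\ref{waldfuncsubsec}, the uniqueness of the $\GL(2,F)$-Whittaker model, and the $P_3$-theory developed in Sections 3 and 4.

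For part (i), the non-generic case is exactly part iv) of Proposition \ref{nongenericsplitproposition}, whose proof computes $V_{N_{\mathrm{alt}},\theta_{\mathrm{alt}}}$ via the $P_3$-filtration of Theorem \ref{finitelength} using the vanishing of $V_2$. In the generic case I would extend that argument by keeping the full filtration $0 \subset V_2 \subset V_1 \subset V_0 = V_{Z^J}$ and computing the contribution of each graded piece to $V_{N_{\mathrm{alt}},\theta_{\mathrm{alt}}}$. The top piece $V_2 \cong \tau^{P_3}_{\GL(0)}(1)$ contributes a single one-dimensional summand carrying a distinguished character of $T$; the middle piece $V_1/V_2$ contributes the finitely many pairwise distinct characters of $T$ already analyzed in Proposition \ref{nongenericsplitproposition}; and the bottom piece $V_0/V_1 \cong \tau^{P_3}_{\GL(2)}(V_{N_Q})$ contributes with multiplicity bounded by the multiplicity of the $\GL(2)$-Whittaker model on $V_{N_Q}$, which is one. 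A case-by-case inspection of Table A.6 of \cite{NF} then confirms that no $T$-character appears in two of these pieces simultaneously, so that $\Hom_T(V_{N_{\mathrm{alt}},\theta_{\mathrm{alt}}},\C_\Lambda)$ is at most one-dimensional for every $\Lambda$.

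For part (ii) in the non-supercuspidal case, I would revisit the case analysis of Theorem \ref{mainnonsupercuspidaltheorem}, now tracking dimensions rather than just non-vanishing. Every non-supercuspidal $\pi$ is a subquotient of a parabolically induced representation $I$; in the non-split case the functor $(\cdot)_{N,\theta}$ is exact by Lemma \ref{Kcompactexactlemma}, so dimension bounds on $\Hom_T(I_{N,\theta},\C_\Lambda)$ propagate to $\pi$. If $I = \pi_1 \rtimes \sigma_1$ is Siegel induced, Lemma \ref{siegelinducedbesselwaldspurgerlemma} identifies the $\Lambda$-isotypic component of $I_{N,\theta}$ with $\Hom_T(\sigma_1\pi_1,\C_\Lambda)$, which is at most one-dimensional by Waldspurger's theorem. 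If $I = \chi \rtimes \pi_1$ is Klingen induced, Lemma \ref{Klingendegjacquetlemma1} bounds each $T$-isotypic component by $\dim\Hom_{\left[\begin{smallmatrix}1&*\\&1\end{smallmatrix}\right]}(\pi_1,\psi) \le 1$, again by the uniqueness of the $\GL(2)$-Whittaker model. The remaining supercuspidal cases Va$^*$ and XIa$^*$ are settled immediately by Corollary \ref{Vastarlemma}, which gives the one-dimensionality of the relevant twisted Jacquet module directly via the theta correspondence with $\GO(X_H)$, together with Proposition \ref{scdimprop} and Proposition \ref{Ozdimeq}.

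The main obstacle is the generic split case of (i): verifying the non-collision of $T$-characters across the three graded pieces of the $P_3$-filtration for every generic $\pi$. This requires an explicit comparison between the distinguished character contributed by $V_2$, the finite list contributed by $V_1/V_2$, and the $T$-characters extracted from the Whittaker model of the $\GL(2)$-module $V_{N_Q}$. This comparison is made manageable by the explicit $P_3$-data tabulated in Appendix A of \cite{NF}, and amounts to a finite bookkeeping check over the representations in the Sally--Tadi\'c classification.
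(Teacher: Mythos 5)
Your part (ii) and the non-generic half of part (i) follow essentially the paper's own route (quotient of a Siegel- or Klingen-induced representation, then Lemma \ref{siegelinducedbesselwaldspurgerlemma}, Lemma \ref{Klingendegjacquetlemma1}, Waldspurger/Whittaker uniqueness, and Corollary \ref{Vastarlemma} for Va$^*$, XIa$^*$), and these are fine. The gap is in the generic split case of part (i), and it sits exactly where you locate the ``main obstacle'': your description of the graded contributions is wrong in a way that makes the non-collision strategy unworkable. The top piece $V_2\cong\tau^{P_3}_{\GL(0)}(1)$ does not contribute ``a single one-dimensional summand carrying a distinguished character of $T$'': its relevant twisted Jacquet module is large and admits a $T$-equivariant functional for \emph{every} character $\Lambda$ — this is precisely why generic representations admit all split Bessel functionals (Proposition \ref{GSp4genericprop}; note that for generic supercuspidal $\pi$ one has $V_{Z^J}=V_2$ and yet all $\Lambda$ occur). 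What is true, by Lemma 2.5.4 of \cite{NF}, is that for each \emph{fixed} $\Lambda$ the contribution of $V_2$ is at most one-dimensional. Consequently, for the finitely many characters $\Lambda$ whose restriction lies in $\nu^{-1}C(\pi)$ — exactly the characters carried by $V_1/V_2$ — the filtration argument only yields $\dim\Hom_D(\pi,\C_{\Lambda\otimes\theta})\le 2$ (one from $V_2$, one from $V_1/V_2$), and no inspection of Table A.6 of \cite{NF} can rule out the collision, because $V_2$ ``collides'' with everything. (A smaller inaccuracy: the bottom piece $\tau^{P_3}_{\GL(2)}(V_{N_Q})$ contributes nothing at all, not ``at most the Whittaker multiplicity'', since by \eqref{tauP3GL2eq} the coordinate on which the split character is non-trivial acts trivially; this does not hurt the bound, but it shows the bookkeeping is off.)

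The paper closes exactly this gap by a different mechanism for the exceptional characters. For generic non-supercuspidal $\pi$ not of type VII, VIIIa, IXa (for which $C(\pi)=\emptyset$ and the criterion of Proposition 2.5.7 of \cite{NF} already gives uniqueness for all $\Lambda$), one writes $\pi$ as a quotient of a Siegel-induced representation $\pi_1\rtimes\sigma_1$ and uses the explicit two-step structure of its split twisted Jacquet module (part ii) of Lemma \ref{siegelinducedbesselwaldspurgerlemma}) to prove uniqueness for all $\Lambda$ outside a pair of Galois-conjugate characters $\Lambda_1,\Lambda_2$ (Lemma \ref{splituniquenesslemma}); then $\dim\Hom_D(\pi,\C_{\Lambda_1\otimes\theta})=\dim\Hom_D(\pi,\C_{\Lambda_2\otimes\theta})$ by \eqref{besselgaloiseq}, and a finite table check that $\{\lambda_1,\lambda_2\}$ is not contained in $\nu^{-1}C(\pi)$ allows the Proposition 2.5.7 criterion to be applied to at least one of the two characters, hence to both. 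Some argument of this kind, going beyond the $P_3$-filtration alone, is needed for those finitely many $\Lambda$; without it your proof of (i) is incomplete for generic representations.
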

\begin{proof}
i) By Proposition \ref{nongenericsplitproposition}, we may assume that $\pi$ is generic. Let the matrix $S$ be as in \eqref{splitSeq}, and $\theta$ be as in \eqref{splitthetaeq}. The resulting group $T$ is then given by \eqref{splitthetaTeq}. Let $\Lambda$ be a character of $T$. We use the fact that any $(\Lambda,\theta)$-Bessel functional $\beta$ on $V$ factors through the $P_3$-module $V_{Z^J}$.

Assume that $\pi$ is supercuspidal. Then, by Theorem \ref{finitelength}, the associated $P_3$-module $V_{Z^J}$ equals $\tau_{\GL(0)}^{P_3}(1)$. Therefore, the space of $(\Lambda,\theta)$-Bessel functionals on $V$ equals the space of linear functionals considered in Lemma 2.5.4 of \cite{NF}. By this lemma, this space is one-dimensional.

Now assume that $\pi$ is non-supercuspidal. As in the proof of Proposition \ref{nongenericsplitproposition}, we write the semisimplification of the quotient $V_1/V_2$ in the $P_3$-filtration as $\sum_{i=1}^n\tau^{P_3}_{\GL(1)}(\chi_i)$ with characters $\chi_i$ of $F^\times$. Let $C(\pi)$ be the set of characters $\chi_i$. Proposition 2.5.7 of \cite{NF} states that if the character $a\mapsto\Lambda({\rm diag}(a,1,a,1))$ is not contained in the set $\nu^{-1}C(\pi)$, then the set of $(\Lambda,\theta)$-Bessel functionals is at most one-dimensional (note that the arguments in the proof of this proposition do not require the hypothesis of trivial central character). The table below lists the sets $\nu^{-1}C(\pi)$ for all generic non-supercuspidal representations. This table implies that $(\Lambda,\theta)$-Bessel functionals for types VII, VIIIa and IXa are unique.

Assume that $\pi$ is not one these types. Then there exists a sequence as in \eqref{splituniquenesslemmaeq} for some irreducible, admissible representation $\pi_1$ of $\GL(2,F)$ and some character $\sigma_1$ of $F^\times$. These $\pi_1$ and $\sigma_1$ are listed in the table below. Let $\Lambda_1$, $\Lambda_2$ be the characters defined in
\eqref{splituniquenesslemmaeq2} and \eqref{splituniquenesslemmaeq3}. Note that, since $\Lambda_1$ and $\Lambda_2$ are Galois conjugate, we have
\begin{equation}\label{splituniquenesstheoremeq1}
 \dim\Hom_D(\pi,\C_{\Lambda_1\otimes\theta})=\dim\Hom_D(\pi,\C_{\Lambda_2\otimes\theta})
\end{equation}
by \eqref{besselgaloiseq}. By Lemma \ref{splituniquenesslemma}, it suffices to prove that these spaces are one-dimensional. Define characters $\lambda_1,\lambda_2$ of $F^\times$ by
\begin{align*}
 \lambda_1(a)&=\Lambda_1({\rm diag}(a,1,a,1))=\nu^{1/2}(a)\sigma_1(a)\omega_{\pi_1}(a),\\
 \lambda_2(a)&=\Lambda_2({\rm diag}(a,1,a,1))=\nu^{-1/2}(a)\sigma_1(a).
\end{align*}
The set $\{\lambda_1,\lambda_2\}$ is listed in the table below for each representation. By the previous paragraph, the spaces \eqref{splituniquenesstheoremeq1} are one-dimensional if $\{\lambda_1,\lambda_2\}$ is not a subset of $\nu^{-1}C(\pi)$. This can easily be verified using the table below.

$$
 \begin{array}{ccccc}
  \toprule
  \pi&\pi_1&\sigma_1&\{\lambda_1,\lambda_2\}&\nu^{-1}C(\pi)\\
  \toprule
  \text{I}&\chi_1\times\chi_2&\sigma&\{\nu^{1/2}\chi_1\chi_2\sigma,\nu^{-1/2}\sigma\}&\{\nu^{1/2}\chi_1\chi_2\sigma,\nu^{1/2}\chi_1\sigma,\nu^{1/2}\chi_2\sigma,\nu^{1/2}\sigma\}\\
  \midrule
  \text{IIa}&\chi\St_{\GL(2)}&\sigma&\{\nu^{1/2}\chi^2\sigma,\nu^{-1/2}\sigma\}&\{\nu^{1/2}\chi^2\sigma,\nu^{1/2}\sigma,\nu\chi\sigma\}\\
  \midrule
  \text{IIIa}&\chi^{-1}\times\nu^{-1}&\nu^{1/2}\chi\sigma&\{\sigma,\chi\sigma\}&\{\nu\chi\sigma,\nu\sigma\}\\
  \midrule
  \text{IVa}&\nu^{-3/2}\St_{\GL(2)}&\nu^{3/2}\sigma&\{\nu^{-1}\sigma,\nu\sigma\}&\{\nu^2\sigma\}\\
  \midrule
  \text{Va}&\nu^{-1/2}\xi\St_{\GL(2)}&\nu^{1/2}\xi\sigma&\{\xi\sigma\}&\{\nu\sigma,\nu\xi\sigma\}\\
  \midrule
  \text{VIa}&\nu^{-1/2}\St_{\GL(2)}&\nu^{1/2}\sigma&\{\sigma\}&\{\nu\sigma\}\\
  \midrule
  \text{VII}&\text{---}&\text{---}&\text{---}&\emptyset\\
  \midrule
  \text{VIIIa}&\text{---}&\text{---}&\text{---}&\emptyset\\
  \midrule
  \text{IXa}&\text{---}&\text{---}&\text{---}&\emptyset\\
  \midrule
  \text{X}&\pi&\sigma&\{\nu^{1/2}\omega_\pi\sigma,\nu^{-1/2}\sigma\}&\{\nu^{1/2}\omega_\pi\sigma,\nu^{1/2}\sigma\}\\
  \midrule
  \text{XIa}&\nu^{-1/2}\pi&\nu^{1/2}\sigma&\{\sigma\}&\{\nu\sigma\}\\
  \bottomrule
 \end{array}
$$

ii) Assume first that $\pi$ is not supercuspidal. Then there exist an irreducible, admissible representation $\pi_1$ of $\GL(2,F)$ and a character $\sigma$ of $F^\times$ such that $\pi$ is either a quotient of $\pi_1\rtimes\sigma$, or a quotient of $\sigma\rtimes\pi_1$. The assertion of ii) now follows from i) of Lemma \ref{siegelinducedbesselwaldspurgerlemma} and Lemma \ref{Klingendegjacquetlemma1}.

Now assume that $\pi=\delta^*([\xi,\nu\xi],\nu^{-1/2}\sigma)$ is of type Va$^*$. Suppose that $\Hom_D(\pi,\C_{\Lambda\otimes\theta})$ is non-zero for some $\theta$ and $\Lambda$, with $L$ being a field. By our main result Theorem \ref{mainnonsupercuspidaltheorem}, the quadratic extension $L$ is the field corresponding to $\xi$ and $\Lambda=\sigma\circ\Norm_{L/F}$. By Corollary \ref{Vastarlemma}, the Jacquet module $\pi_{N,\theta}$ is one-dimensional. This implies that $\Hom_D(\pi,\C_{\Lambda\otimes\theta})$ is one-dimensional.

Finally, assume that $\pi=\delta^*(\nu^{1/2}\pi,\nu^{-1/2}\sigma)$ is of type XIa$^*$. Suppose that $\Hom_D(\pi,\C_{\Lambda\otimes\theta})$ is non-zero for some $\theta$ and $\Lambda$, with $L$ being a field. By our main result Theorem \ref{mainnonsupercuspidaltheorem}, we have $\Lambda=\sigma\circ\Norm_{L/F}$ and $\Hom_T(\pi^{\mathrm{JL}},\C_1)\neq0$. By Corollary \ref{Vastarlemma}, the Jacquet module $\pi_{N,\theta}$ is one-dimensional. This implies that $\Hom_D(\pi,\C_{\Lambda\otimes\theta})$ is one-dimensional.
\end{proof}

\section{Some applications}
We present two applications that result from the methods used in this paper. The first application is a characterization of irreducible, admissible, non-generic representations of $\GSp(4,F)$ in terms of their twisted Jacquet modules and their Fourier-Jacobi quotient. The second application concerns the existence of certain vectors with good invariance properties.
\subsection{Characterizations of non-generic representations}
As before, we fix a non-trivial character $\psi$ of $F$.
\begin{lemma} \label{ZJfinitelengthlemma}
  Let $(\pi,V)$ be a non-generic, supercuspidal, irreducible, admissible representation of  $\GSp(4,F)$. Then $\dim V_{N,\theta}<\infty$ for all non-degenerate $\theta$.
\end{lemma}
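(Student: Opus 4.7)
The plan is to combine the vanishing $V_{Z^J}=V_2=0$ from Theorem \ref{finitelength}(iii) with an averaging argument on the Jacobi group $G^J$, reducing the desired finiteness to a Whittaker-type statement for admissible genuine representations of $\meta(2,F)$.

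First I would exploit the $M$-action on non-degenerate characters (Sect.~\ref{actionsubsec}), which preserves $\dim V_{N,\theta}$. By diagonalizing the matrix $S$ one may assume $\theta=\theta_S$ with $S=\mathrm{diag}(a,c)$, $ac\neq 0$; in the split case one may further choose the isotropic representative $S=\left[\begin{smallmatrix}&1/2\\1/2&\end{smallmatrix}\right]$, for which $\theta|_{Z^J}=1$, so that $V(Z^J)\subseteq V(N,\theta)$ and $V_{N,\theta}$ is a quotient of $V_{Z^J}=0$, hence vanishes. In the non-split (anisotropic) case no $M$-conjugate of $\theta$ can be made trivial on $Z^J$ (this would force the second column of the conjugating matrix to be isotropic for the anisotropic form $S$, which is impossible), so $\theta|_{Z^J}=\psi^c$ with $c\in F^\times$; then $V(Z^J,\psi^c)\subseteq V(N,\theta)$, and $V_{N,\theta}$ is a quotient of the smooth $G^J$-module $V_{Z^J,\psi^c}$.

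Next I would prove that $V_{Z^J,\psi^c}$ is admissible as a $G^J$-representation. For any compact open subgroup $K\subset G^J$ and any $[v]\in(V_{Z^J,\psi^c})^K$ one has $\pi(k)v-v\in V(Z^J,\psi^c)$ for every $k\in K$, so by local constancy the vector $\vl(K)^{-1}\int_K\pi(k)v\,dk$ lies in $V^K$ and is congruent to $v$ modulo $V(Z^J,\psi^c)$. Hence $(V_{Z^J,\psi^c})^K$ is a quotient of the finite-dimensional space $V^K$, proving admissibility. By the bijection recalled before Lemma \ref{GJWhitlemma} there is an admissible smooth genuine representation $(\tau,W)$ of $\meta(2,F)$ with $V_{Z^J,\psi^c}\cong\tau\otimes\pisw^c$ as $G^J$-modules. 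Repeating the calculation in the proof of Lemma \ref{GJWhitlemma}, using \eqref{upperweileq}, \eqref{piswmformula1}, and iii) of Lemma \ref{basicFjacquetlemma}, one identifies $V_{N,\theta}=V_{N,\theta_{a,0,c}}$ with the Whittaker-type twisted Jacquet module $W_{\left[\begin{smallmatrix}1&*\\&1\end{smallmatrix}\right],\psi^a}$.

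The main obstacle is the last step: showing that this Whittaker module is finite-dimensional. By Lemme 2 on p.~226 of \cite{Wald1991} it has dimension at most one on each irreducible genuine $\meta(2,F)$-subquotient of $\tau$, so it suffices to show that $\tau$ has finite length; this follows from the admissibility established above combined with the Noetherian property of smooth admissible representations with fixed central character. Assembling these pieces yields $\dim V_{N,\theta}<\infty$.
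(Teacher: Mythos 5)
The reduction steps are fine: conjugating $S$ to diagonal form, disposing of the split case via $V_{Z^J}=0$, passing to the $G^J$-quotient $V_{Z^J,\psi^c}$, the exactness-of-$K$-invariants argument showing this quotient is admissible, and the identification of $V_{N,\theta_{a,0,c}}$ with $W_{\left[\begin{smallmatrix}1&*\\&1\end{smallmatrix}\right],\psi^a}$ for the corresponding genuine representation $(\tau,W)$ of $\meta(2,F)$ all hold. The proof breaks at the final step. Admissibility, even with a fixed central character, does not imply finite length: an infinite direct sum of pairwise non-isomorphic supercuspidal representations of increasing depth with a common central character is admissible but has infinite length (and each summand can be generic, so its Whittaker-type twisted Jacquet module is then infinite-dimensional). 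There is no ``Noetherian property'' of the kind you invoke; Bernstein's Noetherian results concern \emph{finitely generated} representations of reductive groups, and you have established neither finite generation of $V_{Z^J,\psi^c}$ over $G^J$ nor any substitute for it. Note also that $G^J$ is not reductive, so even the standard statements would not apply verbatim.

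Worse, the assertion you need — that $V_{Z^J,\psi}$ has finite length for non-generic $\pi$ — is precisely part iv) of Theorem \ref{nongenchartheorem} of the paper, and in the supercuspidal case its proof rests on this very lemma via Lemma \ref{kdboundlemma} (finite length is \emph{equivalent} to finiteness of the twisted Jacquet modules $d(W^J)$). So your route is circular in substance: the finite-length claim is exactly as hard as the statement to be proved. The paper's proof supplies the missing nontrivial input from the theta correspondence: by Theorem 5.6 of \cite{GaTa2011}, every non-generic supercuspidal $\pi$ has a nonzero theta lift to $\GO(X_H)$ with $X_H$ the anisotropic four-dimensional space, and then i) of Theorem \ref{fourdimthetatheorem} together with Proposition \ref{scdimprop} gives $\dim V_{N,\theta_S}=\dim\Hom_{\OO(X_H)_z}(\sigma,\C_1)<\infty$. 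Some input of this depth (controlling non-generic supercuspidals globally, not just formally) is needed; the soft admissibility argument cannot replace it.
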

\begin{proof}
If $\theta$ is split, then $V_{N,\theta}=0$ by Theorem \ref{finitelength}. Assume that $\theta$ is not split. Let $\theta=\theta_S$ with $S$ as in \eqref{Sdefeq}. We may assume that $\dim V_{N,\theta}\neq0$. Let $X$ be as in \eqref{XHeq}. By Theorem 5.6 of \cite{GaTa2011}, there exists an irreducible, admissible representation $\sigma$ of $\GO(X)$ such that $\Hom_R(\omega,\pi\otimes\sigma)\neq0$; here, $\omega$ is the Weil representation defined in Sect.\ \ref{thetabesselsubsec}. By i) of Theorem \ref{fourdimthetatheorem}, the set $\Omega_S$ is non-empty. By Proposition \ref{scdimprop}, the dimension of $V_{N,\theta}$ is finite.
\end{proof}

Let $W$ be a smooth representation of $N$. We will consider the dimensions of the complex vector spaces
$
 W_{N, \theta_{a,b,c}}.
$
Fix representatives $a_1,\dots, a_t$ for $F^\times / F^{\times 2}$. We define
$$
 d(W)=\sum_{i=1}^t \dim W_{N, \theta_{a_i,0,1}}.
$$
If $0=W_0 \subset W_1 \subset W_2 \subset \dots \subset W_k=W$ is a chain of $N$ subspaces, then 
\begin{equation}\label{dWsumeq}
 d(W) = \sum_{j=1}^k d(W_j/W_{j-1}).
\end{equation}
If one of the spaces $W_{N, \theta_{a_i,0,1}}$ is infinite-dimensional, then this equality still holds in the sense that both sides are infinite.

\begin{lemma} \label{detectlemma} Let $W^J$ be a non-zero, irreducible, smooth representation of $G^J$ admitting $\psi$ as a central character. Then $1\leq d(W^J) \leq\#F^\times/F^{\times2}$.
\end{lemma}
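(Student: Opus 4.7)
The plan is to deduce the bounds directly from Lemma \ref{GJWhitlemma}, which is the substantive input. Since $W^J$ is irreducible, smooth, and has central character $\psi = \psi^1$, by the Stone--von Neumann style bijection recalled before Lemma \ref{GJWhitlemma} we may write $W^J \cong \tau \otimes \pisw^1$ for a unique irreducible, genuine, smooth representation $\tau$ of $\meta(2,F)$. The only use I will make of this is that the hypotheses of Lemma \ref{GJWhitlemma} apply to $W^J$ with $m=1$.

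First I would invoke Lemma \ref{GJWhitlemma} with $m=1$ to obtain three facts simultaneously: (a) $\dim W^J_{N,\theta_{a,0,1}} \leq 1$ for every $a \in F^\times$; (b) there exists some $a \in F^\times$ with $\dim W^J_{N,\theta_{a,0,1}} = 1$; and (c) the dimension $\dim W^J_{N,\theta_{a,0,1}}$ depends only on the class of $a$ in $F^\times/F^{\times 2}$. Fact (c) ensures that the sum
\[
d(W^J) = \sum_{i=1}^{t} \dim W^J_{N,\theta_{a_i,0,1}}
\]
is well-defined and independent of the chosen system of representatives $a_1,\ldots,a_t$ of $F^\times/F^{\times 2}$.

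Given these facts, the bounds follow immediately. The upper bound $d(W^J) \leq t = \#F^\times/F^{\times 2}$ is a consequence of (a), since each of the $t$ summands is bounded by $1$. For the lower bound, by (b) pick some $a \in F^\times$ with $\dim W^J_{N,\theta_{a,0,1}} = 1$; then the representative $a_i$ in the same square class as $a$ contributes $1$ to the sum by (c), so $d(W^J) \geq 1$.

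There is no real obstacle here: the work has already been done in Lemma \ref{GJWhitlemma} (which in turn rests on Waldspurger's lemma about Whittaker dimensions for irreducible representations of $\meta(2,F)$). The only point requiring a moment of care is making sure to invoke (c) so that we are summing over a legitimate set of square-class representatives and not overcounting or undercounting; once that is noted, the two inequalities are tautological.
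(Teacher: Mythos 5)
Your proposal is correct and is essentially the paper's own argument: the paper proves this lemma by the single line ``This follows immediately from Lemma \ref{GJWhitlemma},'' and your spelled-out version (applying that lemma with $m=1$ and using parts (a), (b), (c) to bound each square-class summand and guarantee one non-zero summand) is exactly what that one-liner means. No gaps.
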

\begin{proof}
This follows immediately from Lemma \ref{GJWhitlemma}.
\end{proof}

\begin{lemma}\label{kdboundlemma}
 Let $(\tau^J,W^J)$ be a smooth representation of $G^J$. Then $W^J$ has finite length if and only if $d(W^J)$ is finite. If it has finite length, then
 $$
  {\rm length}(W^J)\leq d(W^J)\leq{\rm length}(W^J)\cdot\#F^\times/F^{\times2}.
 $$
\end{lemma}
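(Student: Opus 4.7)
The proof hinges on two ingredients already recorded: the additivity \eqref{dWsumeq} of the invariant $d$ along filtrations (a reflection of the exactness of each twisted Jacquet functor $V\mapsto V_{N,\theta_{a_i,0,1}}$), and the tight bound $1\leq d(S)\leq\#F^\times/F^{\times2}$ for every irreducible smooth $G^J$-representation $S$ with central character $\psi$ furnished by Lemma \ref{detectlemma}. Note that the definition of $d$ detects only the $\psi$-isotypic part for $Z^J$, so the relevant case of the lemma is the one in which $Z^J$ acts on $W^J$ via $\psi$; under this standing assumption, inherited by every subrepresentation, quotient, and subquotient, every non-zero piece satisfies $d\geq 1$. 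Indeed, any non-zero smooth $G^J$-representation with central character $\psi$ contains a cyclic subrepresentation, and a standard Zorn's-lemma argument on proper subrepresentations of a cyclic smooth representation produces an irreducible quotient; that quotient inherits central character $\psi$ and therefore has $d\geq 1$ by Lemma \ref{detectlemma}, whence the original representation has $d\geq 1$ by monotonicity of $d$ under quotients, a direct consequence of \eqref{dWsumeq} and non-negativity.

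Granting this, the forward direction of the equivalence is immediate: if $W^J$ admits a composition series $0=W_0\subsetneq W_1\subsetneq\dots\subsetneq W_k=W^J$, then applying \eqref{dWsumeq} and the irreducible bounds to each $W_j/W_{j-1}$ gives
$$
k=\sum_{j=1}^k 1\leq\sum_{j=1}^k d(W_j/W_{j-1})=d(W^J)\leq\sum_{j=1}^k \#F^\times/F^{\times2}=k\cdot\#F^\times/F^{\times2}.
$$
In particular $d(W^J)$ is finite and the two-sided estimate claimed in the lemma holds.

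For the converse I argue by contrapositive. Assume $W^J$ has infinite length; then $W^J$ is either not Noetherian or not Artinian. In the first case, fix an infinite strictly ascending chain $0\subsetneq W_1\subsetneq W_2\subsetneq\dots\subseteq W^J$ and apply \eqref{dWsumeq} to the finite truncation $0\subsetneq W_1\subsetneq\dots\subsetneq W_n$: since each non-zero quotient $W_i/W_{i-1}$ contributes $d(W_i/W_{i-1})\geq 1$ by the observation of the first paragraph, we obtain $d(W_n)\geq n$. Monotonicity $d(W_n)\leq d(W^J)$ then forces $d(W^J)\geq n$ for every $n$, contradicting $d(W^J)<\infty$. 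The Artinian case is handled identically after passing from the descending chain $W^J\supsetneq V_1\supsetneq V_2\supsetneq\dots$ to the ascending chain of quotients $W^J/V_n$ and noting that $d(W^J/V_n)\leq d(W^J)$.

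The main obstacle is precisely the step that lifts Lemma \ref{detectlemma} from irreducible representations to arbitrary non-zero smooth representations with central character $\psi$: without it, a priori there is no reason that a non-zero subquotient $W_i/W_{i-1}$ in an infinite chain must contribute anything to $d$, and the counting argument collapses. The cyclic-plus-Zorn extraction of an irreducible quotient, combined with the monotonicity of $d$ under quotients, is the device that bridges this gap and powers both directions of the equivalence.
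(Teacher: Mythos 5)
Your proof is correct and follows the paper's approach: the two-sided bound is obtained, exactly as in the paper, by applying the additivity \eqref{dWsumeq} to a composition series and invoking the bounds of Lemma \ref{detectlemma} on each irreducible subquotient. For the converse the paper only says ``a similar argument''; your expansion — extracting an irreducible quotient of a cyclic subrepresentation to see that every non-zero smooth piece on which $Z^J$ acts by $\psi$ has $d\geq 1$, and then counting along arbitrarily long chains (Noetherian/Artinian failure) using monotonicity of $d$ — correctly fills in that step, and your explicit proviso that $Z^J$ acts via $\psi$ matches the implicit hypothesis under which the lemma is actually stated and used.
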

\begin{proof}
Assume that $W^J$ has finite length. Let 
$$
 0=W_0 \subset W_1 \subset W_2 \subset \dots \subset W_k=W^J
$$
be a chain of $G^J$ spaces such that $W_j/W_{j-1}$ is not zero and irreducible. By \eqref{dWsumeq}, we have 
$$
d(W^J)=\sum_{j=1}^k d(W_j/W_{j-1}).
$$
By Lemma \ref{detectlemma}, $1\leq d(W_j/W_{j-1}) \leq\#F^\times/F^{\times2}$ for $j=1,\dots,k$. It follows that $d(W^J)$ is finite, and that the asserted inequalities hold.

If $W^J$ has infinite length, a similar argument shows that $d(W)$ is infinite.
\end{proof}

\begin{theorem}\label{nongenchartheorem}
 Let $(\pi,V)$ be an irreducible, admissible representation of $\GSp(4,F)$. The following statements are equivalent.
 \begin{enumerate}
  \item $\pi$ is not generic.
  \item $\dim V_{N,\theta}<\infty$ for all split $\theta$.
  \item $\dim V_{N,\theta}<\infty$ for all non-degenerate $\theta$.
  \item The $G^J$-representation $V_{Z^J,\psi}$ has finite length.
 \end{enumerate}
\end{theorem}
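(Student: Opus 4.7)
The plan is to establish the cycle $(\text{i})\Rightarrow(\text{iv})\Leftrightarrow(\text{iii})\Rightarrow(\text{ii})\Rightarrow(\text{i})$, with $(\text{iii})\Rightarrow(\text{ii})$ being immediate since split characters are non-degenerate. For $(\text{ii})\Rightarrow(\text{i})$ I argue by contrapositive: if $\pi$ is generic, Proposition \ref{GSp4genericprop} supplies a non-zero split $(\Lambda,\theta)$-Bessel functional for every character $\Lambda$ of the split torus $T$ with $\Lambda|_{F^\times}=\omega_\pi$. Since $T/F^\times\cong F^\times$ has infinitely many characters, $V_{N,\theta}$ surjects onto infinitely many distinct one-dimensional $T$-modules and is therefore infinite-dimensional.

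For the equivalence $(\text{iii})\Leftrightarrow(\text{iv})$ I would use Lemma \ref{kdboundlemma} together with the $M$-action from Section \ref{actionsubsec}. Since $\theta_{a,0,1}$ restricts to $\psi$ on $Z^J$, the twisted Jacquet module $V_{N,\theta_{a,0,1}}$ factors through $V_{Z^J,\psi}$ and equals $(V_{Z^J,\psi})_{N,\theta_{a,0,1}}$; summing over representatives $a_1,\dots,a_t$ of $F^\times/F^{\times2}$ gives $d(V_{Z^J,\psi})=\sum_i\dim V_{N,\theta_{a_i,0,1}}$. Lemma \ref{kdboundlemma} then converts finite length of $V_{Z^J,\psi}$ into finiteness of this sum and vice versa. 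To pass between finiteness at the $\theta_{a_i,0,1}$ and at all non-degenerate $\theta$, I invoke the $M$-action: since $\dim V_{N,\theta}$ depends only on $\disc(\theta)\in F^\times/F^{\times2}$ and the discriminants $-a_iF^{\times2}$ exhaust all classes, the two finiteness statements are equivalent.

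The central work is $(\text{i})\Rightarrow(\text{iii})$, which I would prove by cases on the structure of $\pi$. When $\pi$ is supercuspidal and non-generic, Lemma \ref{ZJfinitelengthlemma} directly gives $\dim V_{N,\theta}<\infty$ for all non-degenerate $\theta$. When $\pi$ is non-supercuspidal and non-generic, I would proceed through the Sally--Tadi\'c table entry by entry, using that each such representation is a composition factor of a Siegel induction $\pi_1\rtimes\sigma$ with $\pi_1$ one-dimensional --- in which case Lemma \ref{siegelinducedbesselwaldspurgerlemma} bounds the ambient Jacquet module by $\dim\pi_1<\infty$ --- or a composition factor of a Klingen induction $\chi\rtimes\pi_1$ with $\pi_1$ one-dimensional, for which Lemma \ref{Klingendegjacquetlemma1} forces $d=0$ and the ambient non-split Jacquet module to vanish. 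The remaining families VIIIb, IXb, and XIb arise from induction with infinite-dimensional supercuspidal data, and for these I would read the finite-dimensional bound off the explicit description of Bessel functionals in Theorem \ref{mainnonsupercuspidaltheorem}, Corollary \ref{bsummary}, and the dichotomy \eqref{waldspurgerpropeq2}, together with the uniqueness result of Theorem \ref{splituniquenesstheorem}.

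The main obstacle will be the last group of representations: although they are non-generic, their ambient induced representation has an infinite-dimensional Siegel-unipotent Jacquet module (by Lemma \ref{Klingendegjacquetlemma1} or ii) of Lemma \ref{siegelinducedbesselwaldspurgerlemma}), so isolating the finite-dimensional contribution of the non-generic summand requires carefully tracking the short exact sequences in the proof of Theorem \ref{mainnonsupercuspidaltheorem} together with the theta-correspondence input from Section \ref{thetaapplicationssec}, rather than relying on a soft uniform argument.
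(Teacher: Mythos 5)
Your overall architecture coincides with the paper's: (i)$\Rightarrow$(iii)$\Rightarrow$(ii)$\Rightarrow$(i) plus (iii)$\Leftrightarrow$(iv) via $d(\cdot)$ and Lemma~\ref{kdboundlemma}, with the $M$-action reducing arbitrary non-degenerate $\theta$ to the $\theta_{a,0,1}$ (a step the paper leaves implicit). Your (ii)$\Rightarrow$(i) via Proposition~\ref{GSp4genericprop} and infinitely many split $\Lambda$ is a valid variant of the paper's argument (which instead uses $V_2\cong\tau^{P_3}_{\GL(0)}(1)$ and Lemma 2.5.4 of \cite{NF}). The problem is in (i)$\Rightarrow$(iii), exactly at the point you flag as the ``main obstacle'': for $\pi$ of type VIIIb, IXb, XIb and $\theta$ \emph{split}, none of your proposed tools closes the argument. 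The classification of Bessel functionals (Theorem~\ref{mainnonsupercuspidaltheorem}, Corollary~\ref{bsummary}) together with uniqueness (Theorem~\ref{splituniquenesstheorem}) bounds $\dim V_{N,\theta}$ only when $V_{N,\theta}$ is semisimple as a $T$-module; that holds for non-split $\theta$ because $T/F^\times$ is compact (this is how the paper argues, and you should state it), but fails to be available for split $\theta$, where $T/F^\times\cong F^\times$ is not compact and a module with few or no $T$-eigenfunctionals can still be infinite-dimensional. Nor can you read finiteness off the short exact sequences: e.g.\ $(1_{F^\times}\rtimes\pi)_{N,\theta}=(\mathrm{VIIIa})_{N,\theta}\oplus(\mathrm{VIIIb})_{N,\theta}$ has infinite-dimensional ambient and generic parts (Lemma~\ref{Klingendegjacquetlemma}), so exactness gives no control on the VIIIb summand, and the theta-correspondence input (Proposition~\ref{scdimprop}, Corollary~\ref{Vastarlemma}) only addresses non-split $\theta$ or supercuspidal packet members.

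The missing idea is the paper's uniform split-$\theta$ argument via $P_3$-theory: if $\pi$ is non-generic then $V_2=0$ in Theorem~\ref{finitelength}, so (as in the proof of Proposition~\ref{nongenericsplitproposition}) $V_{N_{\mathrm{alt}},\theta_{\mathrm{alt}}}\cong (V_1)_{\left[\begin{smallmatrix}1\\ *&1&*\\&&1\end{smallmatrix}\right],\psi}$, which by Lemma 2.5.5 and Table A.6 of \cite{NF} has a finite filtration with one-dimensional quotients; this settles split $\theta$ for \emph{all} non-generic $\pi$ at once, including VIIIb, IXb, XIb (and also the Klingen-induced constituents IIIb, IVc, VIc, VId, for which your cited Lemma~\ref{Klingendegjacquetlemma1} only treats non-split $\theta$; alternatively Lemma~\ref{Klingendegjacquetlemma}~i) covers them). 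With that replacement, your case-by-case treatment of the one-dimensional-data constituents via Lemma~\ref{siegelinducedbesselwaldspurgerlemma} is sound but unnecessary: the paper handles split $\theta$ by the $P_3$-filtration and non-split $\theta$ by Lemma~\ref{ZJfinitelengthlemma} (supercuspidal case) or by compactness of $T/F^\times$ plus the Bessel table and uniqueness (non-supercuspidal case).
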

\begin{proof}
i) $\Rightarrow$ iii) Assume that $\pi$ is not generic. Let $\theta$ be a non-degenerate character of $N$. Assume first that $\theta$ is split. Then $V_{N,\theta}$ can be calculated from the $P_3$-filtration of $\pi$. As in the proof of Proposition \ref{nongenericsplitproposition} we see that $V_{N,\theta}$ is finite-dimensional.

Now assume that $\theta$ is not split. If $\pi$ is supercuspidal, then $\dim V_{N,\theta}<\infty$ by Lemma \ref{ZJfinitelengthlemma}. Assume that $\pi$ is not supercuspidal. Then the table of Bessel functionals shows that $\pi$ admits $(\Lambda,\theta)$-Bessel functionals only for finitely many $\Lambda$. Since every $\Lambda$ can occur in $V_{N,\theta}$ at most once by the uniqueness of Bessel functionals (Theorem \ref{splituniquenesstheorem}), this implies that $V_{N,\theta}$ is finite-dimensional.

iii) $\Rightarrow$ ii) is trivial.

ii) $\Rightarrow$ i) Assume that $\pi$ is generic. Then the subspace $V_2$ of the $P_3$-module $V_{Z^J}$ from Theorem \ref{finitelength} is non-zero. In fact, this subspace is isomorphic to the representation $\tau^{P_3}_{\GL(0)}(1)$ defined in \eqref{tauP3GL0eq}. By Lemma 2.5.4 of \cite{NF}, the space
$$
 (V_2)_{\left[\begin{smallmatrix}1&*&*\\&1\\&&1\end{smallmatrix}\right],\theta_{0,1}},
$$
where $\theta_{a,b}$ is defined in \eqref{thetaabdefeq}, is infinite-dimensional. This implies that $V_{N,\theta_{0,1,0}}$ is infinite-dimensional, contradicting the hypothesis in ii).

iii) $\Leftrightarrow$ iv) 
Let $W^J=V_{Z^J,\psi}$. Then $W^J_{N,\theta_{a,0,1}}=V_{N,\theta_{a,0,1}}$ for any $a$ in $F^\times$, so that $d(W^J)=d(V)$. Lemma \ref{kdboundlemma} therefore implies that iii) and iv) are equivalent.
\end{proof}

For more thoughts on $V_{Z^J,\psi}$, see \cite{AdlerPrasad2006}. Theorem \ref{nongenchartheorem} answers one of the questions mentioned at the end of this paper.

\subsection{Invariant vectors}
Let $(\pi,V)$ be an irreducible, admissible representation of $\GSp(4,F)$. In this section we will prove the existence of a vector $v$ in $V$ such that ${\rm diag}(1,1,c,c)v=v$ for all units $c$ in the ring of integers $\OF$ of $F$. This result was motivated by a question of Abhishek Saha; see \cite{Sa2013}.

Our main tool will be the $G^J$-module $V_{Z^J,\psi}$ for a smooth representation $(\pi,V)$ of $\GSp(4,F)$. Throughout this section we will make a convenient assumption about the character $\psi$ of $F$, namely that $\psi$ has conductor $\OF$. By definition, this means that $\psi$ is trivial on $\OF$, but not on $\p^{-1}$, where $\p$ is the maximal ideal of $\OF$. We normalize the Haar measure on $F$ such that $\OF$ has volume $1$. Let $q$ be the cardinality of the residue class field $\OF/\p$.

In this section, we will abbreviate
$$
 d(c)=\begin{bmatrix}1\\&1\\&&c\\&&&c\end{bmatrix},\qquad z(x)=\begin{bmatrix}1&&&x\\&1\\&&1\\&&&1\end{bmatrix}
$$
for $c$ in $F^\times$ and $x$ in $F$.

\begin{lemma}\label{VZJlemma1}
 Let $(\pi,V)$ be a smooth representation of $\GSp(4,F)$. Let $p:\:V\rightarrow V_{Z^J,\psi}$ be the projection map, and let $w$ in $V_{Z^J,\psi}$ be non-zero. Then there exists a positive integer $m$ and a non-zero vector $v$ in $V$ with the following properties.
 \begin{enumerate}
  \item $p(v)=w$.
  \item $\pi(z(x))v=\psi(x)v$ for all $x\in\p^{-m}$.
  \item $\pi(d(c))v=v$ for all $c\in1+\p^m$.
 \end{enumerate}
\end{lemma}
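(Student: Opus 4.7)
The plan is to use the standard averaging construction for twisted Jacquet modules. First, choose any lift $v_0\in V$ of $w$, so that $p(v_0)=w$. By smoothness of $\pi$, there exists a positive integer $m_0$ such that $\pi(d(c))v_0=v_0$ for every $c\in 1+\p^{m_0}$. Fix any integer $m\geq m_0$ and define
\begin{equation*}
v \;=\; q^{-m}\int\limits_{\p^{-m}} \psi(-x)\,\pi(z(x))v_0\,dx,
\end{equation*}
where the integral is absolutely convergent because $x\mapsto\pi(z(x))v_0$ is locally constant and $\p^{-m}$ is compact.

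Next I would verify the three required properties in order. For (i), since $p$ commutes with $\pi$-action, we have $p(\pi(z(x))v_0)=\psi(x)w$ for every $x$, hence
\begin{equation*}
p(v)=q^{-m}\int\limits_{\p^{-m}}\psi(-x)\psi(x)w\,dx=q^{-m}\cdot\vl(\p^{-m})\,w = w.
\end{equation*}
In particular $v\neq0$. For (ii), fix $y\in\p^{-m}$ and change variables $u=x+y$ (so that $u$ again ranges over $\p^{-m}$ since $\p^{-m}$ is a subgroup of $F$) to obtain
\begin{equation*}
\pi(z(y))v = q^{-m}\int\limits_{\p^{-m}}\psi(-(u-y))\,\pi(z(u))v_0\,du = \psi(y)\,v.
\end{equation*}

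For (iii), the key computation uses the conjugation identity $d(c)z(x)d(c)^{-1}=z(c^{-1}x)$ in $\GSp(4,F)$, which follows from a direct matrix multiplication. For $c\in 1+\p^m$, invariance of $v_0$ under $d(c)$ gives
\begin{equation*}
\pi(d(c))v = q^{-m}\int\limits_{\p^{-m}}\psi(-x)\,\pi(z(c^{-1}x))v_0\,dx = q^{-m}\int\limits_{\p^{-m}}\psi(-cy)\,\pi(z(y))v_0\,dy,
\end{equation*}
after substituting $y=c^{-1}x$ (valid because $c$ is a unit, so the substitution preserves $\p^{-m}$ and has Jacobian of absolute value $1$). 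Writing $\psi(-cy)=\psi(-y)\psi(-(c-1)y)$, the crucial observation is that $(c-1)y\in \p^m\cdot\p^{-m}=\OF$, and $\psi$ has conductor $\OF$, so $\psi((c-1)y)=1$. Hence the last integral equals $q^m v$, yielding $\pi(d(c))v=v$. The only obstacle worth highlighting is this last step, which depends on the simultaneous compatibility between the size $\p^{-m}$ of the averaging domain and the size $\p^m$ of the neighborhood in which $c$ lies; choosing $m\geq m_0$ ensures both smoothness invariance of $v_0$ and the triviality of $\psi$ on the resulting product.
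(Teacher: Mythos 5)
Your proposal is correct and follows essentially the same route as the paper: lift $w$ to $v_0$, use smoothness to get invariance of $v_0$ under $d(1+\p^m)$, and average $\psi(-x)\pi(z(x))v_0$ over $\p^{-m}$, verifying (iii) via the conjugation $d(c)z(x)d(c)^{-1}=z(c^{-1}x)$ and the triviality of $\psi$ on $\OF$. The only (harmless) differences are that you allow any $m\geq m_0$ and write out explicitly the computations of $p(v)=w$ and property (ii) that the paper treats as evident.
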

\begin{proof}
Let $v_0$ in $V$ be such that $p(v_0)=w$. Let $m$ be a positive integer such that $\pi(d(c))v_0=v_0$ for all $c\in1+\p^m$. Set $v=q^{-m}\int\limits_{\p^{-m}}\psi(-x)\pi(z(x))v_0\,dx$. Then $p(v)=w$.
In particular, $v$ is not zero. Evidently, $v$ has property ii). Moreover, for $c$ in $1+\p^m$,
\begin{align*}
 \pi(d(c))(v)&=q^{-m}\int\limits_{\p^{-m}}\psi(-x)\pi(z(xc^{-1})d(c))v_0\,dx\\
 &=q^{-m}\int\limits_{\p^{-m}}\psi(-xc)\pi(z(x))v_0\,dx\\
 &=q^{-m}\int\limits_{\p^{-m}}\psi(-x)\pi(z(x))v_0\,dx\\
 &=v.
\end{align*}
This concludes the proof.
\end{proof}

\begin{lemma}\label{VZJlemma2}
 Let $(\pi,V)$ be a smooth representation of $\GSp(4,F)$. Let $p:\:V\rightarrow V_{Z^J,\psi}$ be the projection map. Let $m$ be a positive integer. Assume that $v$ in $V$ is such that $\pi(z(x))v=\psi(x)v$ for all $x\in\p^{-m}$. If $c$ is in $\OF^\times$ but not in $1+\p^m$, then $p(\pi(d(c))v)=0$.
\end{lemma}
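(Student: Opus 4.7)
\medskip

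\noindent\emph{Proof proposal for Lemma \ref{VZJlemma2}.}

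The plan is to verify directly that $\pi(d(c))v$ lies in the defining subspace $V(Z^J,\psi)$, using the standard integration criterion for twisted Jacquet modules. Recall that $u \in V(Z^J,\psi)$ if and only if there exists a compact open subgroup $\p^{-n} \subset F \cong Z^J$ such that
$$
 \int_{\p^{-n}} \psi(-x)\,\pi(z(x))u\,dx = 0.
$$
So it suffices to produce such an $n$ for $u = \pi(d(c))v$; I will take $n = m$.

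The key computation is the conjugation relation between $d(c)$ and $z(x)$. A direct matrix calculation gives $d(c)^{-1}z(x)d(c) = z(cx)$, equivalently $z(x)d(c) = d(c)z(cx)$. Substituting this and then changing variables $y = cx$ (which preserves measure since $c \in \OF^\times$) yields
$$
 \int_{\p^{-m}} \psi(-x)\,\pi(z(x))\pi(d(c))v\,dx
 = \pi(d(c))\int_{\p^{-m}} \psi(-c^{-1}y)\,\pi(z(y))v\,dy,
$$
where I used that $c\p^{-m} = \p^{-m}$.

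Now I would apply the hypothesis on $v$, which says $\pi(z(y))v = \psi(y)v$ for all $y \in \p^{-m}$, so the remaining integral collapses to
$$
 v \cdot \int_{\p^{-m}} \psi\bigl(y(1 - c^{-1})\bigr)\,dy.
$$
The final step is to observe that this integral vanishes. Since $\psi$ has conductor $\OF$, the character $y \mapsto \psi(y(1-c^{-1}))$ on the additive group $\p^{-m}$ is trivial if and only if $1 - c^{-1} \in \p^m$, i.e., $c \in 1 + \p^m$. The hypothesis $c \notin 1 + \p^m$ therefore forces the character to be nontrivial, and integration over the compact group $\p^{-m}$ gives zero. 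This completes the vanishing, hence $p(\pi(d(c))v) = 0$.

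There is no real obstacle here; the only subtle point is keeping track of the conductor calculation at the end (the equivalence ``$1 - c^{-1} \in \p^m \iff c \in 1 + \p^m$'' uses that $c \in \OF^\times$, so that $\val(1-c^{-1}) = \val(c-1)$). Everything else is a routine conjugation-and-change-of-variables computation combined with the standard criterion for membership in $V(Z^J,\psi)$.
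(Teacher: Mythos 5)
Your proposal is correct and follows essentially the same route as the paper: both verify that $\pi(d(c))v$ lies in $V(Z^J,\psi)$ by showing $\int_{\p^{-m}}\psi(-x)\pi(z(x))\pi(d(c))v\,dx=0$, using the relation $z(x)d(c)=d(c)z(cx)$, the hypothesis on $v$, and the conductor of $\psi$. The only cosmetic difference is your change of variables $y=cx$, which turns the paper's vanishing condition $c-1\notin\p^m$ into the equivalent condition $1-c^{-1}\notin\p^m$.
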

\begin{proof}
Let $w=\pi(d(c))v$. To show that $p(w)=0$, it is enough to show that
$$
 \int\limits_{\p^{-m}}\psi(-x)\pi(z(x))w\,dx=0.
$$
Indeed,
\begin{align*}
 \int\limits_{\p^{-m}}\psi(-x)\pi(z(x))w\,dx&=\int\limits_{\p^{-m}}\psi(-x)\pi(z(x)d(c))v\,dx\\
 &=\pi(d(c))\int\limits_{\p^{-m}}\psi(-x)\pi(z(xc))v\,dx\\
 &=\pi(d(c))\int\limits_{\p^{-m}}\psi(-x)\psi(xc)v\,dx\\
 &=\Big(\int\limits_{\p^{-m}}\psi(x(c-1))\,dx\Big)\pi(d(c))v\\
 &=0,
\end{align*}
since $c\notin1+\p^m$ and $\psi$ has conductor $\OF$.
\end{proof}

\begin{proposition}\label{VZJprop}
 Let $(\pi,V)$ be a smooth representation of $\GSp(4,F)$. Let $p:\:V\rightarrow V_{Z^J,\psi}$ be the projection map. Let $w$ be in $V_{Z^J,\psi}$. Then there exists a unique vector $v$ in $V$ with the following properties.
 \begin{enumerate}
  \item $p(v)=w$.
  \item $\pi(z(x))v=v$ for all $x\in\OF$.
  \item $\int\limits_{\p^{-1}}\pi(z(x))v\,dx=0$.
  \item $\pi(d(c))v=v$ for all $c\in\OF^\times$.
 \end{enumerate}
\end{proposition}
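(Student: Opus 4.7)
The proof splits into existence and uniqueness. For existence, I would start with the vector $v_0$ from Lemma~\ref{VZJlemma1}, which satisfies $p(v_0)=w$, $\pi(z(x))v_0=\psi(x)v_0$ for $x\in\p^{-m}$, and $\pi(d(c))v_0=v_0$ for $c\in 1+\p^m$. Properties (ii) and (iii) already hold for $v_0$: (ii) because $\psi|_{\OF}=1$ and $\OF\subseteq\p^{-m}$, and (iii) because $\int_{\p^{-1}}\psi(x)\,dx=0$ ($\psi$ has conductor $\OF$, so is nontrivial on $\p^{-1}$). To enforce (iv), I would average over $\OF^\times$ modulo $1+\p^m$:
$$
v \;=\; \sum_{c\in\OF^\times/(1+\p^m)}\pi(d(c))v_0.
$$
Property (iv) follows by reindexing the sum. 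Properties (ii) and (iii) survive the averaging because the conjugation identity $d(c)^{-1}z(x)d(c)=z(cx)$ gives $\pi(z(x))\pi(d(c))=\pi(d(c))\pi(z(cx))$, while multiplication by $c\in\OF^\times$ preserves both $\OF$ (so $\pi(z(cx))v_0=v_0$ for $x\in\OF$) and $\p^{-1}$ (permitting a change of variable $y=cx$ in the $\p^{-1}$ integral). Finally, Lemma~\ref{VZJlemma2} forces $p(\pi(d(c))v_0)=0$ for every $c\in\OF^\times\setminus(1+\p^m)$, so only the trivial coset contributes to $p(v)$, yielding (i).

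For uniqueness, let $u=v_1-v_2$ where $v_1,v_2$ both satisfy (i)--(iv); then $u$ satisfies (ii)--(iv) with $p(u)=0$. By the lemma in 2.33 of \cite{BeZe1976} (already invoked in the proof of Lemma~\ref{jacanisosteplemma}) there exists $n\geq 0$ with $\int_{\p^{-n}}\psi(-x)\pi(z(x))u\,dx=0$, and using (ii) together with $\psi|_{\OF}=1$ this becomes
$$
B_n u \;:=\; \sum_{y\in\p^{-n}/\OF}\psi(-y)\pi(z(y))u \;=\; 0.
$$
The case $n=0$ yields $u=0$ at once, so I assume $n\geq 1$. The plan is to Fourier-decompose $u$ over the finite abelian group $\p^{-n}/\OF$, whose character group is $\{\psi_a(y)=\psi(ay):a\in\OF/\p^n\}$. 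Since $u$ is fixed by $Z_0=\{z(x):x\in\OF\}$, the finite-dimensional subspace $W:=\C[\p^{-n}/\OF]\cdot u$ is naturally a $\p^{-n}/\OF$-module, giving a decomposition $u=\sum_a P_a u$ with $P_a u = q^{-n}\sum_y \psi(-ay)\pi(z(y))u$.

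I would then show $P_a u = 0$ for every $a\in\OF/\p^n$ in three cases. First, $P_1 u = q^{-n}B_n u = 0$. Second, the conjugation relation combined with (iv) yields the intertwining $\pi(d(c^{-1}))P_a u = P_{c^{-1}a}u$, so applying $\pi(d(c^{-1}))$ to $P_1 u=0$ for varying $c\in\OF^\times$ gives $P_a u=0$ for every $a\in(\OF/\p^n)^\times$. Third, induction from (iii) shows $A_k u := \sum_{y\in\p^{-k}/\OF}\pi(z(y))u = 0$ for every $k\geq 1$, the inductive step grouping $\p^{-k}/\OF$ by cosets of $\p^{-(k-1)}/\OF$ and using the homomorphism property $z(y_1+y_2)=z(y_1)z(y_2)$. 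For $a\in\p^k/\p^n$ with $k\geq 1$ the character $\psi_a$ is trivial on $\p^{-k}/\OF$, so the analogous coset splitting factorizes $P_a u$ into a product involving $A_k u=0$; the case $a=0$ is covered by $k=n$. These three cases exhaust $\OF/\p^n$, forcing $u=0$. The main technical care will be in the last case, where one must use $Z_0$-invariance of $u$ to factorize $P_a u$ along the splitting $\p^{-n}/\OF\cong (\p^{-n}/\p^{-k})\times(\p^{-k}/\OF)$ and exploit the triviality of $\psi_a$ on the inner factor; once these identities are assembled, both existence and uniqueness drop out.
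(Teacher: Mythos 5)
Your proof is correct. The existence half is essentially the paper's own argument: the paper also starts from the vector of Lemma~\ref{VZJlemma1}, averages it over $\OF^\times$ (an integral $q^m\int_{\OF^\times}\pi(d(c))v\,dc$ rather than your finite sum over $\OF^\times/(1+\p^m)$, which is the same thing), and uses Lemma~\ref{VZJlemma2} to see that only the coset $1+\p^m$ contributes to the image under $p$; your extra observation that (ii) and (iii) already hold for $v_0$ and are preserved by the averaging, via $d(c)^{-1}z(x)d(c)=z(cx)$, is a harmless repackaging of the paper's direct computation of $\pi(z(x))v_1$. The uniqueness half, however, follows a genuinely different route. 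The paper, after invoking the same criterion that $p(u)=0$ forces $\int_{\p^{-m}}\psi(-x)\pi(z(x))u\,dx=0$, simply applies $\pi(d(c))$ to this identity, integrates over $c\in\OF^\times$, and evaluates the resulting kernel $\int_{\OF^\times}\psi(-cx)\,dc$ (which is $1-q^{-1}$ on $\OF$, $-q^{-1}$ on $\p^{-1}\setminus\OF$, and $0$ elsewhere) to get $q^{-1}\int_{\p^{-1}}\pi(z(x))u\,dx=\int_{\OF}\pi(z(x))u\,dx$, whence $u=0$ by (ii) and (iii) in two lines. You instead carry out a full finite Fourier decomposition of $u$ over $\p^{-n}/\OF$, kill the unit-character components by transporting $P_1u=0$ with the $d(c)$-equivariance $\pi(d(c^{-1}))P_au=P_{c^{-1}a}u$, and kill the non-unit components through the vanishing sums $A_ku=0$ coming from (ii) and (iii); all the identities you need (well-definedness of the $\p^{-n}/\OF$-action via (ii), the coset factorization for $\mathrm{val}(a)\geq k$, the exhaustion of $\OF/\p^n$ by the three cases) do check out, and in fact only $A_1u=0$ is ever needed since one may always split along $\p^{-1}/\OF$. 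What your version buys is an explicit eigenspace picture showing exactly which hypotheses eliminate which Fourier components; what the paper's version buys is brevity, since the single integration over $\OF^\times$ performs your whole case analysis at once.
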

\begin{proof}
For the existence part we may assume that $w$ is non-zero. Let the positive integer $m$ and $v$ in $V$ be as in Lemma \ref{VZJlemma1}. Define $v_1=q^m\int\limits_{\OF^\times}\pi(d(c))v\,dc$.
Then, by Lemma \ref{VZJlemma2},
\begin{align*}
 p(v_1)&=q^m\int\limits_{\OF^\times}p(\pi(d(c))v)\,dc\\
 &=q^m\int\limits_{1+\p^m}p(\pi(d(c))v)\,dc\\
 &=q^m\int\limits_{1+\p^m}p(v)\,dc\\
 &=w.
\end{align*}
Evidently, $v_1$ has property iv). To see properties ii) and iii), let $x$ be in $\p^{-1}$. By ii) of Lemma \ref{VZJlemma1},
\begin{align*}
 \pi(z(x))v_1&=q^m\int\limits_{\OF^\times}\pi(d(c)z(xc))v\,dc=q^m\int\limits_{\OF^\times}\psi(xc)\pi(d(c))v\,dc.
\end{align*}
It follows that $v_1$ has property ii). Integrating over $x$ in $\p^{-1}$ shows that $v_1$ has property iii) as well.

To prove that $v_1$ is unique, let $V_1$ be the subspace of $V$ consisting of vectors $v$ satisfying properties ii), iii) and iv). We will prove that the restriction of $p$ to $V_1$ is injective (so that $p$ induces an isomorphism $V_1\cong V_{Z^J,\psi}$). Let $v$ be in $V_1$ and assume that $p(v)=0$. Then there exists a positive integer $m$ such that
$$
 \int\limits_{\p^{-m}}\psi(-x)\pi(z(x))v\,dx=0.
$$
Applying $d(c)$ to this equation, where $c$ is in $\OF^\times$, leads to
$$
 \int\limits_{\p^{-m}}\psi(-cx)\pi(z(x))v\,dx=0.
$$
Integrating over $c$ in $\OF^\times$, we obtain
$$
 q^{-1}\int\limits_{\p^{-1}}\pi(z(x))v\,dx=\int\limits_\OF\pi(z(x))v\,dx.
$$
Using properties ii) and iii) it follows that $v=0$. This concludes the proof.
\end{proof}
\begin{corollary}\label{VZJpropcor}
 Let $(\pi,V)$ be an irreducible, admissible representation of $\GSp(4,F)$ that is not a twist of the trivial representation. Then there exists a vector $v$ in $V$ that is invariant under all elements $d(c)$ with $c$ in $\OF^\times$.
\end{corollary}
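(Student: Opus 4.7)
The strategy is to show that $V_{Z^J,\psi}\neq 0$ and then appeal to Proposition \ref{VZJprop}: for any non-zero $w\in V_{Z^J,\psi}$, the vector $v\in V$ supplied by that proposition projects to $w$ under $p$, is therefore non-zero, and satisfies $\pi(d(c))v=v$ for all $c\in\OF^\times$, which is exactly the statement of the corollary.

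To produce a non-zero element in $V_{Z^J,\psi}$, I would first invoke Proposition \ref{Nthetaprop}: since $\pi$ is irreducible and not a twist of the trivial representation, it is not one-dimensional, so there exists a non-degenerate character $\theta=\theta_S$ of $N$ with $V_{N,\theta}\neq 0$. Writing $S=\left[\begin{smallmatrix}a & b/2\\ b/2 & c\end{smallmatrix}\right]$, the restriction of $\theta$ to $Z^J$ is the character $z(x)\mapsto\psi(cx)$, which is non-trivial precisely when $c\neq 0$. Using the $M$-action on twisted Jacquet modules of Sect.~\ref{actionsubsec}, replacing $S$ by $\,^thSh$ for $h\in\GL(2,F)$ preserves non-vanishing of the Jacquet module, via the automorphism $\pi(m)$ of $V$; a short computation of the $(2,2)$-entry of $\,^thSh$ shows that for any non-degenerate $S$ one may choose $h$ so that this entry is non-zero. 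Hence we may assume $c\neq 0$. Since the projection $V\twoheadrightarrow V_{N,\theta}$ factors through $V_{Z^J,\psi^{c}}$, we conclude $V_{Z^J,\psi^{c}}\neq 0$.

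To convert this to the non-vanishing of $V_{Z^J,\psi}$, I would use that $d(t)\in\GSp(4,F)$ normalizes $Z^J$, with $d(t)z(x)d(t)^{-1}=z(x/t)$, so that $\pi(d(t))$ induces a vector-space isomorphism $V_{Z^J,\psi^{m}}\stackrel{\sim}{\longrightarrow}V_{Z^J,\psi^{mt}}$ for each $m\in F^\times$. Applying this with $m=c$ and $t=1/c$ yields $V_{Z^J,\psi}\neq 0$, and Proposition \ref{VZJprop} then finishes the argument. None of the steps presents a serious obstacle; the only mildly non-trivial ingredient is the verification that the $M$-orbit of a non-degenerate symmetric $2\times 2$ matrix contains a representative with non-zero $(2,2)$-entry, which is a routine linear-algebraic check.
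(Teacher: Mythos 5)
Your proposal is correct and follows essentially the same route as the paper: reduce via Proposition \ref{VZJprop} to showing $V_{Z^J,\psi}\neq0$, invoke Proposition \ref{Nthetaprop} to get a non-trivial twisted Jacquet module $V_{N,\theta_S}\neq0$, and normalize so that $\theta_S$ restricts to $\psi$ on $Z^J$. Your explicit normalization (making the $(2,2)$-entry non-zero by the $M$-action and then conjugating by $d(1/c)$, which lies in $M$) is just an unpacked version of the paper's one-line ``we may assume $c=1$.''
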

\begin{proof}
By Proposition \ref{VZJprop}, it is enough to show that $V_{Z^J,\psi}$ is non-zero. By Proposition \ref{Nthetaprop}, there exists a non-trivial character $\theta$ of $N$ such that $V_{N,\theta}\neq0$. We may assume that $\theta$ is of the form \eqref{thetaSsetupeq} with $c=1$. The assertion follows.
\end{proof}
\section*{}
\phantomsection
\label{refsec}
\addcontentsline{toc}{section}{References}
\bibliography{bessel_functionals}{}
\bibliographystyle{plain}

\end{document}